\newcommand{\w}{{\text{w}}}
\newcommand{\esp}[2]{%
\ifthenelse{\equal{#2}{}}{{E}\left[ #1 \right] }{{E}^{#2}\left[ #1 \right] }%
}
\newcommand{\espk}[2]{%
\ifthenelse{\equal{#2}{}}{\mathbb{E}^{\dag}\left[ #1 \right] }{\mathbb{E}^{\dag}_{#2}\left[ #1 \right] }%
}
\newtheorem{theo}{Theorem}
\newtheorem{lem}{Lemma}
\newtheorem{prop}{Proposition}
\newtheorem{def1}{Definition}
\newtheorem{cor}{Corollary}
\newtheorem{remark}{Remark}
\numberwithin{equation}{section}
\numberwithin{theo}{section}
\newcommand{\dd}{{\mathrm{d}}}
\renewcommand{\epsilon}{\varepsilon} 
\newcounter{hypocounter}
\renewcommand\thehypocounter{(H\arabic{hypocounter})} 
\DeclareRobustCommand{\Arrow}[1][]{%
\check@mathfonts
\if\relax\detokenize{#1}\relax
\settowidth{\dimen@}{$\m@th\rightarrow$}%
\else
\setlength{\dimen@}{#1}%
\fi
\sbox\z@{\usefont{U}{lasy}{m}{n}\symbol{41}}%
\begin{picture}(\dimen@,\ht\z@)
\roundcap
\put(\dimexpr\dimen@-.7\wd\z@,0){\usebox\z@}
\put(0,\fontdimen22\textfont2){\line(1,0){\dimen@}}
\end{picture}%
}
\newcommand{\dminiArrow}{\hspace{.01mm}\scalebox{0.8}{\rotatebox[origin=c]{-90}{ \Arrow[.15cm] \hspace{0.01mm}  }}\hspace{.2mm}}
\newcommand{\TT}{\text{T}}
\title{ The structure of the local time of  Markov processes indexed by Lévy trees}
\author{Armand Riera\footnote{armand.riera@math.uzh.ch } \, and Alejandro Rosales-Ortiz\footnote{alejandro.rosalesortiz@math.uzh.ch}}
\date{University of Zurich}
\newcommand{\Jj}{{\text{J}}}
\newcommand{\Jc}{ \widecheck{J} }
\begin{document}
\maketitle
\newcommand{\toctitlefont}{\color{black}}

\begin{abstract}
We construct an additive functional  playing the role of the local time -- at a fixed point $x$ -- for Markov processes indexed by Lévy trees. We start by proving that  Markov processes indexed by Lévy trees  satisfy a special Markov property which can be thought as a spatial version of the classical Markov property. Then, we construct our additive functional by an approximation procedure and we characterize the support of its Lebesgue-Stieltjes measure. We also give an equivalent construction in terms of a special family of  exit local times. Finally, combining these results, we show that the points at which the Markov process takes the value $x$ encode a new Lévy tree and we construct explicitly its height process. In particular, we recover a recent result of Le Gall concerning the subordinate tree of the Brownian tree  where the subordination function is given by the past maximum process of Brownian motion indexed by the Brownian tree. 
\end{abstract}

\vspace{0.8cm}

\tableofcontents
\newpage

\section{Introduction}
 Excursion theory  plays a fundamental role in the study of $\mathbb{R}_+$--indexed Markov  processes dating back to Itô's  work  \cite{ItoExcursiones}. The purpose of this theory is to describe the evolution of a Markov process between visits to a fixed point in the state space. To be more precise, consider a  Polish space $E$, a strong $E$-valued continuous Markov process $\xi$ and fix  a point $x\in E$,  regular and instantaneous for $\xi$.  The paths of $\xi$ can be decomposed in   excursions away from $x$, where an excursion is a piece of path of random length, starting and ending at $x$, such that in between $\xi$ stays away from $x$.  Formally, they consist of the restrictions of $\xi$ to the connected components of $\mathbb{R}_+ \setminus \{ t \in \mathbb{R}_+ : \xi_t = x \}$. In order to keep track of the ordering induced by the time, the family of excursions is indexed by means of a remarkable  additive functional of $\xi$, called its local time at $x$, and denoted throughout this work by  $\mathcal{L}$. It is well known that $\mathcal{L}$ is a continuous process with Lebesgue-Stieltjes measure supported on the random set:
\begin{equation} \label{introduction:xsMarkov}
    \big\{ t \in \mathbb{R}_+ : \xi_t = x \big\},
\end{equation}
 and that the trajectories of $\xi$ can be recovered from the family of indexed excursions by gluing them together, taking into account the time spent by $\xi$ at $x$.  For technical reasons, we will also assume that the point $x$ is recurrent for $\xi$. We stress that  excursion theory holds under broader assumptions on the Markov process $\xi$,  and we refer to e.g.  \cite[Chapter VI]{BertoinBook} and  \cite{Excursions} for a  complete account.  
\par 
The purpose of this work is to set the first milestone  towards introducing an excursion theory for  Markov processes indexed by random trees. The random trees that we consider are the so-called \textit{Lévy trees}. This family is  canonical, in the sense that Lévy trees are  scaling limits of   Galton-Watson trees \cite[Chapter 2]{Duquesne} and are characterized by a branching property in the same vein as their discrete counterparts \cite{Subor, WeillLevyTrees}. 
At this point, let us mention that Markov processes indexed by Lévy trees  are  fundamental objects in   probability theory -- for instance, they  are  intimately linked  to the theory of   superprocesses \cite{Duquesne,RefSnake}. More recently,  Brownian motion indexed by the Brownian tree has been used as the essential building block in the construction of the universal model of random geometry called the Brownian map \cite{LeGallUnivesality, MiermontUnivesality}, as well as in the construction of other related random surfaces \cite{BMR,Infinite_Spine}. We also stress that  Brownian motion indexed by a stable tree is also a  universal object, due to the fact that  it arises as scaling limit of discrete models \cite{ScallingLimitsStableTrees}.  For the sake of completeness, we shall start with a brief and informal account of our objects of interest.

A Lévy tree can be encoded by a continuous $\mathbb{R}_+$-valued  process $H = (H_t)$  called its \textit{height process}; and for this reason we denote the associated tree by $\mathcal{T}_H$. Roughly speaking, the tree $\mathcal{T}_H$ has a root and $H$ encodes the distances to it when the tree is explored in  "clockwise order". Under appropriate assumptions,  we consider the pair consisting of the Markov process $\xi$ and its local time $\mathcal{L}$,  indexed by a Lévy tree $\mathcal{T}_H$. With a slight abuse of notation, this  process   will be denoted in the rest of this work by:
\begin{equation}\label{intro:MpindexedTree}
    \big( (\xi_{\upsilon}, \mathcal{L}_\upsilon) : \upsilon \in \mathcal{T}_H  \big).
\end{equation}
In short, this process can be thought as a random motion defined on top of $\mathcal{T}_H$ and following the law of $((\xi_t, \mathcal{L}_t) : t \in \mathbb{R}_+)$,  but splitting at every branching point of $\mathcal{T}_H$ into independent copies. The role played by $\{ t \in \mathbb{R}_+ : \xi_t =x \}$  is taken over in this setting  by the following random subset of $\mathcal{T}_H$: 
\begin{equation} \label{introduction:set1}
    \mathscr{Z} := \{ \upsilon \in \mathcal{T}_H :\, \xi_\upsilon = x \}. 
\end{equation}
The definition of the excursions of $( \xi_\upsilon )_{\upsilon \in \mathcal{T}_H}$ away from $x$ should then be clear at an intuitive level -- since it suffices to consider the restrictions of  $( \xi_\upsilon )_{\upsilon \in \mathcal{T}_H}$ to the connected  components of $\mathcal{T}_H \setminus \mathscr{Z}$. Notice however that we  lack a proper  indexing for this family of excursions that would allow  to recover the whole path, as in classical excursion theory. Moreover, one can expect the gluing of these excursions   to be more delicate in our setting,  since  in the time-indexed case the extremities of  an excursion  consist  of only two points, while in the present case,  the extremities are  subsets of $\mathcal{T}_H$ of significantly more intricate nature.  In the same vein, since the set $\mathscr{Z}$ is a subset of $\mathcal{T}_H$, it inherits its tree structure and therefore it possesses  richer   spatial properties than the subset of the real line  \eqref{introduction:xsMarkov}. More precisely, we consider the equivalence relation $\sim_\mathcal{L}$ on $\mathcal{T}_H$ which identifies the components of $\mathcal{T}_H$ where $(\mathcal{L}_\upsilon)_{\upsilon \in \mathcal{T}_H}$ stays  constant.  The resulting quotient space  $\mathcal{T}^{\mathcal{L}}_H := \mathcal{T}_H / \sim_\mathcal{L}$  is also a tree,  encoding   the set $\mathscr{Z}$ and   endowing it  with an additional  tree structure. In the terminology of \cite{Subor}, the  tree $\mathcal{T}^\mathcal{L}_H$ is the so-called  \textit{subordinate tree} of $\mathcal{T}_H$ by $\mathcal{L}$. Since  each component of $\mathcal{T}_H$ where $\mathcal{L}$ stays constant is naturally identified  with an excursion of $\xi$ away from $x$,  a proper understanding of $\mathcal{T}_H^{\mathcal{L}}$ is crucial  to develop an excursion theory for $(\xi_\upsilon)_{\upsilon \in \mathcal{T}_H}$.  This work is devoted to both:  
\begin{enumerate}
    \item Introducing a continuous process  suitable to index  the excursion of $(\xi_\upsilon)_{\upsilon \in \mathcal{T}_H}$ away from $x$;
    \item Studying  the structure of the random set $\mathscr{Z}$.
\end{enumerate}
\par As we shall explain, both questions are intimately related and, as we mentioned before, they lay the foundations for the development of  an excursion theory for $(\xi_\upsilon)_{\upsilon \in \mathcal{T}_H}$. In the case of Brownian motion indexed by the Brownian tree, an excursion theory has  already been  developed in \cite{ALG} and  has turned out  to have multiple applications in Brownian geometry, see e.g. \cite{Disks, Growth}. However, we stress that in  \cite{ALG}  the excursions are not indexed and, in particular,  a reconstruction of the Brownian motion indexed by the Brownian tree in terms of its excursions is still out of reach. Let us now present the general framework of this work.
\par  
 In order to formally define  the tree indexed process \eqref{intro:MpindexedTree},  we  rely on the theory of Lévy snakes  and we shall now give a brief account. The theory of Lévy snakes has mainly  been  developed in the monograph of Duquesne and Le Gall  \cite{Duquesne},  and a detailed presentation of the results that we  need is given in Section~\ref{section:preliminaries}. The process \eqref{intro:MpindexedTree} is  built from two layers of randomness. First, as we already mentioned, the family of random trees that we work with are  called  Lévy trees. 
 If $\psi$ is the Laplace exponent of a spectrally positive Lévy process $X$, under appropriate assumptions on $\psi$, one can define the height process $H$ as a functional  of $X$.  In order to explain how $\mathcal{T}_H$ is encoded  by  $H$,    we work under the excursion measure of $X$ above its running infimum and  we write  $\sigma$ for the duration of an excursion. The relation: 
\begin{equation*}
d_{H}(s,t):=H_s+H_t - 2\cdot \inf_{s\wedge t \leq u \leq s\vee t} H_u,   \quad \text{ for all } (s,t)\in [0,\sigma]^{2},  
\end{equation*}
defines a  pseudo-distance on $[0,\sigma]$, and the associated equivalence relation $\sim_H$ is defined by setting  $s\sim_{H} t$ if and only if $d_{H}(s,t)=0$.  The pointed  metric space $\mathcal{T}_{H}:=([0,\sigma_H]/\sim_{H},d_{H},0)$ is a Lévy tree\footnote{More precisely, since the duration $\sigma$ is random,  $\mathcal{T}_{H}$ is referred to as a \textit{free} Lévy tree.}, where for simplicity  we keep the notation $0$ for the equivalence class of $0$. We also write  $p_H : [0,\sigma] \mapsto \mathcal{T}_H$ for the canonical projection on $\mathcal{T}_H$ and we refer to  Section \ref{sec:trees:1} for more details about this encoding. The point $0$ is called the root of $\mathcal{T}_H$ and, by construction, the height process encodes the distances to it. We stress that the distribution of $\mathcal{T}_H$ is characterized by the exponent $\psi$, and we  say that  $\mathcal{T}_H$ is a $\psi$-Lévy tree.   One of the main technical difficulties of this work is that,  except when $X$ is a Brownian motion with drift, the process $H$ is  not Markovian and we will need to introduce a measure-valued process -- called the exploration process --  which heuristically, carries  the  information needed to make  $H$ Markovian. This process will be  denoted throughout this work by $\rho = (\rho_t: \, t \geq 0)$ and its nature  has a crucial impact on the geometry of $\mathcal{T}_H$. For instance, $\rho$ allows to characterize the multiplicity and genealogy  of points of $\mathcal{T}_H$. More precisely, recall that the multiplicity of a point $\upsilon$ in $\mathcal{T}_H$ is defined as the number of connected components of $\mathcal{T}_H \setminus\{ \upsilon\}$. For $i \in \mathbb{N}^{*} \cup \{ \infty \}$, we write $\text{Multi}_i(\mathcal{T}_H)$ for the set of points of $\mathcal{T}_H$ of multiplicity $i$, and the points of multiplicity strictly larger than $2$ are called \textit{branching points}. For instance, if $X$ does not have jumps, the measures $(\rho_t:~t\geq 0)$ are atomless and  all branching points have multiplicity $3$. In contrast,  as soon as the Lévy measure of $X$ is non-null, the measures $(\rho_t:~t\geq 0)$ have atoms and the set $\text{Multi}_\infty(\mathcal{T}_H)$ is non-empty.   We  also refer to \cite{JLG} for the construction of the exploration process.
 The second layer of randomness consists in defining, given $\mathcal{T}_H$,  a spatial motion indexed by $\mathcal{T}_H$ that roughly speaking behaves like the Markov process $(\xi_t)_{t \in \mathbb{R}_+}$ -- when restricted to an injective path connecting the root of $\mathcal{T}_H$ to a leaf.  This informal description can be formalized by making use of   the theory of random snakes \cite[Section 5]{Duquesne}. More precisely, one can define a  process $(W_s, \Lambda_s : s \in[0,\sigma])$ taking values in the collection of finite $E\times \mathbb{R}_+$--valued  continuous paths, each $(W_s, \Lambda_s)$ having lifetime $H_s$ and  such that, for each $s\in \mathbb{R}_+$ and conditionally on $H_s$, the path $(W_s , \Lambda_s)$ has the same distribution as $(\xi_s, \mathcal{L}_s : s \in [0, H_s] )$. The second main property of $(W, \Lambda)$ is that it satisfies the \textit{snake property}, viz.
 $$\big(W_t(H_t), \Lambda_t(H_t)\big)=\big(W_s(H_s), \Lambda_s(H_s)\big),\quad \text{for every }s\sim_H t. $$
For simplicity, from now on, we will  write  $(\widehat{W}_t, \widehat{\Lambda}_t):=(W_t(H_t), \Lambda_t(H_t))$ for the tip of $(W_t, \Lambda_t)$.  By  the snake property, it follows that the process $(\widehat{W}_t, \widehat{\Lambda}_t : t \in [0,\sigma])$ is well defined in the quotient space $\mathcal{T}_H$, and hence it defines a random function indexed by $\mathcal{T}_H$ which will be denoted by \eqref{intro:MpindexedTree}.  The triplet  $(\rho , W , \Lambda)$ is the so-called  $\psi$-Lévy snake with spatial motion $(\xi, \mathcal{L})$, a Markov process that will be extensively studied throughout this work. 
\par 
Let us now present the statements of our main results. These are stated under the excursion measure of $(\rho , W, \Lambda)$, but let us mention that we will  obtain similar results  under the underlying probability measure. By construction, the study of $\mathscr{Z}$ is closely related to the understanding of the random set:  
\begin{equation} \label{introduction:soporteaA}
    \{  t \in [0,\sigma]  : \widehat{W}_t = x \}, 
\end{equation}
since $\mathscr{Z}$ is precisely its  image under the canonical projection $p_H$  on $\mathcal{T}_H$. However,  note  that these two sets are of radically different natures.   As in  classical excursion theory for  Markov processes, we shall start by constructing  an additive functional $A = (A_t)_{t \in [0,\sigma]}$ of the Lévy snake $(\rho , W, \Lambda)$ with suitable properties and Lebesgue-Stieltjes measure $\dd A$ supported on \eqref{introduction:soporteaA}. The first main result of this work is obtained in Section \ref{section:structureOfTloc}  and is divided in two  parts:
\begin{enumerate}
    \item[\rm{(i)}] The construction of the additive functional  $A$ [Proposition \ref{proposition:aditivaDefinicion}];
    \item[\rm{(ii)}] The characterization of the support  of  $\dd A$ [Theorem \ref{prop:suppA}].
\end{enumerate}
See also  Theorem \ref{theorem:supportIntro} for an equivalent formulation of (ii)  in the terminology of the tree indexed process $(\xi_\upsilon)_{\upsilon \in\mathcal{T}_H}$. Recalling our initial discussion, the process $(A_t)_{t \in \mathbb{R}_+}$ is the natural candidate to index the excursions away from $x$ of $(\xi_\upsilon)_{ \upsilon \in \mathcal{T}_H}$. We are not yet in position in this introduction to formally state  the content of (i) and (ii), but we can  give a general description.  Our construction of $(A_t)_{t \in \mathbb{R}_+}$ relies on the so-called \textit{exit local times} of the Lévy snake  $(\rho , W , \Lambda )$.  More precisely, if we consider the family of domains $\{E \times [0, r) : \, r \in (0,\infty) \}$, for each fixed $r>0$, there exists an additive functional of $(\rho, W, \Lambda)$ that heuristically measures, at every $t \geq 0$,  the number of  connected components of $\mathcal{T}_H \setminus \{ \upsilon \in \mathcal{T}_H : \mathcal{L}_\upsilon \leq r \}$ visited up to time $t$. This description is informal and we refer to Section \ref{Special and intermediate} for  details. We  establish  in Section \ref{subsection:PhiTildeyMarkovTloc}   that the corresponding family of exit local times possesses a  jointly measurable version $(\mathscr{L}_t^{r} : \,  t \geq 0 , r >0 )$,  and in Section \ref{subsection:existenciaAditiva} we define our continuous additive $A$ by setting:
\begin{equation*}
    A_t := \int_0^\infty \dd r \,  \mathscr{L}_t^{r} , \quad t \geq 0.
\end{equation*}
After  establishing  that there is no branching point with label $x$, we   give in Section \ref{subsection:characterizationSupport}  a precise characterization of the support of the measure $\dd A$.  Formally, we  prove that:
\begin{equation*} 
\text{supp} ~\dd A=    \overline{  \big\{ t \in [0,\sigma]  : \xi_{p_H(t)} = x, \,  p_H(t) \in \text{Multi}_2 (\mathcal{T}_H) \cup  \{ 0 \}  \big\}}.  
\end{equation*}
We  also show in Theorem \ref{prop:suppA} that, equivalently, the support of $\dd A$ is the complement of the constancy intervals of $(\widehat{\Lambda}_t: \, t \geq 0)$. In particular, if we denote the right inverse of $A$ by $(A^{-1}_t: t \geq 0)$, the relation:
\begin{equation*}
    H^A_t := \widehat{\Lambda}_{A^{-1}_t}, \quad \quad  t \geq 0, 
\end{equation*}
defines a continuous non-negative process that plays a crucial role in the second part of our work.
\par  In Section \ref{section:treeStructureLocalTime}, we turn our attention to the study of $\mathscr{Z}$ or, equivalently, to  the structure of the subordinate  tree $\mathcal{T}_H^{\mathcal{L}}$. Even if this is an object of very different nature, our analysis relies  deeply  on the results and the machinery developed  in Section \ref{section:structureOfTloc}. The second main result of this work consists in showing that the process $H^A$ satisfies the following properties: 
\begin{enumerate}
    \item[\rm{(i')}] It encodes the subordinate tree $\mathcal{T}_H^{\mathcal{L}}$ [Theorem  \ref{theorem:subordinatedTreebyLocalT} (i)];
    \item[\rm{(ii')}] It is the height function of a Lévy tree, with an exponent $\widetilde{\psi}$ that we   identify [Theorem \ref{theorem:subordinatedTreebyLocalT} (ii)].
\end{enumerate}
In particular, this shows that $\mathcal{T}_H^\mathcal{L}$ is a Lévy tree with exponent $\widetilde{\psi}$. We stress that a continuous function can  fulfill (i') without satisfying (ii'), and it is remarkable that $H^A$ follows the exploration order of a Lévy tree. We also mention that the previous two points  were established -- although  with a different construction of the height process $H^A$ -- in \cite[Theorem 1]{Subor} for the subordination of the Brownian tree by the running maximum of the Brownian motion indexed by the Brownian tree\footnote{ When considering the process $(\xi_\upsilon, \mathcal{L}_\upsilon)_{\upsilon \in \mathcal{T}_H}$ indexed by the Brownian tree, the fact that the subordinate tree $\mathcal{T}_H^{\mathcal{L}}$ is a $\widetilde{\psi}$--Lévy tree  is also proved in \cite[Theorem 16]{Subor} but  the construction of its height process is lacking.}. These approaches are complementary, since the techniques employed in \cite{Subor} rely on a discrete approximation of the height function, while we shall argue directly in the continuum.  We also mention that one of the strengths of our method is that it gives an explicit definition of $H^A$ which is suitable for computations. This point is crucial in order to study the excursions of $(\xi_{\upsilon})_{\upsilon \in \mathcal{T}_H}$ from $x$. Our result shows that the height function of  the subordinate tree $\mathcal{T}^\mathcal{L}_H$ can  be constructed in terms  of functionals of $(\rho , W,\Lambda)$, and that   $A^{-1}$ defines an exploration of $\mathcal{T}_H^{\mathcal{L}}$  compatible with the order induced by $H$. Property (i') will be a  consequence of our previous results  (i), (ii)  and Section \ref{section:treeStructureLocalTime} is mainly devoted to the proof of (ii'). The main difficulty to establish (ii') comes from the fact that, as we already mentioned, the height process of a Lévy tree is not always Markovian. To circumvent this difficulty,  the proof of (ii') relies on the computation of  the so-called marginals of the tree  associated with $H^A$. In particular, it  makes use of all the machinery developed in previous sections as well as  standard properties of Poisson random measures.  
\par 
Let us now close the presentation of our work with a result of independent interest which  is used extensively throughout this paper. In Section \ref{Special and intermediate},  we state and prove the so-called \textit{Special Markov property} of the Lévy snake. This section is independent of the setting of Sections \ref{section:structureOfTloc}  and \ref{section:treeStructureLocalTime}, and  we work with an arbitrary $(\psi, \xi)$-Lévy snake  under  general assumptions on the pair $(\psi, \xi)$. Roughly speaking, the special Markov property  is  a spatial version of the classical Markov property for time-indexed Markov processes. The precise statement is the content of Theorem \ref{Theo_Spa_Markov_Excur_P}, see also Corollary \ref{Corollary:specialMarkovN}.  This result was established   in \cite[Theorem 20]{Subor} for  continuous Markov processes indexed by the  Brownian tree, and a particular case was proved for the first time in  \cite{RefSnake}. Our result is a generalisation of \cite[Theorem 20]{Subor}  holding in the broader setting  of continuous Markov processes indexed by $\psi$-Lévy trees.  The special Markov property of the Brownian motion indexed by the Brownian tree has already played a crucial role in multiple contexts, see for instance \cite{Hull, Growth, Infinite_Spine}  and we expect this result to be useful outside the scope of this  work. We also mention that the special Markov property of the Lévy snake is closely related to the one established by  Dynkin in the context of  superprocesses, see \cite[Theorem 1.6]{Dynkin_Special}. However, we stress that the formulation in terms of the Lévy snake, although less general, gives additional and crucial information  for our purposes. In particular, it takes into account the genealogy induced by the Lévy tree, and hence it caries geometrical information.
\par We conclude this introduction  non-exhaustive summary of related works. First, as we already mentioned, we extend to the general framework of Markov processes indexed by Lévy snakes the work of Le Gall on subordination in the  case of the Brownian motion indexed by the Brownian tree \cite{Subor}.  Moreover, our results on subordination of trees with respect to the local time  are closely related, in the terminology of Lévy snakes, to  Theorem 4 in \cite{BertoinLeGallLeJean} stated in the setting of superprocesses  --  the main difference being that in our work we encode the associated genealogy. For instance, we  recover \cite[Theorem 4]{BertoinLeGallLeJean} in a more precise form in our case of interest. 
We also note that we expect our results to be useful beyond the scope of this work, for instance in Brownian geometry. Finally, in the  case of Brownian motion indexed by the Brownian tree and when $x=0$,  our functional $A$ is closely related to the so-called \textit{integrated super-Brownian excursion} \cite{AldousTreeBasedModels} -- a random measure arising in multiple limit theorems for discrete probability models, but also in the theory of interacting particle systems \cite{SuperBrowniianLimits, Rescaledvotermodels} and in a variety of models of statistical physics \cite{Scalinglatticetrees, Scalinglimitincipient}. More precisely, the total mass $A_\infty$ is the density of the integrated super-Brownian excursion at $0$, see \cite[Proposition 3]{RieraLeGallExplicitDistributions}. In particular, we hope that our construction of the functional $A$ will be useful to obtain new explicit computations regarding the integrated super-Brownian excursion and to generalize these computations to  related models.
\medskip
\\
\par  \textit{The work is organised as follows: }Section 2 gives an overview of the theory of Lévy trees and  snakes.  In Section 3, we state and prove the special Markov property for Lévy snakes and we explore some of its consequences. This section is independent of the rest of the work but is   key for the development of Section 4 and 5. The preliminary results needed for its proof are covered in Section \ref{subsection:exitLocalTime}, and mainly concern approximation results for exit local times.  Section 4 is devoted to first, constructing in Section 4.2 the additive functional $A$ [Proposition \ref{proposition:aditivaDefinicion}],  and afterwards to   the characterization of   the support of the measure $\dd A$ [Theorem \ref{prop:suppA}] in Section 4.3.   We shall give two equivalent descriptions for the support of $\dd A$, one in terms of the pair $(H, W)$, and a second one only depending on $\Lambda$. The latter will be needed in Section \ref{section:treeStructureLocalTime} and we expect the former to be useful to develop an excursion theory --  we plan to pursue this goal in future works. The preliminary results  needed for our constructions are  covered in  Section 4.1. Finally, in Section 5, after recalling preliminary results on subordination of trees by continuous functions, we explore the tree structure of the set $\{ \upsilon \in \mathcal{T}_H :\, \xi_\upsilon = x \}$ by considering the subordinate tree of $\mathcal{T}_H$ with respect to the local time $\mathcal{L}$. The main result of the section is stated in Theorem \ref{theorem:subordinatedTreebyLocalT},  and  consists in proving (i') and (ii'). 
\\
\\
\textbf{Acknowledgments.}  We thank Jean-François Le Gall for stimulating conversations.
\section{Preliminaries} \label{section:preliminaries}
\subsection{The height process and the exploration process}\label{subsection:height}

\noindent 
Let us start by introducing the class of Lévy processes that we will consider throughout this work. We set $X$ a Lévy process  on $\mathbb{R}_+$, and we denote its law started from $0$ by $P$. It will be convenient to assume that $X$ is the canonical process on the Skorokhod space $D(\mathbb{R}_+,\mathbb{R})$ of càdlàg (right--continuous with left limits) real-valued paths equipped with the probability measure $P$. We denote the canonical filtration  by $(\mathcal{G}_t:t\geq  0)$, completed as usual by the class of  $P$-- negligible sets of $\mathcal{G}_\infty=\bigvee_{t\geq 0} \mathcal{G}_t$. We henceforth assume that $X$ verifies $P$-a.s. the following properties:
\begin{itemize}
\item (A1) $X$ does not have negative jumps;
\item (A2) The paths of $X$ are of infinite variation;
\item (A3) $X$ does not drift to $+\infty$.
\end{itemize} 
Since $X$ has no negative jumps the mapping $\lambda\mapsto\mathbb{E}[\exp(-\lambda X_{1})]$ is well defined in $\mathbb{R}_{+}$ and 
we denote the Laplace exponent of $X$  by  $\psi$, viz. the function defined by:
\[\mathbb{E}[\exp(-\lambda X_{1})]=\exp(\psi(\lambda)),\quad \text{ for all } \lambda\geq 0. \]
The function  $\psi$  can be written in  the Lévy-Khintchine form: 
\begin{equation*}
  \psi(\lambda) =  \alpha_0 \lambda + {\beta} \lambda^2 + \int_{(0,\infty)} \pi({\rm{d}} x)~(\exp(-\lambda x) - 1 + \lambda x \mathbbm{1}_{\{ x \leq 1 \}}),  
\end{equation*}
where $\alpha_0 \in \mathbb{R},\,  \beta\in\mathbb{R}_{+}$ and $\pi$ is a sigma-finite measure on $\mathbb{R}_{+}^*$ satisfying $\int_{(0,\infty)}\pi(\dd x)(1\wedge x^{2})<\infty$. Moreover, it is well known  that  condition (A2)  holds  if and only if we have: 
\begin{equation*}
    \beta \neq 0 \quad \quad  \text{ or } \quad \quad \int_{(0,1)}\pi(\dd x) ~ x  = \infty.
\end{equation*}
The Laplace exponent  $\psi$ is infinitely differentiable and strictly convex in $(0,\infty)$ (see e.g.  Chapter 8 in \cite{KyprianouBook}). Since $X$ does not drift towards $\infty$  one has $-\psi'(0+) = E[X_1] \leq 0$ which,  in turn, implies that $X$ oscillates, or drifts towards $-\infty$ and that  $X_t$ has a finite first moment for any $t$. In terms of the Lévy measure, this ensures that the additional integrability condition $\int_{(1,\infty)} \pi(\dd x) ~ x < \infty$ holds. Consequently, $\psi$ can and will be supposed to be of the following form:
 \[ \psi(\lambda)=\alpha\lambda+\beta \lambda^{2}+\int_{(0,\infty)}\pi(\dd x)(\exp(-\lambda x)-1+\lambda x),\]
where now $\pi$  satisfies $\int_{(0,\infty)}\pi(\dd x)(x\wedge x^{2})<\infty$ and $\alpha, \beta \in \mathbb{R}_+$  since $\alpha = \psi'(0+)$. From now on, we will denote the infimum of $X$ by $I$ and remark that, under our current hypothesis, $0$ is regular  and instantaneous for the Markov process $X-I = (X_t - \inf_{[0,t]}X_s : t \geq 0)$.  Moreover, it is standard to see that $P$ --a.s., the Lebesgue measure of $\{ t \in \mathbb{R}_+ : X_t = I_t  \}$ is null.  The process $-I$ is a local time of $X-I$ and we denote  the associated excursion measure from $0$ by $N$. To simplify notation, we write  $\sigma_e$ for the lifetime of an excursion $e$. Finally, we  impose the following additional assumption on $\psi$: 
\begin{equation}\label{1_infinity_psi} 
\int_{1}^{\infty}\frac{\dd \lambda}{\psi(\lambda)}<\infty. \tag*{${(\text{A}{4})}$}
\end{equation}
From now on, we will be working under (A1)~--~(A4). \par 
 Let us now briefly discuss  the main  implications of our assumptions. The condition \ref{1_infinity_psi} is twofold: on the one hand, it ensures that $\lim_{\lambda \rightarrow 
 \infty} \lambda^{-1} \psi(\lambda) = \infty$ which implies that $X$ has paths of infinite variation  \cite[VII-5]{BertoinBook}  (the redundancy in our hypothesis is on purpose for ease of reading).  On the other hand, under our hypothesis (A1)~--~(A3), it is well known that  there exists a  continuous state branching process  with branching mechanism $\psi(\lambda)$ (abbreviated  $\psi$-CSBP) and that  \ref{1_infinity_psi} is equivalent to its a.s. extinction. The $\psi$-Lévy tree can be interpreted as the genealogical tree  of this branching process and is defined in terms of a fundamental functional of $X$, called the height process, that we now introduce. \medskip \\

\noindent \textbf{The height and exploration processes}. 
Let us  turn our attention to the so-called height process -- the  main ingredient needed to define  Lévy trees. Our presentation follows  \cite[Chapter 1]{Duquesne} and we start by introducing some standard notation.  For every $0<s\leq t$, we set 
\begin{equation*}
    I_{s,t}:=\inf_{s \leq u \leq t} X_u,
\end{equation*}
the infimum of $X$ in $[s,t]$ and remark that  when $s = 0$ we  have $I_{t}=I_{0,t}$.  Moreover, since $X$  drifts towards $-\infty$ or oscillates, we must have $I_{t}\to -\infty$ when $t \uparrow \infty$. By  \cite[Lemma 1.2.1]{Duquesne}, for every fixed $t\geq 0$,  the limit:
\begin{equation} \label{definition:alturaH}
    H_t := \lim\limits_{\varepsilon \to 0}\frac{1}{\varepsilon }\int_{[0,t]}\dd r~ \mathbbm{1}_{\{ X_{r}<I_{r,t}+\varepsilon \}}
\end{equation}
exists in probability. Roughly speaking, for every fixed $t$, the quantity $H_t$ measures the size of the set: 
\begin{equation*}
    \{ r  \leq t :~ X_{r-} \leq  I_{r,t} \},
\end{equation*}
and we refer to  $H = (H_t : t \geq 0)$ as the height process of $X$.
By \cite[Theorem 1.4.3]{Duquesne}, condition (A4) ensures that $H$ possesses a continuous modification that we consider from now on and that we still denote by $H$.
\par 
The process $H$ will be the building block to define  Lévy trees. However, $H$ is not Markovian as soon as $\pi \neq 0$ and we will need to introduce a process -- called the exploration process --  which roughly speaking carries  the needed information to make  $H$ Markovian. More precisely, the exploration process is a Markov process and we will write  $H$ as a functional of it. In this direction, we write  $\mathcal{M}_{f}(\mathbb{R}_{+})$ for the set of finite measures on $\mathbb{R}_{+}$ equipped with the topology of  weak convergence and with a slight abuse of notation we write   $0$  for the null measure on $\mathbb{R}_+$. The exploration process $\rho = (\rho_t : t \geq 0)$ is  the random measure defined as: 
\begin{equation} \label{definition:explorationProcess}
    \langle \rho_{t},f \rangle :=\int_{[0,t]}\dd_{s}I_{s,t}\:f(H_{s}), \quad t\geq 0,
\end{equation}
where $\dd_{s} I_{s,t}$ stands for the measure associated with the non-decreasing function $s \mapsto I_{s,t}$.  Equivalently, $\rho$ can be defined  as: 
\begin{equation}\label{rho_atoms}
\rho_{t}(\dd r) := \beta \mathbbm{1}_{[0,H_{t}]}(r)\dd r+\mathop{\sum \limits_{0<s\leq t}}_{X_{s-}<I_{s,t}}(I_{s,t}-X_{s-})\:\delta_{H_{s}}(\dd r),\quad t\geq 0,
\end{equation}
and remark that \eqref{definition:explorationProcess} implies that
\begin{equation*}
 \langle \rho_{t},1 \rangle =I_{t,t}-I_{0,t}=X_{t}-I_{t},\quad t\geq 0. 
\end{equation*} 
In particular, $\rho_t$  takes values in $\mathcal{M}_{f}(\mathbb{R}_{+})$. By  \cite[Proposition 1.2.3]{Duquesne}, the process  $(\rho_t : t \geq 0)$ is an   $\mathcal{M}_{f}(\mathbb{R}_{+})$-valued  càdlàg strong Markov process,  and we briefly recall some of its main properties for later use. 
For every $\mu\in \mathcal{M}_{f}(\mathbb{R}_{+})$, we write  $\text{supp} (\mu)$  for the topological support of $\mu$ and  we set $H(\mu):=\sup \text{supp} (\mu)$ with the convention $H(0) = 0$.\\ The following properties hold: 
\begin{itemize}
    \item[\rm{(i)}] Almost surely, for every $t \geq 0$, we have $\text{supp } \rho_t  = [0, H_t]$ if $\rho_t \neq 0$.
    \item[\rm{(ii)}] The process $t \mapsto \rho_t$ is càdlàg with respect to the total variation distance. 
    \item[\rm{(iii)}] Almost surely, the following sets are equal: 
\begin{equation}  \label{equation:zeros}
       \{ t \geq 0 : \rho_t = 0 \} = \{ t \geq 0 : X_t - I_t = 0 \} = \{ t \geq 0 : H_t = 0 \}.
\end{equation}
\end{itemize}
In particular, note that we have $(H(\rho_t))_{t \geq 0} = (H_t)_{t \geq 0}$ and that point (ii) implies that the excursion intervals away from $0$ of $X-I$, $H$ and $\rho$  coincide. Moreover,  since $I_t \rightarrow -\infty$ when $t \uparrow \infty$, the excursion intervals have finite length and by \cite[Lemma 1.3.2]{Duquesne} and the monotonicity of $t\mapsto I_{t}$ we have:
\begin{equation}\label{temps:local:I}
    \lim \limits_{\epsilon \to 0}\mathbb{E}\big[\sup\limits_{s\in[0,t]}\big| \frac{1}{\epsilon} \int_{0}^{s}\dd u \, \mathbbm{1}_{\{ H_u<\epsilon\}}+I_s\big|\big]=0,\quad \text{ for every }t\geq 0.
\end{equation}
By the previous display, $-I$ can be thought as the local time of $H$ at $0$. 
\par 
The Markov process $\rho$ in our previous definition starts at $\rho_0 = 0$ and, in order to make use of the Markov property, we need to  recall  how to define it's distribution starting from an arbitrary measure $\mu \in \mathcal{M}_f(\mathbb{R}_+)$. In this direction,  we will need to introduce the following two operations: \smallskip \\
\textit{Pruning}. For every  $\mu \in \mathcal{M}_{f}(\mathbb{R}_{+})$ and $ 0\leq a < \langle \mu , 1 \rangle$, we set  $\kappa_{a}\mu$ the unique measure on $\mathbb{R}_+$ such that for every $r \geq 0$:
\[\kappa_{a}\mu([0,r]):=\mu([0,r])\wedge(\langle \mu,1 \rangle -a).\]
If $a \geq \langle \mu , 1 \rangle$  we simply set $\kappa_a\mu := 0$. The operation $\mu\mapsto \kappa_a \mu$ corresponds to a pruning operation "from the right"  and 
note that,  for every $a> 0$ and  $\mu\in \mathcal{M}_{f}(\mathbb{R}_{+})$, the measure $\kappa_{a}\mu$ has compact support. In particular, one has $H(\kappa_{a}\mu)<\infty$ for every $a>0$, even for $\mu$ with unbounded support. \smallskip \\
\textit{Concatenation}. Consider $\mu,\nu\in \mathcal{M}_{f}(\mathbb{R}_{+})$ such that $H(\mu)<\infty$. The concatenation of the measure  $\mu$ with $\nu$ is again an element of $\mathcal{M}_f(\mathbb{R}_+)$, denoted by $[\mu,\nu]$ and  defined by the relation:
\[\langle [\mu,\nu],f \rangle :=\int\mu(\dd r) f(r)+\int\nu(\dd r) f(H(\mu)+r). \]
 Finally, for every $\mu \in \mathcal{M}_{f}(\mathbb{R}_{+})$, the exploration process started from $\mu$ is denoted by  $\rho^{\mu}$ and defined as:
\begin{equation}\label{rho_mu}
\rho^{\mu}_{t}:=[\kappa_{-I_{t}}\mu, \rho_{t}],\quad t> 0 ,
\end{equation}
with the convention  $\rho_{0}^{\mu}:=\mu$. In this definition we used the fact that, $P$-a.s., $I_{t}<0$ for every $t>0$,  since we are not  imposing the condition  $H(\mu) < \infty$ on $\mu$. Remark that the process $\langle \rho^\mu , 1 \rangle := (\langle \rho^\mu_t , 1 \rangle : t \geq 0)$ has the same distribution as $X$ started from $\langle \mu , 1 \rangle$, this fact will be used frequently. 
\par For later use we also need to introduce the dual process of  $\rho$, this is,   the $\mathcal{M}_f(\mathbb{R}_+)$-valued process $(\eta_t :  t \geq 0)$ defined by the formula 
\begin{equation}\label{definition:eta}
\eta_t (\dd r) := \beta \mathbbm{1}_{[0,H_{t}]}(r)\dd r+\mathop{\sum \limits_{0<s\leq t}}_{X_{s-}<I_{s,t}}(X_{s} - I_{s,t} )\:\delta_{H_{s}}(\dd r),\quad t\geq 0.
\end{equation}
This process will  be only needed for  some computations  and the terminology will be justified by the identity \eqref{dualidad:etaRho}  below. Moreover, $\eta$ is càdlàg with respect to the total variation distance and the pair $(\rho, \eta)$ is a Markov process.  We refer to  \cite[Section 3.1]{Duquesne} for a complete account on $(\eta_t : t \geq 0 )$.
\\
\\
Before concluding this section, it will be crucial for our purposes to define the height process and the exploration process under the excursion measure $N$ of  $X-I$. In this direction, if for an arbitrary  fixed $r$ we set $g = \sup \{ s \leq r : X_s-I_s = 0 \}$ and $d = \inf\{ s \geq r : X_s-I_s = 0 \}$, it is straightforward to see that $(H_t : t \in [g,d])$  can be written in terms of a  functional of the excursion of $X-I$ that straddles $r$, say $e_j =( X_{(g + t) \wedge d} - I_{g} : t \geq 0)$, and this functional  does not depend on the choice of $r$. Informally, from the initial definition \eqref{definition:alturaH} this should not come as a surprise since the integral \eqref{definition:alturaH} for $t \in [g,d]$ vanishes on $[0,g]$, we refer to the discussion appearing before Lemma 1.2.4 in \cite{Duquesne} for more details. We denote this functional  by $H(e_j)$ and it satisfies that 
$P$--a.s.,  $H_t = H_{t-g}(e _j)$ for every $t \in [g,d]$.   Furthermore, if we denote  the connected components of $\{ t \geq 0: X_t - I_t = 0 \}$ by $\big((a_i, b_i) : i \in \mathbb{N} \big)$ and  the corresponding excursions by $(e_i : i \in \mathbb{N})$, then we have
$H_{(a_i + t) \wedge b_i } = H_t(e_i)$, for all $t \geq 0$.  By considering the first excursion $e$ of $X-I$ with duration $\sigma_e > \epsilon$ for every $\epsilon > 0$, it follows that the functional $H(e)$ in $D(\mathbb{R}_+ , \mathbb{R})$ under  $N( \dd e  \,| \sigma_e > \epsilon)$ is  well defined, and hence it is also well defined  under the excursion measure $N$.
\par 
Turning now our attention to the exploration process and its dual,  observe that for $t \in [a_i, b_i]$ the mass of the atoms in \eqref{rho_atoms} and \eqref{definition:eta} only depend on the corresponding excursion  $e_i$. We deduce by our previous considerations on $H$ that we can also write   $\rho_{(a_i+t) \wedge b_i} = \rho_t (e_i)$ and $\eta_{(a_i+t) \wedge b_i} = \eta_t (e_i)$, for all $t \geq 0$, where the functionals $\rho(e)$, $\eta(e)$ are still defined by  \eqref{rho_atoms} and \eqref{definition:eta} respectively, but replacing $X$ by $e_i$ and  $H$ by $H(e_i)$ -- translated in time appropriately. By the same arguments as before, we deduce that $\rho(e)$ and $\eta(e)$  under $N(\dd e)$ are well  defined $\mathcal{M}_{f}(\mathbb{R}_{+})$-valued functionals. 
From now on, when working under $N$, the dependency on $e$ is omitted from $H$, $\rho$ and $\eta$. Remark that under $N$, we still have $H(\rho_t) = H_t$   and  $\langle \rho_t , 1 \rangle = X_t$, for every $t \geq 0$, where now $X$ is an excursion of the reflected process.   
By excursion theory for  the reflected Lévy process $X-I$ we deduce that the random measure in $\mathbb{R}_+ \times \mathcal{M}_f(\mathbb{R}_+)$ defined as 
\begin{equation} \label{definition:PoissonRMExcursionesRho}
    \sum_{i \in \mathbb{N}}\delta_{(-I_{a_i} , \rho_{(a_i + \cdot)\wedge{b_i}}, \eta_{(a_i + \cdot)\wedge{b_i}} )}
\end{equation}
is a Poisson point measure with intensity $\mathbbm{1}_{\ell\geq 0}\dd \ell  \, N( \dd \rho, \dd \eta  )$. Finally, we recall for later use the equality in distribution under $N$:
\begin{equation}\label{dualidad:etaRho}
     \big( (\rho_t, \eta_t) : t \geq 0 \big) \overset{(d)}{=} \big( (\eta_{(\sigma -t )-}, \rho_{(\sigma -t )-} ) : t \geq 0 \big), 
\end{equation}
and we refer to \cite[Corollary 3.1.6]{Duquesne} for a proof. This identity is the reason why $\eta$ is called the dual process of $\rho$. 
 
\subsection{Trees coded by excursions and Lévy trees}\label{sec:trees:1}
The height process $H$ under $N$ is the main ingredient needed to define Lévy trees, one of the central objects studied in this work. Before giving a formal definition, we shall  briefly recall  some standard notation and notions related to (deterministic) pointed $\mathbb{R}$-trees. \medskip \\
\noindent \textbf{Real trees.} In the same vein as the construction of planar (discrete) trees in terms of  their  contour functions, there exists a canonical construction of  pointed $\mathbb{R}$-trees in terms of positive continuous functions. In order to be more precise, we introduce some  notation. Let $e:\mathbb{R}_+ \mapsto\mathbb{R}_{+}$  be a  continuous function, set $\sigma_e$ the functional $\sigma_e := \inf \{ t > 0 : e(s) = 0, \,  \text{ for every } s \geq t \}$ with the usual convention $\inf \{ \emptyset \} := \infty$. In particular, when  $e(0) = 0$, $\sigma_{e} < \infty$ and  $e(s)> 0$ for all $s \in (0,\sigma_e)$, the function $e$  is  called an excursion with lifetime $\sigma_e$. Note that these notations are compatible with the ones introduced in the previous section. For convenience, we take  $[0,\sigma_e]  := [0,\infty)$ if $\sigma_e = \infty$. For every $s, t \in [0,\sigma_e]$ with $s \leq t$ set
\begin{equation*}
    \displaystyle m_{e}(s,t):= \inf_{s \leq u \leq t}e(u),
\end{equation*}
and consider  the pseudo-distance on $[0,\sigma_e]$ defined by:
\begin{equation*}
d_{e}(s,t):=e(s)+e(t)-2\cdot  m_{e}(s\wedge t,s\vee t),  \quad \text{ for all } (s,t)\in [0,\sigma_{e}]^{2}. 
\end{equation*}
 The pseudo-distance $d_{e}$ induces an equivalence relation $\sim_{e}$ in $[0,\sigma_e]$ according to the following simple rule: for every $(s,t)\in [0,\sigma_{e}]^{2}$ we write $s\sim_{e} t$ if and only if $d_{e}(s,t)=0$, and we keep the notation $0$ for the equivalency class of the real number $0$. The pointed  metric space $\mathcal{T}_{e}:=([0,\sigma_e]/\sim_{e},d_{e},0)$ is an $\mathbb{R}$-tree, called the tree encoded by   $e$ and we denote  its canonical projection by $p_{e}:[0,\sigma_{e}]\to \mathcal{T}_{e}$. We stress that if $\sigma_e<\infty$, then $\mathcal{T}_e$ is a compact $\mathbb{R}-$tree.
 \\
 \\
 Let us now give some standard properties and notations. We recall that in an $\mathbb{R}$-tree there is only one continuous injective path connecting any two points $u,v \in \mathcal{T}_e$, and  we denote its image in $\mathcal{T}_e$ by $[u,v]_{\mathcal{T}_e}$. We say that $u$ is an ancestor of $v$ if $u \in [0,v]_{\mathcal{T}_e}$ and we write $u\preceq_{\mathcal{T}_e} v$.  One can check directly from the definition that we have $u\preceq_{\mathcal{T}_e} v$ if and only if there exists $(s,t)\in[0,\sigma_{e}]^{2}$ such that  $(p_{e}(s),p_{e}(t))=(u,v)$ and $e(s)=m_{e}(s\wedge t,s\vee t)$. In other words, we have:
\[[0,v]_{\mathcal{T}_e}=p_{e}\big(\big\{s\in [0,\sigma_{e}]:~ e(s)=m_{e}(s\wedge t,s\vee t)\big\}\big),\]
where $t$ is any preimage of $v$ by $p_{e}$. To simplify notation, we  write $u\curlywedge_{\mathcal{T}_e} v$ for the unique element on the tree verifying $[0,u\curlywedge_{\mathcal{T}_e} v]_{\mathcal{T}_e}=[0,u]_{\mathcal{T}_e}\cap [0,v]_{\mathcal{T}_e}$. The element   $u\curlywedge_{\mathcal{T}_e} v$ is known as  the common ancestor of $u$ and $v$. Finally, if $u\in\mathcal{T}_e$, the number of connected components of $\mathcal{T}_e\setminus \{u\}$  is called the multiplicity of $u$. For every $i\in\mathbb{N}^{*}\cup \{\infty\}$, we will denote  the set of points $u\in\mathcal{T}_e$ of multiplicity equal to $i$ by $\text{Mult}_{i}(\mathcal{T})$. The points of multiplicity larger than $2$ are called \textit{branching points}, and the points  of multiplicity $1$ are called \textit{leaves}. 
\\
\\
\textbf{Lévy trees.}  We are now in position to introduce: 
\begin{def1}
The random metric space $\mathcal{T}_{H}$ under the excursion measure $N$ is the (free) $\psi$-Lévy tree.
\end{def1}
\noindent The term free refers to the fact that the lifetime of $H$ is not fixed under $N$ and it  will be omitted from now on. Note that the metric space $\mathcal{T}_{H}$ can be considered under $P$ without any modifications. Since, under $P$, we have $\sigma_H = \infty$, the tree $\mathcal{T}_{H}$ stands for the space $(\mathbb{R}_+/\sim_{H},d_{H},0)$, and in particular it is no longer a compact space. The rest of the properties however remain valid and we will use the same notations indifferently under $P$ and $N$. Moreover, since the point $0$ is recurrent for the process $X-I$, it is also recurrent for $H$ by point (ii) of the previous section. This gives a natural interpretation of  $\mathcal{T}_{H}$ as the concatenation at the root of infinitely many  trees $\mathcal{T}_{H^{i}}$, where $(H^{i})_{i\in\mathbb{N}} = (H(e_i))_{i \in \mathbb{N}}$ are the excursions of $H$ away from $0$, and where the concatenation follows the order induced by the local time $-I$. For this reason, we will say that $\mathcal{T}_{H}$ under $P$ is a $\psi$-forest (made of $\psi$-Lévy trees). In particular, remark that under $P$ (resp. $N$), the root $p_{H}(0)$ is a branching point  of multiplicity $\infty$ (resp. a leaf).
\\
\\
Before concluding the discussion on $\mathbb{R}$-trees, we recall that, under $P$ or $N$, $\text{Mult}_{i}(\mathcal{T}_{H})=\emptyset$ for every $i\notin \{1,2,3,\infty\}$. Moreover, we have $\text{Mult}_{\infty}(\mathcal{T}_{H})\setminus\{p_H(0)\}=\emptyset$ if and only if $\pi=0$ or, equivalently, if $X$ does not have jumps. More precisely, $p_{H}$ realizes a bijection  between $\{t\geq 0: \: \Delta X_{t}>0\}$ and $\text{Mult}_{\infty}(\mathcal{T}_{H})\setminus\{p_{H}(0)\}$.

\subsection{The Lévy snake}\label{secsnake} 
In this section, we  give a short introduction  to the so-called Lévy snake, a path-valued Markov process that allows to formalize the notion of a "Markov process indexed by a Lévy tree". We follow the presentation of  \cite[Chapter 4]{Duquesne}. However,  beware that in this work we consider continuous  paths  defined in closed intervals,  and hence  our framework differs slightly with the one considered in  \cite[Chapter 4]{Duquesne}\footnote{
The paths considered in \cite[Section 4.1]{Duquesne} are càdlàg and defined in intervals of the form $[0,\zeta)$, for $\zeta \in (0,\infty)$. 
}.  
\medskip
\\
\noindent \textbf{Snakes driven by continuous functions.} Fix  a Polish space $E$ equipped with a distance $d_{E}$  inducing its topology and we let   $\mathcal{W}_{E}$ be   the set of $E$-valued killed continuous functions. Each  $\text{w} \in \mathcal{W}_E$  is a continuous path $\text{w}:[0,\zeta_{\text{w}}]\to E$, defined in a compact interval $[0,\zeta_{\text{w}}]$. The functional  $\zeta_{\text{w}} \in [0,\infty)$ is called the lifetime of $\text{w}$ and it will  be convenient to  denote the endpoint of $\text{w}$  by  $\widehat{\text{w}}:=\text{w}(\zeta_{\text{w}})$. Further, we write  $\mathcal{W}_{E,x}:=\{\text{w} \in \mathcal{W}_E:~ \text{w}(0)=x\}$ for the subcollection of paths in $\mathcal{W}_E$ starting at $x$, and we identify the trivial element of $\mathcal{W}_x$ with zero lifetime with the point $x$.  We  equip $\mathcal{W}_{E}$ with the distance
\[d_{\mathcal{W}_{E}}(\text{w},\text{w}^{\prime}):=|\zeta_{\text{w}}-\zeta_{\text{w}^{\prime}}|+\sup \limits_{r\geq 0}d_{E}\big(\text{w}(r\wedge \zeta_{\text{w}}),\text{w}^{\prime}(r\wedge \zeta_{\text{w}^{\prime}})\big),\]
and it is straightforward to check that $(\mathcal{W}_{E},d_{\mathcal{W}_{E}})$ is a Polish space.  Let us insist that the notation $e$ is exclusively used  for  continuous $\mathbb{R}_+$-valued  functions  defined in $\mathbb{R}_+$, and $\w$ is reserved for  $E$-valued continuous paths defined in compact intervals $[0, \zeta_\w]$, viz. for the elements of $\mathcal{W}_E$. \par  
 We will now endow $\mathcal{W}_{E}^{\mathbb{R}_+}$ with a  probability measure. In this direction, consider an $E$-valued  Markov process $\xi = (\xi_t : t \geq 0) $ with continuous sample paths.  For every $x\in E$, let $\Pi_{x}$ denote the distribution of $\xi$ started at $x$ and  also assume that $\xi$ is time-homogeneous (it is implicitly assumed in our definition that the mapping $x \mapsto \Pi_x$ is measurable).
Now, fix  a deterministic continuous function $h:\mathbb{R}_{+}\mapsto \mathbb{R}_{+}$. The first step towards defining the Lévy snake consists in introducing a $\mathcal{W}_E$-valued process referred as  \textit{the snake driven by $h$ with spatial motion $\xi$}. In this direction, we also fix  a point $x\in E$ and a path $\w \in \mathcal{W}_{E,x}$.  For every $a,b$ such that $0\leq a\leq \zeta_{\text{w}}$ and $b \geq a$,  there exists a unique probability measure $R_{a,b}(\text{w},\dd\text{w}^{\prime})$ on $\mathcal{W}_{E,x}$ satisfying the following properties: 
\begin{itemize}
    \item[(\rm{i})] $R_{a,b}(\text{w},\dd \text{w}^{\prime})$-a.s., $\text{w}^{\prime}(s)=\text{w}(s)$ for every $s\in[0,a]$.
    \item[(\rm{ii})] $R_{a,b}(\text{w},\dd \text{w}^{\prime})$-a.s., $\zeta_{\text{w}^{\prime}}=b$.
    \item[(\rm{iii})] Under $R_{a,b}(\text{w},\dd \text{w}^{\prime})$, $(\text{w}^{\prime}(s+a))_{s\in[0,b-a]}$
is distributed as $(\xi_{s})_{s\in[0,b-a]}$ under $\Pi_{\text{w}(a)}$.
\end{itemize}
Denoting  the canonical process on $\mathcal{W}^{\mathbb{R}_{+}}_{E}$  by $(W_{s})_{s\geq 0}$,   it is easy to see  by Kolmogorov's extension theorem that, for  every $\text{w}_{0} \in \mathcal{W}_{E,x}$ with $\zeta_{\text{w}_{0}}=h(0)$, there exists a unique probability measure $Q^{h}_{\text{w}_{0}}$ on $\mathcal{W}_E^{\mathbb{R}_+}$ satisfying  that
\begin{align*}\label{Q_h_w}
&Q^{h}_{\text{w}_{0}}\big(W_{s_{0}}\in A_{0}, W_{s_{1}}\in A_{1},...,W_{s_{n}}\in A_{n}\big)\\
&\hspace{0.2cm}=\mathbbm{1}_{\{\text{w}_{0}\in A_{0}\}}\int_{A_{1}\times A_{2}\times \cdots\times A_{n}} R_{m_{h}(s_{0},s_{1}),h(s_{1})}(\text{w}_{0}, \dd\text{w}_{1})R_{m_{h}(s_{1},s_{2}),h(s_{2})}(\text{w}_{1}, \dd\text{w}_{2})...R_{m_{h}(s_{n-1},s_{n}),h(s_{n})}(\text{w}_{n-1}, \dd\text{w}_{n}). 
\end{align*}
 for every $0=s_{0}\leq s_{1}\leq ...\leq s_{n}$ and $A_{0},..., A_{n}$ Borelian sets of $\mathcal{W}_{E}$. 
The canonical process $W$ in $\mathcal{W}^{\mathbb{R}_+}_E$ under $Q^{h}_{\text{w}_{0}}$ is called the snake driven by $h$ with spatial motion $\xi$ started from $\w_0$.  The value $W_s = (W_s(t) : t \in [0,h(s)])$ of the Lévy snake at time $s$ coincides with $\w_0$ for $0 \leq t  \leq m_h(0,s)$ while for $m_{h}(0,s)\leq t \leq h(s)$, it is distributed as the Markov process $\xi$ started at $\w_0(m_h(0,s))$ and stopped at time $h(s)-m_h(0,s)$. 
 Furthermore, informally, when $h$ decreases, the path is erased from its tip and, when $h$ increases, the path is extended by adding “little pieces” of trajectories of $\xi$ at the tip. The term snake refers to the fact that,  the definition of $Q_{\text{w}_0}^h$ entails that for every $s<s^\prime$ we have:
\begin{equation}\label{weak:snake}
W_{s}(r)=W_{s^\prime}(r),\quad r\in [0,m_{h}(s,s^\prime)],\quad Q_{\text{w}_0}^h\text{--a.s.} 
\end{equation}
 Note however that this property only holds  for fixed $s,s^\prime$ $Q_{\w_0}^h$-a.s. A priori, under  $Q^{h}_{\text{w}_{0}}$, the process $W$ does not have a continuous modification  with respect to the metric $d_{\mathcal{W}_E}$, but it will be crucial for our work to find suitable conditions guaranteeing the existence of such  modification.  This question will be addressed in the following proposition. We start by introducing some notation. First recall 
the convention $[a,\infty]:=[a,\infty)$ for $a < \infty$. Next,  consider a $\mathcal{J}$-- indexed family  $a_i, b_i \in \mathbb{R}_+\cup \{\infty\}$,  $\mathcal{J} \subset \mathbb{N}$, with $a_i < b_i$ and suppose that the intervals  $([a_i, b_i], i \in \mathcal{J})$ are  disjoint. A continuous function $h:\mathbb{R}_+ \mapsto \mathbb{R}_+$  is said to be locally $r$-Hölder continuous in $([a_i, b_i], i \in \mathcal{J})$ if, for every $n\in \mathbb{N}$, there exists a constant $C_n$ satisfying that  $|h(s) - h(t)| \leq C_n|s-t|^r$, for every $i \in \mathcal{J}$ and  $s,t \in [a_i,b_i]\cap [0,n]$. We insist on the fact that the constant $C_n$ does not depend on the index $i$.
 \begin{prop}\label{lemma:regularidadSnakeDriven}
 Suppose that there exists  a constant $C_\Pi> 0$ and two positive numbers $p,q > 0$ such that, for every $x \in E$ and $t\geq 0$, we have:
\begin{equation} \label{regularidad:estimate}
    \Pi_{x}\big(  \sup_{0 \leq u \leq t } d_E(\xi_u , x)^p  \big) \leq C_\Pi\cdot t^{q}.
\end{equation}
Further, consider a continuous function $h: \mathbb{R}_+ \mapsto \mathbb{R}_+$ and denote by $((a_i, b_i):~ i \in \mathcal{J})$ the excursion intervals above its running infimum. If  $h$ is locally $r$-Hölder continuous in $([a_i, b_i]: i \in \mathcal{J})$ with $qr > 1$ then, for every $\emph{w}\in \mathcal{W}_E$ with $\zeta_{\emph{\w}} = h(0)$, the process $W$ has a continuous modification under $Q_\text{\emph{w}}^h$. 
\end{prop}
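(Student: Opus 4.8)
The plan is to establish the continuous modification via a Kolmogorov-type moment estimate for the increments of $W$ under $Q_{\w}^h$, using the metric $d_{\mathcal{W}_E}$. The natural strategy is to reduce the problem to the behaviour along a single excursion interval of $h$ above its running infimum and then patch the pieces together. First I would recall that by \eqref{weak:snake}, for fixed $s \leq s'$ the paths $W_s$ and $W_{s'}$ agree on $[0, m_h(s,s')]$, so the only discrepancy between $W_s$ and $W_{s'}$ occurs on the portions of the paths beyond the minimum $m_h(s,s')$. Writing out $d_{\mathcal{W}_E}(W_s, W_{s'})$, the lifetime term is $|\zeta_{W_s} - \zeta_{W_{s'}}| = |h(s) - h(s')|$, which is already controlled by the local $r$-Hölder hypothesis. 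The supremum term splits into the increment of $W_{s'}$ between $m_h(s,s')$ and $h(s')$, and the analogous increment of $W_s$ between $m_h(s,s')$ and $h(s)$; each of these is the oscillation of a path that, conditionally on $h$, is distributed as a piece of $\xi$ run for a time of length at most $|h(s)-h(s')|$ (after using the snake construction and property (iii) of $R_{a,b}$).

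Next I would invoke the spatial regularity assumption \eqref{regularidad:estimate}. The point is that the oscillation of $\xi$ over a time window of length $\delta$ has $p$-th moment bounded by $C_\Pi \cdot \delta^q$, by applying \eqref{regularidad:estimate} at the appropriate starting point. Since $W_s$ and $W_{s'}$ differ only through such freshly-grown pieces of $\xi$ of total time-length comparable to $|h(s) - h(s')|$, I expect a bound of the form
\begin{equation*}
    Q_{\w}^h\big( d_{\mathcal{W}_E}(W_s, W_{s'})^p \big) \leq \text{const} \cdot |h(s) - h(s')|^{q} + \text{const}\cdot |h(s)-h(s')|^p.
\end{equation*}
Then the local $r$-Hölder continuity of $h$ on each excursion interval gives $|h(s) - h(s')| \leq C_n |s - s'|^r$, so on $[a_i, b_i] \cap [0,n]$ one obtains $Q_{\w}^h( d_{\mathcal{W}_E}(W_s, W_{s'})^p ) \leq C \cdot |s - s'|^{qr}$ (the term of order $|h(s)-h(s')|^p = |s-s'|^{pr}$ being of higher order for small increments). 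Since $qr > 1$ by hypothesis, the classical Kolmogorov–Chentsov continuity criterion yields a continuous modification of $W$ on each $[a_i, b_i]\cap[0,n]$, with the crucial feature that the Hölder constant $C_n$ does not depend on the index $i$ — this uniformity is exactly what the definition of local $r$-Hölder continuity was arranged to provide.

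The remaining step is to glue these per-interval modifications into a single process continuous on all of $\mathbb{R}_+$. At the endpoints separating consecutive excursion intervals of $h$ above its infimum, the lifetime $h$ returns to its running minimum, so the path $W_s$ is erased down to a common ancestor path; continuity across these junction points should follow from the snake property together with the fact that $h$ itself is continuous, so the erased portions shrink to zero. The main obstacle I anticipate is making the moment estimate genuinely \emph{uniform} across all excursion intervals simultaneously — one must ensure the implied constant in the Kolmogorov bound is controlled uniformly in $i$ so that the countably many continuous pieces assemble without accumulating discontinuities, and one must handle the behaviour at the (possibly infinitely many) junction points where distinct intervals abut. Controlling the oscillation precisely at the running-infimum times, where the trajectory is maximally erased, is the delicate point; I would address it by exploiting that the discrepancy between $W_s$ and $W_{s'}$ is always bounded by the oscillation of $\xi$ over a time window of size $|h(s)-h(s')|$, which vanishes continuously regardless of which interval $s, s'$ lie in.
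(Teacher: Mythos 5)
Your proposal follows the same route as the paper (a moment bound via the Markov property at time $m_h(s,s')$ plus \eqref{regularidad:estimate}, a Kolmogorov--Chentsov argument on each excursion interval with a H\"older constant uniform in $i$, then a gluing step), but two of your steps contain genuine errors. First, the moment estimate is misstated: conditionally on $h$, the two discrepant pieces of $W_s$ and $W_{s'}$ are runs of $\xi$ of durations $h(s)-m_h(s,s')$ and $h(s')-m_h(s,s')$, \emph{not} of duration at most $|h(s)-h(s')|$ (take $h(s)=h(s')$ with a deep dip in between: your bound would force $W_s=W_{s'}$, which is false). The correct bound is
\begin{equation*}
Q^h_{\w}\big(d_{\mathcal{W}_E}(W_s,W_{s'})^p\big) \leq C\Big(\big|h(s)-m_h(s,s')\big|^{q}+\big|h(s')-m_h(s,s')\big|^{q}\Big) + C\,|h(s)-h(s')|^p .
\end{equation*}
Inside one excursion interval your conclusion $C|s-s'|^{qr}$ survives, but only because the minimum over $[s,s']$ is attained at some $u\in[s,s']$, so the H\"older hypothesis gives $h(s)-m_h(s,s')\leq C_n|s-s'|^r$; this repair must be made explicit. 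More seriously, you discard the term $|s-s'|^{pr}$ as ``higher order'', which is only true if $p>q$ --- nothing in the hypotheses gives this, and if $pr\leq 1$ the Kolmogorov criterion fails for that term. The paper resolves this by replacing $d_{\mathcal{W}_E}$ with the bounded metric $1\wedge d_{\mathcal{W}_E}$, which permits taking $p$ as large as needed so that $pr>1$; you need this (or an equivalent device).

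Second, the gluing step is where the real work lies, and your sketch does not carry it out. The per-interval modifications only define the process on $\bigcup_i [a_i,b_i]$; the complement $\mathcal{V}=\{t:\,h(t)=\inf_{[0,t]}h\}$ is not a countable set of ``junction points'' --- it can be uncountable, and equals all of $\mathbb{R}_+$ if $h$ is non-increasing --- so you must still \emph{define} the modification there and prove continuity. The paper does this by setting $W^*_t:=(\w(u):u\in[0,h(t)])$ for $t\in\mathcal{V}$, checking $Q^h_\w(W_t=W^*_t)=1$ at such times, and then arguing continuity \emph{deterministically}: for $s<t$ with $s\in[a_j,b_j]$ one has $b_j\leq t$, so the triangle inequality through $b_j$ gives $d_{\mathcal{W}_E}(W^*_s,W^*_t)\leq K_n|s-t|^{r_0}+d_{\mathcal{W}_E}\big((\w(u):u\in[0,h(b_j)]),W^*_t\big)$, and the second term vanishes as $s\uparrow t$ by continuity of $\w$ and $h$, using crucially that $K_n$ is uniform in $j$. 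Your substitute --- bounding the discrepancy by ``the oscillation of $\xi$ over a window of size $|h(s)-h(s')|$'' --- is both quantitatively wrong (same issue as above) and of the wrong nature: at this stage one needs an almost sure statement about the already-constructed paths, not a moment estimate. Without the explicit identification of $W^*$ on $\mathcal{V}$ and this pathwise argument, the proof is incomplete.
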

\begin{proof}
With the notation introduced in the statement of the proposition, we fix a  continuous driving function  $h: \mathbb{R}_+ \mapsto \mathbb{R}_+$ locally $r$-Hölder continuous in $([a_i, b_i]:~ i \in \mathcal{J})$, an initial condition $\w \in \mathcal{W}_E$ with $\zeta_\w = h(0)$, and  we consider an arbitrary $n \in \mathbb{N}$.  By definition,   for  any $s,t \in [a_i,b_i] \cap [0,n]$, we have  $|h(s) - h(t)| \leq C_n\cdot |s-t|^r$ for a constant $C_n$ that does not depend on $i$.  Next, we consider $W$,  the snake  driven by $h$ under $Q^h_{\w}(\dd W)$.  The first step of the proof consists in showing  that the process $(W_s : s \in \bigcup_{i\in \mathcal{J}}[a_i,b_i])$ has a locally Hölder-continuous  modification  on  $\big([a_i,b_i]:~ i \in \mathcal{J} )$. In this direction, we remark that the definition of $d_{\mathcal{W}_E}$ gives: 
\begin{align*} 
    Q^h_\w \big(  d_{\mathcal{W}_E}(W_s, W_t)^p   \big) 
    &\leq  2^{p} \cdot    Q^h_\w \Big(   \sup_{m_h(s,t) \leq u}d_{E} \big(W_s(u \wedge h(s) ), W_t(u \wedge h(t) )\big)^p   \Big) + 2^{p}\cdot|h(s) - h(t)|^p, 
\end{align*}
for every  $s,t \in [a_i, b_i] \cap [0,n]$. Next, note  that the first term on the right hand side can be bounded above by: 
\begin{align*}
& Q^h_\w \Big(   \sup_{m_h(s,t) \leq u} d_{E}\big(W_s(u \wedge h(s) ), W_t(u \wedge h(t))\big)^p   \Big)  \\
& \leq  
2^{p}\cdot  Q^h_\w \Big(   \sup_{m_h(s,t) \leq u} d_{E} \big( W_s(u \wedge h(s) ), W_s( m_h(s,t)) \big)^p   \Big) 
+ 2^p\cdot  Q^h_\w \Big(   \sup_{m_h(s,t) \leq u} d_{E}\big(W_t( m_h(s,t) ), W_t(u \wedge h(t))\big)^p   \Big) \\
&\leq 2^{p}\cdot  Q^h_\w \Big(   \Pi_{W_s(m_h(s,t))} \big(  \sup_{ u \leq  h(s) - m_h(s,t) } d_{E}( \xi_u , \xi_0 )^p \big)  \Big) 
+ 2^{p}\cdot  Q^h_\w \Big(   \Pi_{W_t(m_h(s,t))} \big(  \sup_{ u \leq  h(t) - m_h(s,t) } d_{E}( \xi_0 , \xi_u )^p \big)  \Big)  \\
&\leq 2^{p} C_{\Pi}\cdot\Big( \big|h(s)-m_h(s,t)\big|^{q}+\big|h(t)-m_h(s,t)\big|^{q}\Big), 
\end{align*} where in the second inequality we  applied the Markov property at time $m_h(s,t)$, and in the last one we used the upper bound \eqref{regularidad:estimate}. By our  assumptions on $h$ we derive that, for every $n>0$, there exists a constant $C'_n$ such that:
\begin{equation*}
    Q^h_\w \left(  d_{\mathcal{W}_E}(W_s, W_t)^p   \right)  \leq C'_n \cdot \big( | t-s |^{ q r  }+| t-s |^{ p r  }\big) , \quad \quad \text{ for any } s,t \in [a_i, b_i] \cap [0,n], 
\end{equation*}
and we stress that the constant  $C'_n$  does not depend on $i$. Recall that  $qr>1$ and note that we can assume as well that $p r > 1$,   since by replacing the distance $d_{\mathcal{W}_E}$ by $1\wedge d_{\mathcal{W}_E}$, we can take $p$ as large as wanted.  Now, fix $r_0 \in (0, (qr-1)/p )$. We deduce by a standard Borel-Cantelli argument, similar to the  proof of Kolmogorov's lemma, that there exists a modification of $(W_s : s \in [0,n] \cap \bigcup_{i\in \mathcal{J}}[a_i,b_i] )$, say $(W_s^* : s \in [0,n] \cap \bigcup_{i\in \mathcal{J}}[a_i,b_i] )$, satisfying that $Q_\w^h$-- a.s., for every  $i \in  \mathcal{J}$ 
\begin{equation} \label{equation:regularity_eq_2}
     d_{\mathcal{W}_E}(  W_s^*, W_t^*  ) \leq K_n |s-t|^{r_0},  \quad \quad \text{ for every } s,t \in [a_i, b_i] \cap [0,n], 
\end{equation}
 where the (random) quantity $K_n$ does not depend on $i$. To simplify notation, set $\mathcal{V}:=\mathbb{R}_+\setminus \bigcup_{i\in \mathcal{J}}[a_i,b_i]$ and remark that if $t\in \mathcal{V}$, then $h(t)=\inf\{h(u):~u\in [0,t]\}$. For every $t \in \mathcal{V}$, we set $W^*_t := ( \w(u) : u \in [0,h(t)] )$ and we consider the process $(W^*_t: t\in [0,n])$.  Notice that by the very construction of $W^*$, we have  $Q_\w^h(W_t = W^*_t)=1$ for every $t \in [0,n]$, which shows that $W^*$ is a modification of $W$ in $[0,n]$. 
 \par Let us now show that $W^*$ is continuous on $[0,n]$. The continuity for $t \in [0,n]\cap \bigcup_{i\in \mathcal{J}}(a_i,b_i) $ follows by \eqref{equation:regularity_eq_2} and we henceforth  fix $t\in [0,n]\setminus \bigcup_{i\in \mathcal{J}}(a_i,b_i) $. In particular, we have  $h(t)=\inf\{h(u):~u\in [0,t]\}$. On one hand, if $(s_k:~k\geq 1)$ is a sequence with $s_k \rightarrow t$ as $k \uparrow \infty$,  the continuity of $\w$ and $h$ ensures that  $(\w( u) : u \in [0, h(s_k)] ) \rightarrow W^*_t$ with respect to $d_{\mathcal{W}_E}$. Consequently, if the subsequence $(s_{k}:~k\geq 1)$ takes values in $[0,n]\setminus \bigcup_{i\in \mathcal{J}}(a_i,b_i)$,  it holds that:
 $$\lim \limits_{k\to \infty }d_{\mathcal{W}_E}(W_{s_k}^*, W_t^*)=\lim \limits_{k\to \infty }d_{\mathcal{W}_E}\Big(  \big(\w(u) : u \in [0, h(s_k)] \big)  , W^*_{t} \Big) =0.$$
On the other hand, for every $s \in [a_j,b_j] \cap [0,n]$ for some $j \in \mathcal{J}$ with $s < t$, we have 
\begin{align*}
    d_{\mathcal{W}_E}(W_s^*, W_t^*) 
    \leq d_{\mathcal{W}_E}(W_s^*, W_{b_j}^*)+ d_{\mathcal{W}_E}\big( W^*_{b_j} ,  W^*_{t} \big)
    \leq K_n|s-t|^{r_0} + d_{\mathcal{W}_E}\Big(  \big(\w(u \wedge \zeta_\w) : u \in [0,h(s)] \big)  , W^*_{t} \Big),
\end{align*}
which goes to $0$ as $s \uparrow t$ since $W^*_t = \big( \w(u) : u \in [0,h(t)] \big)$. The case $s>t$ can be treated similarly by replacing  $b_i$ with  $a_i$ and  it follows that, for any subsequence $(s_k:~k\geq 1)$ with $s_k \rightarrow t$, we have $d_{\mathcal{W}_E}(W_{s_k}^*, W_t^*) \rightarrow 0$. Consequently, $W^*$ is continuous on $[0,n]$.  Since this holds for any $n$, we can define a continuous modification of $W$ in $\mathbb{R}_+$. 
\end{proof}
 Under the conditions of Proposition \ref{lemma:regularidadSnakeDriven}, the measure $Q^h_{\rm{w}}$ can be defined in the Skorokhod space of $\mathcal{W}_E$-valued right-continuous paths $\mathbb{D}(\mathbb{R}_+, \mathcal{W}_E)$ and,  with a slight abuse of notation, we still denote it  by $Q^h_{\rm{w}}$.  From now on, we shall  work under these conditions and $Q^h_\w$ will always be considered as a measure in $\mathbb{D}(\mathbb{R}_+ , \mathcal{W}_E)$. In particular, remark that if we write $W$ for the canonical process in $\mathbb{D}(\mathbb{R}_+ , \mathcal{W}_E)$, then  $W$ is $Q^h_\w$--a.s. continuous.  Finally, we point out that the regularity of  $W$ was partially addressed in the proof of \cite[Proposition 4.4.1]{Duquesne}, for initial conditions of the form  $x$ with $x \in E$,  when working with paths $\w$ defined in the half open interval  $[0,\zeta_\w)$.\medskip
\\
\noindent \textbf{The Lévy snake with spatial motion $\xi$.} The driving function $h$ of the random snake that we have considered so far was deterministic, and  the next step consists in  randomising  $h$.   We write $\mathcal{M}_f^0$ for the subset of $\mathcal{M}_f(\mathbb{R}_+)$ defined as  
\[\mathcal{M}^{0}_{f}:=\big\{\mu\in \mathcal{M}_{f}(\mathbb{R}_{+}):\:H(\mu)<\infty \ \text{ and } \text{supp } \mu = [0,H(\mu)]\big\}\cup\{0\},\] 
and we introduce $\Theta$ the collection of pairs  $(\mu, \w) \in \mathcal{M}_f^0 \times \mathcal{W}_{E}$ such that $H(\mu)=\zeta_{\text{w}}$. {Fix a Laplace exponent $\psi$ satisfying (A1) -- (A4), and set 
\begin{equation} \label{defintiion:coladeBallenaconcaradepalemera}
    \Upsilon:=\sup \big\{ r \geq 0 : \lim_{\lambda \rightarrow \infty} \lambda^{-r}\psi(\lambda) = \infty \big\}. 
\end{equation} 
In particular, by the convexity of $\psi$ we must have $\Upsilon \geq 1$. For every $\mu \in \mathcal{M}^{0}_{f}$, write $\textbf{P}_{\mu}$ for the distribution of the exploration process started from $\mu$  in $\mathbb{D}(\mathbb{R}_+, \mathcal{M}_f(\mathbb{R}_+))$ -- the space of right-continuous $\mathcal{M}_f(\mathbb{R}_+)$-valued paths.  With a slight abuse of notation we  denote  the canonical process in $\mathbb{D}(\mathbb{R}_+, \mathcal{M}_f(\mathbb{R}_+))$ by $\rho$ and observe that, by Definition \ref{rho_mu},  the process $\rho$ under $\mathbf{P}_\mu$ takes values in $\mathcal{M}^{0}_{f}$. Notice that $H(\rho)$ under  $\textbf{P}_{\mu}$ is continuous since $\mu \in \mathcal{M}_f^0$. We can now state the hypothesis we will be working with. 
\medskip \\
\noindent In the rest of this work, we will always assume that:
\begin{enumerate}
    \item[\mbox{}] \textbf{Hypothesis $(\textbf{H}_{0})$}. There exists  a constant $C_\Pi> 0$ and two positive numbers $p,q > 0$ such that,\\ for every $x \in E$ and $t\geq 0$, we have:
\begin{equation} \label{continuity_snake}
    \hspace{-15mm}\Pi_{x}\big(  \sup_{0 \leq u \leq t } d_E(\xi_u , x)^p  \big) \leq C_\Pi\cdot t^{q}, \hspace{7mm} \text{ and } \hspace{7mm} q\cdot (1-\Upsilon^{-1})>1. \tag*{$(\textbf{H}_{0})$}
\end{equation}
\end{enumerate}
For instance, it can  be checked that condition \ref{continuity_snake} is fulfilled if the Lévy tree has exponent $\psi(\lambda)= \lambda^\alpha$ for $\alpha \in (1,2]$ and $\xi$ is a Brownian motion.
Let us discuss the implications of \ref{continuity_snake}. 
Under  $\textbf{P}_{\mu}$, denote the excursion intervals of $H$ above its running infimum by $(\alpha_i, \beta_i)$. Recall from \eqref{rho_mu} that $(\rho^\mu_t :=  [k_{-I_t}\mu , \rho_t ]:~t \geq 0)$,  under $\textbf{P}_0$,  is distributed according to $\textbf{P}_\mu$, and note that  $H_t(\rho^\mu) = H(k_{-I_t}\mu) + H(\rho_t)$, for $t \geq 0$. By    \cite[Theorem 1.4.4]{Duquesne}, under $\mathbf{P}_0$  the process $H(\rho)$ is locally Hölder continuous  of exponent $m$ for any $m \in (0, 1 - \Upsilon^{-1})$. In particular, this holds for some  $m:=r$  verifying $qr > 1$ by the second condition in \ref{continuity_snake}. Since $\big(H(k_{-I_t}\mu): t \geq 0 \big)$ is constant on each excursion interval $(\alpha_i, \beta_i)$ and $(H(\rho_t): t \geq 0)$ is locally $r$-Hölder continuous,  we deduce that $H(\rho^\mu)$ is locally $r$-Hölder continuous on  $([\alpha_i, \beta_i]:\, i \in \mathbb{N})$. Said otherwise, $\textbf{P}_\mu$-a.s., the paths of  $H(\rho)$ satisfy the conditions of Proposition  \ref{lemma:regularidadSnakeDriven} and we  will henceforth assume  that the condition is  satisfied for every path, and not only outside of a negligible set. \par 
Finally, consider the canonical process $(\rho , W)$ in $\mathbb{D}(\mathbb{R}_+, \mathcal{M}_f(\mathbb{R}_+)\times \mathcal{W}_E )$, the space of $\mathcal{M}_f(\mathbb{R}_+)\times \mathcal{W}_E$-valued,   right continuous paths. By our previous discussion we deduce that  we can define a probability measure in  $\mathbb{D}(\mathbb{R}_+, \mathcal{M}_f(\mathbb{R}_+)\times \mathcal{W}_E )$  by setting  
\begin{equation*}
\mathbb{P}_{\mu,\text{w}}(\dd \rho,\: \dd W):=\textbf{P}_\mu(\dd \rho)\:Q^{H(\rho)}_{\text{w}}(\dd W), 
\end{equation*} 
for every $(\mu,\text{w})\in \Theta$. The process $(\rho , W)$ under $\mathbb{P}_{\mu , \w}$ is called the $\psi$-Lévy snake with spatial motion $\xi$ started from $(\mu , \w)$. We denote its canonical filtration by $(\mathcal{F}_t:~t\geq 0)$ and observe that by construction, $\mathbb{P}_{\mu , \w}$--a.s., $W$ has continuous paths.} Now, the proof  of  \cite[Theorem 4.1.2]{Duquesne} applies without any change to our framework and gives that the process $((\rho , W), (\mathbb{P}_{\mu , \w} : (\mu , \w) \in \Theta ))$ is a strong Markov process with respect to the filtration $(\mathcal{F}_{t+})$. It should be noted that assumption \ref{continuity_snake} is the same  as the one appearing in \cite[Proposition 4.4.1]{Duquesne}, for paths defined in $[0,\zeta_\w)$ and  started from $x\in E$. In the particular case  $\psi(\lambda) = \lambda^2/2$, the path regularity of $W$  was already addressed in  \cite[Theorem 1.1]{APathValuedMarkovProcess}. 
\par  Let us conclude our discussion concerning  regularity issues by  introducing the notion of \textit{snake paths}, which summarises the regularity properties of $(\rho,W)$  as well as some  related notation that will be used throughout this work. Recall that $\mathcal{M}_f(\mathbb{R}_+)$, equipped with the topology of weak convergence, is a Polish space  \cite[Lemma 4.5]{KallenbergRandMeas}.  We denote systematically  the elements of the path space  $\mathbb{D}(\mathbb{R}_+ , \mathcal{M}_f(\mathbb{R}_+) \times \mathcal{W}_E)$ by:  
\begin{equation*}
    (\uprho, \omega) = \big(  (\uprho_s, \omega_s) : \, s \in \mathbb{R}_+  \big), 
\end{equation*}
and by definition, we have $(\rho_s(\upvarrho), W_s(\omega)) = (\upvarrho_s, \omega_s )$ for $s \in \mathbb{R}_+$. 
For each fixed $s$, $\omega_s$ is an element of $\mathcal{W}_{E}$ with lifetime  $\zeta_{\omega_s}$, and the $\mathbb{R}_+$-valued  process $\zeta(\omega) := (\zeta_{\omega_s}: \, s \geq 0)$  is called the lifetime process of $\omega$. We   will occasionally use the notation $\zeta_s(\omega)$ instead of $\zeta_{\omega_s}$, and in such cases we will drop the dependence on $\omega$  if there is no risk of confusion. 
\begin{def1}\label{definition:snakePath}
A snake path started from $(\mu, \emph{w}) \in  \Theta$ is an element  $(\uprho , \omega) \in \mathbb{D}(\mathbb{R}_+ , \mathcal{M}_f(\mathbb{R}_+) \times \mathcal{W}_E)$ such that the mapping $s \mapsto \omega_s$ is continuous,  and satisfying the following properties: 
 \begin{itemize}
     \item[\rm{(i)}] $(\uprho_0 , \omega_0 ) = (\mu,\emph{w})$.
     \item[\rm{(ii)}] $(\uprho_s , \omega_s) \in \Theta$ for all $s \geq 0$, in particular   $H(\rho) = \zeta(\omega)$.
     \item[\rm{(iii)}] $\omega$ satisfies the snake property: for any $0\leq s \leq s'$,  
     \begin{equation*} 
         \omega_s(t) = \omega_{s'}(t) \, \,  \text{ for all } \, \,  0 \leq t \leq \inf_{[s,s']} \zeta(\omega).
     \end{equation*}
 \end{itemize}
\end{def1}
\noindent  A continuous $\mathcal{W}_E$-valued path $\omega$ satisfying (iii)  is called    a snake trajectory. We point out that this notion had  already been introduced   in the context of the Brownian snake \cite[Definition 6]{ALG}. However, in the Brownian case the process $W$ is Markovian and there is no need of working with pairs $(\upvarrho, \omega)$ -- this is the reason why we have to introduce the notion of snake paths.  We denote  the collection of snake paths started from  $(\mu , \w) \in \Theta$ by $\mathcal{S}_{\mu, \w}$ and simply write $\mathcal{S}_x$ instead of $\mathcal{S}_{0,x}$. Finally,  we set: 
 \begin{equation*}
     \mathcal{S} := \bigcup_{(\mu , \w) \, \in \, \Theta } \mathcal{S}_{\mu, \w}.
 \end{equation*}
For any given $(\uprho, \omega) \in \mathcal{S}$, we denote indifferently its duration by 
 \begin{equation*}
     \sigma_{H(\uprho)} = \sigma(\omega) = \sup \{t \geq 0 : \, \zeta_{\omega_t} \neq 0  \}.
 \end{equation*}
Remark that,  by continuity and the definition of $Q^{h}_{\text{w}}$, the process  $((\rho , W), (\mathbb{P}_{\mu , \w} : (\mu , \w) \in \Theta ))$ takes values in $\mathcal{S}$ -- it satisfies the snake property by \eqref{weak:snake} and the continuity of $W$. Said otherwise,  $\mathbb{P}_{\mu,\text{w}}\text{-a.s.}$, we have 
\[\zeta_{s}=H(\rho_{s}), \:\: \text{ for every } s \geq 0, \]
and  for any $t \leq t'$
\[W_{t}(s)=W_{t^{\prime}}(s), \:\: \text{ for all } s\leq m_{H}(t,t^{\prime}). \]
 We stress that when working on $\mathcal{S}$ the equivalent notations $\zeta_s$, $H(\rho_s)$  and $H_s$ will be used indifferently.  The snake property implies that, for every $t,t^{\prime} \geq 0$ such that $p_{H}(t)=p_{H}(t^{\prime})$, we have $W_{t}=W_{t^{\prime}}$. In particular, for such times it holds that  $\widehat{W}_{t}=\widehat{W}_{t^{\prime}}$ and hence $(\widehat{W}_t:~t\geq 0)$ can be defined in the quotient space $\mathcal{T}_H$. More precisely, under $\mathbb{P}_{\mu,\text{w}}$, the function defined with a slight abuse of notation for all  $\upsilon \in \mathcal{T}_H$ as 
\[
\xi_{\upsilon}:=\widehat{W}_{t}, \quad \text{where $t$ is any element of } p_{H}^{-1}(\upsilon),
\]
is  well defined and leads us to the notion of  tree indexed processes. When $(\mu , \w) = (0,x)$,  the process $(\xi_\upsilon)_{\upsilon \in \mathcal{T}_H}$ is known as the Markov process $\xi$  indexed by the tree $\mathcal{T}_H$ and started from $x$.\footnote{With the terminology  introduced in \cite[Definition 7]{ALG}, the pair of processes  $(H,\widehat{W})$ is called a treelike-path.} In this work, we will need to consider the restriction of $(\rho,W)$ to different intervals and therefore,  it will be convenient to  introduce a formal notion of subtrajectories.
\medskip \\
{
\noindent \textbf{Subtrajectories. }Fix $s<t$ such that $H_s = H_t$ and $H_r > H_s$ for all $r \in (s,t)$.  The subtrajectory of $(\rho,W)$ in $[s,t]$ is the process taking values in $\mathbb{D}(\mathbb{R}_+,\mathcal{M}_{f}(\mathbb{R}_+) \times\mathcal{W}_E)$, denoted by $(\rho^{\prime}_{r},W^{\prime}_{r})_{r\in[0,t-s]}$  and  defined as follows:  for every $r\in[0,t-s]$, set 
\begin{align*}
   \langle \rho^{\prime}_{r} , f \rangle  & :=\int \rho_{r+s}(\dd h)f(h - H_s)\mathbbm{1}_{\{ h  > H_s \}} \quad \quad \text{ and } \quad \quad 
    W^{\prime}_{r}(\cdot) :=W_{s+r}(H_{s}+\cdot \,).
\end{align*}
In particular, we have 
\[ \zeta(W^{\prime}_{r})=H_{s+r}-H_{s}=H(\rho^{\prime}_{r}), \quad \text{ for all }  r\in [0,t-s]. \]
}
\noindent Remark that if $(\rho,W)$ is a snake path, then the subtrajectory $(\rho^{\prime}, W^{\prime})$ is also in $\mathcal{S}$. Informally, $W^{\prime}$ encodes the labels $(\xi_{v}:~v\in p_{H}([s,t]))$.
\subsection{Excursion measures of the Lévy snake}
Fix $x\in E$ and consider the Lévy snake $(\rho,W)$ under $\mathbb{P}_{0,x}$. By (\ref{equation:zeros}), the measure $0$ is a regular recurrent point for the Markov process $\rho$, which implies that $(0,x)$ is on its turn regular and recurrent for the Markov process $(\rho,W)$. Moreover,  $(-I_{t}: t \geq 0)$ is a local time at $0$ for $\rho$ and hence it is a local time at $(0,x)$ for $(\rho,W)$. We set   $\mathbb{N}_{x}$ the excursion measure of $(\rho,W)$ away from $(0,x)$  associated with  the local time $-I$.  We stress that $\mathbb{N}_{x}$ is a measure in the canonical space  $\mathbb{D}(\mathbb{R}_+ , \mathcal{M}_f(\mathbb{R}_+) \times \mathcal{W}_E)$. By excursion theory of the Markov process $(\rho , W)$, if $\{(\alpha_i, \beta_i):~i \in \mathcal{I}\}$ stands for the excursion intervals of $(\rho , W)$ and $(\rho^i, W^i)$ are the corresponding subtrajectories then, under $\mathbb{P}_{0,x}$, the measure 
\begin{equation}\label{definition:poissonExcursiones}
    \sum \limits_{i\in \mathcal{I}}\delta_{(-I_{\alpha_i},\rho^{i},W^{i})}, 
\end{equation}
is a Poisson point measure with intensity $\mathbbm{1}_{[0,\infty)}(\ell)\dd \ell\:\mathbb{N}_{x}(\dd\rho, \:\dd\omega).$ 
Recalling the interpretation of the restrictions $\mathbb{N}_x( \, \cdot \, | \sigma > \epsilon)$ as the law of the  first excursion with length greater than $\epsilon$, it follows that  under $\mathbb{N}_{x}$, $W$  satisfies the snake property and $(\rho , W) \in \mathcal{S}$. In particular, we can still make use of the definition of subtrajectories and  $(\xi_{\upsilon})_{\upsilon\in \mathcal{T}_{H}}$ under the excursion measure $\mathbb{N}_{x}$, and for simplicity we will use the same notation. \par
By the previous discussion, it is straightforward to verify that 
\begin{equation}\label{N:H(rho)}
\mathbb{N}_{x}(\dd \rho,\: \dd \eta, \: \dd W)=N(\dd \rho, \, \dd \eta)\: Q^{H(\rho)}_{x}(\dd W).
\end{equation}
Said otherwise,  under $\mathbb{N}_{x}$: 
\begin{itemize}
    \item The distribution of $(\rho, \eta)$ is $N(\dd \rho, \, \dd \eta)$;
    \item The conditional distribution of $W$ knowing $(\rho, \eta)$ is $Q_{x}^{H(\rho)}$. 
\end{itemize}
Remark that by construction and \eqref{dualidad:etaRho}, under $\mathbb{N}_x$ we have 
\begin{equation}\label{dualidad:etaRhoW}
     \big( (\rho_t, \eta_t, W_t) : t \in [0,\sigma] \big) \overset{(d)}{=} \big( (\eta_{(\sigma -t )-}, \rho_{(\sigma -t )-} , W_{\sigma -t } ) :t \in [0,\sigma] \big),
\end{equation}
where we used that by continuity, we have $W_{\sigma -t }=W_{(\sigma -t )-}$ for every $t\in[0,\sigma]$.
\par 
When starting from an arbitrary $(\mu , \w) \in \Theta$, the following variant of \eqref{definition:poissonExcursiones} will be used frequently in our computations: 
let $\mathbb{P}_{\mu , \w}^{\dag}$ be the distribution of $(\rho, W)$ killed at time  $\sigma := \inf \{ t \geq 0 : H(\rho_s) = 0 \text{ for every } s\geq t \}$.  For instance, it will be worth noting that by \eqref{rho_mu}, the process  $\langle \rho , 1 \rangle$ is a Lévy process started from $\langle \mu,1 \rangle$ and stopped when reaching 0. Write $\big( (\alpha_i, \beta_i) : \, i \in \mathbb{N} \big)$ for the excursion intervals over the running infimum of  $\langle \rho,1 \rangle$ under $\mathbb{P}^\dag_{\mu,\text{w}}$ and denote  the corresponding subtrajectory associated with $[\alpha_i , \beta_i]$ by $(\rho^i , W^i)$. If for $t \geq 0$ we write $I_t := \inf_{s \leq t}\langle \rho_s ,  1 \rangle - \langle \mu , 1\rangle$, the measure
\begin{equation}
    \label{PoissonRandMeasure_Inf}
    \sum_{i \in \mathbb{N}}\delta_{(-I_{\alpha_i}, \rho^i , W^i)}, 
\end{equation}
is a Poisson point measure with  intensity $\mathbbm{1}_{[0, \langle \mu ,1 \rangle ]} (u) \,  \dd u \, \mathbb{N}_{\text{w}( H( \kappa_{u} \mu ) )}(\dd  \rho, \dd W)$.   Moreover,  if  $h_i := H_{\alpha_i} = H_{\beta_i}$, by \eqref{rho_mu} we have  $h_i = H(\kappa_{-I_{\alpha_i}} \mu )$ and since the image measure of $\mathbbm{1}_{[0,\langle \mu , 1 \rangle]}(u) \, \dd u$ under the mapping $u \mapsto H(\kappa_u \mu)$ is precisely $\mu$, we deduce that  under $\mathbb{P}^\dag_{\mu,\text{w}}$  the  measure
\begin{equation} \label{PoissonRandMeasure}
    \sum_{i \in \mathbb{N}}\delta_{(h_i, \rho^i , W^i)} 
\end{equation}
is a Poisson point measure with  intensity $\mu(\dd h)\mathbb{N}_{\text{w}(h)}(\dd  \rho, \dd W)$.  We refer to  \cite[Lemma 4.2.4]{Duquesne} for additional details. 
\\
\\
We close this section by recalling a many-to-one formula that will be used frequently to obtain explicit computations. We start with some preliminary notations: consider  a 2-dimensional subordinator $(U^{(1)}, U^{(2)})$ defined in some auxiliary probability space $(\Omega_0, \mathcal{F}_0, P^0 )$ with Laplace exponent  given by 
\begin{equation} \label{identity:exponenteSubord}
    - \log E^0  \Big[ \exp \big( - \lambda_1 U^{(1)}_1 - \lambda_2 U^{(2)}_1 \big) \Big] := 
    \begin{cases}
    \frac{\psi(\lambda_1) - \psi(\lambda_2)}{\lambda_1 - \lambda_2} -\alpha\quad \quad \text{if } \lambda_1 \neq  \lambda_2 \\
    \psi'(\lambda_1) - \alpha    \quad \quad \hspace{7.5mm} \text{if } \lambda_1 = \lambda_2, 
    \end{cases}
 \end{equation}
 where $E^0$ stands for the expectation taken with respect to $P^0$. Notice that in particular  $U^{(1)}$ and $U^{(2)}$ are subordinators with Laplace exponent $\lambda \mapsto \psi(\lambda)/\lambda -  \alpha$.
Let $(J_a , \widecheck{J}_a)$ be the pair or random measures  defined by 
\begin{equation*}
   (J_a , \widecheck{J}_a)  := \big(\mathbbm{1}_{[0,a]}(t)~ \dd U^{(1)}_t , \mathbbm{1}_{[0,a]}(t) ~\dd U^{(2)}_t \big),
\end{equation*}
with the convention  $(J_\infty , \widecheck{J}_\infty)  := (\mathbbm{1}_{[0,\infty)}(t) ~\dd U^{(1)}_t , \mathbbm{1}_{[0,\infty)}(t) ~\dd U^{(2)}_t )$. The following many-to-one  equation will play a central role in all this work:
\begin{lem}
For every $x \in E$ and every non-negative measurable functional $\Phi$ taking values in $\mathcal{M}_f(\mathbb{R}_+)^2 \times \mathcal{W}_E$, we have: 
\begin{equation}\label{tirage_au_hasard_N}
     \mathbb{N}_{x} \Big( \int_0^{\sigma} \dd s \, \Phi\big(\rho_s , \eta_s,  W_s \big) \Big)  
    = \int_0^\infty \dd a \, \exp\big(- \alpha a\big)\cdot  
    E^0 \otimes \Pi_{x} \Big(  \Phi \big( J_{a}, \widecheck{J}_a , (\xi_t : t \leq a ) \big)  \Big).
\end{equation}
\end{lem}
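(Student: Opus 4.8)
The plan is to integrate out the spatial displacement $W$ first, using the conditional description \eqref{N:H(rho)} of $\mathbb{N}_x$, and thereby reduce the statement to the analogous many-to-one formula for the pair $(\rho,\eta)$ alone. Throughout we use that $\Phi\geq 0$, so that every application of Tonelli's theorem is justified and the order of integration is immaterial.

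First I would condition on $(\rho,\eta)$. By \eqref{N:H(rho)}, under $\mathbb{N}_x$ the conditional law of $W$ given $(\rho,\eta)$ is $Q^{H(\rho)}_x$, and this randomisation leaves the first two coordinates of $\Phi$ untouched, so that
\begin{equation*}
\mathbb{N}_x\Big(\int_0^{\sigma}\dd s\,\Phi(\rho_s,\eta_s,W_s)\Big)=N\Big(\int_0^{\sigma}\dd s\, Q^{H(\rho)}_x\big(\Phi(\rho_s,\eta_s,W_s)\big)\Big).
\end{equation*}
For a fixed $s\in(0,\sigma)$ the one-dimensional marginal of $W_s$ under $Q^{H(\rho)}_x$ is, by the very definition of $Q^h_x$ (take $n=1$, $s_0=0$, $s_1=s$, and note that $m_H(0,s)=0$ since $H>0$ on $(0,\sigma)$ under $N$, so the starting path is the trivial one at $x$), precisely the law of $(\xi_u:u\leq H_s)$ under $\Pi_x$. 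Since $H_s=H(\rho_s)$, the inner quantity becomes $\Pi_x\big(\Phi(\rho_s,\eta_s,(\xi_u:u\leq H(\rho_s)))\big)$, and pulling $\Pi_x$ outside the $N$-integral yields
\begin{equation*}
\mathbb{N}_x\Big(\int_0^{\sigma}\dd s\,\Phi(\rho_s,\eta_s,W_s)\Big)=\Pi_x\Big(N\Big(\int_0^{\sigma}\dd s\,G\big(\rho_s,\eta_s\big)\Big)\Big),\qquad G(\mu,\nu):=\Phi\big(\mu,\nu,(\xi_u:u\leq H(\mu))\big).
\end{equation*}

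It then remains to apply the many-to-one formula for the pair $(\rho,\eta)$ under $N$, namely $N(\int_0^\sigma \dd s\, G(\rho_s,\eta_s))=\int_0^\infty \dd a\, e^{-\alpha a}E^0(G(J_a,\widecheck{J}_a))$ for every non-negative measurable $G$ on $\mathcal{M}_f(\mathbb{R}_+)^2$; this is the classical computation of Duquesne--Le Gall, in which the $2$-dimensional subordinator $(U^{(1)},U^{(2)})$ with exponent \eqref{identity:exponenteSubord} describes the law of $(\rho_s,\eta_s)$ at a "typical" time $s$ sampled by $\int_0^\sigma \dd s$, the height $a=H_s$ being selected with improper density $e^{-\alpha a}\dd a$. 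Applied to the functional $G$ above and reinserting $\Pi_x$, this produces $\int_0^\infty \dd a\, e^{-\alpha a}\,\Pi_x(\Phi(J_a,\widecheck{J}_a,(\xi_u:u\leq H(J_a))))$. Finally I would identify $H(J_a)=a$: the measure $J_a=\mathbbm{1}_{[0,a]}\dd U^{(1)}_t$ satisfies $\sup\text{supp}(J_a)=a$ for a.e. $a$, since under (A1)--(A4) the subordinator $U^{(1)}$ (of exponent $\psi(\lambda)/\lambda-\alpha$) has either positive drift $\beta$ or infinite Lévy measure, so its increase points accumulate at every level. Recombining $E^0$ and $\Pi_x$ into $E^0\otimes\Pi_x$ gives exactly \eqref{tirage_au_hasard_N}.

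I expect the main obstacle to lie in the reduction step rather than in the final computation: one must argue cleanly that, after conditioning, only the one-dimensional marginal of $W_s$ enters, which hinges on $m_H(0,s)=0$ $N$-a.e., i.e. on the fact that under $\mathbb{N}_x$ the driving height process genuinely starts a fresh excursion from $0$; and one must check that the auxiliary functional $G$ is jointly measurable (using that $\mu\mapsto H(\mu)=\sup\text{supp}(\mu)$ is measurable) so that Tonelli applies at both stages. The identification $H(J_a)=a$ is a minor but essential point, as it is precisely what aligns the lifetime $H_s$ of $W_s$ with the time horizon $a$ of the sampled motion $(\xi_u:u\leq a)$.
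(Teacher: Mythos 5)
Your proposal is correct and follows essentially the same route as the paper's proof: both use the conditional description \eqref{N:H(rho)} to integrate out $W$, reducing $\Phi$ to a functional of $(\rho_s,\eta_s)$ alone with the spatial coordinate replaced by a $\Pi_x$-expectation over $(\xi_u : u\leq H(\rho_s))$, and then invoke the Duquesne--Le Gall many-to-one formula for $(\rho,\eta)$ under $N$ (formula (18) of \cite{FractalAspectsofLevyTrees}). The only cosmetic differences are the order in which you apply Tonelli and the conditioning, and your explicit verification that $H(J_a)=a$ almost surely, a point the paper uses implicitly.
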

\begin{proof}
First, remark that we have
$$ \mathbb{N}_{x} \Big( \int_0^{\sigma} \dd s \, \Phi\big(\rho_s , \eta_s,  W_s \big) \Big)=\int_{0}^{\infty} \dd s \, \mathbb{N}_{x} \Big( \mathbbm{1}_{\{s< \sigma_H \}} \, \Phi\big(\rho_s , \eta_s,  W_s \big) \Big).$$
Next, we  use \eqref{N:H(rho)} to write the previous display in the form:
$$\int_{0}^{\infty} \dd s \, \mathbb{N}_{x} \Big( \mathbbm{1}_{\{s < \sigma_H\}} \,\Pi_{x}\Big[ \Phi\big(\rho_s , \eta_s,  (\xi_r:~r\leq H(\rho_s)) \big)\Big] \Big) = N \Big(\int_{0}^{\sigma} \dd s  \,\Pi_{x}\Big[ \Phi\Big(\rho_s , \eta_s,  \big(\xi_r:~r\leq H(\rho_s)\big) \Big)\Big] \Big). $$
Since now $\Pi_x \big[ \Phi\big(\rho_s, \eta_s , (\xi_r: r \leq H(\rho_s))\big) \big]$ is a functional of $(\rho_s, \eta_s)$, it  suffices to establish \eqref{tirage_au_hasard_N} for a functional only depending on the pair $(\rho_s, \eta_s)$. However, this is precisely formula 
 (18) in   \cite{FractalAspectsofLevyTrees}.
\end{proof}

\section{Special Markov property }\label{Special and intermediate} 
In this section we state and prove  the (strong) special Markov property for the Lévy snake. This result was originally introduced in \cite[Section 2]{Laplacian_u_2} in the special case of the Brownian motion indexed by the Brownian tree, viz. when the Lévy exponent of the tree is of the form $\psi(\lambda)=\beta\lambda^{2}$ and the spatial motion $\xi$ is a Brownian motion. This result plays a fundamental role in the study of Brownian motion indexed by the Brownian tree, see for example \cite{Laplacian_u_2,Subor,Growth,Infinite_Spine}. 
More recently, a stronger version was proved in \cite{Subor} still for  $\psi(\lambda)=\beta\lambda^{2}$ but holding for more general spatial motions $\xi$. In this section we extend this result to   an arbitrary exponent $\psi$ of a Lévy tree. Even if  we follow a similar strategy  to the one introduced in \cite{Subor},  general Lévy trees are significantly less regular than the Brownian tree -- in particular the height process $H$ is not Markovian.   The arguments need to be carefully reworked and for instance, the existence of points with infinite multiplicity hinder considerably the proof. 
\\
\\
We start by introducing some standard notation that will be used in the rest of the section and recalling the preliminaries needed for our purpose. Fix $x \in E$ and for an arbitrary open subset $D \subset E$ containing $x$ and $\text{w}\in \mathcal{W}_{E,x}$, set 
\[\:\tau_{D}(\text{w}):=\inf\big\{t\in[0,\zeta_{\text{w}}] : ~ \text{w}(t)\notin D\big\},\] 
with the usual convention  $\inf \{ \emptyset \}=\infty$. Similarly, we will write $\tau_{D}(\xi):=\inf\{t\geq 0: ~ \xi_{t}\notin D\}$ for the exit time from $D$ of the spatial motion $\xi$. When considering the later, the dependency on $\xi$ is usually dropped when there is no risk of confusion. In the rest of the section, we will always assume that:
\begin{equation}\label{tau_infinity}
 \Pi_{x}(\tau_{D}<\infty)>0. \tag*{$\mathbf{(H_{1})}$}
\end{equation}
The special Markov property is roughly speaking a spatial version of the Markov property. In order to state it, we need to properly define the notion of  paths "inside $D$" and "excursions outside $D$", as well as a notion of measurability with respect to the information generated by the trajectories staying inside of $D$. Section \ref{subsection:exitLocalTime} is devoted to the study of paths inside $D$ and to a fundamental functional of the Lévy snake, called the exit local time. The study of the excursions outside $D$ is postponed to Section  \ref{subsection:specialMarkovProof}.
\subsection{The exit local time}\label{subsection:exitLocalTime}
 Let us begin by introducing some useful operations and notation. \smallskip  \\ 
\noindent \textbf{Truncation.} We start by defining the \textit{truncation} of a  path $(\upvarrho,\omega) \in \mathbb{D}(\mathbb{R}_+ , \mathcal{M}_f(\mathbb{R}_+) \times \mathcal{W}_{E,x} )$ to $D$ -- we stress that we have $\omega_s(0) = x$ for every $s \geq 0$. In this direction, define the functional 
\begin{equation}\label{definition:VD}
    V^D_t(\upvarrho , \omega) := \int_0^t \dd s \,  \mathbbm{1}_{\{  \zeta_{\omega_s} \leq \tau_D(\omega_s)    \}}, \quad \quad t \geq 0,
\end{equation}
measuring the amount of time spent by $\omega$ without leaving  ${D}$ up to time $t$. Let us be more precise: at time $s$, we will say that $\omega_s$ doesn't leave  $D$ (or stays in $D$) if 
$\omega_s ( [0,\zeta_s) ) \subset {D}$ (notice that $\widehat{\omega}_s$ might be in $\partial D$) 
 and on the other hand, we say that the trajectory exits $D$ if  $\omega_s([0,\zeta_s)) \cap D^c \neq \emptyset$. Observe that a trajectory $(\omega_s(t) : t \in [0,\zeta_s] )$ might exit the domain $D$ and return to it before the lifetime $\zeta_s$, but such a trajectory will not be accounted by $V^D$.  Write $\mathcal{Y}_{D}(\upvarrho , \omega):= V^D_{\sigma(\omega)} (\upvarrho, \omega )$ for the total amount of time spent in $D$, and for every $s\in [0, \mathcal{Y}_{D}(\upvarrho, \omega))$ set  
\[
\Gamma_s^{D}(\upvarrho, \omega):=\inf\big\{t\geq 0 :  V_t^D(\upvarrho , \omega) > s\big\},\:
\]
with the convention $\Gamma^D_s (\upvarrho,\omega):= \sigma(\omega)$, if $s \geq \mathcal{Y}_D(\upvarrho,\omega)$.  The  truncation of $(\upvarrho,\omega)$   to $D$ is the element of $\mathbb{D}(\mathbb{R}_+ , \mathcal{M}_f(\mathbb{R}_+) \times \mathcal{W}_{E,x} )$  with lifetime $\mathcal{Y}_{D}(\upvarrho, \omega)$ defined as follows:
\begin{equation*}
\text{tr}_{D}\big(\upvarrho ,\omega\big) :=(\upvarrho_{\Gamma_s^{D}(\upvarrho, \omega)},\omega_{\Gamma_s^{D}(\upvarrho , \omega)})_{s\in \mathbb{R}_+}.
\end{equation*}
 Indeed, observe that the trajectory $(\upvarrho_{\Gamma^D},\omega_{\Gamma^D})$ is càdlàg since $\upvarrho, \omega$  and $\Gamma^{D}$ are càdlàg. For simplicity, we set $\text{tr}_{D} (\omega ) = 
(\omega_{\Gamma_s^{D}(\omega)})_{s\in \mathbb{R}_+ }$ and we write  $ \text{tr}_D (\widehat{\omega})$ for    $ \widehat{\omega}_{\Gamma^D}$.  Roughly speaking, $\text{tr}_{D}(\omega)$ removes the trajectories $\omega_s$ from $\omega$ leaving $D$, glues the remaining endpoints,  and hence encodes the trajectories  $\omega_s$ that stay in $D$. Let us stress that when $(\upvarrho, \omega)$ is an element of $\mathcal{S}_x$, the truncation $\text{tr}_D(\upvarrho , \omega)$ is still in $\mathcal{S}_x$  since   $\text{tr}_{D} (\omega)$ is  a snake trajectory   taking values in $D\cup \partial D$ by   \cite[Proposition 10]{ALG}, and  condition (ii) in Definition \ref{definition:snakePath} is clearly satisfied. Recall that $(\rho, W)$ stands for the canonical process in $\mathbb{D}(\mathbb{R}_+, \mathcal{M}_f(\mathbb{R}_+)\times \mathcal{W}_{E,x} )$, and that it takes values in $\mathcal{S}_x$ under $\mathbb{P}_{\mu , \w}$ for $(\mu , \w) \in \Theta$ or under $\mathbb{N}_{y}$ for $y \in E$. We will also need to introduce the sigma field 
\begin{equation} \label{definition:sigmaFieldTroncature}
    \mathcal{F}^D := \sigma \big( \text{tr}_D( \rho, W)_s : s \geq 0 \big)
\end{equation}
in $\mathbb{D}(\mathbb{R}_+ , \mathcal{M}_f(\mathbb{R}_+) \times \mathcal{W}_E )$, which roughly speaking, contains the information generated by the trajectories that stay in $D$. The following technical lemma will be often useful. It states that, under $\mathbb{N}_x$,  when a trajectory $W_s$ exits the domain $D$, then the measure $\rho_s$ does not have an atom at  level $\tau_D(W_s)$. More precisely:

\begin{lem}\label{lemma:nomassaTD} Let $D$ be an arbitrary  open subset $D\subset E$ containing $x$. Then, $\mathbb{N}_{x}$--a.e.
\begin{equation*}
     \rho_s(\{ \tau_D(W_s)\}) = 0 ,\quad   \text{ for all } s \geq 0.
\end{equation*} 
\end{lem}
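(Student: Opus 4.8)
The plan is to first establish the statement for Lebesgue-almost every $s$ by means of the many-to-one formula \eqref{tirage_au_hasard_N}, and then to upgrade this to a statement valid for \emph{every} $s$ by exploiting the fact that an atom of $\rho$ at a fixed level persists over a time interval of positive length.

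For the first step I would apply \eqref{tirage_au_hasard_N} to the non-negative functional $\Phi(\mu, \nu, \w) := \mathbbm{1}_{\{ \mu(\{\tau_D(\w)\}) > 0 \}}$, which yields
\begin{equation*}
\mathbb{N}_x\Big( \int_0^{\sigma} \dd s \, \mathbbm{1}_{\{ \rho_s(\{\tau_D(W_s)\}) > 0 \}} \Big) = \int_0^\infty \dd a \, e^{-\alpha a} \, E^0 \otimes \Pi_x\Big( \mathbbm{1}_{\{ J_a(\{ \tau_D((\xi_t : t \leq a)) \}) > 0 \}} \Big).
\end{equation*}
The crucial point is that under $E^0 \otimes \Pi_x$ the measure $J_a = \mathbbm{1}_{[0,a]}\dd U^{(1)}$ is a functional of the subordinator $U^{(1)}$, hence \emph{independent} of the spatial motion $\xi$ and a fortiori of the level $\tau_D((\xi_t : t \leq a))$. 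Since $J_a$ charges a single point only at a jump time of $U^{(1)}$, and a subordinator has no fixed jump times, conditioning on $\xi$ gives $P^0\big( J_a(\{\tau_D((\xi_t:t\leq a))\}) > 0 \mid \xi \big) = 0$. Thus the right-hand side vanishes, and we conclude that $\mathbb{N}_x$-a.e. one has $\rho_s(\{\tau_D(W_s)\}) = 0$ for Lebesgue-a.e. $s$.

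For the upgrade, recall from \eqref{rho_atoms} that $\rho_s(\{h\}) > 0$ exactly when $h = H_u$ for some jump time $u \leq s$ of $X$ with $X_{u-} < I_{u,s}$, in which case $p_H(u)$ is an infinite-multiplicity branching point lying on the ancestral line of $p_H(s)$. As the jump times of $X$ are countable, it suffices to fix such a $u$ and rule out $\mathbb{N}_x$-a.e. the existence of some $s$ with $X_{u-} < I_{u,s}$ and $\tau_D(W_s) = H_u$. The key observation is an all-or-nothing phenomenon: for every $s$ whose ancestral line passes through $p_H(u)$, the snake property forces the restriction $(W_s(t) : t \in [0,H_u])$ to coincide with the common ancestral path to $p_H(u)$; and the event $\{\tau_D(W_s) = H_u\}$ — namely that $W_s$ stays in $D$ strictly below height $H_u$ and satisfies $W_s(H_u) = \xi_{p_H(u)} \notin D$ — depends on $W_s$ only through this restriction. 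Hence it holds either for all such $s$ or for none of them. Whenever it holds, it holds in particular for all $s$ in the exploration interval of the subtrees branching off at $p_H(u)$ while $X_{u-} < I_{u,s}$; this set has positive Lebesgue measure, since the mass $\Delta X_u > 0$ takes a positive time to be consumed (the spectrally positive process $X$, started strictly above $X_{u-}$, needs a positive time to reach that level). On this set $\rho_s(\{\tau_D(W_s)\}) = \rho_s(\{H_u\}) > 0$, contradicting the almost-everywhere statement of the first step; the event is therefore empty. Intersecting over the countably many jump times $u$ gives the claim for every $s$.

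The main obstacle is precisely this passage from ``almost every $s$'' to ``every $s$'': the many-to-one formula only controls the spine at a typical time, so one must argue that a genuine coincidence $\tau_D(W_s) = H_u$ at a single exceptional time would automatically propagate, through the snake property, to an entire time interval of positive measure and thereby become visible to the integrated estimate. Once the all-or-nothing structure of such coincidences is isolated, the remaining verifications (positivity of the exploration interval, measurability of $\Phi$, absence of fixed jumps for $U^{(1)}$) are routine.
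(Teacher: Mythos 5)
Your proof is correct, and its first half coincides with the paper's: both apply the many-to-one formula \eqref{tirage_au_hasard_N} and use the independence of $J_a$ from $\xi$, together with the absence of fixed jump times for the subordinator $U^{(1)}$, to conclude that $\mathbb{N}_x$--a.e. the set $\{s\in[0,\sigma] : \rho_s(\{\tau_D(W_s)\})>0\}$ is Lebesgue-null. The second halves share the same philosophy --- propagate a single exceptional time to a set of exceptional times of positive Lebesgue measure, contradicting the a.e. statement --- but the propagation mechanisms are genuinely different. The paper works forward from the exceptional time $s$ itself: since $\rho$ is càdlàg for the total variation distance, the atom at the fixed level $\tau_D(W_s)$ persists on an interval $[s,s+\delta)$; this forces $H\geq \tau_D(W_s)$ there, so the snake property gives $\tau_D(W_u)=\tau_D(W_s)$ for all $u\in[s,s+\delta)$, and the contradiction follows. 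You instead trace the atom back to its source: by \eqref{rho_atoms} it is created by a jump time $u$ of $X$ with $X_{u-}<I_{u,s}$ and $H_u=\tau_D(W_s)$; the coincidence $\{\tau_D(W_{s'})=H_u\}$ is an all-or-nothing property of the common ancestral path, shared by every $s'$ with $X_{u-}<I_{u,s'}$; and this set of times has positive length because the running infimum of the spectrally positive process $X$ is continuous. Your route uses more structure --- the atomic decomposition \eqref{rho_atoms}, spectral positivity (A1), and the ancestral fact, which deserves one explicit line, that $X_{u-}<I_{u,s'}$ implies $H_v\geq H_u$ for all $v\in[u,s']$ (an atom of $\rho_v$ at level $H_u$ forces $H_v\geq H_u$ because $\mathrm{supp}\,\rho_v\subset[0,H_v]$), hence $m_H(u,s')=H_u$ and the snake property applies at that level --- whereas the paper needs only the abstract total-variation regularity of $\rho$ and is correspondingly shorter. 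In exchange your argument is more informative: it locates any hypothetical violation at an infinite-multiplicity branching point and shows it would be visible throughout the exploration of the attached subtree, the same picture that drives Proposition \ref{prop:branchingNotocax} later on. One small logical repair: the jump times are random, so instead of ``intersecting over the countably many jump times'' it is cleaner to observe that your argument is pathwise on the full-measure event of the first step --- on that event, a coincidence at any jump time whatsoever already yields a contradiction, so none can occur.
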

\begin{proof}
First, remark that  the many-to-one formula \eqref{tirage_au_hasard_N} gives:
\begin{equation*}
    \mathbb{N}_{x}\Big( \int_0^\sigma \dd s \, \mathbbm{1}_{\{ \tau_D(W_s) < \infty \}} \rho_s(\{\tau_D(W_s)\})  \Big) 
    =
    \int_0^\infty \dd a \, \exp(-\alpha a) E^0\otimes \Pi_x \Big(  \mathbbm{1}_{\{ \tau_D((\xi_u: u \leq a)) < \infty \}}  J_a(\{\tau_D(\xi_u : u \leq a)\})  \Big),
\end{equation*}
which vanishes by the independence between $\xi$ and $J_a$. This shows that $\mathbb{N}_x$--a.e., the Lebesgue measure of the set $\{ s\in [0,\sigma] :  \rho_s(\{\tau_D(W_s)\}) \neq 0 \}$ is null and now we claim  that this implies that $\mathbb{N}_{x}$--a.e. $\rho_s(\{ \tau_D(W_s)\}) = 0$ for all $s \geq 0$.  We argue by contradiction to prove this claim. Suppose that for some $s>0$, we have $\rho_s(\{ \tau_D(W_s)\}) > 0$. In this case, recalling that the exploration process $\rho$ is càdlàg with respect to the total variation distance,  we must have 
\begin{equation*}
\lim_{\epsilon \downarrow 0}\big|\rho_s(\{ \tau_D(W_s)\}) -  \rho_{s+\epsilon}(\{ \tau_D(W_s)\})\big|
\leq  \lim_{\epsilon \downarrow 0} \sup_{A \in \mathcal{B}(\mathbb{R}) } \big|\rho_s(A) - \rho_{s+\epsilon}(A)\big| = 0. 
\end{equation*}
We infer that for some $\delta > 0$,  it holds that   $\rho_u(\{ \tau_D(W_s)\})> 0$ for all  $u \in [s,s+\delta)$. In particular, we have  $H_u \geq H_s$ for all $u \in [s,s+\delta)$. By the snake property, we deduce that, for every $u \in [s,s+\delta)$,  $\tau_D(W_s) = \tau_D(W_u)$ and consequently:
 $$\rho_{u}(\{ \tau_D(W_{u})\})=\rho_u(\{ \tau_D(W_s)\})> 0.$$
However, this is in contradiction with the first part of the proof and the desired result follows.  
\end{proof}
\noindent \textbf{Exit local time.} As in classical excursion theory, we will need to properly index the excursions outside $D$ but we will also ask the indexing to be compatible with the order induced by $H$. To achieve it, we will make use of the  \textit{exit local time} from $D$. We briefly recall its definition and main properties and we refer to  \cite[Section 4.3]{Duquesne} for a more detailed account. By   Propositions 4.3.1 and  4.3.2 in \cite{Duquesne}, under $\mathbb{N}_{x}$ and $\mathbb{P}_{0,x}$, the limit
\begin{equation} \label{definition:aproximationExitlocal}
    L_{s}^{D}:=\lim \limits_{\varepsilon\to 0} \frac{1}{\varepsilon} \int_{0}^{s} \dd r \mathbbm{1}_{\{\tau_{D}(W_{r})<H_{r}<\tau_{D}(W_{r})+\varepsilon \}},
\end{equation}
exists for every $s\geq 0$, where  the convergence holds uniformly in compact intervals in  $L_1(\mathbb{P}_{0,x})$ and $L_1(\mathbb{N}_{x})$.   This defines a continuous non-decreasing process $L^{D}$ called the exit local time from $D$ of $(\rho,W)$.  We insist that,  under $\mathbb{N}_x$ and $\mathbb{P}_{0,x}$, the process $(\rho, W)$ takes values in $\mathcal{S}_x$ which yields that  $H_s = \zeta_s$ for every $s \geq 0$.  We also recall the first moment formula: 
\begin{equation} \label{tirage_au_hasard_ExitN}
    \mathbb{N}_{x} \left( \int_0^\sigma \dd L_s^D~ \Phi(\rho_s , \eta_s,  W_s ) \right)  
    = E^0 \otimes \Pi_{x} \bigg( \mathbbm{1}_{\{ \tau_D < \infty \}} \exp(- \alpha \tau_D) \Phi\big( J_{\tau_D} , \widecheck{J}_{\tau_D} ,(\xi_t : t \leq \tau_D )\big)  \bigg),
\end{equation}
see \cite[Proposition 4.3.2]{Duquesne} for a proof of this identity.
In particular, remark that we have
$$\text{supp } \dd L_s^D \subseteq  \{ s\geq 0 :~ \tau_D(W_s) = H_s \}, \quad \mathbb{N}_{x} \text{--a.e.}$$ 
 We stress that  $L^D$ is constant at every interval at which $W_s$ stays in $D$ and in each connected component of 
$$ \{s\geq 0:~\tau_D(W_s)< H_s\}.$$
We call such a  connected component an excursion interval from $D$. This family of intervals will be studied in detail in the next section. The process $L^D$ is not measurable with respect to $\mathcal{F}^{D}$, the informal reason being that it contains the information on the lengths of the excursions from $D$. However, as we are going to show in  Proposition \ref{L_eta_measurable}, the time-changed process 
\begin{equation*}
    \widetilde{L}^D :=  \big(L_{\Gamma^D_{s}}^{D} \big)_{s\in \mathbb{R}_+}
\end{equation*}
is $\mathcal{F}^{D}$-measurable -- notice that we removed precisely from $L^D$ by means of the time change the constancy intervals generated by  excursions from  $D$. This measurability property will be crucial for  the  proof of the special Markov property and the rest of this section is devoted to its proof.\par 
First remark that we have only defined the exit local time  under the measures $\mathbb{P}_{0,x}$ and $\mathbb{N}_x$ for $x \in D$. In order to be able to apply the Markov property, we  need to  extend the definition to more general initial conditions $(\mu , \w) \in \Theta$. This construction will also be essential for the results of Section \ref{section:structureOfTloc}. The precise statement is given in the following proposition:   \par

\begin{prop} \label{proposition:aproxLDPmu}  Fix $(\mu,\rm{w})\in \Theta$ such that $\rm{w}(0)$ $\in D$ and suppose that  $\mu(\{\tau_D(\mathrm{w})\})=0$.  Then, under $\mathbb{P}_{\mu , \rm{w}}$ there exists a continuous, non-decreasing process $L^D$ with associated Lebesgue-Stieltjes measure $\dd L^D$ supported on $\{ t \in \mathbb{R}_+ : \widehat{W}_t \in \partial D \}$, such that, for every $t \geq 0$ 
\begin{equation} \label{definition:ExitPMu}
 L^D_t= \lim \limits_{\epsilon\to 0}  \frac{1}{\epsilon} \int_0^t \dd s \,  \mathbbm{1}_{\{ \tau_D(W_s) < H_s < \tau_D(W_s) + \epsilon \}}, 
\end{equation}
where the convergence holds uniformly in compact intervals in  $L^{1}(\mathbb{P}_{\mu,\rm{w}})$. Moreover: 
\begin{enumerate}
    \item[\rm{(i)}] Under $\mathbb{P}_{\mu , \emph{w}}$, if $\tau_D(\emph{w}) < \infty$, we have $L^D_t = 0$ for every $t \leq \inf \{s \geq 0 : H_s < \tau_D(\emph{w}) \}$.
    \item[\rm{(ii)}]  Under $\mathbb{P}^{\dag}_{\mu,\emph{w}}$,  recall the definition of the random point measure $\sum_{i \in \mathbb{N}}\delta_{(h_i, \rho^i , W^i)}$ defined in \eqref{PoissonRandMeasure}.  Then we have:
\begin{equation} \label{equation:ExitPmuExcursionesDescomp}
    L^D_\infty(\rho,W) = \sum \limits_{h_i<\tau_D(\emph{w})}L^D_\infty(\rho^i,W^i), \quad \mathbb{P}^{\dag}_{\mu,\emph{w}}\text{--a.s.} 
\end{equation}
\end{enumerate}
\end{prop}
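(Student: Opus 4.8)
The plan is to construct $L^D$ from the exit local times of the excursions appearing in the Poisson decomposition \eqref{PoissonRandMeasure}, and to deduce the approximation \eqref{definition:ExitPMu} together with (i) and (ii) from this decomposition. I would first work under $\mathbb{P}^{\dag}_{\mu,\text{w}}$, where the subtrajectories $(\rho^i,W^i)$ with grafting heights $h_i$ form a Poisson measure with intensity $\mu(\dd h)\,\mathbb{N}_{\text{w}(h)}(\dd\rho,\dd W)$. The geometric fact driving the argument is that on the interval $[\alpha_i,\beta_i]$ coding the $i$-th excursion one has $H_s=h_i+H(\rho^i_{s-\alpha_i})$ while $W_s$ coincides with $\text{w}$ on $[0,h_i]$; hence, when $h_i<\tau_D(\text{w})$ the spine $\text{w}$ stays in $D$ below $h_i$ and the exit localizes inside the excursion, $\tau_D(W_s)=h_i+\tau_D(W^i_{s-\alpha_i})$, so the integrand of \eqref{definition:ExitPMu} restricted to $[\alpha_i,\beta_i]$ is exactly the approximating integrand \eqref{definition:aproximationExitlocal} of $(\rho^i,W^i)$ under $\mathbb{N}_{\text{w}(h_i)}$ (and $\text{w}(h_i)\in D$, so this excursion exit local time is already available). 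When $h_i>\tau_D(\text{w})$ the exit is inherited from the spine, $\tau_D(W_s)=\tau_D(\text{w})<h_i\le H_s$, and such excursions only enter the approximation through a vanishing correction.

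I would then define $L^D_t$ by accumulating, in increasing order of the intervals $[\alpha_i,\beta_i]$, the exit local times of the excursions $(\rho^i,W^i)$ with $h_i<\tau_D(\text{w})$. To establish the $L^1(\mathbb{P}_{\mu,\text{w}})$ convergence of $L^{D,\epsilon}:=\tfrac1\epsilon\int_0^{\cdot}\dd s\,\mathbbm 1_{\{\tau_D(W_s)<H_s<\tau_D(W_s)+\epsilon\}}$, I would bound, using the Poisson intensity,
$$\mathbb{E}^{\dag}_{\mu,\text{w}}\big[\,|L^{D,\epsilon}_\infty-L^D_\infty|\,\big]\le\int_{[0,\tau_D(\text{w}))}\mu(\dd h)\,\mathbb{N}_{\text{w}(h)}\big[\,|L^{D,\epsilon}_\infty-L^D_\infty|\,\big]+\mathbb{E}^{\dag}_{\mu,\text{w}}[R_\epsilon],$$
where $R_\epsilon$ gathers the contributions of the excursions with $h_i>\tau_D(\text{w})$. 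For each $h$ the integrand tends to $0$ (convergence under $\mathbb{N}_{\text{w}(h)}$) and is dominated by $\mathbb{N}_{\text{w}(h)}[L^{D,\epsilon}_\infty]+\mathbb{N}_{\text{w}(h)}[L^{D}_\infty]$, which \eqref{tirage_au_hasard_N} and the first-moment formula \eqref{tirage_au_hasard_ExitN} bound by $2$ uniformly in $h$ and $\epsilon$; since $\mu$ is finite, dominated convergence disposes of the first term. The many-to-one formula \eqref{tirage_au_hasard_N} also gives $\mathbb{E}^{\dag}_{\mu,\text{w}}[R_\epsilon]\le\mu\big((\tau_D(\text{w}),\tau_D(\text{w})+\epsilon]\big)$, which decreases to $0$ as $\epsilon\downarrow0$ precisely because $\mu(\{\tau_D(\text{w})\})=0$. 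This yields convergence at $t=\infty$, hence at every $t$ after localizing to $[0,t]$; as $L^{D,\epsilon}_t$ is non-decreasing in $t$ and the limit is continuous, a Dini-type argument promotes this to uniform convergence on compacts, which also gives the continuity of $L^D$ and the support property $\text{supp}\,\dd L^D\subseteq\{\widehat W\in\partial D\}$ inherited from the excursion level.

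Property (ii), i.e. \eqref{equation:ExitPmuExcursionesDescomp}, then follows by letting $\epsilon\downarrow0$ in the decomposed approximation. Property (i) follows from (ii) combined with the exploration order: an excursion with $h_i<\tau_D(\text{w})$ has $H_{\alpha_i}=h_i<\tau_D(\text{w})$, so it starts at a time $\alpha_i\ge T:=\inf\{s:H_s<\tau_D(\text{w})\}$; since by (ii) all of $L^D$ is produced by such excursions, none of it accrues before $T$ and $L^D_t=0$ for $t\le T$. (Equivalently, for $s\le T$ one has $\min_{[0,s]}H\ge\tau_D(\text{w})$, so by the snake property $W_s$ agrees with $\text{w}$ on $[0,\tau_D(\text{w})]$ and $\tau_D(W_s)=\tau_D(\text{w})$ is inherited rather than freshly created.) Finally, to move from $\mathbb{P}^{\dag}_{\mu,\text{w}}$ to $\mathbb{P}_{\mu,\text{w}}$ I would split at $T_0:=\inf\{t:\langle\rho_t,1\rangle=0\}$: the two laws agree on $[0,T_0]$, while after $T_0$ the process runs as under $\mathbb{P}_{0,\text{w}(0)}$, for which the exit local time and its approximation are already known (as $\text{w}(0)\in D$); additivity of the two pieces gives the statement under $\mathbb{P}_{\mu,\text{w}}$, and since $\text{w}(0)\in D$ forces $\tau_D(\text{w})>0$ and hence $T\le T_0$, (i) is unaffected.

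The main obstacle is the convergence step. One must control the spurious mass $R_\epsilon$ coming from excursions grafted just above $\tau_D(\text{w})$ — this is the single point where the hypothesis $\mu(\{\tau_D(\text{w})\})=0$ is genuinely used — and at the same time obtain a bound on $\mathbb{N}_{\text{w}(h)}[L^{D,\epsilon}_\infty]$ that is uniform in both $h$ and $\epsilon$, so that the per-excursion $L^1$ convergence survives integration against $\mu$. The secondary difficulty is upgrading convergence at a fixed time to uniform convergence on compacts and obtaining continuity of the limit, which is handled through the monotonicity of the approximations.
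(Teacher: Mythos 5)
Your proposal is correct and follows essentially the same route as the paper's proof: reduction to $\mathbb{P}^{\dag}_{\mu,\mathrm{w}}$ by the strong Markov property at $T_0$, decomposition along the Poisson point measure \eqref{PoissonRandMeasure}, per-excursion convergence from \eqref{definition:aproximationExitlocal} combined with the uniform bounds $\mathbb{N}_{\mathrm{w}(h)}[L^{D,\epsilon}_\infty]\leq 1$ and $\mathbb{N}_{\mathrm{w}(h)}[L^{D}_\infty]\leq 1$ from the many-to-one formulas \eqref{tirage_au_hasard_N} and \eqref{tirage_au_hasard_ExitN} plus dominated convergence against the finite measure $\mu$, and the estimate $\mathbb{E}^{\dag}_{\mu,\mathrm{w}}[R_\epsilon]\lesssim\mu([\tau_D(\mathrm{w}),\tau_D(\mathrm{w})+\epsilon])$ for the excursions grafted above $\tau_D(\mathrm{w})$, which is exactly where the paper also uses $\mu(\{\tau_D(\mathrm{w})\})=0$. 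The only cosmetic differences are that the paper establishes a Cauchy property (splitting into the cases $\tau_D(\mathrm{w})\geq\zeta_{\mathrm{w}}$ and $\tau_D(\mathrm{w})<\zeta_{\mathrm{w}}$, handling the latter by a time split at $T_a$) rather than comparing directly with the candidate limit defined as a sum of per-excursion exit local times, and that it controls $\sup_{s\leq t}$ per excursion directly instead of invoking a Dini-type upgrade.
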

\begin{proof} Let us start with preliminary remarks and introducing some needed notation.  Fix $(\mu,\w)\in \Theta$ with $\w(0) \in D$ satisfying $\mu(\{\tau_D(\w)\}) = 0$. We write
$$T_r:=\inf\{t\geq 0:~H_t=r\},\text{ for every }r \geq 0, ~~~~ \text{ and } ~~~~T_0^{+}:=\inf\{t\geq 0:~\langle \rho_t, 1\rangle=0\}.$$ By \eqref{definition:aproximationExitlocal} and the strong Markov property, we already know that $\epsilon^{-1} \int_{T_0^+}^{T_0^++t} \dd s \,  \mathbbm{1}_{\{ \tau_D(W_s) < H_s < \tau_D(W_s) + \epsilon \}}$  converges as $\epsilon \downarrow 0$ uniformly in compact intervals in  $L^{1}(\mathbb{P}_{\mu,\text{w}})$ towards a non-decreasing continuous process supported on $\{t\geq T_0^+ : \widehat{W}_t \in \partial D\}$. Consequently, it suffices to prove the proposition under $\mathbb{P}_{\mu,\w}^{\dag}$. In this direction,   we set 
\begin{align*}
I(t,\epsilon) :=  \frac{1}{\epsilon} \int_0^t \dd s \, \mathbbm{1}_{\{ \tau_D(W_s) < H_s < \tau_D(W_s) + \epsilon  \}}, 
\end{align*}
 for every $\epsilon > 0$. Recall now that under $\mathbb{P}^\dag_{\mu , \w}$, the process $\langle\rho,1\rangle$ is a killed Lévy process started at $\langle \mu,1\rangle$ and stopped at its first hitting time of $0$. Write  $((\alpha_i, \beta_i):~i \in \mathbb{N})$, for the excursion intervals of  $\langle \rho,1 \rangle$ over its running infimum,  and  let  $(\rho^i, W^i)$ be the subtrajectory associated with the excursion interval $[\alpha_i , \beta_i]$. To simplify notation, we also set $h_i:=H(\alpha_i)$ and recall from \eqref{PoissonRandMeasure}  that the measure $\mathcal{M} := \sum_{i \in \mathbb{N}}\delta_{(h_i, \rho^i , W^i)}$ is a Poisson point measure with intensity $\mu(\dd h)\mathbb{N}_{\w(h)}(\dd \rho, \dd W)$.\par 
We suppose first that $\tau_D(\w) \geq \zeta_\w$.  We shall prove that the collection $\big(I(t,\epsilon), t \geq 0\big)$ for $\epsilon > 0$ is Cauchy  in $L_1(\mathbb{P}^\dag_{\mu , \w})$ uniformly in compact intervals as $\epsilon \downarrow 0$, viz. 
\begin{equation}\label{equation:cauchyLd}
    \lim_{\delta, \epsilon \rightarrow 0}\mathbb{E}_{\mu , \w}^{\dag} \big[ \sup_{s \leq t} |I(s,\epsilon) - I(s,\delta)| \big] = 0.
\end{equation}
This implies directly  the existence of $L^D$ defined as in  \eqref{definition:ExitPMu} as well as point (i). We  shall then deduce (ii), and the remaining case $\tau_D(\w) < \zeta_\w$ is treated afterwards. Let us proceed with the proof of \eqref{equation:cauchyLd}. Since the Lebesgue measure of $\{ t \in [0,\sigma] : \langle \rho_t,1 \rangle = \inf_{s \leq t} \langle \rho_s , 1 \rangle  \}$ is null,  we can write  
\begin{align*}
I(t,\epsilon) = \frac{1}{\epsilon} \sum_{i \in \mathbb{N}}   \int_{\alpha_i \wedge t  }^{\beta_i \wedge t} \dd s \, \mathbbm{1}_{\{ \tau_D(W_s) < H_s < \tau_D(W_s) + \epsilon  \}}, 
\end{align*}
which yields the following upper bound:
\begin{align*}
    &\mathbb{E}_{\mu , \w}^{\dag} \big[ \sup_{s \leq t} |I(s,\epsilon) - I(s,\delta)| \big] \\
    &\quad \leq 
    \mathbb{E}_{\mu , \w}^{\dag} \Big[ \sum_{i \in \mathbb{N}} \sup_{s \leq t  } \big|\frac{1}{\epsilon} \int_{\alpha_i \wedge s}^{\beta_i \wedge s} \dd u \, \mathbbm{1}_{\{ \tau_D(W_u) < H_u < \tau_D(W_u) + \epsilon  \}} -    \frac{1}{\delta} \int_{\alpha_i \wedge s}^{\beta_i \wedge s} \dd u \, \mathbbm{1}_{\{ \tau_D(W_u) < H_u < \tau_D(W_u) + \delta  \}} \big| \Big] \\
    & \quad \leq 
    \mathbb{E}_{\mu , \w}^{\dag} \Big[ \sum_{i \in \mathbb{N}} \sup_{s \leq \sigma(W^i)  } \big|\frac{1}{\epsilon} \int_0^{s \wedge t} \dd u \, \mathbbm{1}_{\{ \tau_D(W^i_u) < H(\rho_u^i) < \tau_D(W^i_u) + \epsilon  \}} -    \frac{1}{\delta} \int_0^{s \wedge t} \dd u \, \mathbbm{1}_{\{ \tau_D(W^i_u) < H(\rho_u^i) < \tau_D(W^i_u) + \delta  \}} \big| \Big]. 
\end{align*} 
 Since $\mu(\{ \tau_D(\w) \}) = 0$, the last display  is given by 
\begin{equation} \label{equation:aproxPmuLdeqe}
   \int_{[0,\tau_D(\w))} \mu(\dd h) \,  \mathbb{N}_{\w(h)}\Big(  \sup_{s \leq t}  | I(s,\epsilon)  - I(s,\delta) | \Big). 
\end{equation}
Let us now  show that \eqref{equation:aproxPmuLdeqe}  converges towards $0$ when  $\epsilon, \delta \downarrow 0$. Since for every $h \in [0,\tau_D(\w))$  we have  $\w(h) \in D$,  the term inside the integral in \eqref{equation:aproxPmuLdeqe} converges towards $0$ as $\epsilon, \delta  \downarrow 0$  by the approximation of exit local times   under the excursion measure given in \eqref{definition:aproximationExitlocal}. Knowing that $\mu$ is a finite measure,  it suffices to show that the term,  
\begin{equation*}
  \mathbb{N}_{\w(h)}\Big(  \sup_{s \leq t}  | I(s,\epsilon)  - I(s,\delta) | \Big),  
\end{equation*}
can be  bounded uniformly in $\epsilon, \delta$. However, still under $\mathbb{N}_{\w(h)}$, we have the simple upper bound: 
\begin{equation*}
     \sup_{s \leq t}  | I(s,\epsilon)  - I(s,\delta) | 
     \leq    I(\sigma,\epsilon)  + I(\sigma,\delta),
\end{equation*}
and by the many-to-one formula \eqref{tirage_au_hasard_N}, we deduce that 
\begin{align*} 
    \mathbb{N}_{\w(h)}\big(  I(\sigma,\epsilon)  \big) 
    =\epsilon^{-1} E^0 \otimes \Pi_{\w(h)}\Big[  \int_0^\infty \dd a \, \exp(-\alpha a) \mathbbm{1}_{\{ \tau_D(\xi) < H(J_a) < \tau_D(\xi) + \epsilon \}}  \Big] \leq 1,
\end{align*}
for every $\epsilon>0$, where to obtain the previous inequality we use that $H(J_a)=a$.
In particular, we have   $\mathbb{N}_{\w(h)}\big(  I(\sigma,\epsilon)  + I(\sigma,\delta)  \big) \leq 2$ and  \eqref{equation:cauchyLd} follows.  Still under our assumption  $\tau_D(\w) \geq \zeta_\w$ we now turn our attention to  \eqref{equation:ExitPmuExcursionesDescomp}. We know that for any $(h_i,W^i, \rho^i) \in \mathcal{M}$ we have the limit in probability:
\begin{equation*}
    L^{D}_{\sigma_i} ( \rho^i, W^i) 
    = \lim_{\epsilon \rightarrow 0} \epsilon^{-1} \int_{a_i}^{b_i} \dd s \mathbbm{1}_{\{\tau_D(W_s) < H_s < \tau_D(W_s)+\epsilon\}}.  
\end{equation*}
It then follows from our definitions that for every $r> 0$,
\begin{equation*}
    L_\sigma^{D} - L^{D}_{T_{\zeta_{\w}-r}} = \sum_{h_i \leq \zeta_\w-r}L^{D}_{\sigma_i}(\rho^i, W^i), 
\end{equation*}
observing that the number of non-zero terms on the right-hand side is finite.
By taking the limit as $r \downarrow 0,$ we deduce \eqref{equation:ExitPmuExcursionesDescomp} by monotonicity.
\par 
Let us now assume that $\tau_D(\w) < \zeta_\w$. To simplify notation, set $a:=\tau_D(\w)$  and notice that
\begin{equation*}
    (\rho_{T_a}, W_{T_a}) = \big(\mu \mathbbm{1}_{[0,\tau_D(\w)]} , (\w(h) : h \in [0,\tau_D(\w)])\big), 
\end{equation*}
where we recall that $\mu(\{ \tau_D(\w) \}) = 0$.  By our previous discussion and the strong Markov property, we deduce that $(I(t,\epsilon) -I(T_a,\epsilon) :t \geq T_a)$ converges as $\epsilon \downarrow 0$ uniformly in compact intervals in $L^{1}(\mathbb{P}_{\mu,\w})$ towards a continuous process. To conclude our proof, it suffices to show that: 
\begin{equation*}
    \lim_{\epsilon \rightarrow 0} 
    \frac{1}{\epsilon} \mathbb{E}_{\mu , \w}^{\dag} \Big[ \int_0^{T_a} \dd s \, \mathbbm{1}_{\{ \tau_D(W_s) < H_s < \tau_D(W_s) + \epsilon  \}} \Big] 
    = 0.
\end{equation*}
 To obtain the previous display,  write 
$$\int_{0}^{T_{a}} \dd s \,  \mathbbm{1}_{\{ \tau_D(W_s) < H_s < \tau_D(W_s) + \epsilon \}}=\sum \limits_{ h_i\geq     a}\int_{\alpha_i}^{\beta_i}\dd s \,  \mathbbm{1}_{\{ \tau_D(W_s) < H_s < \tau_D(W_s) + \epsilon \}},$$
where we have $h_i\neq a$ for every $i\in \mathbb{N}$, since $\mu(\{a\})=0$. Moreover, for every $i$ with $h_i > a$ notice that $\tau_{D}(W_s)=a$. This implies:
\begin{align*}
    \int_{0}^{T_{a}} \dd s \,  \mathbbm{1}_{\{ \tau_D(W_s) < H_s < \tau_D(W_s) + \epsilon \}}\leq \sum \limits_{a\leq h_i\leq a+\epsilon}\int_{0}^{\sigma(W^i)} \dd s \,  \mathbbm{1}_{\{ 0 < H(\rho_s^i) <  \epsilon \}}, 
\end{align*}
and we can now use that $\mathcal{M}$ is a  Poisson point measure with intensity $\mu(\dd h)\mathbb{N}_{\w(h)}(\dd \rho, \dd W)$ to obtain:
\begin{align}\label{equation:aproxLDPmu_eq11}
 \mathbb{E}^{\dag}_{\mu,\w}\big[\int_{0}^{T_{a}} \dd s \,  \mathbbm{1}_{\{ \tau_D(W_s) < H_s < \tau_D(W_s) + \epsilon \}}  \big] 
 \leq \mu([a,a+\epsilon])N(\int_{0}^{\sigma}\dd s \mathbbm{1}_{\{0\leq H(\rho_s)<\epsilon\}}).
\end{align}
Finally, by the many-to-one formula \eqref{tirage_au_hasard_N}, the previous display is equal to 
$\epsilon\cdot\mu([a,a+\epsilon])$, giving:
\begin{equation*}
    \limsup \limits_{\epsilon\to 0}\frac{1}{\epsilon}\mathbb{E}^{\dag}_{\mu,\w}\Big[ \int_{0}^{T_{a}} \dd s \,  \mathbbm{1}_{\{ \tau_D(W_s) < H_s < \tau_D(W_s) + \epsilon \}}\Big]=\mu(\{a\})=0,
\end{equation*}
where in the last equality we use that $\mu\in \Theta$ which ensures that $\mu(\{a\} )=0$.
\end{proof}
Now that  we have defined the exit local time under more general initial conditions, let us turn our attention to the measurabliliy properties of $\widetilde{L}^D$. From now on, when working under $\mathbb{P}_{0,x}$ or $\mathbb{N}_{x}$, the sigma field $\mathcal{F}^D$ should be completed  with the $\mathbb{P}_{0,x}$-negligible and   $\mathbb{N}_{x}$-negligible sets respectively -- for simplicity we use the same notation.
\begin{prop}\label{L_eta_measurable}
Under $\mathbb{P}_{0,x}$ and $\mathbb{N}_x$, the process $\widetilde{L}^D$ is $\mathcal{F}^{D}$-measurable. 
\end{prop}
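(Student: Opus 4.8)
The plan is to construct, under $\mathbb{N}_x$, a family of $\mathcal{F}^D$-measurable processes converging to $\widetilde{L}^D$, and to deduce the statement under $\mathbb{P}_{0,x}$ from it. Indeed, under $\mathbb{P}_{0,x}$ the snake is a Poisson concatenation of excursions distributed as $\mathbb{N}_x$ by \eqref{definition:poissonExcursiones}; since each excursion is rooted at $x\in D$, the truncation \eqref{truncature} is (up to the Lebesgue-null zero set of $X-I$) the ordered concatenation of the truncated excursions, and the additive functional $L^D$, hence $\widetilde{L}^D$, splits accordingly. A routine decomposition over excursions then reduces the claim to the analogous statement under $\mathbb{N}_x$, on which I would focus.

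The difficulty to keep in mind is that the defining approximation \eqref{definition:aproximationExitlocal} of $L^D$ is carried by the times $r$ with $\tau_D(W_r)<\zeta_r$, that is, precisely the times discarded by the time change $\Gamma^D$ through \eqref{definition:VD}; it is therefore manifestly \emph{not} $\mathcal{F}^D$-measurable, and no change of variables can rewrite it along the truncated clock. The starting point is the observation that $L^D$ is constant on every excursion interval from $D$, its support being contained in $\{r:\tau_D(W_r)=H_r\}$. Consequently $\widetilde{L}^D$ is a continuous non-decreasing additive functional of the truncated snake $(\rho^*,W^*):=\text{tr}_D(\rho,W)$ that charges only the boundary set $\{s:\widehat{W^*_s}\in\partial D\}$, and the whole point is to recover its density from $(\rho^*,W^*)$ alone.

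The heart of the argument is thus to produce an \emph{intrinsic} approximation of $\widetilde{L}^D$: a family of functionals $\Psi_\epsilon$ of the truncated exploration pair, measuring the approach of the truncated snake to $\partial D$ through $\rho^*$, for which
\[
\widetilde{L}^D_s=\lim_{\epsilon\to 0}\frac{1}{\epsilon}\int_0^s du\,\Psi_\epsilon\big(\rho^*_u,W^*_u\big),\qquad \text{uniformly on compacts in }L^1(\mathbb{N}_x).
\]
Here Lemma \ref{lemma:nomassaTD} is essential: since $\rho_s$ carries no atom at the exit level $\tau_D(W_s)$, the truncation cuts cleanly at the boundary and $\rho^*$ faithfully records the part of the exploration process lying strictly below the exit points, which is what makes the boundary-approach detectable in an $\mathcal{F}^D$-measurable way. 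To fix the correct normalization and prove convergence I would compute first moments through the exit-local-time many-to-one formula \eqref{tirage_au_hasard_ExitN} and match them against the first moments of $\Psi_\epsilon$ obtained from the occupation formula \eqref{tirage_au_hasard_N}, then control the difference of two approximants by a Cauchy estimate in $L^1(\mathbb{N}_x)$, in the spirit of the computation leading to \eqref{equation:cauchyLd} in the proof of Proposition \ref{proposition:aproxLDPmu}; the subtrajectory decomposition \eqref{PoissonRandMeasure} and the additivity \eqref{equation:ExitPmuExcursionesDescomp} would localize these estimates excursion by excursion. Once convergence holds, each approximant is $\mathcal{F}^D$-measurable by construction and the $L^1$-limit inherits $\mathcal{F}^D$-measurability after completion.

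The step I expect to be the main obstacle is exactly the construction and validation of $\Psi_\epsilon$: one must show that the excursions outside $D$, although erased by the truncation, each leave a measurable trace at their root on $\partial D$, and that summing these traces with the right weight reproduces $L^D$ after the time change. This is delicate because the naive approximation lives entirely on the erased times, so the identification can only be seen at the level of first (and second) moments via the Poisson and many-to-one formulas, and the presence of points of infinite multiplicity, i.e. atoms of $\rho$, must be handled — which is precisely where Lemma \ref{lemma:nomassaTD} does the work.
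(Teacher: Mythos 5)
Your proposal correctly isolates the obstruction --- the defining approximation \eqref{definition:aproximationExitlocal} of $L^D$ lives on the very times erased by $\Gamma^D$ --- but it then defers the resolution: the functionals $\Psi_\epsilon$ are never constructed, and you yourself flag their ``construction and validation'' as the main obstacle. That deferred step \emph{is} the proof; without it the proposal is a framework, not an argument. The paper's solution is an interior approximation which your sketch does not contain: take open sets $D_n \uparrow D$ with $\overline{D_n}\subset D$ and use the exit local times $L^{D_n}$ as the $\mathcal{F}^D$-measurable approximants. The point is geometric: if $\tau_{D_n}(W_r)<\zeta_r<\tau_{D_n}(W_r)+\varepsilon_k$ with $\varepsilon_k$ small and $r\leq \Gamma^D_s$, then $W_r$ cannot have exited $D$ (otherwise continuity of the path and $\overline{D_n}\subset D$ would force $\tau_{D_n}(W_{r_0})=\tau_D(W_{r_0})\leq \zeta_{r_0}$ for some $r_0$, a contradiction), so the approximating integral for $L^{D_n}$ is carried by times with $\zeta_r\leq\tau_D(W_r)$; one can then change variables along the clock $V^D$ and conclude that $\widetilde{L}^{D_n}=L^{D_n}_{\Gamma^D}$ is a functional of $\mathrm{tr}_D(\rho,W)$, hence $\mathcal{F}^D$-measurable. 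In other words, the ``measurable trace'' left by the erased excursions is recovered not at $\partial D$ itself but through exit local times of slightly smaller domains --- exactly the idea your unspecified $\Psi_\epsilon$ would have to embody.

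Two further points. First, your plan to validate $\Psi_\epsilon$ by matching first moments via \eqref{tirage_au_hasard_ExitN} and \eqref{tirage_au_hasard_N}, followed by an $L^1$ Cauchy estimate in the spirit of \eqref{equation:cauchyLd}, cannot work as stated: equality of first moments does not identify a limit process, and the convergence actually needed --- $\sup_{s\in[0,\sigma]}|L^{D_{n_k}}_s-L^D_s|\to 0$ along a subsequence, which is Lemma \ref{lemma:technicalLema2} --- is proved in the paper by genuinely second-moment arguments: an $L^2(\mathbb{N}_x)$ computation that is only possible after truncating by the event $\{\langle\rho_s,1\rangle\leq K\}$ (since second moments need not exist), then a martingale argument with Doob's inequality and Dini's theorem, and finally an excursion decomposition to transfer the convergence to $\mathbb{P}_{0,x}$. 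Second, Lemma \ref{lemma:nomassaTD} does not play the role you assign to it: in the paper it enters only to guarantee that $(\rho_t,W_t)\in\Theta_D\cap\bigcap_{n\geq 1}\Theta_{D_n}$ for all $t$, so that the Markov-property computations with the functionals $\phi_{D_n}$ in Lemma \ref{lemma:technicalLema2} are legitimate; the mechanism that makes the approximation $\mathcal{F}^D$-measurable is not a ``clean cut at the boundary'' but the strict inclusion $\overline{D_n}\subset D$.
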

\noindent In particular, the proposition implies that, under $\mathbb{N}_x$, the total mass $L^D_\sigma = \widetilde{L}^D_\infty$ is $\mathcal{F}^D$-measurable. The proof will mainly rely on the  two following technical lemmas. 
\begin{lem} \label{lemma:techincalLemma0} Consider an open subset $D \subset E$ containing $x$.   Fix an arbitrary   $(\mu , \emph{w})\in \Theta$ with $\emph{w}(0) = x$ and satisfying $\mu( \{ \tau_D(\emph{w}) \}) = 0$ if  $\tau_D(\emph{w})<\infty$. Then, for every $K > 0$,  we have:
\begin{align*}
      \mathbb{E}_{\mu, \emph{w}}^{\dag}\Big[ \int_0^\sigma \dd L_s^D \mathbbm{1}_{\{ \langle \rho_s, 1 \rangle \leq K \}} \Big] = 
     \int_0^{ \mu\left([0,\tau_D(\emph{w}))\right) } \dd u ~E^{0} \otimes \Pi_{\emph{w}( H(\kappa_{\langle \mu , 1 \rangle -u} \mu ))} \left( \mathbbm{1}_{\{ \tau_D < \infty \}} \exp(-  \alpha \tau_D) \mathbbm{1}_{\{   \langle J_{\tau_D} , 1 \rangle  \leq K -u \}} \right).
\end{align*}
\end{lem}
\begin{proof} 
Recall that, under $\mathbb{P}^\dag_{\mu , \w}$, the process $\langle\rho,1\rangle$ is a Lévy process started at $\langle \mu , 1 \rangle$ and stopped at its first hitting time of $0$. As usual, write $\{(\alpha_i, \beta_i): \, i \in \mathbb{N}\}$ for the excursion intervals  of $\langle\rho,1\rangle-\langle \mu,1\rangle$ over its  running infimum, that we still denote by $I$. We write $(\rho^i,W^i)$ for the subtrajectory associated with $[\alpha_i,\beta_i]$. As explained in \eqref{definition:poissonExcursiones}, the measure:
$$\sum_{i \in \mathbb{N}}\delta_{(-I_{\alpha_i}, \rho^i , W^i)}, $$ 
is a Poisson point measure with intensity 
$\mathbbm{1}_{[0,\langle \mu , 1 \rangle]}(u) \dd u \, \mathbb{N}_{\text{w}(H( \kappa_{u}\mu ))}(\dd  \rho, \dd W).$ Furthermore, for every $i\in \mathbb{N}$, we have  $H(\kappa_{-I_{\alpha_i}} \mu ) =H_{\alpha_i} = H_{\beta_i}$ and to simplify notation we denote this quantity by $h_i$. Next, we  notice that, by Proposition \ref{proposition:aproxLDPmu}, we have   $\int_0^\sigma \dd L_s^D \mathbbm{1}_{\{ \langle \rho_s,1\rangle -\langle \mu,1 \rangle = I_s \}} = 0$ and  $L^D_{t} = 0$, for every  $t\leq  \inf \{s \geq 0 : H_s < \tau_D(\w) \}$.  From our previous observations, we get:
\begin{align*}
\int_0^\sigma \dd L_s^D \mathbbm{1}_{\{ \langle \rho_s, 1 \rangle \leq K \}}=\sum\limits_{h_i<\tau_{D}(\w)} \int_{\alpha_i}^{\beta_i} \dd L_s^D \mathbbm{1}_{\{ \langle \rho_s, 1 \rangle \leq K \}}   =\sum\limits_{H(\kappa_{-I_{\alpha_i}}\mu)<\tau_{D}(\w)} \int_{0}^{\beta_i-\alpha_i} \dd L_s^D(\rho^i, W^i) \mathbbm{1}_{\{ \langle \rho_s^i, 1 \rangle \leq K- \langle \mu,1 \rangle - I_{\alpha_i} \}},
\end{align*}
where  we used in the second identity that $\langle \rho_{s+\alpha_i}, 1 \rangle =\langle \rho_s^i, 1 \rangle +\langle \rho_{\alpha_i}, 1 \rangle=\langle \rho_s^i, 1 \rangle+I_{\alpha_i} +\langle \mu,1 \rangle$, for every $s\in [0,\beta_i-\alpha_i]$. This implies that:
\begin{align*} 
 & \mathbb{E}_{\mu, \text{w}}^{\dag}\Big[\sum\limits_{H(\kappa_{-I_{\alpha_i}}\mu)<\tau_{D}(\w)} \int_{0}^{\beta_i-\alpha_i} \dd L_s^D(\rho^i, W^i) \mathbbm{1}_{\{ \langle \rho_s^i, 1 \rangle \leq K- \langle  \mu,1 \rangle -I_{\alpha_i} \}}\Big]  \\
 & \hspace{55mm}=\int_{ \mu([\tau_{D}(\w),\infty))}^{\langle \mu,1\rangle} \dd u \,    \mathbb{N}_{\text{w}( H(\kappa_{u} \mu ))} \Big( \int_0^\sigma \dd L_s^D \mathbbm{1}_{\{  \langle \rho_s , 1 \rangle \leq K - \langle  \mu,1 \rangle +u\}} \Big),  
\end{align*}
and the desired result now follows by performing the change of variable $u  \text{ \reflectbox{$ \longmapsto $}} \,  \langle \mu,1 \rangle  -u$ and applying the many-to-one formula \eqref{tirage_au_hasard_ExitN}. 
\end{proof}

\begin{lem} \label{lemma:technicalLema2} Consider an increasing sequence of open subsets $(D_n:~n\geq 1)$ containing $x$, such that $\cup_n D_n = D$ and  $\overline{D_n} \subset D$. There exists a subsequence $(n_k:~k\geq 0)$ converging towards infinity, such that
\begin{align} \label{equation:uniformConvExtis}
\lim \limits_{k\to \infty}\sup\limits_{s\in[0,\sigma]}|L_{s}^{D_{n_k}}-L_{s}^{D}|=0,\:\: \quad \mathbb{N}_x \text{--a.e. } 
\end{align}
\end{lem}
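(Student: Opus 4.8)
The plan is to establish the stronger statement that $\mathbb{N}_x\big(\sup_{s\le\sigma}|L^{D_n}_s-L^D_s|\big)\to 0$ as $n\to\infty$; since $\mathbb{N}_x$ is $\sigma$-finite, this $L^1(\mathbb{N}_x)$--convergence automatically produces a subsequence $(n_k)$ along which the supremum tends to $0$ $\mathbb{N}_x$--a.e., which is exactly \eqref{equation:uniformConvExtis}. Throughout I would use the deterministic fact that $\tau_{D_n}\uparrow\tau_D$: since $D_n\uparrow D$, $\overline{D_n}\subset D$ and $\bigcup_n D_n=D$, the image under a continuous path of any compact time-interval on which the path stays in $D$ is a compact subset of $D$, hence contained in some $D_N$, and a standard argument gives $\tau_{D_n}\uparrow\tau_D$ (for $\xi$ this holds $\Pi_x$--a.s., and for the snake $\tau_{D_n}(W_r)\uparrow\tau_D(W_r)$ for every $r$). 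I also introduce, for an open set $D'\ni x$, the non-decreasing approximations
\[\ell^{D'}_\varepsilon(s):=\frac1\varepsilon\int_0^s\dd r\,\mathbbm{1}_{\{\tau_{D'}(W_r)<\zeta_r<\tau_{D'}(W_r)+\varepsilon\}},\]
which by \eqref{definition:aproximationExitlocal} converge to $L^{D'}$ as $\varepsilon\to0$, uniformly on compacts in $L^1(\mathbb{N}_x)$.

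The core is the decomposition $L^{D_n}_s-L^D_s=(L^{D_n}_s-\ell^{D_n}_\varepsilon(s))+(\ell^{D_n}_\varepsilon(s)-\ell^{D}_\varepsilon(s))+(\ell^{D}_\varepsilon(s)-L^{D}_s)$, to which I apply $\sup_{s\le\sigma}$ and $\mathbb{N}_x$. The last term tends to $0$ as $\varepsilon\to0$ by \eqref{definition:aproximationExitlocal} for the fixed domain $D$ (using $\sigma<\infty$ $\mathbb{N}_x$--a.e. together with a first-moment tail bound coming from \eqref{tirage_au_hasard_ExitN} to replace the compact interval by $[0,\sigma]$). For the middle term, monotonicity in $s$ of both approximations bounds the supremum by the total variation
\[R_n^\varepsilon:=\frac1\varepsilon\int_0^\sigma\dd r\,\big|\mathbbm{1}_{\{\tau_{D_n}(W_r)<\zeta_r<\tau_{D_n}(W_r)+\varepsilon\}}-\mathbbm{1}_{\{\tau_{D}(W_r)<\zeta_r<\tau_{D}(W_r)+\varepsilon\}}\big|,\]
and the many-to-one formula \eqref{tirage_au_hasard_N} turns $\mathbb{N}_x(R_n^\varepsilon)$ into $\tfrac1\varepsilon\int_0^\infty\dd a\,e^{-\alpha a}\,\Pi_x\big(\big|\mathbbm{1}_{\{\tau_{D_n}<a<\tau_{D_n}+\varepsilon\}}-\mathbbm{1}_{\{\tau_{D}<a<\tau_{D}+\varepsilon\}}\big|\big)$, the variables $J_a,\widecheck{J}_a$ dropping out since the integrand depends only on $\xi$ and $a$. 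For fixed $\varepsilon$ the integrand converges to $0$ pointwise because $\tau_{D_n}\uparrow\tau_D$, and the integrals of the two nonnegative indicators converge by dominated convergence; a Scheff\'e-type lemma then gives $\mathbb{N}_x(R_n^\varepsilon)\to0$ as $n\to\infty$ for each fixed $\varepsilon$.

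The main obstacle is the first term $\mathbb{N}_x\big(\sup_{s\le\sigma}|L^{D_n}_s-\ell^{D_n}_\varepsilon(s)|\big)$, which has to be made small \emph{uniformly in $n$}, since $\varepsilon$ is chosen before letting $n\to\infty$; equivalently, one must justify interchanging the limits $\varepsilon\to0$ and $n\to\infty$ in the exit-local-time approximation over the whole family $(D_n)$. At the level of expectations this is automatic: combining \eqref{tirage_au_hasard_ExitN} (with $\Phi\equiv1$) and \eqref{tirage_au_hasard_N} yields the $n$-uniform identity
\[\mathbb{N}_x\big(L^{D_n}_\sigma\big)-\mathbb{N}_x\big(\ell^{D_n}_\varepsilon(\sigma)\big)=\Big(1-\tfrac{1-e^{-\alpha\varepsilon}}{\alpha\varepsilon}\Big)\,\Pi_x\big(e^{-\alpha\tau_{D_n}}\mathbbm{1}_{\{\tau_{D_n}<\infty\}}\big)\le 1-\tfrac{1-e^{-\alpha\varepsilon}}{\alpha\varepsilon},\]
which tends to $0$ as $\varepsilon\to0$ uniformly in $n$. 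Upgrading this control of the mean discrepancy to a uniform bound on $\mathbb{N}_x(\sup_{s\le\sigma}|L^{D_n}_s-\ell^{D_n}_\varepsilon(s)|)$ is the crux of the proof: I would obtain it from a second-moment estimate for the exit local time, derived by iterating the many-to-one formula, whose right-hand side is controlled by quantities of the form $\Pi_x(\tau_{D'}<\infty)\le1$ and is therefore uniform in the domain; this bounds the $L^2(\mathbb{N}_x)$ distance between $\ell^{D_n}_\varepsilon$ and $L^{D_n}$ uniformly in $n$, and the monotonicity in $s$ together with the continuity of $L^{D_n}$ (a Polya/Dini-type argument) promotes it to the supremum. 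Granting this, a three-$\eta$ argument (fix $\varepsilon$ small so that the first and third terms are below $\eta$, the first uniformly in $n$, then take $n$ large for the middle term) gives $\mathbb{N}_x(\sup_{s\le\sigma}|L^{D_n}_s-L^D_s|)\to0$, and extracting an a.e.-convergent subsequence yields \eqref{equation:uniformConvExtis}.
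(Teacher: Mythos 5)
Your outline (three-term split through the $\varepsilon$-approximations, Scheff\'e plus many-to-one for the middle term, then a three-$\eta$ argument) is sensible, and your middle-term and mean-discrepancy computations are correct. But the step you yourself call the crux --- upgrading this to a bound on $\mathbb{N}_x\big(\sup_{s\le\sigma}|L^{D_n}_s-\ell^{D_n}_\varepsilon(s)|\big)$ uniform in $n$ via ``a second-moment estimate for the exit local time \ldots controlled by quantities of the form $\Pi_x(\tau_{D'}<\infty)\le 1$'' --- fails, and it fails for exactly the reason this lemma is delicate in the L\'evy (as opposed to Brownian) setting. What iterating the Markov property and the many-to-one formulas \eqref{tirage_au_hasard_ExitN} actually produces is the identity
\begin{equation*}
\mathbb{N}_x\big((L^{D'}_\sigma)^2\big)
=2\,E^0\otimes\Pi_x\Big(e^{-\alpha\tau_{D'}}\mathbbm{1}_{\{\tau_{D'}<\infty\}}\,\phi_{D'}\big(J_{\tau_{D'}},(\xi_t)_{t\le\tau_{D'}}\big)\Big),
\qquad
\phi_{D'}(\mu,\w):=\int_{[0,\tau_{D'}(\w))}\mu(\dd h)\,\Pi_{\w(h)}\big(e^{-\alpha\tau_{D'}}\mathbbm{1}_{\{\tau_{D'}<\infty\}}\big),
\end{equation*}
and $\phi_{D'}(J_{\tau_{D'}},\cdot\,)$ is comparable to the subordinator value $\langle J_{\tau_{D'}},1\rangle=U^{(1)}_{\tau_{D'}}$, not to an exit probability. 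Since $U^{(1)}$ has Laplace exponent $\psi(\lambda)/\lambda-\alpha$, its mean is $\beta+\tfrac12\int\pi(\dd x)\,x^2$, which is infinite whenever $\int\pi(\dd x)\,x^2=\infty$ --- a situation allowed under (A1)--(A4) (e.g.\ the stable exponents $\psi(\lambda)=\lambda^{1+\delta}$ with $\delta<1$). So in general the exit local times are simply not in $L^2(\mathbb{N}_x)$, no domain-uniform second-moment bound exists, and your interchange of the limits $\varepsilon\to0$ and $n\to\infty$ cannot be justified this way. The paper flags this obstruction explicitly (``we do not have a priori moments of order $2$ in our setting''), and its proof is built around the device you are missing: truncation by the event $\{\langle\rho_s,1\rangle\le K\}$.

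Concretely, the paper never compares $L^{D_n}$ with its own $\varepsilon$-approximation; it compares the truncated functionals $\int_0^\cdot \dd L^{D_n}_s\mathbbm{1}_{\{\langle\rho_s,1\rangle\le K\}}$ and $\int_0^\cdot \dd L^{D}_s\mathbbm{1}_{\{\langle\rho_s,1\rangle\le K\}}$ directly. On the truncated event the conditional expectations $\phi_{D_n}$ are bounded by $K$, so all second moments are finite and dominated convergence gives $L^2(\mathbb{N}_x)$ convergence of the truncated total masses; Doob's maximal inequality applied to the martingales $\mathbb{N}_x(A^n_\infty\mid\mathcal{F}_s)$ then yields a.e.\ uniform convergence along a subsequence, Dini's theorem upgrades convergence of the increasing truncated processes to uniformity in $t$, and a diagonal extraction over $K$ (using that the events $\{\sup X\le K\}$ exhaust the space) removes the truncation. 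If you want to salvage your write-up, you should restructure it along these lines rather than trying to make your second-moment claim work.
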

\begin{proof}
The  proof of this lemma will be achieved by using similar techniques as in  \cite[Proposition 2.3]{Laplacian_u_2} in the Brownian setting. We start  by showing  that, for a suitable subsequence, the total mass $L_\sigma^{D_n}$ converges towards $L_\sigma^{D}$, $\mathbb{N}_x$-a.e. The uniform convergence will then be deduced by standard techniques. Notice however that in   \cite{Laplacian_u_2},  this  is mainly done by establishing an $L_2(\mathbb{N}_x)$ convergence of $L^{D_n}_\sigma$ towards $L^{D}_\sigma$, and that  we do not have a priori  moments of order $2$ in our setting. In order to overcome this difficulty, we need to localize the tree  by the use of a truncation argument. We start by showing  that,  for any fixed $K>0$, we have:
\begin{equation} \label{equation:L2convergenceExitLocal}
    \lim_{n \rightarrow \infty} \int_0^\sigma \dd L_s^{D_n} \mathbbm{1}_{\{\langle \rho_s,1 \rangle\leq K\}} = \int_0^\sigma \dd L_s^{D} \mathbbm{1}_{\{\langle \rho_s,1 \rangle \leq K\}}, \quad \text{ in } L_2(\mathbb{N}_x).
\end{equation}
In this direction, we write $\mathbb{N}_x\Big(  \big|\int_0^\sigma \dd L_s^D \mathbbm{1}_{\{ \langle \rho_s,1 \rangle \leq K \}} 
    - \int_0^\sigma \dd L_s^{D_n} \mathbbm{1}_{\{ \langle \rho_s,1 \rangle \leq K \}} \big| ^2  \Big)$ in the following form
\begin{align}\label{equation:FdmedibilidadCuentass}  \mathbb{N}_x\Big( \big(\int_0^\sigma \dd L_s^D \mathbbm{1}_{\{ \langle \rho_s,1 \rangle \leq K \}} \big)^2  \Big)\nonumber
    &+ \mathbb{N}_x\Big(\big( \int_0^\sigma \dd L_s^{D_n} \mathbbm{1}_{\{ \langle \rho_s,1 \rangle \leq K \}} \big)^2\Big)\\&
    - 2 \mathbb{N}_x \Big(  \big( \int_0^\sigma \dd L_s^{D_n} \mathbbm{1}_{\{ \langle \rho_s,1 \rangle \leq K \}} \big)\cdot 
    \big( \int_0^\sigma \dd L_s^{D} \mathbbm{1}_{\{ \langle \rho_s,1 \rangle \leq K \}} \big)   \Big),
\end{align}
and the proof of \eqref{equation:L2convergenceExitLocal} will follow by computing  each term separately and by taking the limit as $n \uparrow \infty$. First, we  remark that
\begin{align*}
\big( \int_0^\sigma \dd L_s^D \mathbbm{1}_{\{ \langle \rho_s,1 \rangle \leq K \}} \big)^2 
&= 2 \int_0^\sigma \dd L_s^{D} \mathbbm{1}_{\{ \langle \rho_s,1 \rangle \leq K \}}  \int_s^\sigma \dd L_u^D~ \mathbbm{1}_{\{ \langle \rho_u,1 \rangle \leq K\}},
\end{align*}
and the idea now is to apply the Markov property. For convenience, we let  $\Theta_D$ be the subset of $\Theta$ of all the pairs $(\mu , \w)$ satisfying the condition  $\mu(\{\tau_D(\w)\}) = 0$ when $\tau_D(\w) < \infty$, and we define $\Theta_{D_n}$ similarly replacing $D$ by $D_n$. Notice that by Lemma  \ref{lemma:nomassaTD}, we have, $\mathbb{N}_{x}$--a.e.,  $(\rho_{t},W_t)\in \Theta_{D}\cap(\cap_{n\geq 1} \Theta_{D_n})$ for every $t\geq 0$. For $(\mu , \w)\in \Theta_D$,  we set
\begin{align*}
    \phi_{D}(\mu, \text{w}) 
    &:= \mathbb{E}_{\mu, \text{w}}^{\dag}\Big[ \int_0^\sigma \dd L_s^D \mathbbm{1}_{\{ \langle \rho_s, 1 \rangle \leq K \}} \Big] \\
     &=    \int_0^{ \mu\left([0,\tau_D(\w))\right) } \dd u ~E^{0} \otimes \Pi_{\w( H(\kappa_{\langle \mu,1\rangle-u} \mu ))} \left( \mathbbm{1}_{\{ \tau_D < \infty \}} \exp(-  \alpha \tau_D) \mathbbm{1}_{\{   \langle J_{\tau_D} , 1 \rangle  \leq K -u \}} \right),  
\end{align*}
 where in the second equality we used  Lemma \ref{lemma:techincalLemma0}. Note that the dependence of $\phi_D$ on $K$  is being omitted to simplify the notation. By our previous discussion,   an application of the Markov property  gives:  
\begin{align}\label{eq:tau_d:lim}
    \mathbb{N}_x \left( \left(\int_0^\sigma \dd L_s^D \mathbbm{1}_{\{ \langle \rho_s ,1 \rangle  \leq K \}} \right)^2   \right) 
    &= 2 \mathbb{N}_x \left(  \int_0^\sigma \dd L_s^{D} \mathbbm{1}_{\{  \langle \rho_s, 1 \rangle \leq K \}}  \phi_{D}(\rho_s,W_s)  \right)\nonumber \\
    &=  2 E^{0} \otimes \Pi_{x} \left( \mathbbm{1}_{\{ \tau_{D} < \infty \}} \exp(-\alpha \tau_{D}) \mathbbm{1}_{\{ \langle J_{\tau_{D}}, 1 \rangle \leq K \}} \phi_{{D}}(J_{\tau_{D}}, \xi^{\tau_{D}})  \right),   
\end{align}
where to simplify notation, we write 
$\xi^{\tau_{D}} := (\xi_{t}: 0 \leq  t \leq \tau_{D})$. Observe that $(J_{\tau_{D}}, \xi^{\tau_D}) \in \Theta_D$  since by independence, we have $\mathbbm{1}_{\{ \tau_D < \infty \}}J_{\tau_D}(\{ \tau_D \}) = 0$, $P^0\otimes\Pi_x$--a.s. 
Replacing $D$ by $D_n$, we also have $(J_{\tau_{D_n}}, \xi^{\tau_{D_n}}) \in \Theta_{D_n}$ and  we obtain   
\begin{align}  \label{equation:aproxLD_eq}
     \mathbb{N}_x \Big( \big(\int_0^\sigma \dd L_s^{D_n} \mathbbm{1}_{\{ \langle \rho_s ,1 \rangle  \leq K \}} \big)^2   \Big) 
     = 2 E^{0} \otimes \Pi_{x} \left( \mathbbm{1}_{\{ \tau_{D_n} < \infty \}} \exp(-\alpha \tau_{D_n}) \mathbbm{1}_{\{ \langle J_{\tau_{D_n}}, 1 \rangle \leq K \}} \phi_{{D_n}}(J_{\tau_{D_n}}, \xi^{\tau_{D_n}})  \right), 
\end{align}
where for $(\mu , \w) \in \Theta_{D_n}$, we write 
\begin{equation*}
    \phi_{D_n}(\mu, \text{w})  
    =   \int_0^{ \mu\left([0,\tau_{D_n}(\w))\right) } \dd u ~E^{0} \otimes \Pi_{\w( H(\kappa_{\langle \mu,1\rangle -u} \mu ))} \left( \mathbbm{1}_{\{ \tau_{D_n} < \infty \}} \exp(-  \alpha \tau_{D_n}) \mathbbm{1}_{\{   \langle J_{\tau_{D_n}} , 1 \rangle  \leq K -u \}} \right).  
\end{equation*}
Our goal now is to take the limit in  \eqref{equation:aproxLD_eq} as $n \uparrow \infty$ and to show that this limit is precisely \eqref{eq:tau_d:lim}. In this direction, we remark that under $\{ \langle J_{\tau_{D_n}}, 1 \rangle \leq K \}$, we have the trivial bound   $\phi_{D_n}(J_{\tau_{D_n}} , \xi^{\tau_{D_n}})\leq K$. Thanks to the dominated convergence theorem, it is then enough to show that, $P^0\otimes \Pi_x$-a.s.,  the following convergence holds:
$$ \lim \limits_{n\to\infty}\mathbbm{1}_{\{ \tau_{D_n} < \infty \}} \exp(-  \alpha \tau_{D_n}) \mathbbm{1}_{\{   \langle J_{\tau_{D_n}} , 1 \rangle  \leq K \}}\phi_{{D_n}}(J_{\tau_{D_n}}, \xi^{\tau_{D_n}})=\mathbbm{1}_{\{ \tau_{D} < \infty \}} \exp(-  \alpha \tau_{D}) \mathbbm{1}_{\{   \langle J_{\tau_{D}} , 1 \rangle  \leq K  \}}\phi_{{D}}(J_{\tau_{D}}, \xi^{\tau_{D}}).$$
In order to prove it, we start noticing that we always have $\tau_{D_n}\uparrow\tau_{D}$ as $n\to\infty$.
In particular, since $\langle J_{\infty}, 1 \rangle=\infty$, we see that the limit in the previous display is $0$ under   $\{\tau_D=\infty\}$. Let us focus now on the event $\{\tau_D<\infty\}$. First remark that
 \begin{equation*}
    \kappa_{\langle J_{\tau_{D_n}},1\rangle -u }J_{\tau_{D_n}} = \kappa_{\langle J_{\tau_{D}},1\rangle - u }J_{\tau_{D}},
\end{equation*}
for every $u \leq \langle J\tau_{D_n} , 1 \rangle$. This combined with the independence between $J$ and $\xi$ ensures that,  under $\{\tau_D<\infty\}$, the quantities $\langle J\tau_{D_n} , 1 \rangle$ and $\phi_{D_n}(J_{\tau_{D_n}},  \xi^{\tau_{D_n}})$  convergence 
respectively to $\langle J\tau_{D} , 1 \rangle$ and $\phi_{D}(J_{\tau_{D}},  \xi^{\tau_{D}})$, giving the desired convergence under $\{\tau_D<\infty\}$. Consequently, we  get:
\begin{equation*}
    \lim_{n \rightarrow \infty}  \mathbb{N}_x \Big( \big(\int_0^\sigma \dd L_s^{D_n} \mathbbm{1}_{\{ \langle \rho_s ,1 \rangle  \leq K \}} \big)^2   \Big)  
    = 
     \mathbb{N}_x \Big( \big(\int_0^\sigma \dd L_s^D \mathbbm{1}_{\{ \langle \rho_s ,1 \rangle  \leq K \}} \big)^2   \Big). 
\end{equation*}

Turning our attention to the cross-term, we can apply similar steps and  the Markov property as before to obtain 
\begin{align*}
     &\mathbb{N}_x \Big(  \big( \int_0^\sigma \dd L_s^{D_n} \mathbbm{1}_{\{ \langle \rho_s ,1 \rangle  \leq K \}} \big)\cdot 
    \big( \int_0^\sigma \dd L_s^{D} \mathbbm{1}_{\{ \langle \rho_s ,1 \rangle  \leq K \}} \big)   \Big)\\
    &  =  \mathbb{N}_x \Big( \int_0^\sigma \dd L_s^{D_n} \mathbbm{1}_{\{ \langle \rho_s ,1 \rangle  \leq K\}} \int_s^\sigma \dd L_u^D \mathbbm{1}_{\{\langle \rho_u ,1 \rangle  \leq K \}} \Big)
    +
    \mathbb{N}_x \Big( \int_0^\sigma \dd L_s^{D} \mathbbm{1}_{\{  \langle \rho_s ,1 \rangle  \leq K\}} \int_s^\sigma \dd L_u^{D_n} \mathbbm{1}_{\{\langle \rho_u ,1 \rangle  \leq K \}} \Big)  \\
    &  = E^0 \otimes \Pi_x \left( \mathbbm{1}_{\{ \tau_{D_n} < \infty \}} \exp(- \alpha \tau_{D_n}) \mathbbm{1}_{\{ \langle J_{\tau_{D_n}},1 \rangle  \leq K \}} \phi_{D}(J_{\tau_{D_n}} , \xi^{\tau_{D_n}}) \right)   \\
    &  \hspace{40mm} + E^0 \otimes \Pi_x \left( \mathbbm{1}_{\{ \tau_D < \infty \}} \exp(- \alpha \tau_D) \mathbbm{1}_{\{ \langle J_{\tau_D},1 \rangle \leq K \}} \phi_{D_n}(J_{\tau_D} , \xi^{\tau_D}) \right),
\end{align*}
and using the same method as before we get:
$$\lim_{n \rightarrow \infty}\mathbb{N}_x \Big(  \big( \int_0^\sigma \dd L_s^{D_n} \mathbbm{1}_{\{\langle \rho_s ,1 \rangle  \leq K \}} \big)\cdot 
    \big( \int_0^\sigma \dd L_s^{D} \mathbbm{1}_{\{ \langle \rho_s ,1 \rangle  \leq K \}} \big)   \Big)= \mathbb{N}_x \Big( \big(\int_0^\sigma \dd L_s^D \mathbbm{1}_{\{ \langle \rho_s ,1 \rangle  \leq K \}} \big)^2   \Big).$$
Taking  the limit as $n \uparrow \infty$ in  \eqref{equation:FdmedibilidadCuentass}  we deduce the claimed $L_2(\mathbb{N}_x)$ convergence \eqref{equation:L2convergenceExitLocal}. Now that the  convergence of the truncated total mass  has been established, to derive the statement of the  proposition  we proceed  as follows. First, we introduce the processes
\begin{equation*}
    A^n_t := \int_0^{t} \dd L_s^{D_n} \mathbbm{1}_{\{\langle \rho_s , 1  \rangle \leq K \}} \:\:\:\text{ and }\:\:\: A_t := \int_0^{t} \dd L_s^{D} \mathbbm{1}_{\{\langle \rho_s , 1  \rangle \leq K \}},
\end{equation*}
which are continuous additive functionals of the Markov process $(\rho, W)$. Then using the Markov property, we get
\begin{align} \label{equation:martingaleCAF}
   \mathbb{N}_x \left( A^n_\infty | \mathcal{F}_s  \right) = A_{s \wedge \sigma}^{n} + \phi_{D_n}(\rho_{s \wedge \sigma} , W_{s \wedge \sigma}) \:\:\:\text{ and }\:\:\:  \mathbb{N}_x \left( A_\infty | \mathcal{F}_s  \right) = A_{s \wedge \sigma } + \phi_D(\rho_{s \wedge \sigma} , W_{s \wedge \sigma}), 
\end{align}
since   $\phi_{D_n}(\mu,\text{w}) = \mathbbm{E}^\dag_{\mu, \text{w}}[A^n_\infty]$, $\phi_D(\mu, \text{w}) = \mathbbm{E}^\dag_{\mu, \text{w}}[A_\infty]$ and  $\phi_{D_n}(\rho_\sigma, W_\sigma) = \phi_{D}(\rho_\sigma, W_\sigma) =0$, $\mathbb{N}_x$-a.e. To simplify notation, we denote respectively by $M^n_s= \mathbb{N}_x(A^n_\infty | \mathcal{F}_s)$ and $M_s= \mathbb{N}_x(A_\infty | \mathcal{F}_s)$  for $s\geq 0$ the  martingales in \eqref{equation:martingaleCAF}. Next, we  apply  Doob's inequality  to  derive:
\begin{equation} \label{cuentas:markovineq_0}
    \mathbb{N}_{x}\big(\sup \limits_{s>0}|M_{s}^{n}-M_{s}|>\delta\big)\leq\delta^{-2} \mathbb{N}_{x}\big(|  A^n_\sigma-A_\sigma |^2\big).
\end{equation}
Indeed, even if $\mathbb{N}_x$ is not a finite measure, we can argue as follows: fix $a>0$ and observe that $(M_{a+ t})_{t \geq 0}$,  $(M^n_{a+ t})_{t \geq 0}$ under $\mathbb{N}_x( \, \cdot \,  |\sigma > a)$ are uniformly integrable martingales, from which we obtain  
   \begin{equation*}
    \mathbb{N}_{x}\big(\sup \limits_{s\geq a}|M_{s}^{n}-M_{s}|>\delta~ \big|~ \sigma > a\big)\leq\delta^{-2} \mathbb{N}_{x}\big(|   A^n_\sigma-A_\sigma|^2~ \big|~ \sigma > a\big), 
\end{equation*}
and we deduce \eqref{cuentas:markovineq_0} by multiplying both sides by $\mathbb{N}_x(\sigma > a)$ and by taking the limit as $a \downarrow 0$ -- using monotone convergence. \par 
By  \eqref{equation:L2convergenceExitLocal}, the right-hand side of  \eqref{cuentas:markovineq_0}  converges towards $0$ as $n\uparrow \infty$ and we deduce that 
\[
\lim \limits_{k\to\infty} \sup \limits_{s >0}|M_{s}^{n_k}-M_{s}| =0, \quad \mathbb{N}_{x}\text{ --a.e. }
\]
for a suitable subsequence $(n_k:~k\geq 1)$ increasing towards infinity.
Since $\lim \limits_{n\to\infty}\phi_{D_n}(\rho_{s},W_{s})=\phi_{D}(\rho_{s},W_{s})$, we obtain that $\mathbb{N}_x$-a.e.,  for every $t\geq 0$, $\int_0^t \dd L_s^{D_{n_k}} \mathbbm{1}_{\{\langle \rho_s , 1  \rangle\leq K \}} \rightarrow \int_0^{t}\dd L^{D}_s \mathbbm{1}_{\{ \langle\rho_s , 1  \rangle \leq K \}}$ as $k \rightarrow \infty$. By continuity, monotonicity and the fact that $\sigma <\infty$ $\mathbb{N}_x$--a.e.,  we can apply Dini's theorem to get: 
\begin{equation*}
    \lim_{k \rightarrow \infty} \sup_{t > 0} \big|\int_0^t \dd L_s^{D_{n_k}} \mathbbm{1}_{\{ \langle \rho_s , 1  \rangle\leq K\}} - \int_0^t \dd L_s^D \mathbbm{1}_{\{ \langle \rho_s , 1  \rangle \leq K \}} \big| = 0, \quad \quad \mathbb{N}_x \text{-- a.e.}
\end{equation*}
Consequently,  we deduce that on the event $\{ \sup_{s\geq 0}  \langle\rho_s , 1  \rangle \leq K \}= \{\sup X\leq K\}$, the $\mathbb{N}_x$-a.e. uniform convergence (\ref{equation:uniformConvExtis}) holds under a subsequence $(n_k)$, which depends on $K$. Since this holds for arbitrary $K$, we can use a diagonal argument to find a deterministic subsequence that we still denote by $(n_k:k\geq 1)$ converging towards infinity such that
\begin{equation*}
    \lim_{k \rightarrow \infty} \sup_{t\in[0,\sigma]}|L_{t}^{D_{n_k}}-L_{t}^{D}|=0, \quad \quad \mathbb{N}_x \text{-- a.e.}
\end{equation*}
\end{proof}
\noindent We are now in position to prove that the process $\widetilde{L}^D$ is $\mathcal{F}^D$-measurable.
\begin{proof}[Proof of Proposition \ref{L_eta_measurable}]
Until further notice, we argue under $\mathbb{P}_{0,x}$. By \eqref{definition:aproximationExitlocal} and monotonicity,  a diagonal argument gives that we can find a  subsequence $(\varepsilon _k:~k\geq 1)$, with $\varepsilon _k \downarrow 0$ as $k \rightarrow \infty$, such that:
$$L_{\Gamma_s^{D}}^{D_{n}}=\lim \limits_{k \to \infty }\frac{1}{\varepsilon _k}\int_{0}^{\Gamma_s^{D}} \dd r \mathbbm{1}_{\{\tau_{D_{n}}(W_{r})< H_{r}<\tau_{D_{n}}(W_{r})+\varepsilon _k \}}, $$
for every $n\geq 1$ and $s\geq 0$. Our goal  is now to show that:
\begin{align}\label{cambio:(*)}
L_{\Gamma_s^{D}}^{D_{n}}= \lim \limits_{k \to \infty }\frac{1}{\varepsilon _k}\int_{0}^{s} \dd r \mathbbm{1}_{\{\tau_{D_{n}}(W_{\Gamma_r^D})< H_{\Gamma^D_r}<\tau_{D_{n}}(W_{\Gamma^D_r})+\varepsilon _k \}},
\end{align}
which will imply that $(L_{\Gamma_{s}^D}^{D_{n}})_{s\geq 0}$ is $\mathcal{F}^{D}$-measurable for every $n \in \mathbb{N}$.
In order to establish \eqref{cambio:(*)} we argue for $\omega$ fixed and observe that for $k$ large enough, we have:
\begin{equation*}
    \mathbbm{1}_{\{\tau_{D_{n}}(W_{r})< H_{r}<\tau_{D_{n}}(W_{r})+\varepsilon _k \}} = \mathbbm{1}_{\{\tau_{D_{n}}(W_{r})< H_{r}<\tau_{D_{n}}(W_{r})+\varepsilon _k \}} \mathbbm{1}_{\{ H_r \leq \tau_D(W_r)  \}}, \quad \text{ for all } r \in [0,\Gamma_s^D].
\end{equation*}
To see it, remark that if the previous display did not hold, by a compactness argument and continuity we would have $\tau_{D_n}(W_{r_0})  = \tau_D(W_{r_0})  \leq H_{r}$ for some $r_0$ in $[0,\Gamma_s^D]$. This gives a  contradiction since $\overline{D}_n \subset D$ and $(W_{r_0}(t))_{t \in [0, H_{r_0}]}$ is continuous.  Recalling the notation  $V^D$ given in \eqref{definition:VD}, we deduce that 
\begin{align*}
L_{\Gamma_s^{D}}^{D_{n}}
&=\lim \limits_{k \to \infty }\frac{1}{\varepsilon _k}\int_{0}^{\Gamma_s^{D}} \dd r \mathbbm{1}_{\{\tau_{D_{n}}(W_{r})< H_{r}<\tau_{D_{n}}(W_{r})+\varepsilon _k \}} \\
&=\lim \limits_{k \to \infty }\frac{1}{\varepsilon _k}\int_{0}^{\Gamma_s^{D}} \dd V^D_r \mathbbm{1}_{\{\tau_{D_{n}}(W_{r})< H_{r}<\tau_{D_{n}}(W_{r})+\varepsilon _k \}} 
=\lim \limits_{k \to \infty }\frac{1}{\varepsilon _k}\int_{0}^{s} \dd r \mathbbm{1}_{\{\tau_{D_{n}}(W_{\Gamma_r^D})< H_{{\Gamma_r^D}}<\tau_{D_{n}}(W_{\Gamma_r^D})+\varepsilon _k \}}, 
\end{align*}
giving us \eqref{cambio:(*)}.  The same arguments can be applied under $\mathbb{N}_x$ and, to complete the proof of the proposition, it suffices to show that  for every $t\geq 0$
\begin{align} \label{equation:aproxLD}
\lim \limits_{n\to \infty}\sup\limits_{s\in[0,t]}|L_{\Gamma_s^{D}}^{D_{n}}-L_{\Gamma_s^{D}}^{D}|=0, \quad \quad \text{ under }\mathbb{P}_{0,x} \text{ and } \mathbb{N}_x,
\end{align}
at least along a suitable subsequence. However, note that when working under $\mathbb{N}_x$, this convergence follows by  Lemma \ref{lemma:technicalLema2}. Now, the result under $\mathbb{P}_{0,x}$ is a standard  consequence of excursion theory. More precisely, 
recall that $-I$ is the local time of $(\rho,W)$ at $(0,x)$ and, for fixed $r > 0$, set $T_r := \inf \{ t \geq 0 : ~-I_t > r \}$.  If we let  $T_D := \inf\{ t \geq 0 : \tau_D(W_t) < \infty \}$, by continuity there exists a finite number of excursions $(\rho^i, W^i)$ of $(\rho , W)$  in $[0,T_r]$ satisfying $T_{D}(W^i) < \infty$, and their distribution is $\mathbb{N}_{x,0}(\, \cdot \, | T_D < \infty)$. Since $T_r \uparrow \infty$, the approximation \eqref{equation:aproxLD} under $\mathbb{P}_{x,0}$ now follows from the result under $\mathbb{N}_{x,0}$. This completes the proof of  Proposition \ref{L_eta_measurable}.
\end{proof}
\subsection{Proof of special Markov property} \label{subsection:specialMarkovProof}
Now that we have already studied the trajectories staying in $D$, we turn our attention to the complementary side of the picture and we start by introducing formally  the notion of excursions from $D$.\medskip \\
\noindent \textbf{Excursions from $D$.} Observe that (\ref{tirage_au_hasard_N}) and  assumption  \ref{tau_infinity}  imply that 
\begin{equation*} 
   \mathbb{N}_{x}\Big(\int_{0}^{\sigma}\dd s~\mathbbm{1}_{\{\tau_{D}(W_{s})< \zeta_{s} \}}>0\Big)>0.
\end{equation*}
Hence, the set $\big\{s\in[0,\sigma]:\:\tau_{D}(W_{s})<\zeta_{s}\big\}$ is non-empty with non null measure under $\mathbb{N}_x$ and $\mathbb{P}_{0,x}$. If we define 
\begin{equation*}
    \gamma^D_s := \big( \zeta_s - \tau_D(W_s) \big)_+, \quad \quad s \geq 0, 
\end{equation*}
it is straightforward  to show by the snake property and the continuity of $\zeta$ that $\gamma^D$ is continuous. Set  
\[\sigma^{D}_{t}:=\inf\big\{s\geq 0:\:\int_{0}^{s} \dd r\mathbbm{1}_{\{\gamma^D_{r}>0\}} > t\big\},\]
and consider the process $(\rho^{D}_{t})_{t\geq 0}$ taking values in $\mathcal{M}_{f}(\mathbb{R}_{+})$ defined by:
\begin{equation}\label{def:rho:D}
\langle  \rho^{D}_{t}, f \rangle :=\int \rho_{\sigma^{D}_{t}}(\dd h)f\big(h-\tau_{D}(W_{\sigma^{D}_{t}})\big) \mathbbm{1}_{\{ h>\tau_D(W_{\sigma_t^D}) \}}.
\end{equation}
Then, by Proposition 4.3.1 in \cite{Duquesne},  $\rho^D$ and $\rho$ have the same distribution under $\mathbb{P}_{0,x}$. In particular,  $\langle \rho^D , 1 \rangle$ has the same law as the reflected Lévy process $X-I$ and we denote its local time at $0$ by  $(\ell^D(s) : s \geq 0)$. Moreover, it is shown in  \cite[ Section 4.3]{Duquesne} that the process $L^D$ is related to the local time $\ell^D$ by the identity:
\begin{equation} \label{equation:exitlocalTimeChange}
    L^D_t = \ell^D \left(  \int_0^t \dd s \,  \mathbbm{1}_{\{ \gamma^D_s > 0  \}}  \right). 
\end{equation}
The proof of  Proposition 4.3.1 in \cite{Duquesne} shows that $\rho^{D}$ can be obtained as limit of functions which are independent of $\mathcal{F}^{D}$,  implying that $\rho^{D}$ is on its turn independent of $\mathcal{F}^{D}$.
Now,  denote  the connected components of the open set 
\begin{equation*}
    \big\{t\geq 0:\:\tau_{D}(W_{t})< \zeta_t \big\} = \big\{ t \geq 0 : \gamma^D_t > 0 \big\},
\end{equation*}
by  $\big((a_{i},b_{i}) : i\in \mathcal{I} \big)$, 
where  $\mathcal{I}$ is an indexing set that might be empty. By construction, for any $s \in (a_i,b_i)$, the trajectory $W_s$ is a trajectory leaving $D$.  Remark that $H_{a_{i}}=H_{b_{i}}<H_r$ for every $r\in (a_i,b_i)$ and let $(\rho^{i},W^{i})$ be the subtrajectory of $(\rho,W)$ associated with $[a_{i},b_{i}]$ as defined in  Section \ref{secsnake}.  Observe that in our setting,   $(\rho^i, W^i)$ is  defined for each $s \in [0,b_i - a_i]$ and for any measurable function $f:\mathbb{R}_+ \mapsto \mathbb{R}_+$ as
\begin{align*}
    \langle \rho^{i}_{s} , f \rangle  =\int \rho_{a_i+s}(\dd h)f(h - \tau_D(W_{a_i}))\mathbbm{1}_{\{ h  > \tau_D(W_{a_i}) \}}
\end{align*}
and
\begin{equation*}
    W^i_s =  W_{(a_i+s) \wedge b_i }( t + \tau_D(W_{a_i})) \quad  \text{ for } t \in [ 0  , \zeta_{ (a_i+ s)\wedge b_i   } - \tau_D(W_{a_i})  ] ,
\end{equation*}
with respective lifetime process given by 
\begin{equation*}
    \zeta_s^i = \zeta _{(a_i + s) \wedge b_i} - \tau_D(W_{a_i}), 
\end{equation*}
where $\tau_D(W_s)= \tau_D(W_{a_i}) = \zeta_{a_i}$. We say that $(\rho^i , W^i)$ is an \textit{excursion} of $(\rho,W)$ from  $D$. Observe that $W_s^i(0) = W_{a_i}^i(0)$ for all $s \in [a_i, b_i]$ by the snake property and that we have $W_{a_i}^i(0) \in \partial D$. This is the point of $\partial D$ used by  the subtrajectory $W^i$  to escape from $D$.

{ 
In order to state the special Markov property we need to introduce one last notation. Let $\theta$ be the right inverse of $\widetilde {L}^D$, viz.  the $\mathcal{F}^D$-measurable function defined as
\[\theta_{r}:=\inf\big\{s \geq 0 \: :  L_{\Gamma^D_{s}}^{D}> r\big\}, \quad  \text{ for all } r\in[0,L_{\sigma}^{D}).\:\]
Recall that we are considering some fixed  $x\in D$,   the notation $( (\rho^i , W^i) : i \in \mathcal{I} )$ for the excursions outside $D$, and that we are working under  the hypothesis \ref{tau_infinity}. We are now going to state and prove the special Markov property under $\mathbb{P}_{0,x}$,  and we will  deduce by standard arguments  a version   under the excursion measure $\mathbb{N}_{x}$.  Under $\mathbb{P}_{0,x}$ we use the same notation as under $\mathbb{N}_x$, but observing that $\sigma_H = \infty$ and  noticing that  $\mathbb{P}_{0,x}$-a.s., we have   $\mathcal{Y}_{D}=\int_{0}^{\infty} \dd s \mathbbm{1}_{\{H_{s} \leq \tau_{D}(W_s)\}}=\infty$ and $L^D_\infty = \infty$. In particular, this implies that  $\Gamma^D_s$ and $\theta_s$ are finite for every $s < \infty$.   
\begin{theo}[Special Markov property]\label{Theo_Spa_Markov_Excur_P}
Under $\mathbb{P}_{0,x}$, conditionally on $\mathcal{F}^{D}$, the point measure
\[\sum \limits_{i\in \mathcal{I}} \delta_{(L_{a_{i}}^{D},\rho^{i},W^{i})}(\dd \ell, \:\dd\rho, \:\dd\omega)\]
is a Poisson point process with intensity
\begin{equation*} 
    \mathbbm{1}_{[0,\infty)}(\ell)\:\dd\ell \: \mathbb{N}_{\mathrm{tr}_{D}(\widehat{W})_{\theta_\ell }}(\dd\rho, \:\dd\omega).
\end{equation*}
\end{theo}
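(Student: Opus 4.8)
The plan is to characterise the conditional law of the point measure $\mathcal{N} := \sum_{i \in \mathcal{I}} \delta_{(L^D_{a_i}, \rho^i, W^i)}$ given $\mathcal{F}^D$ by computing its conditional Laplace functional and matching it with that of the announced Poisson measure. Exploiting the two-layer structure of the Lévy snake, I would treat separately the genealogical component carried by the exploration process and the spatial component carried by the labels.

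\textbf{Genealogy.} The first step is to read off the genealogical parts $(\rho^i)$ from the reflected exploration process $\rho^D$ of \eqref{def:rho:D}. By \cite[Proposition 4.3.1]{Duquesne} the process $\rho^D$ is distributed as $\rho$ under $\mathbb{P}_{0,x}$, and the remark following \eqref{equation:exitlocalTimeChange} gives that it is independent of $\mathcal{F}^D$. Since $\langle \rho^D, 1\rangle$ has the law of $X - I$ with local time $\ell^D$ at $0$, excursion theory (as in \eqref{definition:PoissonRMExcursionesRho}) shows that the excursions of $\rho^D$ away from $0$, indexed by $\ell^D$, form a Poisson point measure with intensity $\mathbbm{1}_{[0,\infty)}(\ell)\,\dd\ell\,N(\dd\rho)$ that is independent of $\mathcal{F}^D$. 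Through the identity \eqref{equation:exitlocalTimeChange} and the time change $t \mapsto \int_0^t \mathbbm{1}_{\{\gamma^D_s > 0\}}\dd s$, the intervals $(a_i, b_i)$ are matched with the excursion intervals of $\langle \rho^D, 1\rangle$, the genealogies $\rho^i$ with the corresponding excursions of $\rho^D$, and the indices $L^D_{a_i}$ with the values of $\ell^D$ at their left endpoints; this produces the Poisson structure of $\sum_i \delta_{(L^D_{a_i}, \rho^i)}$ conditionally on $\mathcal{F}^D$.

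\textbf{Labels and exit points.} It then remains to attach the spatial motions $W^i$. Using the conditional representation \eqref{P_mu_w}--\eqref{N:H(rho)}, according to which $W$ given $\rho$ is distributed as $Q^{H(\rho)}_x$, i.e. obtained by decorating $\mathcal{T}_H$ with independent copies of $\xi$ branching along the genealogy, I would argue that conditionally on $\mathcal{F}^D$ and on $(\rho^i)_i$ the labels $(W^i)_i$ are mutually independent, each $W^i$ being distributed as $Q^{H(\rho^i)}_{y_i}$ where $y_i := \widehat W_{a_i} \in \partial D$ is the exit point; the relevant fact is that the pieces of the decorating motion lying beyond the exit level $\tau_D(W_{a_i})$ are fresh $\xi$-paths started at $y_i$ and independent of the trajectories recorded inside $D$. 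Combining with the genealogy step and the disintegration $\mathbb{N}_y(\dd\rho, \dd W) = N(\dd\rho)\,Q^{H(\rho)}_y(\dd W)$ of \eqref{N:H(rho)}, the full measure $\mathcal{N}$ becomes, conditionally on $\mathcal{F}^D$, Poisson with intensity $\mathbbm{1}_{[0,\infty)}(\ell)\,\dd\ell\,\mathbb{N}_{y_\ell}(\dd\rho, \dd W)$. Finally, I would identify $y_\ell$ with $\mathrm{tr}_D(\widehat W)_{\theta_\ell}$: by the snake property $y_i = \widehat W_{a_i} = \widehat W_{b_i}$ depends only on the truncated trajectory, $L^D$ is constant on each $(a_i, b_i)$, and the definition of $\theta$ as the right inverse of $\widetilde L^D = (L^D_{\Gamma^D_s})_{s}$ matches the index $\ell = L^D_{a_i}$ with the position $\theta_\ell$ at which $\mathrm{tr}_D(W)$ reaches $\partial D$; here the $\mathcal{F}^D$-measurability of $\widetilde L^D$ from Proposition \ref{L_eta_measurable} is essential.

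The step I expect to be the main obstacle is the rigorous justification that, conditionally on $\mathcal{F}^D$, the labels grafted outside $D$ are independent fresh snakes started exactly at the $\mathcal{F}^D$-measurable exit points. In the general Lévy setting $H$ is not Markovian and $\mathcal{T}_H$ carries points of infinite multiplicity, so when $\rho$ has an atom at the exit level infinitely many excursions may emanate from a single exit point, and the bookkeeping matching each excursion with its exit-local-time index becomes delicate. I would control this by decomposing the snake along the levels of $\rho$ via the Poisson measure \eqref{PoissonRandMeasure}, approximating $D$ from inside by an increasing sequence $(D_n)$ as in Lemma \ref{lemma:technicalLema2} so that the exit points and indices are correctly identified for each $D_n$, and then passing to the limit using the uniform convergence of the exit local times established there.
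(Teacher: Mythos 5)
Your genealogical step and your final identification of the exit points are exactly the paper's: the Poisson structure of $\sum_i \delta_{(L^D_{a_i},\rho^i)}$, independent of $\mathcal{F}^D$, is obtained there precisely from $\rho^D$, Duquesne--Le Gall's Proposition 4.3.1, the time change \eqref{equation:exitlocalTimeChange} and the set identity \eqref{equality:pruebaSPM}; and the identity $\widehat W_{a_i}=\mathrm{tr}_D(\widehat W)_{\theta_{L^D_{a_i}}}$ is invoked there in the same way. The gap sits in your middle step, which you yourself flag as the main obstacle: the claim that, conditionally on $\mathcal{F}^D$ and the genealogies $(\rho^i)$, the labels $(W^i)$ are independent snakes started from the exit points. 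As stated, this is essentially a disintegrated rephrasing of the theorem, not an argument, and the tools you propose to make it rigorous would not do the job. The Poisson decomposition \eqref{PoissonRandMeasure} describes the subtrajectories grafted along the spine of a \emph{fixed initial path} under $\mathbb{P}^{\dag}_{\mu,\mathrm{w}}$; it says nothing about the conditional law of the labels given the trajectories truncated at $\partial D$, which is the object at stake here. The inner approximation by $(D_n)$ together with Lemma \ref{lemma:technicalLema2} merely reproduces the same conditioning problem at each $n$; in the paper that lemma serves a different purpose (the $\mathcal{F}^D$-measurability of $\widetilde L^D$, Proposition \ref{L_eta_measurable}), not the independence of the grafted labels. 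Incidentally, your worry about atoms of $\rho$ at the exit level is already ruled out by Lemma \ref{lemma:nomassaTD}.

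What the paper does instead, and what is missing from your plan, is an intermediate sigma-field that decouples the two layers cleanly. One introduces the stopped snake $W^*_t(s)=W_t\big(s\wedge\tau_D(W_t)\big)$ --- the $(\psi,\xi^*)$-Lévy snake for the stopped motion $\xi^*_t=\xi_{t\wedge\tau_D}$ --- and its sigma-field $\mathcal{F}^D_*\supset\mathcal{F}^D$. All of $\gamma^D$, $V^D$, the intervals $(a_i,b_i)$, the genealogies $\rho^i$, the lifetimes $\zeta^i$ and the exit points $\widehat W^*_{a_i}$ are $\mathcal{F}^D_*$-measurable. The conditional independence of the $W^i$ given $\mathcal{F}^D_*$, with conditional law $\mathbb{Q}^{\zeta^i}_{\widehat W^*_{a_i}}$, is then proved by a reconstruction argument: take conditionally independent snake trajectories $W^{i,\prime}$ with laws $\mathbb{Q}^{\zeta^i}_{\widehat W^*_{a_i}}$, glue them onto $W^*$ along the intervals $(a_i,b_i)$, and verify by computing finite-dimensional marginals (via the kernels $R_{a,b}$ defining $Q^h_{\mathrm{w}}$ and the Markov property of $\xi$ at $\tau_D$) that the resulting pair $(\rho,W^{\prime})$ has law $\mathbb{P}_{0,x}$; uniqueness of the law yields the claim. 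Only after this does one project from $\mathcal{F}^D_*$ down to $\mathcal{F}^D$: the quantity $\mathbb{Q}^{\zeta^i}_{\widehat W^*_{a_i}}\big(\exp(-F(L^D_{a_i},\rho^i,\cdot))\big)$ is a measurable function of $(L^D_{a_i},\rho^i)$ and of the $\mathcal{F}^D$-measurable process $\big(\mathrm{tr}_D(W)_{\theta_r}\big)_{r\geq 0}$, so your (correct) Poisson step for $\sum_i\delta_{(L^D_{a_i},\rho^i)}$, the exponential formula, and the disintegration $\mathbb{N}_y(\dd\rho,\dd W)=N(\dd\rho)\,Q^{H(\rho)}_y(\dd W)$ finish the proof. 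Without the device of $\mathcal{F}^D_*$, or an equivalent reconstruction argument, your middle step remains unproven.
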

Recall that we have established in Proposition \ref{L_eta_measurable} that $\widetilde {L}^D$ is  $\mathcal{F}^D$-measurable. It might also be worth observing that if  $F = F(\upvarrho , \omega)$ is a measurable function, when integrating with respect to the intensity measure  $ \mathbbm{1}_{[0,\infty)}(\ell)\: \dd\ell \: \mathbb{N}_{\mathrm{tr}_{D}(\widehat{W})_{\theta_\ell }}(\dd\rho, \:\dd\omega)$ we can re-write the expression in the following more tractable form:
\begin{equation*}
    \int_0^\infty \dd \ell \,  \mathbb{N}_{\text{tr}_{D}\widehat{(W)}_{\theta_\ell }} (F) 
    =
    \int_0^\infty \dd\widetilde {L}_s^D \,  \mathbb{N}_{\text{tr}_{D}(\widehat{W})_{s}} (F) 
    = \int_0^\infty \dd {L}_s^D \,  \mathbb{N}_{\widehat{W}_{s}} (F) 
\end{equation*}
where in the last equality, we applied  a change of variable for Lebesgue-Stieltjes integrals using the fact that $L^D$ is constant on the excursion intervals $[\Gamma^D_{s-} , \Gamma^D_{s} ]$ when $\Gamma^D_{s-} < \Gamma^D_s$.  Let us now prove Theorem \ref{Theo_Spa_Markov_Excur_P}.
\begin{proof}
In this  proof, we work with  $(\rho,W)$ under $\mathbb{P}_{0,x}$.  Let us start with some preliminary constructions and  remarks. First, we introduce the $\mathcal{S}_x$-valued  process $(\rho, W^*)$ defined at each $t \geq 0$ as 
\begin{equation*} 
    \big(\rho_{t},W^{*}_{t}(s)\big)=\Big(\rho_{t}(\dd h)  ,W_{t}\big(s\wedge\tau_{D}(W_{t})\big)\Big), \quad \text{ for } s \in [0,\zeta_{W_t}], 
\end{equation*}
and let $\mathcal{F}^D_*$ be its generated sigma-field on $\mathbb{D}(\mathbb{R}_+, \mathcal{M}_f(\mathbb{R}_+) \times \mathcal{W}_{E,x})$. The snake $(\rho, W^*)$ can be interpreted as the Lévy snake associated with $(\psi,\xi^*)$, where $\xi^{*}$ is the stopped Markov process $(\xi^*_t:~t\geq 0)=(\xi_{t\wedge \tau_D(\xi)}:~t\geq 0)$. Since, for every $t\geq 0$,    \[\big(\zeta_{W_{t}}-\tau_{D}(W_{t})\big)_{+}=\big(\zeta_{W_{t}^{*}}-\tau_{D}(W_{t}^{*})\big)_{+},\]
we derive that the process   $\gamma_{t}^D = (\zeta_{W_{t}}-\tau_{D}(W_{t}))_{+}$ is  $\mathcal{F}_*^D-$measurable. Consequently, we have  $\mathcal{F}^D\subset \mathcal{F}^D_*$ since  $V^{D}$ -- the functional measuring the time spent in $D$  defined in \eqref{definition:VD} --  is $\mathcal{F}^D_*$-measurable and by definition $\rm{tr}_D (\rho,W)=\rm{tr}_D (\rho,W^*)$. Recalling  that  $\big( (a_{i},b_{i}) : i \in \mathcal{I}\big)$ stands for the connected components of the open set 
\begin{equation*}
    \{t\geq 0:\:\tau_{D}(W_{t})< \zeta_{W_t} \} = \{ t \geq 0 : \gamma^D_t > 0 \}, 
\end{equation*}
we deduce by the previous discussion and the identity $\tau_D(W_{a_i}) = \zeta_{a_i}$,  that the variables
$$\widehat{W}_{a_i} = \widehat{W}^*_{a_i},\,  \zeta^{i} =\zeta_{(a_{i}+\cdot \, )\wedge b_{i}}-\zeta_{a_{i}} \text{ and a fortiori } \, \rho^{i} \, 
     \text{ are } \mathcal{F}^D_* - \text{ measurable}. $$
 Informally, $\mathcal{F}_*^D$ encodes the information of the trajectories staying in $D$ and the tree structure.
We claim that conditionally on $\mathcal{F}^D_*$, the excursions $(  W^i :i \in \mathbb{N}  )$ are independent, and that the conditional distribution of $W^{i}$ is $\mathbb{Q}_{\widehat{W}_{a_{i}}^*}^{\zeta^{i}}$, where we recall from Section \ref{secsnake} that we denote  the distribution of the snake driven by $h$ started at $x$ by $\mathbb{Q}^h_x$. 
\\
\\
In order to prove this claim, consider a collection of snake trajectories  $\big( W^{i,\prime}:i\in \mathcal{I} \big)$ such that, conditionally on $(\rho , W^{*})$, they are independent and each one is respectively distributed according to the measure $\mathbb{Q}_{\widehat{W}_{a_{i}}^{*}}^{\zeta^{i}}$.  Next let $W^{\prime}$ be the process defined as follows: for every $t$ such that $\gamma_{t}^D=0$ set  $W^{\prime}_{t}=W^{*}_{t}$, and if $\gamma_{t}^D>0$ we  set:
$$
 W^{\prime}_{t}(s)=
\begin{cases}
W_{t}^{*}(s) \hspace{27mm} \, \, \text{if}\:\:s\in[0,\tau_{D}(W_{t}^{*})]\\
W_{t-a_{i}}^{i,\prime}\big(s-\tau_D(W_{t}^{*})\big)\:\: \hspace{4mm} \text{if}\:\:s\in[\tau_{D}(W_{t}^{*}), \zeta(W_t^*)],
\end{cases}
$$
where $i$ is the unique index such that $t\in(a_{i},b_{i})$. By construction, $(\rho , W^{\prime})$ is in $\mathbb{D}(\mathbb{R}_+, \mathcal{M}_f(\mathbb{R}_+) \times \mathcal{W}_{E,x})$ and  a straightforward computation of its finite marginals  shows that its distribution is $\mathbb{P}_{0,x}$, proving our claim. \par 
Notice that \eqref{definition:aproximationExitlocal} implies that  $L^D$ is constant on the intervals $[\Gamma^D_{s-}, \Gamma^D_{s}]$ when $\Gamma^D_{s-}  < \Gamma^D_s$. Hence, $L^D_s = \widetilde {L}^D_{V^D_s}$ for all $s \geq 0$ and in particular $L^D_{a_i} = \widetilde {L}^D_{V^D_{a_i}}$, 
the latter being $\mathcal{F}^D_*$-measurable. Consider now $U$ a bounded   $\mathcal{F}^{D}$-measurable  random variable,  and remark that to obtain the desired result, it is enough to show that:
\begin{equation*}
    \mathbb{E}_{0,x}\Big[U\exp(-\sum\limits_{i\in \mathcal{I}} F\big(L_{a_{i}}^{D},\rho^{i},W^{i})\big)\Big]= \mathbb{E}_{0,x}\Big[U\exp\Big(-\int_{0}^{\infty}\dd\ell \: \mathbb{N}_{\text{tr}_{D}\widehat{(W)}_{\theta_\ell }}\big(1-\exp(-F(\ell ,\rho,W)\big)\Big)\Big],
\end{equation*}
for every  non-negative measurable function $F$  in $\mathbb{R}_+ \times \mathbb{D}(\mathbb{R}_+, \mathcal{M}_f(\mathbb{R}_+) \times \mathcal{W}_E)$. In order to prove this identity, we start by projecting the left term on $\mathcal{F}^{D}_*$:  by the previous discussion and recalling that $\mathcal{F}^{D}\subset\mathcal{F}^{D}_*$, we get
$$\mathbb{E}_{0,x}\Big[U\exp(-\sum\limits_{i\in \mathcal{I}} F\big(L_{a_{i}}^{D},\rho^{i},W^{i})\big)\Big]=\mathbb{E}_{0,x}\Big[U\prod\limits_{i\in\mathcal{I}}\mathbb{Q}_{\widehat{W}_{a_{i}}^{*}}^{\zeta^i}\big(\exp(-F(L_{a_{i}}^{D},\rho^i,W)\big)\Big].$$
Moreover, it is straightforward to see that
\[ \widehat{W}^*_{a_i} = \widehat{W}_{a_{i}}=\text{tr}_{D}\widehat{(W)}_{\theta_{L_{a_{i}}^{D}}},\]
we omit the details of this identity  since the  argument used in (23) of \cite[Theorem 20]{Subor} for the Brownian snake applies directly to our framework.
Consequently, we have: 
$$\mathbb{E}_{0,x}\Big[U\prod\limits_{i\in\mathcal{I}}\mathbb{Q}_{\widehat{W}_{a_{i}}^{*}}^{\zeta^{i}}\big(\exp(-F(L_{a_{i}}^{D},\rho^i,W)\big)\Big]=\mathbb{E}_{0,x}\Big[U\prod\limits_{i\in\mathcal{I}}\mathbb{Q}_{\text{tr}_{D}\widehat{(W)}_{\theta_{L_{a_{i}}^{D}}}}^{\zeta^{i}}\big(\exp(-F(L_{a_{i}}^{D},\rho^i,W)\big)\Big]. $$
Now, we need to take the projection on $\mathcal{F}^D$. 
Recalling that $H(\rho^i)=\zeta^i$, observe that for every $i\in \mathcal{I}$,
$$\mathbb{Q}_{\text{tr}_{D}\widehat{(W)}_{\theta_{L_{a_{i}}^{D}}}}^{\zeta^{i}}\big(\exp(-F(L_{a_{i}}^{D},\rho^i ,W)\big) $$
is a measurable function  of the pair $(L_{a_i}^D,\rho^i)$ and the process $(\text{tr}_{D}(W)_{\theta_r}:~r\geq 0)$, the latter being $\mathcal{F}^D$-measurable. We are going to conclude by showing that the point measure 
\[\sum\limits_{i\in \mathcal{I}}\delta_{(L_{a_{i}}^{D},\rho^{i})}\]
is a Poisson point measure with intensity $\mathbbm{1}_{ [0,\infty)}(\ell)\dd \ell \,  N(\dd \rho)$ independent of $\mathcal{F}^D$. Remark that once this has been established, an application of the exponential formula for functionals of Poisson random measures  yields
$$\mathbb{E}_{0,x}\Big[U\prod\limits_{i\in\mathcal{\mathbb{N}}}\mathbb{Q}_{\text{tr}_{D}\widehat{(W)}_{\theta_{L_{a_{i}}^{D}}}}^{\zeta^{i}}\big(\exp(-F(L_{a_{i}}^{D},\rho^i,W)\big)\Big]=\mathbb{E}_{0,x}\Big[U\exp\Big(-\int_{0}^{\infty} \dd\ell \: \mathbb{N}_{\text{tr}_{D}\widehat{(W)}_{\theta_\ell }}\big(1-\exp(-F(\ell ,\rho,W)\big)\Big)\Big],$$
giving the desired result. In this direction, recall the definition of $\rho^{D}$ given in \eqref{def:rho:D}, and that $\ell^D$ stands for the local time of $\rho^D$ at 0.
We denote the connected component of the open set $\{t\geq 0 : \,  \langle \rho^{D}_{t} , 1 \rangle\neq 0\} = \{ t \geq 0 : H(\rho^D_t) > 0 \}$ by  $\big((c_{j},d_{j}) : j \in \mathbb{N} \big)$  -- the latter equality holding since $\rho^D_t(\{ 0 \}) =0$ --   and  observe that these  are precisely the excursion intervals of $\langle \rho^D, 1 \rangle$ from $0$. It follows by \eqref{definition:PoissonRMExcursionesRho} and the discussion before the proof that 
\begin{equation*}
    \sum_{j\in \mathbb{N}}\delta_{ (\ell^D(c_j) , \, \rho^{D}_{(c_{j}+\cdot)\wedge d_{j}}) }
\end{equation*}
is a Poisson point measure  with intensity $\mathbbm{1}_{ [0,\infty)}(\ell)\dd \ell \,  N(\dd \rho)$ and observe that this measure is  independent of $\mathcal{F}^D$ --  since $\rho^D$ is  independent of $\mathcal{F}^D$. 
Furthermore, by \eqref{equation:exitlocalTimeChange} we have:

$$L_{\sigma_{s}^{D}}^{D}=  \ell^D \Big(  \int_0^{\sigma_{s}^{D}} \dd r \,  \mathbbm{1}_{\{ \gamma^D_r > 0  \}}  \Big)=\ell^D (s),  $$
for every $s\geq 0$. 
 It is now straightforward  to deduce from our last observations that:
\begin{equation*} 
    \big\{ ( L_{a_i}^D , \rho^i ) : i \in \mathcal{I} \big\} =  
   \big\{ ( \ell^D(c_j) , \rho^{D}_{(c_{j}+\cdot)\wedge d_{j}} ) : j \in \mathbb{N} \big\}, 
\end{equation*}
concluding the proof. 

\end{proof}
}
Setting $T_D = \inf\{ t \geq 0 : \tau_D(W_t) < \infty \}$, we infer from our previous result  a version of the special Markov property holding under the probability measure 
\begin{equation*}
    \mathbb{N}^D_{x} := \mathbb{N}_{x}( \, \cdot \,  | T_D < \infty ).
\end{equation*}
Observe that $\mathbb{N}_x(T_D < \infty)$ is finite: if this quantity was infinite, by excursion theory, the process $(\rho , W)$ under  $\mathbb{P}_{0,x}$ would have infinitely many excursions exiting $D$ on compact intervals, contradicting the continuity of its   paths. Finally, note that $(\rho, W)$ under $\mathbb{N}^D_x$ has  the distribution of the first  excursion  exiting the domain $D$.  As a straightforward consequence  of Theorem \ref{Theo_Spa_Markov_Excur_P}, this observation allows us to deduce: 
{
\begin{theo}\label{Theo_Spa_Markov_Excur}
Under $\mathbb{N}^D_{x}$ and conditionally on $\mathcal{F}^{D}$, the point measure:
\[
\sum \limits_{i\in \mathcal{I}} \delta_{(L_{a_{i}}^{D},\rho^{i},W^{i})}(\dd \ell, \: \dd\rho, \: \dd \omega)
\]
is a Poisson point process with intensity
\[\mathbbm{1}_{ [0, L_{\sigma}^{D}]}(\ell)\: \dd \ell \: \mathbb{N}_{\mathrm{tr}_{D}(\widehat{W})_{\theta_\ell}}(\dd \rho, \: \dd\omega).
\]
\end{theo}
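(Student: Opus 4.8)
The plan is to derive the statement under $\mathbb{N}^D_{x}$ from the version already proved under $\mathbb{P}_{0,x}$ (Theorem \ref{Theo_Spa_Markov_Excur_P}) by means of excursion theory for the Markov process $(\rho, W)$ away from $(0,x)$. Recall from \eqref{definition:poissonExcursiones} that, under $\mathbb{P}_{0,x}$, the excursions $(e^{u} : u \geq 0)$ of $(\rho, W)$ away from $(0,x)$, indexed by the local time $u = -I$, form a Poisson point measure with intensity $\dd u \, \mathbb{N}_{x}(\dd \rho, \dd \omega)$. Since $\mathbb{N}_{x}(T_D < \infty) \in (0, \infty)$, the index $G := \inf\{ u : T_D(e^{u}) < \infty \}$ of the first excursion that exits $D$ is well defined, and by the restriction property of Poisson point measures the excursion $e^{G}$ is distributed according to $\mathbb{N}_{x}(\, \cdot \mid T_D < \infty) = \mathbb{N}^D_{x}$, independently of the preceding excursions $(e^{u} : u < G)$.

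Before invoking Theorem \ref{Theo_Spa_Markov_Excur_P} I would record how the global objects $L^D$, $\mathrm{tr}_D$ and $\theta$ under $\mathbb{P}_{0,x}$ are built from the individual excursions. The exit local time is additive along the excursion decomposition (this is inherent in its construction and in Proposition \ref{proposition:aproxLDPmu}), so that $L^D$ is obtained by concatenating the intrinsic exit local times $L^D_{\sigma}(e^{u})$ in the order prescribed by $u$. Moreover, each excursion with $T_D(e^{u}) = \infty$ stays in $D$, hence $\tau_D(W_s) = \infty$ along it and, because $\mathrm{supp}\, \dd L^D \subset \{ s : \tau_D(W_s) = H_s \}$, it contributes neither to $\dd L^D$ nor any excursion from $D$; in particular $L^D_{\sigma}(e^{u}) = 0$ for every $u < G$. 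Consequently the global exit local time vanishes up to the onset of $e^{G}$, and for every $\ell \in [0, L^D_{\sigma}(e^{G}))$ the time-changed endpoint $\mathrm{tr}_D(\widehat{W})_{\theta_{\ell}}$ together with the excursions from $D$ carrying exit local time $\ell$ coincide with the corresponding objects built intrinsically from $e^{G}$.

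With these identifications, I would disintegrate the conclusion of Theorem \ref{Theo_Spa_Markov_Excur_P} over the excursions away from $(0,x)$. That theorem states that, conditionally on $\mathcal{F}^{D}$, the point measure $\sum_{i} \delta_{(L^D_{a_i}, \rho^{i}, W^{i})}$ is Poisson with intensity $\mathbbm{1}_{[0,\infty)}(\ell)\, \dd \ell \, \mathbb{N}_{\mathrm{tr}_D(\widehat{W})_{\theta_{\ell}}}$. Grouping its atoms according to the excursion $e^{u}$ to which they belong and using the previous paragraph together with the independence of distinct excursions, the atoms originating from a fixed $e^{u}$ form, conditionally on the truncation of $e^{u}$, a Poisson measure with intensity $\mathbbm{1}_{[0, L^D_{\sigma}(e^{u})]}(\ell)\, \dd \ell \, \mathbb{N}_{\mathrm{tr}_D(\widehat{W})_{\theta(\ell)}}$, where the truncation and time change are those of $e^{u}$, and these contributions are independent across $u$. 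Reading this disintegration at the distinguished index $u = G$, and recalling that $e^{G}$ has law $\mathbb{N}^D_{x}$ with finite total exit local time $L^D_{\sigma} = L^D_{\sigma}(e^{G})$, yields exactly the asserted Poisson structure with intensity $\mathbbm{1}_{[0, L^D_{\sigma}]}(\ell)\, \dd \ell \, \mathbb{N}_{\mathrm{tr}_D(\widehat{W})_{\theta(\ell)}}$ under $\mathbb{N}^D_{x}$.

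I expect the main obstacle to be making this disintegration rigorous, since the cutoff $L^D_{\sigma}(e^{u})$ of each excursion's Poisson measure is itself a functional of that excursion, so that one cannot simply restrict the global intensity. The cleanest route is probably to avoid an explicit disintegration and instead test the identity against a bounded $\mathcal{F}^{D}$-measurable variable $U$ and a nonnegative measurable $F$: one computes the Laplace functional $\mathbb{E}_{0,x}[ U \exp(-\sum_{i} F(L^D_{a_i}, \rho^{i}, W^{i})) ]$ by Theorem \ref{Theo_Spa_Markov_Excur_P} and the exponential formula, then re-expands it through the excursion decomposition \eqref{definition:poissonExcursiones} so as to isolate the contribution of the first exiting excursion, and finally matches the result with the right-hand side of the desired identity read under $\mathbb{N}^D_{x}$. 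The only additional input needed is the elementary fact, used above, that an excursion with $T_D = \infty$ carries no exit local time, which follows directly from the support property $\mathrm{supp}\, \dd L^D \subset \{ s : \tau_D(W_s) = H_s \}$.
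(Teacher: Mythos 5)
Your proposal is correct and follows essentially the same route as the paper: the paper deduces Theorem \ref{Theo_Spa_Markov_Excur} from Theorem \ref{Theo_Spa_Markov_Excur_P} precisely by observing that $\mathbb{N}_x(T_D<\infty)<\infty$ and that $(\rho,W)$ under $\mathbb{N}^D_x$ is the first excursion of $(\rho,W)$ away from $(0,x)$ that exits $D$, then transferring the conditional Poisson structure to that excursion. Your elaboration (non-exiting excursions carry no exit local time, identification of the global and intrinsic truncations/time changes, and the Laplace-functional argument to handle the excursion-dependent cutoff $L^D_\sigma(e^G)$ — which is in fact $\mathcal{F}^D$-measurable by Proposition \ref{L_eta_measurable}, so the Poisson restriction theorem applies directly) simply fills in the details the paper labels ``straightforward.''
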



Recall that the measure $\dd L_s^D$ is supported on $\{ s\geq 0 : \widehat{W}_s \in \partial D \}$ and consider a measurable function $g:\partial D\to \mathbb{R}_{+}$. Under $\mathbb{N}_x$, we define the \textit{exit measure from $D$}, denoted by $\mathcal{Z}^D$  as: 
\begin{equation*}
    \langle \mathcal{Z}^D , g \rangle := \int_0^\sigma \dd L^D_s g(\widehat{W}_s).
\end{equation*}
The total mass of $\mathcal{Z}^D$ is $L_\sigma^D$ and, in particular,  $\mathcal{Z}^D$ is non-null only in  $\{ T_D < \infty \}$. Again by a standard change of variable, we get  
\begin{equation*} 
    \langle \mathcal{Z}^D , g \rangle 
    = \int_0^\sigma \dd \widetilde{L}^D_s g( \text{tr}_D (\widehat{W}_s)) = \int_0^{L^D_\sigma} \dd \ell \, g({\text{tr}_D \widehat{W}_{\theta_\ell }}),  \quad \quad \mathbb{N}_x \text{--a.e.}
\end{equation*}
and this implies that $\mathcal{Z}^D$ is $\mathcal{F}^D$-measurable  since $L^D_\sigma \in \mathcal{F}^D$ by Proposition \ref{L_eta_measurable}. In this work, we shall frequently make  use  of the following  simpler  version of the special Markov property. By Theorem \ref{Theo_Spa_Markov_Excur}, we have
\begin{cor} \label{Corollary:specialMarkovN}
Under $\mathbb{N}_x^D$ and conditionally on $\mathcal{F}^D$,  the point measure 
\begin{equation} \label{specialMarkov_v3}
    \sum_{i \in \mathcal{I}} \delta_{(\rho^i , W^i)} (\dd \rho, \,  \dd \omega)
\end{equation}
is a Poisson random measure with intensity $\int \mathcal{Z}^D(\dd y) \mathbb{N}_{y}(\dd \rho, \, \dd \omega)$.
\end{cor}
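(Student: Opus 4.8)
The plan is to deduce the corollary from Theorem~\ref{Theo_Spa_Markov_Excur} simply by forgetting the local-time labels $L^D_{a_i}$. Conditionally on $\mathcal{F}^D$, that theorem asserts that under $\mathbb{N}^D_x$ the point measure
\[
\mathcal{N} := \sum_{i \in \mathcal{I}} \delta_{(L^D_{a_i}, \rho^i, W^i)}
\]
on $\mathbb{R}_+ \times \mathbb{D}(\mathbb{R}_+, \mathcal{M}_f(\mathbb{R}_+) \times \mathcal{W}_E)$ is Poisson with intensity $\mathbbm{1}_{[0,L^D_\sigma]}(\ell)\, \dd\ell\, \mathbb{N}_{\text{tr}_D(\widehat{W})_{\theta(\ell)}}(\dd\rho, \dd\omega)$. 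The point measure in \eqref{specialMarkov_v3} is precisely the image of $\mathcal{N}$ under the projection $\pi:(\ell,\rho,\omega)\mapsto(\rho,\omega)$ onto the last two coordinates, so it suffices to compute the conditional law of $\pi_*\mathcal{N}$.

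First I would apply the mapping theorem for Poisson point measures, working conditionally on $\mathcal{F}^D$. By Proposition~\ref{L_eta_measurable} the total mass $L^D_\sigma$ is $\mathcal{F}^D$-measurable and, by construction, so is the process $(\text{tr}_D(\widehat{W})_{\theta(r)} : r \geq 0)$; hence the conditional intensity of $\mathcal{N}$ is a genuine $\mathcal{F}^D$-measurable random measure. It is moreover $\sigma$-finite, since $\mathcal{Z}^D$ has finite total mass $L^D_\sigma<\infty$ while each $\mathbb{N}_y$ is $\sigma$-finite. Consequently, conditionally on $\mathcal{F}^D$, the image $\pi_*\mathcal{N}=\sum_{i\in\mathcal{I}}\delta_{(\rho^i,W^i)}$ is again a Poisson point measure, and its intensity is the push-forward under $\pi$ of the intensity above, namely $\int_0^{L^D_\sigma}\dd\ell\,\mathbb{N}_{\text{tr}_D(\widehat{W})_{\theta(\ell)}}(\dd\rho,\dd\omega)$.

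It then remains to identify this intensity with $\int\mathcal{Z}^D(\dd y)\,\mathbb{N}_y(\dd\rho,\dd\omega)$. This is exactly the Lebesgue--Stieltjes change of variable recorded right after Theorem~\ref{Theo_Spa_Markov_Excur_P}: for every non-negative measurable $F$ one has
\[
\int_0^{L^D_\sigma}\dd\ell\,\mathbb{N}_{\text{tr}_D(\widehat{W})_{\theta(\ell)}}(F)=\int_0^\sigma\dd L^D_s\,\mathbb{N}_{\widehat{W}_s}(F)=\int\mathcal{Z}^D(\dd y)\,\mathbb{N}_y(F),
\]
where the first equality uses that $L^D$ is constant on each excursion interval $[\Gamma^D_{s-},\Gamma^D_s]$ together with the substitution $\ell=L^D_s$, $s=\theta(\ell)$ (and the identity $\text{tr}_D(\widehat{W})_{\theta(L^D_s)}=\widehat{W}_s$ on the support of $\dd L^D$, already used in the proof of Theorem~\ref{Theo_Spa_Markov_Excur_P}), while the second is the definition $\langle\mathcal{Z}^D,g\rangle=\int_0^\sigma\dd L^D_s\,g(\widehat{W}_s)$ of the exit measure. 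This produces the announced intensity and completes the argument.

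I do not anticipate any serious obstacle: the only points requiring care are the conditional application of the mapping theorem, which is legitimate precisely because the intensity is an $\mathcal{F}^D$-measurable $\sigma$-finite random measure, and the bookkeeping in the change of variable, both of which are routine given the preceding results.
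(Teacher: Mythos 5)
Your proof is correct and takes essentially the same route as the paper: the corollary is obtained there as an immediate consequence of Theorem~\ref{Theo_Spa_Markov_Excur} combined with the change-of-variable identity $\langle \mathcal{Z}^D , g \rangle = \int_0^{L^D_\sigma} \dd \ell \, g\big(\mathrm{tr}_{D}(\widehat{W})_{\theta(\ell)}\big)$ recorded just before the statement. Your write-up simply makes explicit the projection (mapping-theorem) step, its conditional application given $\mathcal{F}^D$, and the $\sigma$-finiteness of the pushed-forward intensity, all of which are correct.
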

Let us close this section by recalling some well-known properties of 
$\mathcal{Z}^D$ that will be needed,  and by introducing some useful notations. Remark  by \eqref{tirage_au_hasard_ExitN} that, for any measurable $g : \partial D \mapsto \mathbb{R}_+$ and for every $y \in D$, we have
\begin{equation*}
\:\mathbb{N}_{y}\big(\langle \mathcal{Z}^{D},g \rangle \big)=\Pi_{y}\Big(\mathbbm{1}_{\{\tau_{D}<\infty\}}\exp(-\alpha \tau_{D}) g(\xi_{\tau_{D}})\Big),
\end{equation*}
and for such $g$,  we set:
\begin{equation} \label{definition:u_g_lambda}
    \:u_{g}^{D}(y):=\mathbb{N}_{y}\big(1-\exp(-\langle \mathcal{Z}^{D},g\rangle )\big), \quad  \text{ for all } y\in D.
\end{equation}
Theorem 4.3.3 in \cite{Duquesne}
states that for every $g:\partial D\to \mathbb{R}_{+}$ bounded measurable function,  $u_{g}^{D}$ solves the integral equation:
\begin{equation}\label{integral_equation}
u_{g}^{D}(y)+\Pi_{y}\Big(\int_{0}^{\tau_{D}}\dd t \, \psi(u_{g}^{D}(\xi_{t}))\Big)=\Pi_{y}\big(\mathbbm{1}_{\{\tau_{D}<\infty\}}g(\xi_{\tau_{D}})\big).
\end{equation}
By convention, we set $u_{g}^{D}(y):=g(y)$ for every $y\in \partial D$,   and we stress that this convention is compatible with \eqref{integral_equation}.
}

\section{Construction of a measure supported on
\texorpdfstring{$\{ t \in \mathbb{R}_+ : \widehat{W}_t = x \}$}{Lg}}\label{section:structureOfTloc}

From now on, we fix $x\in E$ and we consider the random set:
\begin{equation} \label{equatioN:introsupport}
     \{ t \in \mathbb{R}_+ : \widehat{W}_t = x  \}, 
     \quad  \text{ as well as its image on the tree $\mathcal{T}_H$, viz.   }\quad 
     \{ \upsilon \in \mathcal{T}_H : \xi_\upsilon = x  \}.
\end{equation}
In order to study the latter,  we shall construct an additive functional $A := (A_t)_{t \in \mathbb{R}_+}$ of the Lévy snake supported on $\{ t \in \mathbb{R}_+ : \widehat{W}_t = x \}$. The present section is devoted to the construction of $A$ and to  develop the machinery needed for our analysis.
 The study  of $\{ \upsilon \in \mathcal{T}_H : \xi_\upsilon = x  \}$ is delayed to Section \ref{section:treeStructureLocalTime} and will heavily rely on the results of this section. Let us discuss now in detail the framework we will consider in the rest of this work.
\\
\\
\textbf{Framework of Section \ref{section:structureOfTloc}  and \ref{section:treeStructureLocalTime}: }With the same notations as in previous sections, consider a strong Markov process $\xi$ taking values in $E$ with a.s.  continuous sample paths and we make the following assumptions:
\begin{equation}\label{Asssumption_2}
\bm{x} \:\:\textbf{is regular, instantaneous and recurrent for}\:\:  {\xi}, \tag*{$\mathbf{(H_{2})}$}
\end{equation}
and
\begin{equation}\label{Asssumption_3}
\mathbf{\int_{0}^{\infty}\dd t \:\mathbbm{1}_{\{\xi_{t}=\bm{x}\}} =0}, \quad  \quad \Pi_x -\text{a.s. } \tag*{$\mathbf{(H_{3})}$}
\end{equation}
Under  \ref{Asssumption_2}  the local time of $\xi$ at $x$ is well defined up to a multiplicative constant  (that we fix arbitrarily) and we denote it by $\mathcal{L}$. The recurrence hypothesis  is assumed for convenience and we expect our results to hold with minor modifications without it.   
Set $E_* := E \setminus \{ x\}$ and for $\w \in \mathcal{W}_E$, with the  notation  of Section \ref{Special and intermediate} write
\begin{equation*}
    \tau_{E_*}(\w) = \inf \{ h \in [0,\zeta_\w] : \w(h) = x \},
\end{equation*}
for the  exit time of $\w$ from the open set $E_*$. Observe that since $x$ is recurrent for $\xi$, we have 
\begin{equation} \label{equation:recurrent}
    \Pi_y(\tau_{E_*}< \infty) = 1 
\end{equation}
for every $y \in E_*$, and in particular \ref{tau_infinity} holds. This will allow us to make use of the special Markov property established in the previous section. Assumption \ref{Asssumption_3} might seem a technicality but it plays a crucial role in our study:   it will ensure,  under $\mathbb{N}_{y}$ and $\mathbb{P}_{0,y}$, that the set of branching points of $\mathcal{T}_H$ and  $\{ \upsilon \in \mathcal{T}_H \setminus \{ 0 \} :  \xi_\upsilon = x \}$ are disjoint. We will explain properly this point  after concluding the presentation of the section.  

\par  Let  $\mathcal{N}$ be the excursion measure of $\xi$ at $x$ associated with $\mathcal{L}$ and, with a slight abuse of notation,  still write    $\sigma_\xi$ for the lifetime of $\xi$ under $\mathcal{N}$.  The pair 
\begin{equation*}
    \overline{\xi}_s = ( \xi_s , \mathcal{L}_s ), \quad s \geq 0, 
\end{equation*}
  is a strong Markov process taking values in the Polish space $\overline{E}:=E\times \mathbb{R}_+$ equipped with the product metric $d_{\overline{E}}$.  We  set $\Pi_{y,r}$ for  its law started from an arbitrary point $(y,r)\in \overline{E}$.  Recall that we always work under the assumptions \ref{continuity_snake}, which for $(\psi, \overline{\xi})$ takes the following form:
\begin{enumerate}
    \item[\mbox{}] \textbf{Hypothesis $(\textbf{H}'_{0})$}. There exists  a constant $C_\Pi> 0$ and two positive numbers $p,q > 0$ such that,\\ for every $y \in E$ and $t\geq 0$, we have:
\begin{equation} \label{continuity_snake_2}
    \hspace{-15mm}\Pi_{y,0}\Big(  \sup_{0 \leq u \leq t } d_{\overline{E}}\big((\xi_u, \mathcal{L}_u) , (y,0) \big)^p  \Big) \leq C_\Pi\cdot t^{q}, \hspace{7mm} \text{ and } \hspace{7mm} q\cdot (1-\Upsilon^{-1})>1, \tag*{$\mathbf{(H_{0}^{\prime})}$}
\end{equation}
\end{enumerate}
where we recall the definition of $\Upsilon$ from \eqref{defintiion:coladeBallenaconcaradepalemera}.
We will use respectively the notation   $\overline{\Theta}$, $\overline{\mathcal{S}}$ for  the sets defined as $\Theta$,  $\mathcal{S}$ in Section \ref{secsnake} but  replacing the Polish space $E$ by $\overline{E}$. It will be convenient to write the elements of $\mathcal{W}_{\overline{E}}$ as pairs  $\overline{\text{w}} = (\text{w}, \ell)$, where $\w \in \mathcal{W}_E$ and $\ell : [0,\zeta_\w]\mapsto \mathbb{R}_+$ is a continuous function.
 Recall that under \ref{continuity_snake_2}, the family of measures $(\mathbb{P}_{\mu , \overline{\w}}: (\mu , \overline{\w}) \in \overline{\Theta}  )$ are defined in the canonical space $\mathbb{D}(\mathbb{R}_+,  \mathcal{M}_f(\mathbb{R}_+) \times \mathcal{W}_{\overline{E}} )$ and we denote  the canonical process by $(\rho , W, \Lambda)$, where  $W_{s}:[0,\zeta_{s}(\overline{W}_{s})]\mapsto E$ and $\Lambda_{s}:[0,\zeta_{s}(\overline{W}_{s})]\mapsto \mathbb{R}_+$.  Said otherwise, for each $(\mu , \overline{\w})\in \overline{\Theta}$, under $\mathbb{P}_{\mu , \overline{\w}}$ the process 
 \begin{equation*}
      (\rho_s ,  W_{s},\Lambda_{s} ), \quad  s \geq 0, 
 \end{equation*}
 is the $\psi$-Lévy snake with spatial motion $\overline{\xi}$ started from $(\mu , \overline{\w})$ and  
   we  simply write $\overline{W}_s := (W_{s},\Lambda_{s})$. For every $(y,r_0) \in \overline{E}$, we denote the excursion measure of $(\rho , \overline{W})$   starting from $(0,y,r_0)$  by ${\mathbb{N}}_{y,r_0}$. 
  \par
  Recall that  under $\mathbb{P}_{0,y,r_0}$ or $\mathbb{N}_{y,r_0}$, for each $s\geq 0$ and  conditionally on $\zeta_s$, the pair  $$(W_s,\Lambda_s) = \big( (W_s(h) , \Lambda_s(h)\big): h \in [0,\zeta_s]\big)$$ has the distribution of $(\xi, \mathcal{L})$ under $\Pi_{y,r_0}$  killed at $\zeta_s$. In particular,  the associated  Lebesgue-Stieltjes  measure of $\Lambda_s$ is  supported on the closure of  $\{ h \in [0,\zeta_s) : W_s(h) = x \}$,  $\mathbb{P}_{0,y,r_0}$ and $\mathbb{N}_{y,r_0}$--a.e. We will restrict our analysis to the collection of initial conditions $(\mu , \overline{\w}):=(\mu,\w,\ell) \in \overline{\Theta}$ satisfying that: 
 \begin{enumerate} 
     \item[\rm{(i)}] $\ell$   is a non-decreasing continuous function and the support  of its Lebesgue-Stieltjes  measure is $$\overline{\big\{ h \in [0,\zeta_\w) : \w(h) = x \big\}}.$$ 
     \item[\rm{(ii)}] The measure $\mu$ does not charge the set $\{ h \in [0,\zeta_\w] : \w(h) = x\}$, viz.  
     \begin{equation*}
         \int_{[0, \zeta_{\w}]} \mu(\dd h) \, \mathbbm{1}_{\{ \w(h) = x  \}} = 0.
     \end{equation*}
 \end{enumerate} 
 This subcollection of $\overline{\Theta}$ is denoted by $\overline{\Theta}_x$ and we will work with the process  $\big( (\rho, \overline{W}) , (\mathbb{P}_{\mu , \overline{\w}} : (\mu, \overline{\w}) \in \overline{\Theta}_x) \big)$.  Conditions (i) and (ii)  are natural, since as a particular consequence of the next lemma,   under $\mathbb{P}_{0,y,r_0}$ and $\mathbb{N}_{y,r_0}$  the Lévy  snake $(\rho, \overline{W})$ takes values in $\overline{\Theta}_x$.
\begin{lem}\label{te_quedas_en_Theta}
For every  $(\mu, \overline{\emph{w}})\in \overline{\Theta}_x$ and $(y,r_0) \in \overline{E}$,  the process $(\rho,\overline{W})$ under  $\mathbb{P}_{\mu,\overline{\emph{w}} }$ and $\mathbb{N}_{y,r_0}$  takes values in $\overline{\Theta}_x$. 
\end{lem}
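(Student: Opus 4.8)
The plan is to reduce the statement to the excursion measures $\mathbb{N}_{y,r_0}$ and then to verify the two defining conditions of $\overline{\Theta}_x$ for the canonical process $(\rho,W,\Lambda)$ there. For the reduction I would first invoke the Poisson decomposition \eqref{PoissonRandMeasure}: under $\mathbb{P}^{\dag}_{\mu,\overline{\text{w}}}$ the excursions of $(\rho,\overline{W})$ above the trunk are governed, conditionally, by the measures $\mathbb{N}_{\overline{\text{w}}(h)}$ with $h\in\mathrm{supp}\,\mu$, and condition (ii) on $(\mu,\overline{\text{w}})\in\overline{\Theta}_x$ guarantees that $\mu$ charges no height $h$ with $\text{w}(h)=x$, so every such excursion issues from a point of $\overline{E}$ whose spatial coordinate differs from $x$. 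Granting the claim for the $\mathbb{N}_{y,r_0}$, each excursion then lies in $\overline{\Theta}_x$; the trunk $\overline{\text{w}}\vert_{[0,H(\kappa_{-I_s}\mu)]}$ and the truncated measure $\kappa_{-I_s}\mu$ inherit (i)--(ii) from $(\mu,\overline{\text{w}})$, and since the local-time coordinates match continuously at the gluing heights while neither the trunk nor the excursions give $\rho_s$-mass to $\{W_s=x\}$, the concatenation stays in $\overline{\Theta}_x$. The extension to the full law $\mathbb{P}_{\mu,\overline{\text{w}}}$ follows from \eqref{rho_mu} and \eqref{definition:poissonExcursiones}, since once the trunk is exhausted the snake performs excursions from the root value $\overline{\text{w}}(0)$, which is preserved for all $s$ by the snake property and distributed according to $\mathbb{N}_{\overline{\text{w}}(0)}$. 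Thus it suffices to work under $\mathbb{N}_{y,r_0}$.

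For condition (i), continuity of $h\mapsto\Lambda_s(h)$ is automatic, $\Lambda_s$ being a coordinate of the continuous path $\overline{W}_s\in\mathcal{W}_{\overline{E}}$. For each fixed $s$, \eqref{N:H(rho)} shows that, conditionally on $\zeta_s$, the pair $(W_s,\Lambda_s)$ is a killed trajectory of $\overline{\xi}=(\xi,\mathcal{L})$; hence $\mathbb{N}_{y,r_0}$-a.e. $\Lambda_s$ is non-decreasing and $\dd\Lambda_s$ is carried by $\overline{\{h<\zeta_s:W_s(h)=x\}}$, because $\mathcal{L}$ is non-decreasing and $\dd\mathcal{L}$ is supported on $\{\xi=x\}$. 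To pass to every $s$ simultaneously I would use the continuity of $s\mapsto\overline{W}_s$ in $d_{\mathcal{W}_{\overline{E}}}$: monotonicity is preserved under uniform limits along a countable dense set of times, while the support property follows by contradiction. Indeed, if for some $s_*$ there were rationals $c<d<\zeta_{s_*}$ with $\inf_{[c,d]}d_E(W_{s_*}(\cdot),x)\geq\varepsilon>0$ and $\Lambda_{s_*}(d)>\Lambda_{s_*}(c)$, the continuity of the snake would make these strict inequalities persist (with $\varepsilon/2$) on a neighbourhood of $s_*$, producing a set of times of positive Lebesgue measure at which $\dd\Lambda_s$ fails to be carried by $\overline{\{W_s=x\}}$; this contradicts the fixed-time statement together with Fubini, which yields the property for Lebesgue-almost every $s$.

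For condition (ii) I would start from the many-to-one formula \eqref{tirage_au_hasard_N} applied to $\Phi(\rho_s,\eta_s,\overline{W}_s)=\rho_s(\{h:W_s(h)=x\})$, which gives
\begin{equation*}
\mathbb{N}_{y,r_0}\Big(\int_0^\sigma \dd s\,\rho_s\big(\{h:W_s(h)=x\}\big)\Big)=\int_0^\infty \dd a\,e^{-\alpha a}\,E^0\otimes\Pi_{y,r_0}\Big(\int_{[0,a]}\mathbbm{1}_{\{\xi_h=x\}}\,\dd U^{(1)}_h\Big).
\end{equation*}
By \ref{Asssumption_3} (and the strong Markov property at the hitting time of $x$) the set $\{h\leq a:\xi_h=x\}$ is Lebesgue-negligible $\Pi_{y,r_0}$-a.s.; since the subordinator $U^{(1)}$ of \eqref{identity:exponenteSubord} is independent of $\xi$ and, being a subordinator, gives no mass to a fixed set of zero Lebesgue measure (its drift term vanishes there, its almost surely finite number of jumps of size at least one avoid the independent null set, and its small jumps have finite expected total mass over bounded intervals), the inner expectation vanishes. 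Hence $\rho_s(\{h:W_s(h)=x\})=0$ for Lebesgue-almost every $s$, $\mathbb{N}_{y,r_0}$-a.e. The passage to every $s$ proceeds as in Lemma \ref{lemma:nomassaTD}: if $\rho_s$ charged $\{h:W_s(h)=x\}$ at some height strictly below $\zeta_s$, then writing $m_u=\inf_{[s,u]}\zeta$ and using that $W_u=W_s$ on $[0,m_u]$ with $m_u\uparrow\zeta_s$ as $u\downarrow s$, together with the total-variation right-continuity of $u\mapsto\rho_u$, one would obtain $\rho_u(\{h:W_u(h)=x\})\geq\rho_u(\{h\leq m_u:W_s(h)=x\})>0$ on a right-neighbourhood of $s$, contradicting the almost-everywhere statement.

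The main obstacle is precisely this upgrade from "Lebesgue-almost every $s$" to "every $s$" in (ii). Because the set whose $\rho_s$-mass is controlled moves with $s$, the total-variation continuity of $\rho$ has to be coupled with the snake property, and the right-approach above reaches the whole continuous part $\beta\,\mathrm{Leb}\{h<\zeta_s:W_s(h)=x\}$ and every atom of $\rho_s$ lying strictly below $\zeta_s$, but not a possible atom at the very tip $\zeta_s=H_s$. Since by \eqref{rho_atoms} atoms of $\rho_s$ occur only at jump times of $X$, this residual case amounts exactly to ruling out that an infinite-multiplicity branching point carries the label $x$, i.e.\ that $\widehat{W}_s=x$ at some $s$ with $\Delta X_s>0$. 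I expect to dispose of it by a first-moment (compensation) computation over the jumps of $X$, which once more reduces, through \ref{Asssumption_3} and the independence of the spatial motion from the genealogy, to the vanishing of $\int_0^\infty \dd a\,e^{-\alpha a}\,\Pi_{y,r_0}(\xi_a=x)$, so that $\mathbb{N}_{y,r_0}$-a.e.\ no jump time carries a tip labelled $x$.
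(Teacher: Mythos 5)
Your proposal is correct, and for most of its length it runs parallel to the paper's own proof: the paper also obtains the fixed-time identities from the many-to-one formula \eqref{tirage_au_hasard_N} together with \ref{Asssumption_3} and Campbell's formula, and also upgrades from ``Lebesgue-almost every $s$'' to ``every $s$'' by combining the snake property with the right-continuity of $\rho$ in total variation (the paper phrases this through a countable dense set of times rather than your Fubini-plus-persistence contradiction, but the mechanism is the same), and the passage to $\mathbb{P}_{\mu,\overline{\mathrm{w}}}$ via \eqref{PoissonRandMeasure} and the strong Markov property at $T_0^+$ is identical. The genuine divergence is in the residual case you isolate, a possible atom of $\rho_s$ at the tip with $\widehat{W}_s=x$. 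The paper disposes of it softly: if $\rho_t(\{H_t\})>0$ and $W_t(H_t)=x$, total-variation right-continuity makes the atom at level $H_t$ persist on an interval $[t,t+\epsilon)$, the snake property keeps the label at that level equal to $x$ there, and this contradicts the almost-everywhere statement; no computation over jumps is needed. You instead prove directly that no jump time of $X$ carries a tip labelled $x$ -- which is precisely the infinite-multiplicity half of Proposition \ref{prop:branchingNotocax}, derived in the paper only \emph{after} this lemma. Your computation does go through, but note a point you leave implicit: since $\pi$ may be infinite, the expected number of jumps is infinite and one must truncate, i.e. establish
\begin{equation*}
\mathbb{N}_{y,r_0}\Big(\sum_{s\leq\sigma}\mathbbm{1}_{\{\Delta X_s>\epsilon\}}\mathbbm{1}_{\{\widehat{W}_{s}=x\}}\Big)
=\pi\big((\epsilon,\infty)\big)\,\mathbb{N}_{y,r_0}\Big(\int_0^\sigma \dd s\,\Pi_{y,r_0}(\xi_{H_s}=x)\Big)
=\pi\big((\epsilon,\infty)\big)\int_0^\infty \dd a\,e^{-\alpha a}\,\Pi_{y,r_0}(\xi_a=x)=0,
\end{equation*}
(conditioning on $\rho$ to replace $\mathbbm{1}_{\{\widehat{W}_s=x\}}$ by $\Pi_{y,r_0}(\xi_{H_s}=x)$ is legitimate, there being countably many jumps and $\widehat{W}_s=\widehat{W}_{s-}$), and then let $\epsilon\downarrow 0$; stated with all jumps at once the right-hand side is the indeterminate product $\infty\cdot 0$. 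With this fix both treatments are valid: the paper's buys economy of means, yours buys an early, self-contained proof of the branching-point statement.
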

\begin{proof}
First, we argue  that $\mathbb{N}_{y,r_0}$--a.e. the pair $(\rho_t, \overline{W}_t)$  satisfies (i) and (ii) for each $t \in [0, \sigma]$. 
On the one hand, by formula \eqref{tirage_au_hasard_N},  for every $(y,r_0) \in \overline{E}$ we have :
\begin{align*} 
\mathbb{N}_{y,r_0}\left(\int_{0}^{\sigma_{H}} \dd t \,  \langle \rho_{t},\{h \in [0,H_t]: \:W_{t}(h)=x\} \rangle \right)&=\int_{0}^{\infty}\dd a\:\exp(-\alpha a)\: E^0 \otimes \Pi_{y,r_0}\big[\int_{0}^{a} J_a(\dd h)  \, \mathbbm{1}_{\{\xi_{h}=x\}}\big] = 0.
\end{align*}
In the last equality we used that, by \ref{Asssumption_3} and  the  independence  between $\xi$ and $J_\infty$,  Campbell's formula yields 
$E^0 \otimes \Pi_{y,r_0}\big[\int_{0}^{\infty} J_{\infty}(\dd h) \,  \mathbbm{1}_{\{\xi_{h}=x\}}\: \big]=0$.  On the other hand, by construction of the Lévy snake, for each fixed $t \geq 0$ the support of $\Lambda_t(\dd h)$ is  the closure of  $\{ h \in [0,H_t) : W_t(h) = x \}$ in $[0,H_t]$, $\mathbb{N}_{y,r_0}$--a.e. 
\par 
Consequently, $\mathbb{N}_{y,r_0}$--a.e.~, we can  find a countable dense set $\mathcal{D} \subset [0,\sigma]$ such that  we have
\\
\\
\centerline{$\langle \rho_{t},\{h \in [0,H_t]: \:W_{t}(h)=x\} \rangle = 0$ \hspace{0.5cm} and\hspace{0.5cm} $\text{supp } \Lambda_t(\dd h)$ is the closure of  $\{  h \in [0,H_t) : W_t(h) = x \}$,}\\
\\
for every $t \in \mathcal{D}$. For instance, one can construct the set $\mathcal{D}$ by  taking an infinite sequence of independent uniform points in $[0,\sigma]$.  We now claim  that $\rho$ satisfies that $\mathbb{N}_{y,r_0}$--a.e., for every $s<t$, we have   $\rho_{s}\mathbbm{1}_{[0,m_H(s,t))} = \rho_{t}\mathbbm{1}_{[0,m_H(s,t))}$, where we recall the notation $m_{H}(s,t)=\min_{[s,t]}H$. Indeed,  remark that for fixed $s<t$, this holds by the Markov property and we can extend this property to all $0\leq s<t\leq \sigma$ since $\rho$ is  right-continuous with respect to the total variation distance. Now, by the snake property we deduce  that $\mathbb{N}_{y,r_0}$--a.e, for every $t \in [0,\sigma]$, we have
\begin{equation} \label{equation:NomassaRhox_2}
   \langle \rho_{t},\{h \in [0,H_t): \:W_{t}(h)=x\} \rangle = 0  
    \quad \quad \text{ and }\quad \quad \{ h \in [0,H_t) : {W}_t(h) = x \} = \text{supp } \Lambda_t(\dd h) \cap [0,H_t) .
\end{equation}
Taking the closure in the second equality  we deduce that the closure of  $ \{ h \in [0,H_t) : {W}_t(h) = x \}$ is exactly  $\text{supp } \Lambda_t(\dd h)$. However,  to conclude  that $\mathbb{N}_{y,r_0}$-a.e. 
\begin{equation} \label{equation:NomassaRhox_}
    \langle \rho_{t},\{h \in [0,H_t] : \:W_{t}(h)=x\} \rangle =0, \quad \quad \text{ for all } \,  t\in  [0, \sigma], 
\end{equation}
we still need an additional step.  Arguing  by contradiction,  suppose that for some $t>0$ the quantity  \eqref{equation:NomassaRhox_} is non-null. Then, by \eqref{equation:NomassaRhox_2} we must have  $\rho_t(\{ H_t \})>0$ and $W_t(H_t)=x$. By right-continuity of  $\rho$ with respect to the total variation metric, we get
\begin{equation*}
\lim_{\epsilon \rightarrow 0}|\rho_t(\{ H_t \}) -  \rho_{t+\epsilon}(\{ H_t \})|  = 0, 
\end{equation*}
and we deduce   that for $\epsilon$ small enough, $\rho_u(\{ H_t\})> 0$ for all  $u \in [t,t+\epsilon)$;  in particular $H(\rho_{u})\geq H(\rho_t)$ for all  $u \in [t,t+\epsilon)$. Since $W_t(H_t) = x$, the snake property ensures that ${W}_{u}(H_t) = x$ for all $u \in [t,t+\epsilon)$ and, since   $\rho_u(\{ H_t\})> 0$ for every   $u \in [t,t+\epsilon)$, we obtain a contradiction with the fact that $\langle \rho_{u},\{h \in [0,H_u]: \:W_{u}(h)=x\} \rangle = 0$ for every $u \in \mathcal{D}$. 
\par 
Let us now deduce this result  under $\mathbb{P}_{\mu , \overline{\w}}$.    First, observe that the statement of the lemma follows directly  under $\mathbb{P}_{0,y,r_0}$ by excursion theory. Next, fix $(\mu,\overline{\text{w}})\in \overline{\Theta}_{x}$ with $\overline{\w}(0) = (y,r_0)$, consider $(\rho,\overline{W})$ under $\mathbb{P}_{\mu,\overline{\w}}$ and set $T_{0}^+:=\inf\{t\geq 0:\: \langle \rho_{t} ,1 \rangle  =0\}$. The strong Markov property gives us that $((\rho_{T_{0}^+ +t},\overline{W}_{T_{0}^++t}):~t\geq 0)$ is distributed according to $\mathbb{P}_{0,y,r_0}$ and consequently,   $(\rho_{T_{0}^+ + t},\overline{W}_{T_{0}^++t})_{t\geq 0}$ takes values in $\overline{\Theta}_x$. To conclude,  it remains to prove the statement of the lemma under $\mathbb{P}^{\dag}_{\mu , \overline{\w}}$. In this direction, under $\mathbb{P}^{\dag}_{\mu , \overline{\w}}$, consider $\big((\alpha_i,\beta_i):~i\geq 0\big)$ the excursion intervals  of $\langle \rho,1\rangle$ from its running infimum. Then, write    $(\rho^i, \overline{W}^i)$ for the subtrajectories of $(\rho , \overline{W})$ associated with $[\alpha_i,\beta_i]$ and set $h_i:=H_{\alpha_i}$. We recall from \eqref{PoissonRandMeasure}  that the measure:
\begin{equation*} 
\sum \limits_{i\in \mathbb{N}}\delta_{(h_{i},\rho^{i},\overline{W}^{i})}, 
\end{equation*}
is a Poisson point measure with intensity $\mu(\dd h)\:\mathbb{N}_{\overline{\w}(h)}(\dd\rho,\:\dd \overline{W}).$ Since $(\mu,\overline{\w})\in\overline{\Theta}_x$, it follows by the result under the excursion measures $(\mathbb{N}_{y,r_0}: (y,r_0) \in \overline{E} )$ that $\mathbb{P}^\dag_{\mu , \overline{\w}}$--a.s. the pair  $(\rho_{t},\overline{W}_{t})$  belongs to $\overline{\Theta}_x$ for every  $t\in [0,T_{0}^+]$, as  wanted. 
\end{proof}

 Finally, recall that the snake property ensures that the function $(\widehat{W}_s, \widehat{\Lambda}_s )_{s \geq 0}$ is  well defined in the quotient space $\mathcal{T}_H$.  Hence, we can  think of $\overline{W}$ as a tree-indexed process, that we write with the usual abuse of notation as 
\begin{equation*}
    (\xi_\upsilon, \mathcal{L}_\upsilon)_{\upsilon \in \mathcal{T}_H}. 
\end{equation*}
\textbf{Main results of Section 4: }Now that we have introduced our framework, we can  state the  main results of this section. Much of our effort is devoted to  the construction of a measure supported on the set $\{ t\in \mathbb{R}_+ : \widehat{W}_t = x \}$ and  satisfying suitable  properties. In this direction, for every $r\geq 0$,   we set $\tau_r(\overline{\w}) := \inf \{  h \geq 0 :~\overline{\w}(h)=(x, r) \}$ and remark that,  for every  $(\mu , \overline{\w}) \in \overline{\Theta}_x$, it holds that  $\mu(\{\tau_r(\overline{\w})\}) = 0,$ with the convention $\mu(\infty) = 0$. We can now state the main result of this section:
\begin{theo}\label{thm:existenciaAditivaprelim}
Fix $(y,r_0)\in\overline{E}$ and $(\mu,\overline{\emph{w}})\in\overline{\Theta}_x$.
The  convergence 
\begin{equation} \label{equation:framework-aditiva}
 A_t = \lim_{\varepsilon \downarrow 0} \frac{1}{\varepsilon}  \int_0^t \dd u  \int_{\mathbb{R}_+} \dd r \,  \mathbbm{1}_{\{ \tau_r(\overline{W}_u ) < H_u < \tau_r(\overline{W}_u) + \varepsilon \}} ,
\end{equation}
 holds uniformly in compact intervals  in measure under $\mathbb{P}_{\mu, \overline{\emph{w}} }$ and $\mathbb{N}_{y,r_0}( \, \cdot \, \cap \{ \sigma > z \} )$  for every   $z >0$. Moreover, \eqref{equation:framework-aditiva}  defines a continuous additive functional $A = (A_t)$ for the Lévy snake $(\rho , \overline{W})$  whose Lebesgue-Stieltjes measure $\dd A$ is supported on $\{ t \in \mathbb{R}_+ : \widehat{W}_t = x \}$.
\end{theo}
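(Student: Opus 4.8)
The plan is to realise $A$ as the integral over $r$ of the exit local times from the nested domains $D_r := E\times[0,r)\subset\overline{E}$, $r>0$. Since $\mathcal{L}$ increases only on $\{\xi=x\}$, the exit time of $\overline{\w}=(\w,\ell)$ from $D_r$ coincides with $\tau_r(\overline{\w})$, and at such a time the spatial coordinate necessarily equals $x$; moreover \eqref{equation:mutaurnul} guarantees $\mu(\{\tau_r(\overline{\w})\})=0$, so Proposition \ref{proposition:aproxLDPmu} applies to each $D_r$ and produces a continuous non-decreasing exit local time $\mathscr{L}^r$ whose defining approximation \eqref{definition:ExitPMu} is exactly the inner integrand of \eqref{equation:framework-aditiva} for fixed $r$. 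First I would construct a jointly measurable version $(\mathscr{L}^r_t:t\ge 0,\,r>0)$ (via a monotonicity-and-density argument in $r$), set $A_t:=\int_0^\infty\mathscr{L}^r_t\,\dd r$, and then identify this with the limit in \eqref{equation:framework-aditiva} and establish its properties.

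The analytic backbone is a first-moment identity. Applying the exit-local-time many-to-one formula \eqref{tirage_au_hasard_ExitN} to each $D_r$ and integrating in $r$ yields, for nonnegative measurable $\Phi$,
\begin{equation*}
\mathbb{N}_{y,r_0}\Big(\int_0^\sigma \dd A_s\,\Phi(\rho_s,\eta_s,\overline{W}_s)\Big)
=\int_0^\infty \dd r\; E^0\otimes\Pi_{y,r_0}\Big(e^{-\alpha\tau_r}\,\Phi\big(J_{\tau_r},\widecheck{J}_{\tau_r},(\overline{\xi}_t:t\le\tau_r)\big)\Big),
\end{equation*}
and since $r\mapsto\tau_r$ is the right-continuous inverse of the continuous functional $\mathcal{L}$, the change of variable $\dd r=\dd\mathcal{L}_t$ recasts the right-hand side as $E^0\otimes\Pi_{y,r_0}\big(\int_0^\infty\dd\mathcal{L}_t\,e^{-\alpha t}\,\Phi(J_t,\widecheck{J}_t,(\overline{\xi}_s:s\le t))\big)$. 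Under $\mathbb{P}_{\mu,\overline{\w}}$ the analogue follows from the excursion decomposition \eqref{PoissonRandMeasure}, as in the proof of Proposition \ref{proposition:aproxLDPmu}. This identity is the quantitative control of $\dd A$ used throughout; I would always apply it after a truncation on $\{\langle\rho_s,1\rangle\le K\}$, since for $\alpha=0$ the untruncated mean mass is generally infinite.

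For the convergence, Fubini rewrites the approximation as $A^\varepsilon_t:=\int_0^\infty\mathscr{L}^{r,\varepsilon}_t\,\dd r$, where $\mathscr{L}^{r,\varepsilon}$ is the $\varepsilon$-approximation of $\mathscr{L}^r$. For each fixed $r$ one has $\mathscr{L}^{r,\varepsilon}\to\mathscr{L}^r$ uniformly on compacts in $L^1$ by Proposition \ref{proposition:aproxLDPmu}. To push the limit through the $r$-integral I would truncate on $\{\langle\rho_s,1\rangle\le K\}$ as in Lemma \ref{lemma:technicalLema2}, so that a many-to-one bound of the type \eqref{equation:LDbajoPmu_eq2} becomes integrable in $r$ and furnishes a domination uniform in $(r,\varepsilon)$; this gives a Cauchy property in $L^1$ for the truncated functionals, which I would upgrade to uniform-in-time control via Doob's inequality applied to the martingales $\mathbb{N}_{y,r_0}(A^{\varepsilon}_\infty\mid\mathcal{F}_s)$, exactly as in Proposition \ref{L_eta_measurable}. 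Removing the truncation by letting $K\to\infty$ on $\{\sup_s\langle\rho_s,1\rangle\le K\}$ with a diagonal extraction then yields the stated convergence in measure under $\mathbb{N}_{y,r_0}(\,\cdot\,\cap\{\sigma>z\})$ and under $\mathbb{P}_{\mu,\overline{\w}}$; continuity and the additive-functional property of $A$ are inherited from the $\mathscr{L}^r$ by linearity and monotone convergence.

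Finally, for the support, taking $\Phi(\rho_s,\eta_s,\overline{W}_s)=\mathbbm{1}_{\{\widehat{W}_s\ne x\}}$ in the first-moment identity makes the right-hand side vanish, because $\dd\mathcal{L}$ charges only $\{\xi=x\}$; hence $\dd A$ does not charge $\{\widehat{W}_s\ne x\}$ in the mean, and this is promoted to the pathwise statement $\mathrm{supp}\,\dd A\subset\{t:\widehat{W}_t=x\}$ by the now-standard argument exploiting right-continuity of $\rho$ in total variation together with the snake property, as in Lemma \ref{lemma:nomassaTD} and Lemma \ref{te_quedas_en_Theta}. The main obstacle I anticipate is the interchange of the $\varepsilon\downarrow0$ limit with the continuum integral $\int_0^\infty(\cdot)\,\dd r$: one must extract a domination uniform in $r$ and $\varepsilon$ from the first-moment formula and simultaneously secure uniform-in-time (not merely pointwise-in-$t$) convergence, all while the total mass of $\mathbb{N}_{y,r_0}$ is infinite — it is precisely to accommodate this that the statement is phrased in measure and restricted to $\{\sigma>z\}$.
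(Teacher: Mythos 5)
Your overall architecture coincides with the paper's (which proves this statement as Proposition \ref{proposition:aditivaDefinicion}, after the preliminaries of Section \ref{subsection:PhiTildeyMarkovTloc}): both define $A_t=\int_0^\infty \dd r\,\mathscr{L}^{\,r}_t$ from a jointly measurable family of exit local times of the domains $D_r$, both obtain \eqref{equation:framework-aditiva} by a truncation-plus-domination interchange of the $\varepsilon$-limit with the $r$-integral, and both get the support statement from the many-to-one formula for $\dd A$ (the paper's Lemma \ref{lemLAformula:aditiva}; your extra appeal to the total-variation/snake-property argument is unnecessary, since $\{t:\widehat{W}_t=x\}$ is already closed by continuity of $\widehat{W}$). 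The genuine divergence is in the key estimate. The paper truncates on $\{\widehat{\Lambda}^*_t<K\}$ with $\widehat{\Lambda}^*_t=\sup_{s\le t}\widehat{\Lambda}_s$: on this event $\mathscr{L}^{\,r}\equiv 0$ on $[0,t]$ for $r>K$, so the $r$-integral collapses to the compact interval $(0,K]$ and only a bound uniform in $(r,\varepsilon)$ is needed; that bound comes from a time-change trick — rewriting the $\varepsilon$-approximation through $\rho^{D_r}$ of \eqref{def:rho:D} and using $\mathscr{L}^{\,r}_t=\ell^{D_r}(R^{D_r}_t)$ from \eqref{equation:exitlocalTimeChange} — which dominates the error by $\mathbb{E}_{0,y,0}\big[\sup_{s\le t}|\varepsilon^{-1}\int_0^s\dd u\,\mathbbm{1}_{\{0<H_u<\varepsilon\}}+I_s|\big]$, an $r$-independent quantity tending to $0$ by \eqref{temps:local:I}. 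This yields uniform boundedness \emph{and} uniform-in-time convergence in one stroke, with no martingale argument and no moment of order two. You instead truncate on the mass $\{\langle\rho_s,1\rangle\le K\}$, keep the full range $r\in(0,\infty)$, and therefore need your domination to be integrable in $r$, plus Doob's inequality for uniformity in time. This route is viable — the truncated many-to-one bounds are controlled by $P^0\otimes\Pi_{y,r_0}(\langle J_{\tau_r},1\rangle\le K,\ \tau_r<\infty)$, whose $r$-integral is the mean first passage above $K$ of the non-degenerate subordinator $r\mapsto\langle J_{\tau_r},1\rangle$, hence finite — but you assert rather than prove this integrability, and your Doob step also requires convergence of the predictors $\mathbb{E}^{\dag}_{\rho_s,\overline{W}_s}[A^\varepsilon_\infty]$, which is exactly the part of Proposition \ref{L_eta_measurable} that takes work.

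Two concrete weak points. First, your parenthetical plan for joint measurability, ``a monotonicity-and-density argument in $r$'', does not work: $r\mapsto\mathscr{L}^{\,r}_t$ is neither monotone (under $\mathbb{N}_{x,0}$ the total masses $(\mathscr{L}^{\,r}_\sigma)_{r>0}$ form a branching process) nor continuous in $r$, so no interpolation from a dense set of levels is available. The paper's Lemma \ref{def:lem:scr:L} builds the jointly measurable version directly from the jointly measurable $\varepsilon$-approximations via a measurable selection of limits in probability, and this step is indispensable even to define $A$. Second, Proposition \ref{proposition:aproxLDPmu} does not cover the exceptional level $r$ with $\overline{\mathrm{w}}(0)=(x,r)$ (relevant when $y=x$); the paper treats it separately in Lemma \ref{L:r:Lemma}, showing in particular $L^{D_r}_\sigma=0$ there. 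Both points are repairable and do not invalidate your scheme, but as written they are the places where your argument is incomplete.
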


We will  give another equivalent construction of the additive functional $A$  in  Proposition \ref{proposition:aditivaDefinicion} but we are   not yet in position to formulate the precise statement. Both constructions will be needed for our work. Next,  the second main result of the section characterizes the support of the measure $\dd A$ as follows: 
\begin{theo} \label{theorem:supportIntro}
Fix $(y,r_0)\in\overline{E}$,  $(\mu,\overline{\emph{w}})\in\overline{\Theta}_x$ and 
denote  the support of the Stieltjets  measure of $A$ by $\emph{supp }\dd A$. Under $\mathbb{N}_{y,r_0}$ and $\mathbb{P}_{\mu,\overline{\emph{\w}}}$, we have:
\begin{equation} \label{equation:supportIntro}
    \emph{supp } \, \dd A = \overline{\big\{ t \in [0,\sigma] : \, \xi_{p_H(t)} = x \emph{ and } p_H(t) \in \emph{Multi}_2(\mathcal{T}_H) \cup \{ 0 \}  \big\}}.
\end{equation}
\end{theo}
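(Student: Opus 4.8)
The plan is to prove the two inclusions in \eqref{equation:supportIntro} under $\mathbb{N}_{y,r_0}$, and then transfer the statement to $\mathbb{P}_{\mu,\overline{\w}}$ by excursion decomposition. The basic tool is the representation $A_t=\int_0^\infty \mathscr{L}^r_t\,\dd r$, which follows from the defining approximation \eqref{equation:framework-aditiva} by Fubini's theorem after identifying the inner $\varepsilon$-approximation with the exit local time $\mathscr{L}^r=L^{D_r}$ from $D_r=E\times[0,r)$; thus $\dd A=\int_0^\infty \dd\mathscr{L}^r\,\dd r$ as measures on $[0,\sigma]$, and each $\dd\mathscr{L}^r$ is carried by $\{s:\widehat{W}_s=x,\ \widehat{\Lambda}_s=r\}$. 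I would first prove that $\text{supp}\,\dd A$ equals the complement $C$ of the open intervals on which $\widehat{\Lambda}$ is locally constant, and only afterwards translate $C$ into the tree/label description.

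For the inclusion $\text{supp}\,\dd A\subseteq C$, let $(a,b)$ be a maximal interval with $\widehat{\Lambda}\equiv c$ on it. For every $r\ne c$ the measure $\dd\mathscr{L}^r$ gives no mass to $(a,b)$, since it is carried by $\{\widehat{\Lambda}=r\}$; hence $\dd A((a,b))=\int_0^\infty \dd r\,\dd\mathscr{L}^r((a,b))$ only receives a contribution from the single level $r=c$, which carries no Lebesgue mass, so $\dd A((a,b))=0$. Thus $\dd A$ charges no constancy interval of $\widehat{\Lambda}$ and its support lies in the closed set $C$.

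For the reverse inclusion $C\subseteq \text{supp}\,\dd A$, fix $s_0\in C$, so that $\widehat{\Lambda}$ is non-constant on every window $(s_0-\delta,s_0+\delta)$; by continuity it then covers a nondegenerate band of levels $(c_1,c_2)$ inside the window. For $r\in(c_1,c_2)$ the first upcrossing of level $r$ by $\widehat{\Lambda}$ inside the window is an increase point of $\widehat{\Lambda}$, and since $\widehat{\Lambda}_s=\mathcal{L}_{p_H(s)}$ and the label local time $\mathcal{L}$ grows only at $x$, at that time $\widehat{W}=x$ and the tip sits exactly at the first $r$-level crossing of its ancestral line, i.e. $\tau_r(\overline{W}_s)=H_s$. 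Plugging this into the approximation \eqref{definition:aproximationExitlocal} of $\mathscr{L}^r$ should yield that $\dd\mathscr{L}^r$ charges the window for a set of levels $r\in(c_1,c_2)$ of positive Lebesgue measure, whence $\dd A(s_0-\delta,s_0+\delta)\ge \int_{c_1}^{c_2}\dd r\,\dd\mathscr{L}^r(s_0-\delta,s_0+\delta)>0$. The main obstacle is precisely this quantitative step: turning the mere variation of $\widehat{\Lambda}$ into a genuine lower bound on the exit-local-time increments, uniformly over a positive-measure set of levels. I would obtain it by localising around the crossing with the special Markov property (Theorem~\ref{Theo_Spa_Markov_Excur_P}): conditionally on $\mathcal{F}^{D_r}$ the excursions off $D_r$ form a Poisson measure directed by $\mathcal{Z}^{D_r}$, which both forces $\mathscr{L}^r$ to be strictly increasing at such crossings and lets one control the increments from below, and then conclude by a Fubini argument in $(r,s)$.

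It remains to identify $C$ with the right-hand side of \eqref{equation:supportIntro}. By \ref{Asssumption_3} — which, as recorded in the framework, forces every branching point of $\mathcal{T}_H$ to carry a label different from $x$ — any $s$ with $\widehat{W}_s=x$ has $p_H(s)\in\text{Multi}_1(\mathcal{T}_H)\cup\text{Multi}_2(\mathcal{T}_H)\cup\{0\}$. Since $\mathcal{L}$ increases only through genuine crossings of $x$, while an isolated touch of $x$ at a leaf accumulates no local time, a time $s_0$ lies in a constancy interval of $\widehat{\Lambda}$ exactly when $p_H(s_0)$ is not an accumulation point of crossings; consequently $C=\overline{\{t:\xi_{p_H(t)}=x\text{ and }p_H(t)\in\text{Multi}_2(\mathcal{T}_H)\cup\{0\}\}}$, the root being kept to cover the boundary case $y=x$ and to remain consistent under $\mathbb{P}_{\mu,\overline{\w}}$, where $p_H(0)$ may be a branching point. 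This proves \eqref{equation:supportIntro} under $\mathbb{N}_{y,r_0}$; the statement under $\mathbb{P}_{\mu,\overline{\w}}$ then follows because both $\dd A$ and the right-hand set split over the excursions of $\langle\rho,1\rangle$ encoded by the Poisson measure \eqref{PoissonRandMeasure}, on each of which the excursion-measure result applies.
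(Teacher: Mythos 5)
Your overall architecture is sound and close in spirit to the paper's: the inclusion $\text{supp}\,\dd A\subseteq[0,\sigma]\setminus\mathcal{C}^*$ via the representation $\dd A=\int_0^\infty\dd r\,\dd\mathscr{L}^r$ with each $\dd\mathscr{L}^r$ carried by $\{\widehat{W}=x,\widehat{\Lambda}=r\}$ is correct as you wrote it, and the reduction to the excursion measure and back is the same as in the paper. However, the step you yourself flag as ``the main obstacle'' is a genuine gap, and it is precisely where the paper has to invest its deepest input. The assertion that the special Markov property ``forces $\mathscr{L}^r$ to be strictly increasing at such crossings'' does not follow readily from the Poisson structure of the excursions off $D_r$: that structure indexes excursions by \emph{local time}, and converting density and distinctness of the values $\{\mathscr{L}^r_{a_i^r}\}$ into the statement that the \emph{time-indexed} measure $\dd\mathscr{L}^r$ charges every time-neighborhood of a crossing requires the time-change argument of Lemma~\ref{L^rcapExi} (the c\`adl\`ag process $R_t=\sum_{\mathscr{L}^r_{a_i^r}\leq t}\sigma(\overline{W}^{i,r})$ together with continuity of $\theta^r$ at the d\'ebuts). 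Alternatively, the paper's route goes through Lemma~\ref{lemma:suppordUnderN} ($\{0,\sigma\}\subset\text{supp}\,\dd A$ under $\mathbb{N}_{x,0}$), whose proof hinges on $\mathbb{N}_{x,0}(1-e^{-\lambda A_\infty})=\widetilde{\psi}^{-1}(\lambda)\to\infty$ as $\lambda\uparrow\infty$, i.e.\ on the identification of the branching mechanism $\widetilde{\psi}$ and the fact that it satisfies (A4) (Propositions~\ref{lemma:divergExpo} and~\ref{lemma:InversaExpo}). Without this divergence one cannot rule out that $A$ vanishes on an initial segment of the excursions glued at a crossing, so your ``control of the increments from below'' genuinely needs this machinery; it is not a localisation exercise. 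There is also a measure-theoretic point your Fubini step skips: the support statement for $\dd\mathscr{L}^r$ holds for each \emph{fixed} $r$ outside a null set depending on $r$; to integrate over the random band $(c_1,c_2)$ you must upgrade it, via the jointly measurable version of Lemma~\ref{def:lem:scr:L} and Fubini on $\dd r\otimes\mathbb{N}_{x,0}$, to ``a.e.\ realization, Lebesgue-a.e.\ $r$'' — restricting to rational $r$, which is what the fixed-level lemma naively allows, contributes zero Lebesgue measure and kills the lower bound. (The paper sidesteps this entirely by proving that the countably many points $\{a_i^r,b_i^r:r\in\mathbb{Q}_+^*\}$ lie in $\text{supp}\,\dd A$ itself and that their closure is $[0,\sigma]\setminus\mathcal{C}^*$.)

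The final identification of $[0,\sigma]\setminus\mathcal{C}^*$ with the right-hand side of \eqref{equation:supportIntro} is also compressed to the point of being a restatement rather than a proof. The paper devotes Proposition~\ref{proposition:exits_constancyLambda} to it: one direction needs Proposition~\ref{prop:branchingNotocax} (no branching point carries the label $x$, which you correctly invoke), but the other direction — that a time $t$ with $\xi_{p_H(t)}=x$ and $p_H(t)$ of multiplicity $2$ cannot sit inside a constancy interval of $\widehat{\Lambda}$ — requires the regularity of $x$ for $\xi$ (Hypothesis~\ref{Asssumption_2}), the snake property, and the approximation argument with the times $s_k$ in the paper's proof; ``not an accumulation point of crossings'' is not a justification of this. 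In summary: your strategy is completable and is a legitimate variant of the paper's (Fubini over a band of levels instead of a countable dense family of levels), but the two steps above — especially the first — are exactly the substance of Theorem~\ref{prop:suppA} and are missing from the proposal.
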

Observe that under $\mathbb{P}_{\mu , \overline{\w}}$ with $\w(0) = x$, the root of $\mathcal{T}_H$ has infinite multiplicity and this is  why we had to consider it separately in the previous display.  This result is stated in a slightly different but equivalent form  in Theorem \ref{prop:suppA}.  The identity \eqref{equation:supportIntro} can be also formulated in terms of constancy intervals of $\widehat{\Lambda}$. More precisely, we will also establish in Theorem \ref{prop:suppA} that under $\mathbb{N}_{y,r_0}$ and $\mathbb{P}_{\mu,\overline{\w}}$, we have:
\begin{equation} \label{identity:framework:soporteConstanciaLambda}
  \text{supp } \dd A = [0,\sigma] \setminus \big\{ t \in [0,\sigma] : ~ \sup_{(t-\varepsilon, t+\varepsilon )} \widehat{\Lambda}_{s} =  \inf_{(t-\varepsilon, t+\varepsilon )} \widehat{\Lambda}_{s}, \quad  \text{ for some } \varepsilon > 0  \big\}.
\end{equation}
We conclude the presentation of our framework  with a consequence of Lemma \ref{te_quedas_en_Theta}.  Roughly speaking it states that, with the exception of the root under $\mathbb{P}_{0,x,0}$, the process $(\xi_\upsilon)_{\upsilon \in \mathcal{T}_H}$ can not take the value $x$ at the branching points of $\mathcal{T}_H$.  The precise statement is the following: 
\begin{prop}\label{prop:branchingNotocax} For every $(y,r_0) \in \overline{E}$ and $(\mu,\overline{\mathrm{w}})\in \overline{\Theta}_x$, we have: 
\begin{equation*}
    \{ t \in [0,\sigma] : \widehat{W}_t = x  \} \cap  \{ t \in [0,\sigma]: p_H(t) \in \emph{Multi}_3 (\mathcal{T}_H) \cup \emph{Multi}_\infty(\mathcal{T}_H) , \, {p}_H(t) \neq 0 \} = \emptyset,
\end{equation*}
under $\mathbb{N}_{y,r_0}$ and  $\mathbb{P}_{\mu,\overline{\emph{w}}}$.
\end{prop}

\begin{proof}
We start by proving our result under $\mathbb{N}_{y,0}$.  First, introduce the measure $\mathbb{N}^\bullet_{y,0}(\dd\rho, \dd \overline{W}, \, \dd s  )$ supported on $\mathbb{D}(\mathbb{R}_+, \mathcal{M}_f(\mathbb{R}_+) \times \mathcal{W}_{\overline{E}}) \times \mathbb{R}_+$ defined by $\mathbb{N}^\bullet_{y,0} = \mathbb{N}_{y,0}(\dd \rho , \dd \overline{W}) \, \dd s \mathbbm{1}_{\{ s \leq \sigma \}}$ and write  $U : \mathbb{R}_+ \mapsto \mathbb{R}_+$ for the identity function $U(s) = s$. The law under $\mathbb{N}^\bullet_{y,0}$ of $(\rho , \overline{W}, U)$ is therefore  given by
\begin{equation*}
    \mathbb{N}^{\bullet}_{y,0}\Big( \Phi(\rho, \overline{W}, U) \Big) = \mathbb{N}_{y,0}\Big( \int_0^\sigma \dd s \,  \Phi( \rho, \overline{W} , s) \Big). 
\end{equation*} The measure  $\mathbb{N}^{\bullet}_{y,0}$ can be seen as a pointed version of $\mathbb{N}_{y,0}$. In particular, conditionally on $(\rho, \overline{W})$, the random variable $U$ is  a uniform point in $[0,\sigma]$. Under $\mathbb{N}_{y,0}^\bullet$ we still write $X_t := \langle \rho_t ,1  \rangle$ and $H_t:=H(\rho_t)$. Furthermore,  we set  $X^{\bullet}_t:= X_{U+ t} - X_U$ and   $I^{\bullet }_t := \inf _{s \leq t } X^{\bullet}$ , for every $t \geq 0$, and  we  denote  the excursion intervals over the running infimum of $X^{\bullet}$ by $\big( (\alpha_i, \beta_i) : \, i \in \mathbb{N} \big)$. The dependence on $U$ is dropped to simplify notation. Finally, set  
\begin{align*}
     h_i^{\bullet}:= H\big( \kappa_{-I^{\bullet}_{\alpha_i}} \rho_U \big) ,
\end{align*}
and write  $(\rho^{\bullet , i}, \overline{W}^{\bullet , i})$ for the corresponding subtrajectory associated with $(\alpha_i , \beta_i)$ occurring at  height $h_i^{\bullet}$. Under $\mathbb{N}^\bullet_{y,0}$,  the Markov property applied at time $U$ and \eqref{PoissonRandMeasure} gives that, conditionally on $(\rho_U, W_U)$,  the random measure 
\begin{equation*}
    \mathcal{\mathcal{M}}^\bullet := \sum_{i\in \mathbb{N}} \delta_{ ({ h_i^{\bullet}, \,  \rho^{\bullet , i},\, \overline{W}^{\bullet , i}})},  
\end{equation*}
is a Poisson point measure  with intensity 
$ \rho_U(\dd h) \,  \mathbb{N}_{\overline{W}_U(h)} (\dd \rho, \dd {W} ). $ In particular,  the functional
\begin{equation*}
    F(\mathcal{M}^\bullet) = \# \Big\{ (h_i^{\bullet},\rho^{\bullet,i}, \overline{W}^{\bullet,i} ) \in \mathcal{M}^\bullet : W^{\bullet,i}(0) = x \Big\}, 
\end{equation*}
conditionally on $(\rho_U , W_U)$,  is a Poisson random variable with parameter $\int \rho_U(dh) \mathbbm{1}_{\{ W_U(h) = x \}}$. However, by Lemma \ref{te_quedas_en_Theta}, we have 
$\int \rho_U(dh) \mathbbm{1}_{\{ W_U(h) = x \}} =0$ and we derive  that, $\mathbb{N}^{\bullet}_{y,0}$ --a.e.,  $F(\mathcal{M}^\bullet)$ is null. Heuristically,   the previous argument shows that if we take --  conditionally on $\sigma$ -- a  point  uniformly at random in $\mathcal{T}_H$, there is no branching point $\upsilon$ with label $x$ on the right of the branch connecting the root to $\upsilon$. Let us now show that this ensures that
\begin{equation*}
    \big\{ t \in [0,\sigma] : \widehat{W}_t = x  \big\} \cap  \big\{ t \in [0,\sigma] : p_H(t) \in \text{Multi}_3 (\mathcal{T}_H) \cup \text{Multi}_\infty(\mathcal{T}_H) \big\} = \emptyset,  \quad \mathbb{N}_{y,0} \text{--a.e.}
\end{equation*}
Since $\mathbb{N}_{y,0}^{\bullet}(\Phi(\rho,\overline{W})) = \mathbb{N}_{y,0}(\sigma \cdot  \Phi(\rho,\overline{W}))$, it suffices to prove  the previous display  under $\mathbb{N}^\bullet_{y,0}$. Let $(\upsilon_i:i\in \mathbb{N})$ be an indexation of the branching points of $\mathcal{T}_H$ -- an indexation measurable with respect to $H$.
Pick a  branching point  $\upsilon_i \in \text{Multi}_3 (\mathcal{T}_H) \cup \text{Multi}_\infty(\mathcal{T}_H)$ and let  $t_i$ be  the smallest element of $p_H^{-1}(\upsilon_i)$. Arguing  by contradiction,  suppose that $\widehat{W}_{p_H(t_i)} = x$. Still  under $\mathbb{N}^{\bullet}_{y,0}$,  since $\upsilon_i$ is a branching point,  we can find $0 \leq s_* < t_*\leq \sigma$ in $p_H^{-1}(\{\upsilon_i\})$    such that the event
\begin{equation*}
    \{  \widehat{W}_{p_H(t_i)} = x  \} \cap \{ s_* < U <t_* \}, 
\end{equation*}
is included in $\{ F(\mathcal{M}^U) > 0 \}$.
However  $F(\mathcal{M}^U)= 0, \, \mathbb{N}^{\bullet}_{y,0}$--a.e. and we deduce that $\mathbb{N}^{\bullet}_{y,0}\big(   \widehat{W}_{p_H(t_i)} = x  , s_* < U <t_* \big) = 0$.  Next, since conditionally on $(\rho,\overline{W})$, the variable $U$ is uniformly distributed on $[0,\sigma]$, we conclude that  $\mathbb{N}^{\bullet}_{y,0}\big(   \widehat{W}_{p_H(t_i)} = x \big) = 0$. The desired result now follows, since the collection of branching points $(\upsilon_i:i\in \mathbb{N})$ is countable. Finally, we deduce the statement under $\mathbb{N}_{y,r_0}$ by the translation invariance of the local time and under $\mathbb{P}_{\mu,\overline{\w}}$  by excursion theory -- we omit the details  since this is standard and one can apply  the method described in Lemma  \ref{te_quedas_en_Theta}.
\end{proof}


\noindent \textit{The section is organised as follows:} In Section \ref{subsection:PhiTildeyMarkovTloc} we address several preliminary results needed to prove  Theorems  \ref{thm:existenciaAditivaprelim} and \ref{theorem:supportIntro}  and our results of Section \ref{section:treeStructureLocalTime}. More precisely,   Section \ref{subsection:PhiTildeyMarkovTloc} is essentially devoted to the study  of a family of exit local times of $(\rho , \overline{W})$ that will be used as  building block for our second construction of  $A$. Then in Section \ref{subsection:existenciaAditiva} we  shall prove Theorem \ref{thm:existenciaAditivaprelim}   and establish  our second construction of $A$ in terms of the family of exit times studied in Section \ref{subsection:PhiTildeyMarkovTloc}.  The rest of the section is dedicated to  the study of basic properties of $A$ that we will frequently use in our computations. Finally,  in Section \ref{subsection:characterizationSupport} we  turn our attention  to the study of  the support of the measure  $\dd A$ and it will lead us to the proof of Theorem \ref{theorem:supportIntro} and the characterisation  \eqref{identity:framework:soporteConstanciaLambda}.

\subsection{Special Markov property of the local time}\label{subsection:PhiTildeyMarkovTloc}
The first step towards constructing our additive functional $A$, with associated Lebesgue-Stieltjes measure  $\dd A$ supported  in $\{ t\in \mathbb{R}_+ : \widehat{W}_t = x \}$, consists in the study of a particular family of $[0,\infty)$-indexed  exit local times of $(\rho,\overline{W})$. More precisely,  for each $r \geq  0$, let $D_r \subset \overline{E} := E \times \mathbb{R}_+$ be the open domain 
\begin{equation*}
    D_r := \overline{E} \setminus\{(x,r)\}
    \quad  \text{ and recall the notation  } \quad
    \tau_r(\overline{\w}) := \inf \{  h \geq 0 :~\overline{\w}(h)=(x,r)\},
\end{equation*}
for every $\overline{\w}\in \mathcal{W}_{\overline{E}}$. 
Notice that $\mathcal{\tau}_r(\overline{\w})$ is the exit time from $D_r$ and we write $\mathcal{\tau}_r$ instead of making use of the more cumbersome notation $\mathcal{\tau}_{D_r}$. We also recall that since   $\tau_{r}(\overline{\w}) \in \{ h \in [0,\zeta_\w] : \w(h) = x \}$ as soon as $\tau_r(\overline{\w}) < \infty$, for $(\mu , \overline{\w})\in \overline{\Theta}_x$ we have $\mu(\{ \tau_{r}(\overline{\w}) \}) = 0$.  Proposition \ref{proposition:aproxLDPmu} now yields that for any $(\mu , \overline{\w}) \in \overline{\Theta}_x$ with $\overline{\w}(0) \neq (x,r)$ we have 
\begin{equation}\label{L:r}
L^{D_r}_t(\rho,\overline{W}):=\lim \limits_{\epsilon\to 0}  \frac{1}{\epsilon} \int_0^t \dd s \,  \mathbbm{1}_{\{ \tau_{r}(\overline{W}_s) < H_s < \tau_{r}(\overline{W}_s) + \epsilon \}},
\end{equation}
 where the convergence holds uniformly in compact intervals in $L^1(\mathbb{P}_{\mu , \overline{\w}})$ and $L^1(\mathbb{N}_{\overline{\w}(0)})$. Let us be more precise: recalling the notation $\overline{\w} = (\w , \ell)$,  
first remark that if $\ell(0) < r$,  for any $\w(0) \in E$  we have $\Pi_{\w(0) , \ell(0)}(\tau_r < \infty) = 1$ and in consequence \ref{tau_infinity} holds.  On the other hand, if $r<\ell(0)$,  we simply  have  $L^{D_r} = 0$ since $\tau_{r}(\overline{W}_s)=\infty$ for every $s \geq 0$. Finally, if $ \w(0)  \neq x$ and $r \geq 0$, we have $\tau_{D_{r}}(\overline{W}_s)=\tau_{E_*}( W_s)$ for every $s\geq 0$, and  recalling  \eqref{equation:recurrent} it follows that    $L^{D_r}(\rho,\overline{W}) = L^{E_*}(\rho,W)$. It will be usefully for our purposes to extend the definition to the remaining case $\overline{\w}(0) = (x,r)$, and that is precisely the content of the following lemma: 
\begin{lem}\label{L:r:Lemma}
For $r \geq 0$,  fix $(\mu,\overline{\rm{w}})=(\mu,\rm{w},\ell)$ $\in \overline{\Theta}_x$ with $\overline{\rm{w}}(0) = (x,r)$. Then, the limit \eqref{L:r} 
exists for every $t\geq 0$, where the convergence holds uniformly in compact intervals  in  $L^1(\mathbb{P}_{\mu,\overline{\rm{w}}})$ and $L^1(\mathbb{N}_{x,r})$, and defines a continuous non-decreasing process that we still denote by $L^{D_r}$. Moreover, under $\mathbb{P}^\dag_{\mu , \overline{\rm{w}}}$ and $\mathbb{N}_{x,r}$, we have $L^{D_r}_\sigma = 0$.
\end{lem}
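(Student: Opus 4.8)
The key observation driving the whole argument is that, since $\overline{W}_s(0)=\overline{\mathrm{w}}(0)=(x,r)$ for every $s$ — the root value being common to all the paths, both under $\mathbb{P}_{\mu,\overline{\mathrm{w}}}$ and under $\mathbb{N}_{x,r}$ — the exit time is identically zero: $\tau_r(\overline{W}_s)=\inf\{h\geq 0:\overline{W}_s(h)=(x,r)\}=0$. Consequently the approximating family in \eqref{L:r} collapses to $I(t,\varepsilon):=\varepsilon^{-1}\int_0^t\mathrm{d}s\,\mathbbm{1}_{\{0<H_s<\varepsilon\}}$, and since $\{s:H_s=0\}$ is Lebesgue-null this coincides with $\varepsilon^{-1}\int_0^t\mathrm{d}s\,\mathbbm{1}_{\{H_s<\varepsilon\}}$, the usual approximation of the local time of $H$ at level $0$. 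So the first step is simply to record this reduction, after which the statement no longer involves the labels $\overline{W}$ at all, only the height process.

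For the convergence I would argue first under $\mathbb{P}_{0,x,r}$, where \eqref{temps:local:I} gives directly that $I(\cdot,\varepsilon)$ converges, uniformly on compact intervals and in $L^1(\mathbb{P}_{0,x,r})$, to $-I$; thus $L^{D_r}=-I$ is continuous and non-decreasing and its Stieltjes measure is carried by $\{s:H_s=0\}$. To reach a general $(\mu,\overline{\mathrm{w}})\in\overline{\Theta}_x$ I would split the trajectory at $T_0^+:=\inf\{t\geq 0:\langle\rho_t,1\rangle=0\}$: the post-$T_0^+$ process is governed by $\mathbb{P}_{0,x,r}$ by the strong Markov property (the root being $(x,r)$), while the part before $T_0^+$ is handled under $\mathbb{P}^\dag_{\mu,\overline{\mathrm{w}}}$ via the Poisson decomposition \eqref{PoissonRandMeasure}; combining the two pieces yields the announced uniform-on-compacts $L^1(\mathbb{P}_{\mu,\overline{\mathrm{w}}})$ convergence to a continuous non-decreasing process.

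It then remains to prove $L^{D_r}_\sigma=0$. The clean way is to note that $\mathrm{d}L^{D_r}$ charges only $\{s:H_s=0\}$, which is transparent from the reduced approximation since $I(t,\varepsilon)-I(s,\varepsilon)=0$ for small $\varepsilon$ whenever $H$ stays bounded away from $0$ on $[s,t]$. Under $\mathbb{N}_{x,r}$ the process $H$ is a single excursion, so $\{s\in[0,\sigma]:H_s=0\}=\{0,\sigma\}$; under $\mathbb{P}^\dag_{\mu,\overline{\mathrm{w}}}$, which is killed at the first return of $\langle\rho,1\rangle$ to $0$, and using that $\mu(\{0\})=0$ (because $0\in\{h:\mathrm{w}(h)=x\}$ and $\mu$ does not charge that set, condition (ii) of $\overline{\Theta}_x$), one has again $H>0$ on $(0,\sigma)$ and the same two-point description. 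A continuous non-decreasing function whose Stieltjes measure is supported on a two-point set must be constant, and since $L^{D_r}_0=0$ we get $L^{D_r}\equiv 0$ on $[0,\sigma]$, in particular $L^{D_r}_\sigma=0$. To obtain the convergence statement under $\mathbb{N}_{x,r}$ itself I would transfer from $\mathbb{P}_{0,x,r}$ through the interpretation of $\mathbb{N}_{x,r}(\,\cdot\,\cap\{\sigma>z\})$ as governing the first excursion of $(\rho,\overline{W})$ of length exceeding $z$: on the corresponding excursion interval $[a,b]$ the limit $-I$ is constant, so $I(\cdot,\varepsilon)$ converges uniformly on compacts to $0$ in the $L^1$ sense inherited from $\mathbb{P}_{0,x,r}$.

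The delicate point — and the reason the boundary case $\overline{\mathrm{w}}(0)=(x,r)$ must be treated separately from Proposition \ref{proposition:aproxLDPmu} — is that the first moment of $I(\sigma,\varepsilon)$ does \emph{not} vanish under the infinite measure $\mathbb{N}_{x,r}$: the many-to-one formula \eqref{tirage_au_hasard_N} gives $\mathbb{N}_{x,r}\big(I(\sigma,\varepsilon)\big)=\varepsilon^{-1}\int_0^\varepsilon e^{-\alpha a}\,\mathrm{d}a\to 1$, so the approximation fails to be uniformly integrable under $\mathbb{N}_{x,r}$ even though its limit is $0$, this mass being carried by the abundance of arbitrarily small excursions. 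The whole difficulty is therefore to control the convergence only after restricting to $\{\sigma>z\}$, which is exactly what the transfer from $\mathbb{P}_{0,x,r}$ through the first long excursion achieves, and this is the step I expect to require the most care.
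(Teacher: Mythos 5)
Your proposal follows the same route as the paper's proof: the reduction via $\tau_r(\overline{W}_s)=0$ of \eqref{L:r} to the approximation $\varepsilon^{-1}\int_0^t \dd s\,\mathbbm{1}_{\{0<H_s<\varepsilon\}}$ of the local time of $H$ at level $0$, the splitting at $T_0^+$, the use of \eqref{temps:local:I} together with the strong Markov property for the post-$T_0^+$ piece, the Poisson decomposition \eqref{PoissonRandMeasure} for the piece before $T_0^+$, and the transfer to $\mathbb{N}_{x,r}$ by excursion theory through the first excursion of length exceeding a fixed threshold. Your closing observation is also correct and worth making explicit: by \eqref{tirage_au_hasard_N} one has $\mathbb{N}_{x,r}\big(\varepsilon^{-1}\int_0^\sigma \dd s\,\mathbbm{1}_{\{0<H_s<\varepsilon\}}\big)=\varepsilon^{-1}\int_0^\varepsilon \dd a\, e^{-\alpha a}\to 1$, so there is no $L^1$ convergence under the unrestricted measure $\mathbb{N}_{x,r}$, and the statement has to be read through the restriction that the excursion-theoretic transfer provides.

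The one genuine gap is the pre-$T_0^+$ piece. You write that it is ``handled via the Poisson decomposition \eqref{PoissonRandMeasure}'' but never say what this handling establishes, and the support argument you give afterwards cannot fill the hole: it identifies the limit on $[0,T_0^+]$ as $0$ only once a limit is known to exist there, so it presupposes exactly the convergence that is missing. Nor can one argue excursion by excursion (each subtrajectory grafted at height $h_i>0$ contributes $0$ once $\varepsilon<h_i$): the problem is the accumulation of infinitely many subtrajectories grafted at heights close to $0$, and this needs a quantitative bound. The paper supplies it: only the atoms of \eqref{PoissonRandMeasure} with $h_i\le\varepsilon$ can contribute to $\int_0^{T_0^+}\dd s\,\mathbbm{1}_{\{0<H_s<\varepsilon\}}$, each contribution is at most $\int_0^{\sigma_i}\dd s\,\mathbbm{1}_{\{H(\rho^i_s)<\varepsilon\}}$, and the Poisson intensity $\mu(\dd h)\,\mathbb{N}_{\overline{\w}(h)}$ together with the many-to-one formula \eqref{tirage_au_hasard_N} bounds the expectation by $\mu([0,\varepsilon])\cdot N\big(\int_0^\sigma \dd s\,\mathbbm{1}_{\{H_s<\varepsilon\}}\big)\le\varepsilon\,\mu([0,\varepsilon])$. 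Dividing by $\varepsilon$ and letting $\varepsilon\downarrow 0$ gives the limit $\mu(\{0\})$, which vanishes by condition (ii) in the definition of $\overline{\Theta}_x$ since $\w(0)=x$. Note that this estimate, not your description of $\{s:H_s=0\}$, is where $\mu(\{0\})=0$ really does its work; once it is in place, the pre-$T_0^+$ part converges to $0$ in $L^1(\mathbb{P}^{\dag}_{\mu,\overline{\w}})$ uniformly on compacts, so $L^{D_r}_\sigma=0$ under $\mathbb{P}^{\dag}_{\mu,\overline{\w}}$ comes for free, and your support argument, although valid, becomes redundant there (it remains a clean way to see $L^{D_r}\equiv 0$ under $\mathbb{N}_{x,r}$, consistent with what excursion theory gives).
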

\begin{proof}
{ We work under $\mathbb{P}_{\mu,\overline{\rm{w}}}$ since the result under $\mathbb{N}_{x,r}$ follows directly by excursion theory.
For every $a \geq 0$ we set $T_a:=\inf\{t\geq 0:~H_t=a\}$  and let $T_0^{+}:=\inf\{t\geq 0:~\langle \rho_t, 1\rangle=0\}$. Since $\mathcal{\tau}_{r}(\overline{W}_s) = 0$ for every $s \geq 0$,  we have
$$ \int_0^t \dd s \,  \mathbbm{1}_{\{ \tau_{r}(\overline{W}_s) < H_s < \tau_{r}(\overline{W}_s) + \epsilon \}}=\int_0^t \dd s \,  \mathbbm{1}_{\{0 < H_s <  \epsilon \}}=\int_{T_\epsilon \wedge t}^{T_0^+ \wedge t} \dd s \,  \mathbbm{1}_{\{ 0 < H_s <  \epsilon \}}+\int_{T_0^+ \wedge t}^{T_0^+ \wedge t} \dd s \,  \mathbbm{1}_{\{0< H_s <  \epsilon \}}.$$
Furthermore, by the strong Markov property and  \eqref{temps:local:I}, we already know that $\epsilon^{-1} \int_{T_0^+}^{T_0^++t} \dd s \,  \mathbbm{1}_{\{ 0 < H_s <  \epsilon \}}$  converges as $\epsilon \downarrow 0$ uniformly in compact intervals in  $L^{1}(\mathbb{P}_{\mu,\text{w}})$. To conclude, it suffices to show that:
\begin{equation}\label{L:0:0}
   \lim \limits_{\epsilon\to 0}\epsilon^{-1}\cdot \mathbb{E}_{\mu,\overline{\w}}\big[\int_{T_\epsilon}^{T_0^+} \dd s \,  \mathbbm{1}_{\{ 0 < H_s <  \epsilon \}}\big]=0. 
\end{equation}
Write  $(\alpha_i, \beta_i)$ for $i \in \mathbb{N}$  the excursion intervals of the killed process $(\langle \rho_t,1 \rangle:~t\in[0,T_0^{+}])$ over its running infimum  and  let  $(\rho^i, \overline{W}^i)$ be the subtrajectory associated with the excursion interval $[\alpha_i , \beta_i]$. To simplify notation, we also set $h_i=H(\alpha_i)$ and recall from \eqref{PoissonRandMeasure}  that the measure $\mathcal{M} := \sum_{i \in \mathbb{N}}\delta_{(h_i, \rho^i , \overline{W}^i)}$ is a Poisson point measure with intensity $\mu(\dd h)\mathbb{N}_{\overline{\w}(h)}(\dd \rho, \dd \overline{W})$. Next, notice that:
\begin{align*}
    \int_{T_\epsilon}^{T_{0}^+} \dd s \,  \mathbbm{1}_{\{ 0 < H_s < \epsilon \}}\leq \sum \limits_{0\leq h_i\leq \epsilon}\int_{0}^{\sigma(\overline{W}^i)} \dd s \,  \mathbbm{1}_{\{ 0 < H(\rho_s^i) <  \epsilon \}}, 
\end{align*}
and we can now use that $\mathcal{M}$ is a Poisson point measure with intensity $\mu(\dd h)\mathbb{N}_{\overline{\w}(h)}(\dd \rho, \dd \overline{W})$ to get that:
\begin{align*}
 \mathbb{E}_{\mu,\overline{\w}}\big[\int_{T_\epsilon}^{T_{0}^+} \dd s \,  \mathbbm{1}_{\{ 0 < H_s <  \epsilon \}}  \big] 
 \leq \mu([0,\epsilon])N(\int_{0}^{\sigma}\dd s \mathbbm{1}_{\{0 < H(\rho_s)<\epsilon\}}).
\end{align*}
Finally, by the many-to-one formula \eqref{tirage_au_hasard_N}, the previous display is 
$\epsilon\cdot\mu([0,\epsilon])$, which gives:
\begin{equation*}
    \limsup \limits_{\epsilon\to 0}\epsilon^{-1}\cdot\mathbb{E}^{\dag}_{\mu,\w}\Big[ \int_{T_\epsilon}^{T_{0}^+} \dd s \,  \mathbbm{1}_{\{ 0 < H_s < \epsilon \}}\Big]=\mu(\{0\}).
\end{equation*}
  Now \eqref{L:0:0} follows since we have $\mu(\{0\})=0$, which holds since  $\w(0)=x$  and $(\mu,\overline{\w})\in \overline{\Theta}_x$.
}
\end{proof}
Now, we give  a regularity result for the double-indexed family $({L}_{s}^{D_r} : (s,r)\in \mathbb{R}_+^2)$ that will be needed  in the next section. 
\begin{lem}\label{def:lem:scr:L}
Let $(\mu,\overline{\rm{w}})\in \overline{\Theta}_x$ with $\overline{\rm{w}}=(\rm{w},\ell)$. There exists a $\mathcal{B}(\mathbb{R}_+)\otimes \mathcal{B}(\mathbb{R}_+) \otimes \mathcal{F}$-measurable function 
 $(\mathscr{L}_t^r : \,  (r,t) \in \mathbb{R}_+^2 )$ satisfying the following properties:
\begin{itemize}
    \item [\rm{(i)}] For every $r\geq 0$, the processes    ${L}^{D_r}$ and $\mathscr{L}^r$  are indistinguishables under $\mathbb{P}_{\mu,\overline{\rm{w}}}$.
    \item [\rm{(ii)}]  $\mathbb{P}_{\mu,\overline{\rm{w}}}$ almost surely, the mapping $t \mapsto \mathscr{L}^r_t$ is continuous for every $r \geq 0$.
\end{itemize}
\end{lem}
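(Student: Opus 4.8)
The plan is to build $\mathscr{L}$ out of the jointly measurable approximants underlying \eqref{L:r}. For $n\ge 1$ and $(r,t)\in\mathbb{R}_+^2$ set
\[
\mathscr{L}^{r,(n)}_t:=n\int_0^t \dd s\,\mathbbm{1}_{\{\tau_r(\overline{W}_s)<H_s<\tau_r(\overline{W}_s)+1/n\}}.
\]
Since $\tau_r(\overline{\w})=\inf\{h:\ell(h)\ge r\}$ depends jointly measurably on $(r,\overline{\w})$ (because $\ell$ is continuous and non-decreasing), the map $(r,s,\omega)\mapsto(\tau_r(\overline{W}_s),H_s)$ is $\mathcal{B}(\mathbb{R}_+)\otimes\mathcal{B}(\mathbb{R}_+)\otimes\mathcal{F}$-measurable, so each $\mathscr{L}^{r,(n)}$ is jointly measurable and, for fixed $(r,\omega)$, continuous and non-decreasing in $t$. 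By Proposition \ref{proposition:aproxLDPmu} and Lemma \ref{L:r:Lemma}, for every fixed $r$ we have $\mathscr{L}^{r,(n)}\to L^{D_r}$ as $n\to\infty$, uniformly on compact $t$-intervals in $L^1(\mathbb{P}_{\mu,\overline{\w}})$. The goal is to extract from these a jointly measurable limit that is, for each $r$, indistinguishable from $L^{D_r}$, and whose $t$-sections are continuous for every $r$ on a single event of full measure.

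First I would treat a countable dense set of levels. Enumerating $\mathbb{Q}_+$ and combining the uniform-on-compacts $L^1$ convergence at each rational level with a diagonal extraction over levels and over the horizons $T\in\mathbb{N}$, one obtains a subsequence $(n_k)$ and an event $\Omega_0$ of full $\mathbb{P}_{\mu,\overline{\w}}$-measure on which, for every $q\in\mathbb{Q}_+$, $\mathscr{L}^{q,(n_k)}$ converges uniformly on compacts to a continuous non-decreasing process $\widetilde{L}^{q}$ that is indistinguishable from $L^{D_q}$. This step is routine, being a countable union of null sets handled exactly as in the subsequence/Dini argument of Proposition \ref{L_eta_measurable}.

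The crux is to pass from rational to arbitrary levels, and here I would exploit the monotonicity of the level map. Since $\ell$ is continuous and non-decreasing, $r\mapsto\tau_r(\overline{\w})=\inf\{h:\ell(h)\ge r\}$ is non-decreasing and left-continuous, so that $\tau_q\uparrow\tau_r$ as $q\uparrow r$. The plan is to show this one-sided regularity is inherited by the exit local times,
\[
\lim_{q\uparrow r,\,q\in\mathbb{Q}_+}\mathbb{E}_{\mu,\overline{\w}}\Big[\sup_{t\le T}|L^{D_r}_t-L^{D_q}_t|\Big]=0,\qquad T>0,
\]
and to upgrade it to $\mathbb{P}_{\mu,\overline{\w}}$-a.s. uniform-on-compacts convergence of $\widetilde{L}^{q}$ as $q\uparrow r$, simultaneously for all $r$. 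To do so I would rerun the argument of Lemma \ref{lemma:technicalLema2} and Proposition \ref{L_eta_measurable} with the increasing sequence of domains $D_{q_j}$, $q_j\uparrow r$, in place of $(D_n)$: truncate by $\{\langle\rho_s,1\rangle\le K\}$, compute the second moments and the cross term via the Markov property and the first-moment formula \eqref{tirage_au_hasard_ExitN} through the explicit functional $\phi_{D_q}$, invoke dominated convergence $\phi_{D_q}\to\phi_{D_r}$ (a consequence of $\tau_q\uparrow\tau_r$ and the independence between $\xi$ and $J$), and then apply Doob's inequality, Dini's theorem and a diagonal extraction over $K$. On the resulting full-measure event (which we relabel $\Omega_0$) one may define, for $r>0$,
\[
\mathscr{L}^r_t:=\lim_{q\uparrow r,\,q\in\mathbb{Q}_+}\widetilde{L}^{q}_t,
\]
with $\mathscr{L}^0:=\widetilde{L}^0$, and $\mathscr{L}^r_t:=0$ off $\Omega_0$. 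Being a countable limiting operation on jointly measurable objects, $\mathscr{L}$ is $\mathcal{B}(\mathbb{R}_+)\otimes\mathcal{B}(\mathbb{R}_+)\otimes\mathcal{F}$-measurable; as a locally uniform limit of continuous functions it is continuous in $t$ for every $r$ on $\Omega_0$, which is (ii); and by the left-continuity in $r$ just established it is indistinguishable from $L^{D_r}$ for each fixed $r$, which is (i).

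I expect the main obstacle to be precisely this last step. The family $r\mapsto L^{D_r}$ genuinely jumps — for instance across levels at which $\ell$ has a flat stretch, which on the tree side are tied to the points of infinite multiplicity of $\mathcal{T}_H$ — so only \emph{one-sided} continuity in $r$ is available; one must both identify the correct side, dictated by the left-continuity of the inverse local time $\tau_r$, and control the increments $L^{D_r}-L^{D_q}$ with enough uniformity to cover all levels $r$ on a single event. All the quantitative control for this rests on the second-moment computations via the many-to-one formulas \eqref{tirage_au_hasard_N}--\eqref{tirage_au_hasard_ExitN} of Section \ref{Special and intermediate}; the analogous statement under the excursion measures follows by the same steps together with excursion theory.
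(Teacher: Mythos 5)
You have identified the crux correctly, but your proposed resolution does not close it. Your definition $\mathscr{L}^r_t:=\lim_{q\uparrow r,\,q\in\mathbb{Q}_+}\widetilde{L}^q_t$ requires that, on a \emph{single} event of full $\mathbb{P}_{\mu,\overline{\w}}$-measure, the left limit along the \emph{full} rational sequence exists, uniformly on compacts in $t$, simultaneously for \emph{every} $r>0$. The tools you invoke cannot deliver this: rerunning the second-moment/Doob/Dini argument of Lemma \ref{lemma:technicalLema2} and Proposition \ref{L_eta_measurable} with domains $D_{q_j}$, $q_j\uparrow r$, operates at a \emph{fixed} level $r$, produces an exceptional null set depending on $r$, and in fact only yields a.s.\ convergence along a \emph{subsequence} of $(q_j)$ — note that Lemma \ref{lemma:technicalLema2} itself is a subsequential statement. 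Since the levels $r$ are uncountable, these null sets cannot be unioned, and no pathwise monotonicity in $r$ of the exit local times is available to bridge the countable-to-uncountable passage. On the contrary, as you observe yourself, $r\mapsto L^{D_r}$ genuinely jumps (under $\mathbb{N}_{x,0}$ the total masses $(\mathscr{L}^{\,r}_\sigma)_{r>0}$ form a CSBP by Proposition \ref{lemma:divergExpo}), so what you need is precisely a regularization theorem for a jump-type, function-valued family indexed by $r$ — an assertion that is nowhere proved in your sketch, and which neither the fixed-$r$ $L^1$ statement nor the countable-level diagonal extraction implies. (Two secondary points: the domains $D_q$ are not nested as sets, only the exit times satisfy $\tau_q\uparrow\tau_r$, so Lemma \ref{lemma:technicalLema2} needs genuine reworking even at fixed $r$; and your observations that left limits are the correct side and that fixed levels carry no discontinuity are sound, but they only rescue property (i) \emph{after} the existence problem is solved.)

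For comparison, the paper takes a much shorter route that bypasses regularity in $r$ entirely. It keeps exactly what you establish in your first two steps: joint measurability of the approximants $Y_n(r,t)=\varepsilon_n^{-1}\int_0^t\dd u\,\mathbbm{1}_{\{\tau_r(\overline{W}_u)<H_u<\tau_r(\overline{W}_u)+\varepsilon_n\}}$, together with, for each fixed $r$, convergence of $Y_n(r,\cdot)$ to $L^{D_r}$ uniformly on compacts in probability (from Proposition \ref{proposition:aproxLDPmu} and Lemma \ref{L:r:Lemma}). It then invokes the standard result \cite[Theorem 62]{ProtterStochIntegration} (and its proof): a sequence of jointly measurable processes converging, for each value of the parameter, uniformly on compacts in probability admits a limit version that is jointly measurable in $(r,t,\omega)$ and has continuous $t$-paths for every $(r,\omega)$. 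Joint measurability of a parametrized limit does not require any continuity of the limit in the parameter — the a.s.-convergent subsequence can be chosen measurably in $r$ — and this is exactly the point your construction tries, unnecessarily and at the cost of the gap above, to manufacture through one-sided continuity in $r$.
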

The result also holds under the measure $\mathbb{N}_{y,r_0}$, for every $(y,r_0)\in \overline{E}$, by the same type of arguments and we omit the details. 
\begin{proof}
Fix an initial condition $(\mu,\overline{\rm{w}})=(\mu, \w, \ell)\in \overline{\Theta}_x$. Since under $\mathbb{P}_{\mu,\w,\ell}$, the distribution of  $(\rho,W,\Lambda-\ell(0))$  is exactly  $\mathbb{P}_{\mu,\w,\ell-\ell(0)}$,  without loss of generality we might assume that $\ell(0)=0$.
 Next, by \eqref{L:r} and Lemma \ref{L:r:Lemma}, for every $r\geq 0$ we have
\begin{equation*}
    \lim_{\varepsilon \downarrow 0} \mathbb{E}_{\mu,\overline{\rm{w}}}\Big[ \sup_{s \leq t} 
    |{L}^{D_r}_s - \varepsilon^{-1}\int_0^s \dd u \mathbbm{1}_{\{ \tau_r(\overline{W}_u ) < H_u < \tau_r(\overline{W}_u) + \varepsilon \}}  |
    \Big] = 0, 
\end{equation*}
and hence for any subsequence $(\epsilon_n)$ converging to $0$, the sequence of processes 
\begin{equation*}
 Y_n(r, t) :=   \varepsilon_n^{-1}\int_0^t \dd u~ \mathbbm{1}_{\{ \tau_r(\overline{W}_u ) < H_u < \tau_r(\overline{W}_u) + \varepsilon_n \}}, \quad \quad t \geq 0,
\end{equation*}
converges uniformly in compact intervals in probability towards ${L}^{D_r}$. Now, to simplify notation write $\bm{\omega} := (\upvarrho, \omega)$ for the elements of  $\mathbb{D}(\mathbb{R}_+, \mathcal{M}_f(\mathbb{R}_+)\times \mathcal{W}_{\overline{E}} )$.  Remark now that the mapping $(u,r,\bm{\omega})\mapsto \tau_r(\overline{W}_u(\omega))$ is jointly measurable since for each $(u,\bm{\omega})$  it is càdlàg in $r$, while for each fixed $r$ the mapping $ (u,\bm{\omega})\mapsto \tau_r( \overline{W} _u(\omega))$ is measurable. Consequently, by Fubini, for every fixed $t$ the application 
\begin{equation*}
    (r,\bm{\omega}) \mapsto \int_0^t \dd u~ \mathbbm{1}_{\{ \tau_r(\overline{W}_u ) < H_u < \tau_r(\overline{W}_u) + \varepsilon_n \}} (\omega)
\end{equation*}
is measurable  while for fixed $(r,\bm{\omega})$ it is continuous in $t$, and we deduce that $Y_n$ is jointly measurable in $(r,t,\bm{\omega})$. It is now standard -- see e.g. \cite[Theorem 62]{ProtterStochIntegration} and its proof -- to deduce that  there exits a jointly measurable process $(r,t,\bm{\omega}) \mapsto Y(r,t, \bm{\omega})$ such that for every $(r, \bm{\omega}) \in \mathbb{R}_+ \times \mathbb{D}(\mathbb{R}_+, \mathcal{M}_f(\mathbb{R}_+)\times \mathcal{W}_{\overline{E}})$, the mapping  $t \mapsto Y(r,t, \bm{\omega} )$ is continuous and for each fixed $r\geq 0$, $Y_n(r,\cdot) \mapsto Y(r, \cdot)$ as $n \uparrow \infty$ uniformly in compact intervals in probability. In particular for each $r\geq 0$, the process $(Y(r,t) : t \geq 0 )$ is indistinguishable from $({L}^{D_r}_t:~ t \geq 0)$ and we shall write $(\mathscr{L}^{\,r}_t:~ t \geq 0, \, r \geq 0)$ instead of  $(Y(r,t) : \, t \geq 0, \, r \geq 0 )$. 
\end{proof}
We now turn our attention to the Markovian properties of $(\mathscr{L}^{\,r}_\sigma : r \geq  0)$ under the excursion measure $\mathbb{N}_{x,0}$. To simplify notation, for every $y\neq x$, we set:
\begin{equation} \label{definition:ulambda}
    u_\lambda(y) :=\mathbb{N}_{y,0} \left(1-\exp (- \lambda \mathscr{L}^{\,0}_\sigma) \right), \quad \text{ for }  y \in E_*,
\end{equation}
and remark that with the notation of \eqref{definition:u_g_lambda} we have $u_{\lambda}(y)=u_{\lambda}^{E_*}(y)$. We shall use the usual convention
$u_\lambda(x) = \lambda$. 
\par 
Before stating our next result, we briefly recall from \cite[Chapter II-1]{RefSnake} that an $\mathbb{R}_+$--valued Markov process with semigroup $(P_t(y,\dd z) :~ t, y \in \mathbb{R}_+ )$ is called a branching process if its semigroup satisfies the branching property, viz. if for any $y,y' \in \mathbb{R}_+$, we have $P_t(y,\cdot)*P_t(y',\cdot) = P_t(y+y', \cdot)$. In order to fall in the framework of \cite[Chapter II- Theorem 1]{RefSnake} we also assume that $\int_{\mathbb{R}_+} P_t(y,\dd z) z \leq y$ for every $t,y \in \mathbb{R}_+$. By the branching property it follows that for any $t,y\in \mathbb{R}_+$  the distribution  $P_t(y,\dd z)$ is  infinitely divisible and  non-negative, and  consequently, for every $t,y \in \mathbb{R}_+$,  the Laplace transform of  $P_t(y,\dd z)$ takes the Lévy-Khintchine form:
\begin{equation*}
    \int_{\mathbb{R}_+} \,  P_t(y, \dd z)\exp(-\lambda z)= \exp \big(-y a_t(\lambda) \big), \quad \quad \text{ for } \lambda \geq 0,
\end{equation*}
for some function $(a_t(\lambda): t, \lambda \geq 0)$.
By \cite[Chapter II- Theorem 1]{RefSnake},  the function $(a_t(\lambda): t  , \lambda \geq 0)$ is the unique non-negative solution of the integral equation 
\begin{equation} \label{equation:branchingEquation}
    a_t(\lambda) + \int_0^t \dd u \,  \Psi(  a_u(\lambda) )  = \lambda,  
\end{equation}
for a function $(\Psi(\lambda) : \lambda \geq 0)$ of the form,  
\begin{equation*}
    \Psi(\lambda) = c_1 \lambda + c_2 \lambda^2 + \int_{\mathbb{R}_+} \nu(\dd y) \,  (\exp(-\lambda y) -1 + \lambda y  ), \quad \quad \text{ for } \lambda \geq 0, 
\end{equation*}
where $c_1, c_2 \in \mathbb{R}_+$ and $\nu$ is a Lévy measure supported on $(0,\infty)$ satisfying $\int_{(0,\infty)} \nu(\dd y) (y \wedge y^2) < \infty$. By \eqref{equation:branchingEquation}, it follows  that $a_t(\lambda)$ is the unique function that satisfies  
\begin{equation*}
    \int_{a_t(\lambda)}^\lambda \frac{\dd s}{\Psi(s)} = t,\quad \quad t,\lambda\geq 0.  
\end{equation*}
The Markov process with semigroup $(P_t)$ is then called a CSBP with branching mechanism $\Psi$, or in short a $\Psi$-CSBP. The exponent $\Psi$ clearly fulfils (A1) and so does (A3) by \cite[Corollary 2]{BertoinBook}. So to fall in our framework, it only need to satisfy (A4) -- since  as explained in the preliminaries (A4)  implies (A2). 
\begin{prop} \label{lemma:divergExpo} 
Under $\mathbb{N}_{x,0}$, the process $(\mathscr{L}^{\,r}_\sigma: r > 0)$ is a branching process with entrance measure $\nu_r( \dd x) = \mathbb{N}_{x,0}(\mathscr{L}^{\,r}_\sigma \in \dd x)$, for $r > 0$, and branching mechanism 
\begin{equation} \label{definition:PhiTilde}
    \widetilde{\psi}(\lambda) = \mathcal{N}\Big( \int_0^\sigma\dd h ~ \psi\big(u_\lambda(\xi_h)\big)\Big), \quad \text{ for } \, \lambda \geq 0. 
\end{equation}
 Moreover, $\widetilde{\psi}$ satisfies the assumptions \emph{(A1)} -- \emph{(A4)} introduced in Section \ref{subsection:height} and consequently we can associate to it a Lévy tree.
\end{prop}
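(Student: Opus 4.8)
The plan is to establish the branching property via the special Markov property, to identify the branching mechanism through the integral equation \eqref{integral_equation}, and finally to verify that $\widetilde{\psi}$ meets \emph{(A1)--(A4)}. Throughout I write $a_t(\lambda) := \mathbb{N}_{x,0}(1-\exp(-\lambda\mathscr{L}^{\,t}_\sigma))$ for $t>0$, and $v_s(y):=\mathbb{N}_{y,0}(1-\exp(-\lambda\mathscr{L}^{\,s}_\sigma))$ for $y\in E$, $s\ge 0$, so that $v_s(x)=a_s(\lambda)$ and, by \eqref{definition:ulambda} and the convention $u_\lambda(x)=\lambda$, one has $v_0=u_\lambda$ on all of $E$.

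First I would prove the branching property. Fix $0<r<r'$. Since the exit measure $\mathcal{Z}^{D_r}$ is carried by $\partial D_r=\{(x,r)\}$ with total mass $\mathscr{L}^{\,r}_\sigma$, Corollary \ref{Corollary:specialMarkovN} applied to $D_r$ shows that, under $\mathbb{N}^{D_r}_{x,0}$ and conditionally on $\mathcal{F}^{D_r}$, the excursions $(\rho^i,\overline{W}^i)$ outside $D_r$ form a Poisson point measure with intensity $\mathscr{L}^{\,r}_\sigma\,\mathbb{N}_{x,r}$. Because $\mathcal{L}$ is non-decreasing along branches of $\mathcal{T}_H$, every point with label $(x,r')$ lies in one such excursion, and the additivity of exit local times (as in \eqref{equation:ExitPmuExcursionesDescomp}) gives $\mathscr{L}^{\,r'}_\sigma=\sum_i\mathscr{L}^{\,r'}_\sigma(\rho^i,\overline{W}^i)$. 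By translation invariance of the local time, $\mathbb{N}_{x,r}(1-\exp(-\lambda\mathscr{L}^{\,r'}_\sigma))=a_{r'-r}(\lambda)$, so the exponential formula yields, on $\{T_{D_r}<\infty\}$, the identity $\mathbb{N}_{x,0}(\exp(-\lambda\mathscr{L}^{\,r'}_\sigma)\mid\mathcal{F}^{D_r})=\exp(-\mathscr{L}^{\,r}_\sigma\,a_{r'-r}(\lambda))$. This simultaneously shows that $(\mathscr{L}^{\,r}_\sigma)_{r>0}$ is Markov with transition Laplace exponent $a_{r'-r}$ (whence $P_t(y,\cdot)\ast P_t(y',\cdot)=P_t(y+y',\cdot)$) and, upon integrating, the semigroup relation $a_{r'}(\lambda)=a_r(a_{r'-r}(\lambda))$.

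The core step is the identification of the mechanism. Applying Corollary \ref{Corollary:specialMarkovN} at the domain $D_0$ (starting from $(y,0)$ with $y\neq x$, exiting $D_0$ meaning hitting $x$) and arguing exactly as above gives $v_s(y)=\mathbb{N}_{y,0}(1-\exp(-\mathscr{L}^{\,0}_\sigma\,a_s(\lambda)))=u_{a_s(\lambda)}(y)$ for every $y\in E$. On the other hand I would apply the integral equation \eqref{integral_equation} to the domain $D_t$, the boundary function $g\equiv\lambda$ and the starting point $(x,0)$; since $\tau_t<\infty$ $\Pi_{x,0}$-a.s. and $u^{D_t}_\lambda(\overline{\xi}_s)=v_{t-\mathcal{L}_s}(\xi_s)$ by translation invariance, this reads
\begin{equation*}
a_t(\lambda)+\Pi_{x,0}\Big(\int_0^{\tau_t}\dd s\,\psi\big(v_{t-\mathcal{L}_s}(\xi_s)\big)\Big)=\lambda.
\end{equation*}
Decomposing the path of $\overline{\xi}$ under $\Pi_{x,0}$ into its excursions away from $x$, which by It\^o's theory form a Poisson measure with intensity $\mathbbm{1}_{[0,t]}(\ell)\,\dd\ell\,\mathcal{N}(\dd e)$ (the label staying constant equal to $\ell$ along the excursion at local time $\ell$), and discarding the time spent at $x$ (of zero Lebesgue measure by \ref{Asssumption_3}), the compensation formula turns the expectation into $\int_0^t\dd s\,\mathcal{N}(\int_0^\sigma\psi(v_s(\xi_h))\,\dd h)$. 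Substituting $v_s=u_{a_s(\lambda)}$ and recalling \eqref{definition:PhiTilde} gives $\mathcal{N}(\int_0^\sigma\psi(v_s(\xi_h))\,\dd h)=\widetilde{\psi}(a_s(\lambda))$, whence
\begin{equation*}
a_t(\lambda)+\int_0^t\dd s\,\widetilde{\psi}\big(a_s(\lambda)\big)=\lambda,\qquad t>0.
\end{equation*}
This is precisely \eqref{equation:branchingEquation} with $\Psi=\widetilde{\psi}$ (and it incidentally shows $\widetilde{\psi}$ is finite on the range of $a_s$); together with the branching property and the bound $\int z\,P_t(y,\dd z)=y\,\Pi_{x,0}(\mathbbm{1}_{\{\tau_t<\infty\}}e^{-\alpha\tau_t})\le y$ from \eqref{first_moment_Z}, \cite[Chapter II--Theorem 1]{RefSnake} identifies $(\mathscr{L}^{\,r}_\sigma)_{r>0}$ as the $\widetilde{\psi}$-CSBP with entrance measure $\nu_r=\mathbb{N}_{x,0}(\mathscr{L}^{\,r}_\sigma\in\cdot)$.

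It remains to check \emph{(A1)--(A4)}. The branching-process structure forces $\widetilde{\psi}$ into the L\'evy--Khintchine form displayed before \eqref{equation:branchingEquation}, giving \emph{(A1)}; the first-moment bound yields $\widetilde{\psi}'(0^+)\ge 0$, i.e.\ \emph{(A3)}, and \emph{(A2)} follows from \emph{(A4)}. For \emph{(A4)} I would show finite-time extinction of the $\widetilde{\psi}$-CSBP: under $\mathbb{N}_{x,0}$ the tree $\mathcal{T}_H$ is compact and $s\mapsto\widehat{\Lambda}_s$ is continuous, so $R:=\sup_{\upsilon\in\mathcal{T}_H}\mathcal{L}_\upsilon<\infty$, and for every $r>R$ the set $\{\upsilon:\mathcal{L}_\upsilon=r\}$ is empty, whence $\mathscr{L}^{\,r}_\sigma=0$. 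Thus the CSBP is a.s.\ absorbed at $0$ in finite time, which by \cite[Theorem 12.5]{KyprianouBook} is equivalent to \ref{1_infinity_psi} for $\widetilde{\psi}$. I expect the main difficulty to be the mechanism step, specifically the rigorous excursion decomposition of $\Pi_{x,0}(\int_0^{\tau_t}\psi(v_{t-\mathcal{L}_s}(\xi_s))\,\dd s)$ through the compensation formula, together with the attendant measurability and boundedness of $v_s$; by contrast the passage $v_s=u_{a_s(\lambda)}$ and the finite-time extinction argument are comparatively routine given the special Markov property and the compactness of $\mathcal{T}_H$.
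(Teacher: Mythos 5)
Your argument follows the paper's own proof in all of its main steps: the Markov/branching property of $(\mathscr{L}^{\,r}_\sigma)_{r>0}$ comes from the special Markov property (the paper packages this as the semigroup property of the kernels $P_t$ defined by $\int P_t(y,\dd z)\,e^{-\lambda z}=\exp(-y\,u^{D_t}_\lambda(x,0))$, but the computation is the same as yours), and your identification of the mechanism --- the integral equation \eqref{integral_equation} applied to $D_t$ with $g\equiv\lambda$, the excursion decomposition of $\overline{\xi}$ under $\Pi_{x,0}$ using \ref{Asssumption_3}, the identity $v_s=u_{a_s(\lambda)}$ coming from the special Markov property at $D_0$, and the appeal to \cite[Chapter II, Theorem 1]{RefSnake} together with the first-moment bound --- is exactly the paper's derivation of \eqref{equation:characterizationBranchingMec}.

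The one point where you genuinely diverge is the verification of (A4). The paper stays analytic: \eqref{equation:characterizationBranchingMec} yields $\int_{f(\lambda,r)}^{\lambda}\dd s/\widetilde{\psi}(s)=r$ for $f(\lambda,r)=u^{D_r}_\lambda(x,0)$, the limit $f(\infty,r)=\mathbb{N}_{x,0}(L^{D_r}_\sigma>0)\leq \mathbb{N}_{x,0}(T_{D_r}<\infty)$ is finite, and letting $\lambda\uparrow\infty$ gives \eqref{conditions:branchingMechCSBP}, hence (A4) by convexity. You argue probabilistically: compactness of $\mathcal{T}_H$ and continuity of $\widehat{\Lambda}$ force $\mathscr{L}^{\,r}_\sigma=0$ for all large $r$, and you then invoke Grey's criterion \cite[Theorem 12.5]{KyprianouBook}. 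Both routes work; the paper's has the side benefit of producing the explicit flow identity \eqref{cuentas:expoCSBPchar}, which carries quantitative information, while yours is shorter and more conceptual. However, your version as stated skips one step that should be made explicit: your absorption statement holds under the $\sigma$-finite measure $\mathbb{N}_{x,0}$, where $(\mathscr{L}^{\,r}_\sigma)_{r>0}$ only has an entrance law, whereas \cite[Theorem 12.5]{KyprianouBook} concerns a $\widetilde{\psi}$-CSBP issued from a fixed $y>0$. The transfer is routine but not empty: work under $\mathbb{N}_{x,0}(\,\cdot\,|\,T_{D_\epsilon}<\infty)$, where $(\mathscr{L}^{\,\epsilon+r}_\sigma)_{r\geq 0}$ is an honest CSBP started from $\mathscr{L}^{\,\epsilon}_\sigma$; since $\mathbb{N}_{x,0}(\mathscr{L}^{\,\epsilon}_\sigma)=E^0\otimes\Pi_{x,0}\big[\exp(-\alpha\tau_{D_\epsilon})\big]>0$ by \eqref{tirage_au_hasard_ExitN}, the initial value is strictly positive with positive probability, and a.s.\ absorption then gives a.s.\ extinction of the $\widetilde{\psi}$-CSBP from some --- hence, by the branching property, from every --- deterministic initial state, at which point the cited theorem applies.
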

Our result is a particular case, in the terminology of Lévy snakes, of  Theorem 4 in \cite{BertoinLeGallLeJean} stated in the setting of superprocesses.   Theorem 4 in \cite{BertoinLeGallLeJean} is  more general and the family $(\mathscr{L}^r_\sigma)_{r > 0}$ in our result correspond precisely to the total mass process of the superprocess considered in \cite{BertoinLeGallLeJean}, for the same branching mechanism $\widetilde{\psi}$.
\begin{proof} 
The proof is structured as follows: we start by introducing a family of probability kernels  $(P_t)$ and  by showing that they form a semigroup of operators  associated with a branching process. We then establish that $(\mathscr{L}^{\,r}_\sigma :~ r > 0)$ is a Markov process associated with the semigroup $(P_t)$, with entrance measure $(\nu_r: \, r >0)$. Finally, we conclude the proof by establishing that its branching mechanism is $\widetilde{\psi}$ and that it fulfils (A4). 
\par
We stress that we  are only interested in the finite-dimensional distributions of $(\mathscr{L}^{r}_\sigma : r >  0)$. Recalling the notation \eqref{definition:u_g_lambda}, for any $r > 0$ and $\lambda \geq 0$, we write  
\begin{equation*}
     u_\lambda^{D_r}{(x,0)}
= \mathbb{N}_{x,0}\big(1-\exp (-\lambda \mathscr{L}^{\,r}_\sigma ) \big)
= \int \nu_r(\dd y) \,  ( 1-\exp(-\lambda y) ).  
\end{equation*}
Moreover, since  $\mathbb{N}_{x,0}(\mathscr{L}^{\,r}_\sigma > 0) \leq \mathbb{N}_{x,0}(\sup \widehat{\Lambda} \geq  r ) < \infty$, we have $\int_{(0,\infty)}\nu_r(\dd y) (1 \wedge y)  < \infty$, and we deduce that the function  $\lambda\mapsto u_\lambda^{D_r}(x,0)$ is the Laplace exponent of an infinitely divisible random variable with Lévy measure $\nu_r( \cdot \cap (0,\infty))$.  For each $t>0$ and $y \in \mathbb{R}_+$ denote by $P_t(y,\dd z)$ the  probability  measure with Laplace transform 
\begin{equation}\label{equation:laplaceBranchingSemigroup}
    \int P_t(y,\dd z ) \, \exp(-\lambda z )= \exp\big(-y\cdot u_\lambda^{D_t}(x,0)\big), \quad \quad \lambda \geq 0.
\end{equation}
Remark now that the translation invariance of the local time of $\xi$ implies that, under $\mathbb{P}_{0,x,r}$ (resp. $\mathbb{N}_{x,r}$) for  $r\geq 0$,  the distribution of $(W,\Lambda-r)$ is $\mathbb{P}_{0,x,0}$ (resp. $\mathbb{N}_{x,0}$). In particular, for every $s,t\geq 0$, we have
\begin{equation*}
    u_\lambda^{D_{t+s}}(x,s) = u_\lambda^{D_t}(x,0). 
\end{equation*}
We deduce that the family $(P_t(y,\dd z), t > 0,  y \in \mathbb{R}_+)$ is a semigroup since, by the special Markov property applied at the domain $D_s$, it holds that
\begin{align*}
    \int P_{t+s}(y,\dd z) \exp (- \lambda z)
    &= \exp \Big( -y \, \mathbb{N}_{x,0}\Big(1-\exp \big(-\lambda \mathscr{L}^{\,t+s}_\sigma\big) \Big) \Big) \\
    &=  \exp \Big( -y \, \mathbb{N}_{x,0}\Big(1-\exp \big(- \mathscr{L}^{\,s}_\sigma \cdot u_\lambda^{D_{t+s}}(x,s) \big) \Big) \Big) \\
    &=  \exp \Big( -y \, \mathbb{N}_{x,0}\Big(1-\exp \big(- \mathscr{L}^{\,s}_\sigma \cdot u_\lambda^{D_t}(x,0) \big) \Big) \Big) 
    = \exp\Big( -y \cdot u^{D_s}_{ u_\lambda^{D_t}(x,0)}\big(x,0\big)  \Big),
\end{align*}
 which coincides with the Laplace transform of the measure $ \int_{u \in \mathbb{R}_+} P_s(y,\dd u)P_t(u,\dd z)$. Since $\mathbb{N}_{x,0}(\mathscr{L}^{\,r}_\sigma) \leq 1$ by \eqref{tirage_au_hasard_ExitN} and $1-\exp (-\lambda \mathscr{L}^{\,r}_\sigma) \leq \lambda \mathscr{L}^{\,r}_\sigma$, we deduce by dominated convergence and \eqref{equation:laplaceBranchingSemigroup} that 
 $
     \int_{\mathbb{R}_+} P_t(y,\dd z) \, z = y \cdot \mathbb{N}_{x,0}(\mathscr{L}^{\,r}_\sigma) \leq y.  
 $
 Since  the semigroup clearly fulfils  the branching property, it follows that there exits a CSBP associated with the semigroup  $(P_t)$.
\par 
 Recall the notation $T_{D_\epsilon} :=  \inf \{ t \geq 0 : \tau_{\epsilon}(\overline{W}_t) < \infty \}$ as well as the definition of the sigma field $\mathcal{F}^{D_\epsilon}$ from \eqref{definition:sigmaFieldTroncature}.  We will now show that for any $\varepsilon >0$,  the process $(\mathscr{L}^{\,\epsilon+r}_\sigma : r \geq 0 )$ under the probability measure $\mathbb{N}_{x,0}^{D_\varepsilon } := \mathbb{N}_{x,0}(\, \cdot \, | T_{D_\epsilon} < \infty )$ has  transition kernel $(P_t)$.  Fix $\varepsilon < a < b$; by considering the point process  of excursions \eqref{specialMarkov_v3} outside $D_a$, we deduce by an application of the special Markov property that 
\begin{align*}
    \mathbb{N}_{x,0}^{D_\varepsilon } \left( \exp \left(- \lambda \mathscr{L}^{\,b}_\sigma  \right) \big| \mathcal{F}^{D_a} \right) 
    = \exp \left(- \mathscr{L}^{\,a}_\sigma \mathbb{N}_{x,a}\left( 1-\exp(-\lambda \mathscr{L}^{\,b}_\sigma ) \right)   \right) 
    =  \exp \left(- \mathscr{L}^{\,a}_\sigma \cdot u_\lambda^{D_{b-a}}(x,0)    \right),  \quad \mathbb{N}_{x,0}^{D_\varepsilon } \text{ --a.s.},
\end{align*}
where in the last equality we used the translation invariance of the local time of $\xi$. We have obtained that, for every $\varepsilon> 0$,   $(\mathscr{L}^{\,r+\epsilon}_\sigma :~ r \geq 0)$ under $\mathbb{N}^{D_\varepsilon }_{x,0}$ is a CSBP with  Laplace functional $(u_\lambda^{D_r}(x,0):~r> 0)$ and initial distribution $\mathbb{N}^{D_\epsilon}_{x,0}(\mathscr{L}^{\,\epsilon}_\sigma \in \dd x )$ with respect to the filtration $(\mathcal{F}^{D_{\epsilon + r}}: r \geq 0)$ (recall that $\mathscr{L}^{\,r}_\sigma$ is $\mathcal{F}^{D_r}$-measurable by Proposition \ref{L_eta_measurable} and Lemma \ref{def:lem:scr:L}). 
Now, we claim that for any $0<r_1< \dots < r_k$  and any collection of non-negative measurable functions $f_i : \mathbb{R}_+ \mapsto \mathbb{R}_+$, 
\begin{equation} \label{equation:fddscsbp}
    \mathbb{N}_{x,0}\left( \prod_{i=1}^k f_i(\mathscr{L}^{\,r_i}_\sigma) \right) 
    =
    \int_{\mathbb{R}_+} \nu_{r_1}(\dd z_1) f_{1}(z_1) \int_{\mathbb{R}_+} P_{r_2-r_1}(z_1,\dd z_2)f_2(z_2) 
    \dots \int_{\mathbb{R}_+} P_{r_k-r_{k-1}}(z_{k-1},\dd z_k)f_k(z_k).
\end{equation}
This follows from the previous result, by observing that for any $\epsilon < r_1$ we have 
\begin{align*} 
    &\mathbb{N}_{x,0}\left( \prod_{i=1}^k f_i(\mathscr{L}^{\,r_i}_\sigma) \mathbbm{1}_{\{ T_{D_\epsilon} < \infty \}} \right) \\ 
    &\quad \quad \quad =  \mathbb{N}_{x,0} \left( \mathbbm{1}_{\{ T_{D_\epsilon } < \infty \}}  f_1(\mathscr{L}^{\,r_1}_\sigma)  \int_{\mathbb{R}_+} P_{r_2-r_1}(\mathscr{L}^{\,r_1}_\sigma,\dd z_2)f_2(z_2) 
    \dots \int_{\mathbb{R}_+} P_{r_k-r_{k-1}}(z_{k-1},\dd z_k)f_k(z_k) \right),
\end{align*}
  and we conclude  taking the limit as $\epsilon \downarrow 0$. The fact that the family $(\nu_t: t \geq 0)$  satisfies that $\nu_{t+s} = \nu_s P_t$ for $t,s \geq 0$ now follows  from \eqref{equation:fddscsbp}.  Let us now identify $\widetilde{\psi}$. Recall from our  discussion in \eqref{equation:branchingEquation}  that the Laplace exponent $(u_\lambda^{D_r}(x,0) : r,\lambda \geq 0  )$   is the unique  solution to the equation
\begin{equation} \label{equation:branchingIdentif}
    u_\lambda^{D_r}(x,0) + \int_0^r {\rm{d}} u \, \Psi\big( u_{\lambda}^{D_u}(x,0) \big)  = \lambda,
\end{equation}
where $\Psi$ is  the branching mechanism associated with $(P_t)$, and that it is  defined in a unique way by \eqref{equation:branchingIdentif}. In particular, $\Psi$
characterizes completely the semigroup $(P_t)$.  To identify the branching mechanism we argue as follows: first,  observe that the identity \eqref{integral_equation}  applied at the domain  $D_r$ yields
\begin{equation} \label{equation:limiteInfinito}
u_\lambda^{D_r}(x , 0) + \Pi_{x,0}\left( \int_0^{\tau_{D_r}} \dd t ~\psi( u^{D_r}_\lambda(\xi_t , \mathcal{L}_t) ) \right) = \lambda,
\end{equation}
for every $\lambda\geq 0$ and $r>0$.
Next,  by excursion theory and \ref{Asssumption_3} we get:   
\begin{equation*}
    \Pi_{x,0}\left( \int_0^{\tau_{D_r}}  \dd t~\psi( u^{D_r}_\lambda(\xi_t , \mathcal{L}_t) )  \right)
    = 
    \int_{0}^r {\rm{d}}u~ \mathcal{N}\left( \int_0^\sigma \dd t~\psi\left( u_\lambda^{D_r} (\xi_t , u)\right) \right) 
    =
    \int_0^r {\rm{d}}u~  \mathcal{N}\left( \int_0^\sigma \dd t ~\psi\left( u_\lambda^{D_{r-u}} (\xi_t , 0)\right)  \right) ,
\end{equation*}
where in the last equality we use the invariance by translation of the local time of $\xi$. Moreover, the special Markov property applied at the domain $D_0$ gives
\begin{equation*}
    u_\lambda^{D_{r}}(y,0) = u_{u_{\lambda}^{D_r}(x,0)}(y), 
\end{equation*}
for every $y\in E \setminus \{ x \}$ and $\lambda\geq 0$ -- and the identity also holds for $y=x$.
Putting everything together,  by definition of $\widetilde{\psi}$, the identity  (\ref{equation:limiteInfinito}) can be re-written as follows: 
\begin{equation} \label{equation:characterizationBranchingMec}
u_\lambda^{D_r}(x,0) + \int_0^r {\rm{d}} u \, \widetilde{\psi}( u_{\lambda}^{D_u}(x,0) )  = \lambda.
\end{equation}
Consequently, we deduce that the branching mechanism associated with the Laplace functional $u_\lambda^{D_r}(x,0)$ is $\widetilde{\psi}$. It remains to show that the conditions stated in Section \ref{subsection:height} are satisfied by $\widetilde{\psi}$. As we already mentioned, it only remains to verify (A4). In this direction and recalling the notation $T_{D_r} = \inf \{ t \geq 0: \widehat{\Lambda}_t \geq r \}$,  also by \eqref{equation:characterizationBranchingMec} we obtain that $f(\lambda, r) := u_\lambda^{D_r}(x,0)$ satisfies for every $r$,
\begin{equation} \label{cuentas:expoCSBPchar}
    \int_{f(\lambda , r)}^\lambda \frac{\rm{d} s}{ \widetilde{\psi}(s) } = r,
\end{equation} 
where the limit $f(\infty , r)  = \mathbb{N}_{x,0}(L_\sigma^{D_r} > 0)$ is finite,  since   $\{ L_\sigma^{D_r} > 0 \} \subset  \{T_{D_r}  < \infty \}$ and $\mathbb{N}_{x,0}(T_{D_r} < \infty) < \infty$ by the same argument  used before  Theorem \ref{Theo_Spa_Markov_Excur}.
Hence,   taking the limit as $\lambda \uparrow \infty$ in (\ref{cuentas:expoCSBPchar}), we infer  that  the following conditions are fulfilled: 
\begin{equation*} 
    \widetilde{\psi}(\infty) = \infty \quad \quad \text{ and }\quad \quad\int^\infty_{\cdot} \frac{\dd s }{\widetilde{\psi}(s)} < \infty.
\end{equation*}
To derive the exact form of (A4), recall that $\widetilde{\psi}$ is convex and that we have $\widetilde{\psi}(0) = 0$ and $\widetilde{\psi}'(0+) \geq 0$. 
\end{proof}

Now that we have established that $\widetilde{\psi}$ is the Laplace exponent of a Lévy tree, let us briefly introduce some related notation and a few facts that will be  used frequently in the upcoming sections. From now on, we set $\widetilde{X}$ a  $\widetilde{\psi}$-Lévy process  and we write $\widetilde{I}$ for the running infimum of $\widetilde{X}$. We also denote the excursion  measure of the reflected  process $\widetilde{X}- \widetilde{I}$ by $\widetilde{N}$ -- where the associated local time is  $-\widetilde{I}$. The usual notation introduced in Section \ref{subsection:height}  applied to $\widetilde{X}$ are indicated with a $\sim$.  For instance, we denote   the height process and the exploration process issued from $\widetilde{X}$ respectively by $\widetilde{H}$ and  $\widetilde{\rho}$. 
\par By convexity and the fact that $\widetilde{\psi}'(0+) \geq 0$,     the only solution to $\widetilde{\psi}(\lambda) = 0$ is $\lambda = 0$. This implies that the mapping $\lambda \mapsto \widetilde{\psi}(\lambda)$ is invertible in $[0,\infty)$. By  classical results in the theory of Lévy processes, $\widetilde{\psi}^{-1}$  is the Laplace exponent of the right-inverse of $-\widetilde{I}$ and,  since $\widetilde{X} -\widetilde{I}$ does not spend  time at $0$, the former is a  subordinator with no drift. So, recalling the relation between excursion lengths and jumps of the right-inverse of   $-\widetilde{I}$, we derive that:
\begin{equation}\label{identity:inversePhi}
    \widetilde{\psi}^{-1}(\lambda) = \widetilde{N} (1-\exp(-\lambda \sigma )), \quad \lambda \geq 0.
\end{equation}
For a more detailed discussion, we refer to Chapters IV and VII of \cite{BertoinBook}. \smallskip 
\par We close this section with some useful identities in the same vein of \eqref{definition:PhiTilde},   that will be used frequently in our computations. These identities allow  to express some Laplace-like transforms concerning the process $\big(\psi(u_{\lambda}(\xi_t)):~t\geq 0\big)$, under the excursion measure $\mathcal{N}$,  in terms of the $\widetilde{\psi}$. As an application of these computations, we will identify the drift and Brownian coefficients of $\widetilde{\psi}$. We summarise these identities in the following lemma.
\begin{lem} \label{lemma:cuentas} For every $\lambda_1,\lambda_2 \in \mathbb{R}_+$ with $\lambda_1\neq \lambda_2$,    we have
\begin{equation}\label{u:r:l:1}
         \mathcal{N} \left( 1- \exp\Big(- \int _0^\sigma \dd s ~
         \frac{\psi\big(u_{\lambda_1}(\xi_s) \big) -\psi\big( u_{\lambda_2}(\xi_s) \big)   }{u_{\lambda_1}(\xi_s) - u_{\lambda_2}(\xi_s) } \Big) \right) 
         = \frac{\widetilde{\psi}\big(\lambda_1\big) - \widetilde{\psi}\big( \lambda_2\big)   }{ \lambda_1-\lambda_2 }.  
    \end{equation}
\end{lem}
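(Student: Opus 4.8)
The plan is to evaluate the quantity $u_{\lambda_1}^{D_r}(x,0)-u_{\lambda_2}^{D_r}(x,0)$ along the CSBP flow associated with $\widetilde\psi$ in two different ways, and then to differentiate at $r=0$. Throughout I abbreviate $v_i(r):=u_{\lambda_i}^{D_r}(x,0)$ for $i=1,2$ and write $\phi(p,q):=\big(\psi(p)-\psi(q)\big)/(p-q)$ for the coefficient appearing in \eqref{u:r:l:1}; since $\psi$ is convex with $\psi'(0+)=\alpha\ge 0$ one has $0\le\phi(p,q)\le\psi'(p\vee q)$, and $u_\lambda^{D_r}\le\lambda$ by \eqref{tirage_au_hasard_ExitN}. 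Hence every potential below is bounded by $\psi'(\lambda_1\vee\lambda_2)<\infty$, which will make all the analytic manipulations legitimate.

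First I would record the ODE side. By \eqref{equation:characterizationBranchingMec} the maps $v_i$ solve $\partial_r v_i(r)=-\widetilde\psi(v_i(r))$ with $v_i(0+)=\lambda_i$, and since $\lambda_1\ne\lambda_2$ uniqueness of solutions forces $v_1(r)\ne v_2(r)$ for all $r$. Subtracting the two equations and writing the difference of $\widetilde\psi$ as a divided difference gives $\partial_r\log|v_1(r)-v_2(r)|=-\big(\widetilde\psi(v_1(r))-\widetilde\psi(v_2(r))\big)/\big(v_1(r)-v_2(r)\big)$, whence
\begin{equation*}
\frac{v_1(r)-v_2(r)}{\lambda_1-\lambda_2}=\exp\Big(-\int_0^r\frac{\widetilde\psi(v_1(s))-\widetilde\psi(v_2(s))}{v_1(s)-v_2(s)}\,\dd s\Big).\tag{$\star$}
\end{equation*}

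Next I would obtain a second expression for the same ratio by a Feynman--Kac argument. Applying the integral equation \eqref{integral_equation} at the domain $D_r$ for the motion $(\xi,\mathcal L)$ with boundary datum $g\equiv\lambda_i$ on $\partial D_r=\{(x,r)\}$, subtracting the cases $i=1,2$, and using $\psi(p)-\psi(q)=\phi(p,q)(p-q)$, the difference $\Delta_r:=u_{\lambda_1}^{D_r}-u_{\lambda_2}^{D_r}$ solves a linear equation with bounded nonnegative potential $\phi(u_{\lambda_1}^{D_r},u_{\lambda_2}^{D_r})$ and boundary value $\lambda_1-\lambda_2$. By the Feynman--Kac formula (uniqueness of bounded solutions holds since the potential is bounded), and evaluating at $(x,0)$ where $\tau_{D_r}<\infty$ $\Pi_{x,0}$-a.s.\ by recurrence,
\begin{equation*}
v_1(r)-v_2(r)=(\lambda_1-\lambda_2)\,\Pi_{x,0}\Big(\exp\big(-\textstyle\int_0^{\tau_{D_r}}\phi\big(u_{\lambda_1}^{D_r}(\xi_t,\mathcal L_t),u_{\lambda_2}^{D_r}(\xi_t,\mathcal L_t)\big)\,\dd t\big)\Big).
\end{equation*}
I would then expand the exponential along the excursions of $\xi$ from $x$: by \ref{Asssumption_3} the time spent at $x$ is Lebesgue-negligible, while from $(x,0)$ the excursions up to the inverse local time $\tau_{D_r}$ form a Poisson measure with intensity $\mathbbm 1_{[0,r]}(s)\,\dd s\,\mathcal N(\dd e)$, indexed by their local-time level $s$. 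On the excursion at level $s$ one has $\mathcal L\equiv s$, so by translation invariance of the local time and the special Markov relation $u_\lambda^{D_\rho}(\cdot,0)=u_{u_\lambda^{D_\rho}(x,0)}(\cdot)$ (both established in the proof of Proposition \ref{lemma:divergExpo}) the labels become $u_{\lambda_i}^{D_r}(\xi_t,s)=u_{v_i(r-s)}(\xi_t)$. The exponential formula for Poisson measures, followed by the substitution $s\mapsto r-s$, then yields
\begin{equation*}
\frac{v_1(r)-v_2(r)}{\lambda_1-\lambda_2}=\exp\Big(-\int_0^r\mathcal N\Big(1-\exp\big(-\textstyle\int_0^\sigma\phi\big(u_{v_1(s)}(\xi_t),u_{v_2(s)}(\xi_t)\big)\,\dd t\big)\Big)\,\dd s\Big).\tag{$\star\star$}
\end{equation*}

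Finally I would compare: the inner $\mathcal N$-integral in $(\star\star)$ is exactly the left-hand side of \eqref{u:r:l:1} evaluated at $(v_1(s),v_2(s))$, call it $F(v_1(s),v_2(s))$, while the integrand of $(\star)$ is the right-hand side $\Phi(v_1(s),v_2(s))$. Equating $(\star)$ and $(\star\star)$ gives $\int_0^r F(v_1(s),v_2(s))\,\dd s=\int_0^r\Phi(v_1(s),v_2(s))\,\dd s$ for all $r\ge0$; differentiating at $r=0$ and using $v_i(0)=\lambda_i$ yields $F(\lambda_1,\lambda_2)=\Phi(\lambda_1,\lambda_2)$, which is \eqref{u:r:l:1}. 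The one point requiring care in this last step, and the main technical obstacle of the proof, is the rigorous justification of the Feynman--Kac representation together with the excursion expansion of $\int_0^{\tau_{D_r}}\phi\,\dd t$ (in particular the vanishing contribution of $\{t:\xi_t=x\}$ via \ref{Asssumption_3} and the correct Poisson intensity $\dd s\,\mathcal N(\dd e)$), and the right-continuity at $s=0$ of $s\mapsto F(v_1(s),v_2(s))$ needed to differentiate $(\star\star)$; the latter follows from dominated convergence, using that $\phi$ is bounded by $\psi'(\lambda_1\vee\lambda_2)$ and that $\mathcal N(1\wedge\sigma)<\infty$ by excursion theory.
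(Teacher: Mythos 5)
Your proposal is correct, but it follows a genuinely different route from the paper. The paper reduces the lemma, via the Markov property under $\mathcal{N}$ and the definition \eqref{definition:PhiTilde} of $\widetilde{\psi}$, to the single identity
\begin{equation*}
\frac{u_{\lambda_1}(y)-u_{\lambda_2}(y)}{\lambda_1-\lambda_2}
=\Pi_y\Big(\exp\Big(-\int_0^{T_x}\dd t\,\frac{\psi(u_{\lambda_1}(\xi_t))-\psi(u_{\lambda_2}(\xi_t))}{u_{\lambda_1}(\xi_t)-u_{\lambda_2}(\xi_t)}\Big)\Big),
\end{equation*}
which it then proves with the L\'evy-snake machinery: writing the left side as an integral $\dd\mathscr{L}^{\,0}$ under $\mathbb{N}_{y,0}$, applying the Markov property, the time-reversal duality \eqref{dualidad:etaRhoW}, and finally the many-to-one formula \eqref{tirage_au_hasard_N} together with \eqref{identity:exponenteSubord} --- the whole point being that the divided difference of $\psi$ minus $\alpha$ is exactly the joint Laplace exponent of the pair $(U^{(1)},U^{(2)})$. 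You instead never touch the snake: you work with the CSBP flow $v_i(r)=u_{\lambda_i}^{D_r}(x,0)$, comparing the exponential representation of $(v_1-v_2)/(\lambda_1-\lambda_2)$ coming from the ODE \eqref{equation:characterizationBranchingMec} with the one coming from a Feynman--Kac representation of $u^{D_r}_{\lambda_1}-u^{D_r}_{\lambda_2}$ (derived from \eqref{integral_equation}) expanded along the excursions of $\xi$ away from $x$ --- the same translation-invariance and special-Markov identities already used in the proof of Proposition \ref{lemma:divergExpo} --- and then differentiating at $r=0$. What the paper's approach buys is that the key identity is obtained probabilistically in one shot, at every $y$, with no uniqueness question; what yours buys is elementarity: only one-dimensional excursion theory of $\xi$, the already-established properties of $u^{D_r}$, and real analysis.

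One caveat: your parenthetical justification of the Feynman--Kac step, ``uniqueness of bounded solutions holds since the potential is bounded,'' is too quick as stated. Since $\Pi_{x,0}(\tau_{D_r})$ is typically infinite (the inverse local time is a driftless subordinator, generically with infinite mean), boundedness of the potential does not yield uniqueness by contraction or by iterating the kernel $Kf=\Pi(\int_0^{\tau}\phi\, f(\xi_t)\,\dd t)$. A correct argument is available --- for instance the Duhamel identity $K=K^{\phi}+K^{\phi}K$ relating $K$ to the damped kernel $K^{\phi}f=\Pi(\int_0^{\tau}\phi\, f(\xi_t)e^{-\int_0^t\phi}\dd t)$, which satisfies $\|K^{\phi}g\|_\infty\le\|g\|_\infty$, applied to the difference of two solutions, each of which has finite and bounded $K$-image by its own integral equation --- but it has to be spelled out; note also that the identity the paper proves via snake duality is precisely this Feynman--Kac representation (for the domain $E_*$), so in the paper's logic it is obtained probabilistically rather than by an appeal to uniqueness. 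The remaining technical points you invoke are sound: $\mathcal{N}(1\wedge\sigma)<\infty$ holds because $\mathcal{N}(\sigma\in\cdot)$ is the L\'evy measure of the inverse local time, and the continuity in $\lambda$ of $u_\lambda(y)$ needed for the dominated-convergence step at $r=0$ is immediate from its definition.
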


Recalling the identities \eqref{identity:exponenteSubord}, remark that Lemma 8 allows  to express the Laplace exponent of $(\widetilde{U}^{(1)}, \widetilde{U}^{(2)})$ in terms of $\mathcal{N}$ and $\psi$. 

\begin{proof} First note that the functions $\lambda \mapsto u_\lambda(y)$  and $\lambda \mapsto \psi(u_\lambda(y))$ are non-decreasing. So without loss of generality we can and will assume that $\lambda_1>\lambda_2$.  We set $T_x := \inf \{ t \geq 0 : \xi_t = x \}$ and we write 
\begin{align*}
     &\mathcal{N} \Big( 1- \exp \Big(- \int_0^\sigma \dd s ~
         \frac{\psi\big(u_{\lambda_1}(\xi_s) \big) -\psi\big( u_{\lambda_2}(\xi_s) \big) }{u_{\lambda_1}(\xi_s) - u_{\lambda_2}(\xi_s) } \Big) \Big) \\
     & \hspace{20mm} = \mathcal{N} \left( \int_0^\sigma \dd s~ \frac{\psi\big(u_{\lambda_1}(\xi_s) \big) -\psi\big( u_{\lambda_2}(\xi_s) \big) }{u_{\lambda_1}(\xi_s) - u_{\lambda_2}(\xi_s) } \cdot  \exp \Big(- \int_s^\sigma \dd t~\frac{\psi\big(u_{\lambda_1}(\xi_t) \big) -\psi\big( u_{\lambda_2}(\xi_t) \big) }{u_{\lambda_1}(\xi_t) - u_{\lambda_2}(\xi_t) } \Big) \right)  \\
     & \hspace{20mm} = \mathcal{N} \left( \int_0^\sigma \dd s~\frac{\psi\big(u_{\lambda_1}(\xi_s) \big) -\psi\big( u_{\lambda_2}(\xi_s) \big) }{u_{\lambda_1}(\xi_s) - u_{\lambda_2}(\xi_s) }\cdot  \Pi_{\xi_s} \left( \exp \Big(- \int_0^{T_x} \dd t~ \frac{\psi\big(u_{\lambda_1}(\xi_t) \big) -\psi\big( u_{\lambda_2}(\xi_t)\big) }{u_{\lambda_1}(\xi_t) - u_{\lambda_2}(\xi_t) } \Big) \right) \right)
\end{align*}
where in the last equality we applied the Markov property. On the other hand, the definition of $\widetilde{\psi}$ given in  \eqref{definition:PhiTilde} yields 
\begin{align*}
    \frac{\widetilde{\psi}\big(\lambda_1\big) - \widetilde{\psi}\big( \lambda_2\big)   }{ \lambda_1-\lambda_2 }  
    &= \mathcal{N}\left( \int_0^\sigma \dd s~ \frac{\psi\big(u_{\lambda_1}(\xi_s)\big) - \psi\big(u_{\lambda_2}(\xi_s)\big) }{ \lambda_1-\lambda_2} \right) \\
    &= 
    \mathcal{N}\left( \int_0^\sigma \dd s~  \frac{\psi\big(u_{\lambda_1}(\xi_s)\big) - \psi\big(u_{\lambda_2}(\xi_s)\big) }{ u_{\lambda_1}(\xi_s) - u_{\lambda_2}(\xi_s) } \cdot \frac{ u_{\lambda_1}(\xi_s) - u_{\lambda_2}(\xi_s) }{\lambda_1-\lambda_2}   \right).
\end{align*}
Consequently, the lemma will  follow as soon as we establish the identity:
\begin{equation*}\label{(*)--eq--2}
    \frac{u_{\lambda_1}(y) - u_{\lambda_2}(y) }{\lambda_1-\lambda_2}  
    =
    \Pi_{y} \Big( \exp \Big(- \int_0^{T_x} \dd t~  \frac{\psi\big(u_{\lambda_1}(\xi_t)\big) - \psi\big(u_{\lambda_2}(\xi_t)\big) }{ u_{\lambda_1}(\xi_t) - u_{\lambda_2}(\xi_t) } \Big) \Big).
\end{equation*}
In this direction, recall that under $\mathbb{N}_{y,0}$ with $y\neq x$ the processes $\mathscr{L}^{\,0}(\rho,\overline{W})$ and $L^{E_*}(\rho,W)$ are well defined and indistinguishables, and  remark  that 
\begin{align*}
   u_{\lambda_1}(y) - u_{\lambda_2}(y)&=  \mathbb{N}_{y,0} 
    \Big( \exp\big(- \lambda_1 \int_{0}^{\sigma}\dd \mathscr{L}^{\,0}_u\big)  - \exp\big(- \lambda_2 \int_{0}^{\sigma}\dd \mathscr{L}^{\,0}_u\big)  \Big) 
    \\
    &=(\lambda_1-\lambda_2) \cdot \mathbb{N}_{y,0} 
\Big( \int_0^\sigma \dd \mathscr{L}^{\,0}_s \exp\big(- \lambda_1 \int_0^s \dd \mathscr{L}^{\,0}_u \big) \cdot \exp\big( -\lambda_2 \int_s^\sigma \dd \mathscr{L}^{\,0}_u \big)  \Big).
\end{align*}    
Then,  an application of the Markov property gives:
$$  u_{\lambda_1}(y) - u_{\lambda_2}(y)=(\lambda_1-\lambda_2) \cdot \mathbb{N}_{y,0} 
    \Big(  \int_0^\sigma \dd \mathscr{L}^{\,0}_s  \exp\big(- \lambda_1 \mathscr{L}^{\,0}_s\big)\cdot \mathbb{E}^{\dag}_{\rho_s , \overline{W}_s} \big[ \exp\big( -\lambda_2  \mathscr{L}^{\,0}_\sigma\big) \big]   \Big). $$
We can now apply the duality identity  $\big((\rho_{(\sigma-t)-},\eta_{(\sigma-t)-},\overline{W}_{\sigma-t}):~t\in[0,\sigma]\big) \overset{(d)}{=} \big((\eta_{t},\rho_{t},\overline{W}_{t}):~t\in[0,\sigma]\big)$  under $\mathbb{N}_{y,0}$, to get that the previous display is equal to
\begin{align*}
(\lambda_1-\lambda_2)\cdot  \mathbb{N}_{y,0} 
    \Big(  \int_0^\sigma \dd \mathscr{L}^{\,0}_s & \exp\big(-\lambda_1 \int_s^{\sigma}  \dd \mathscr{L}^{\,0}_t\big) \cdot \mathbb{E}^{\dag}_{\eta_s , \overline{W}_s} \big[  \exp\big( -\lambda_2  \mathscr{L}^{\,0}_\sigma  \big) \big]   \Big)\\
    &=(\lambda_1-\lambda_2)\cdot  \mathbb{N}_{y,0} 
    \Big(  \int_0^\sigma \dd \mathscr{L}^{\,0}_s ~ \mathbb{E}^{\dag}_{\rho_s , \overline{W}_s} \big[ \exp\big(-\lambda_1  \mathscr{L}^{\,0}_\sigma  \big) \big] \cdot \mathbb{E}^{\dag}_{\eta_s , \overline{W}_s} \big[  \exp\big( -\lambda_2  \mathscr{L}^{\,0}_\sigma \big) \big]   \Big).
\end{align*}
Remark that $(\eta, \overline{W})$ takes values in $\overline{\Theta}_x$ by duality and right-continuity of $\eta$ with respect to the total variation distance. 
We are now in position to apply the many-to-one equation \eqref{tirage_au_hasard_N}. In this direction, for $(\mu , \overline{\w}) \in \overline{\Theta}_x$ with $\overline{\w}(0) = (y,0)$ and $y \neq x$ we notice that
\begin{equation*}
     \mathbb{E}_{\mu , \overline{\w}}^{\dag} \Big[ \exp\big(-\lambda  \mathscr{L}^{\,0}_\sigma\big)\Big] = \exp \Big(-\int_0^{\tau_{D_0}(\overline{\w})} \mu(\dd h)~ \mathbb{N}_{\overline{\w}(h)} \big( 1-\exp(- \lambda \mathscr{L}^{\,0}_\sigma) \big) \Big) = \exp \Big(- \int_0^{\tau_{D_0}(\overline{\w})} \mu(\dd h)~ u_\lambda(\w(h)) \Big),
\end{equation*}
for every $\lambda>0$. Consequently, \eqref{tirage_au_hasard_N} gives: 
\begin{align*}
    \frac{u_{\lambda_1}(y) - u_{\lambda_2}(y) }{\lambda_1-\lambda_2} 
    &= E^0 \otimes \Pi_y \Big(   \exp\big(-\alpha T_x\big)   \exp \Big(  -\int_0^{T_x} J(\dd s )~ u_{\lambda_1}(\xi_s) - \int_0^{T_x} \Jc(\dd s ) ~u_{\lambda_2}(\xi_s)  \Big) \Big).
\end{align*}
 Finally  an application of  \eqref{identity:exponenteSubord} yields exactly the desired result \eqref{(*)--eq--2}. 
\end{proof}
As an immediate consequence, we obtain two other useful identities  taking   $\lambda_2 = 0$ and letting $\lambda_2 \downarrow \lambda_1$ respectively. For every $\lambda >0$, we have
\begin{equation} \label{identity:expoenente_1}
          \mathcal{N} \Big( 1- \exp\Big(- \int_0^\sigma \dd h~ \psi\big(u_\lambda(\xi_h)\big)/u_\lambda(\xi_h)  \Big) \Big)  = \widetilde{\psi}(\lambda)/\lambda
\quad \text{ and }\quad 
        \mathcal{N} \Big( 1- \exp\Big(- \int_0^\sigma \dd h~ \psi^{\prime}(u_\lambda(\xi_h))  \Big) \Big)   = \widetilde{\psi}^{\prime}(\lambda) ,
\end{equation} 
where for the first one we used that $u_0(y) = 0$ since $\mathbb{N}_y(L^{E_*}_\sigma = \infty ) = 0$. We also stress that \eqref{identity:expoenente_1}  can be proved independently directly by the same   arguments as the ones applied in the proof of \eqref{u:r:l:1}. \par 
Since by Proposition \ref{lemma:divergExpo} the exponent  $\widetilde{\psi}$ satisfies (A1) -- (A4), it can be written in the following form
\begin{equation*}
    \widetilde{\psi}(\lambda) = \widetilde{\alpha}\lambda + \widetilde{\beta} \lambda^2 +\int_{\mathbb{R}_+} \widetilde{\pi}(\dd x) \, (\exp(-\lambda x)-1+\lambda x),
\end{equation*}
where $\widetilde{\alpha}, \widetilde{\beta} \geq 0$ and $\widetilde{\pi}$ is a  measure in $\mathbb{R}_+$ satisfying $\int \widetilde{\pi}(\dd x) (x \wedge x^2)  < \infty$. In the following corollary, we identify the coefficients $\widetilde{\alpha}$ and $\widetilde{\beta}$. 
\begin{cor} \label{corollary:phitildeParametros} We have    $\widetilde{\alpha} = \mathcal{N}\big(1-\exp(-\alpha\sigma )\big)$ and  $\widetilde{\beta} = 0$. 
\end{cor}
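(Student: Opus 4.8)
The plan is to read off $\widetilde{\alpha}$ and $\widetilde{\beta}$ from the Lévy--Khintchine form \eqref{equation:triplettilde} by taking the two extreme limits $\lambda\downarrow 0$ and $\lambda\to\infty$ in the identities of \eqref{identity:expoenente_1}. Differentiating \eqref{equation:triplettilde} gives $\widetilde{\psi}'(\lambda)=\widetilde{\alpha}+2\widetilde{\beta}\lambda+\int\widetilde{\pi}(\dd x)\,x(1-e^{-\lambda x})$, and a routine dominated-convergence argument on $\widetilde{\pi}$ (using $\int\widetilde{\pi}(\dd x)(x\wedge x^2)<\infty$) shows both that $\widetilde{\psi}'(0+)=\widetilde{\alpha}$ and that $\tfrac1\lambda\int\widetilde{\pi}(\dd x)\,x(1-e^{-\lambda x})\to 0$, so that $2\widetilde{\beta}=\lim_{\lambda\to\infty}\widetilde{\psi}'(\lambda)/\lambda$, equivalently $\widetilde{\beta}=\lim_{\lambda\to\infty}\widetilde{\psi}(\lambda)/\lambda^2$. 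Thus everything reduces to computing these two limits of the right-hand sides of \eqref{identity:expoenente_1} and \eqref{definition:PhiTilde}.

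Two elementary bounds on $u_\lambda$ will be used throughout. By \eqref{tirage_au_hasard_ExitN} applied to $D=E_*$ one gets $\mathbb{N}_{y,0}(\mathscr{L}^{\,0}_\sigma)\le 1$, whence $0\le u_\lambda(y)\le\lambda\,\mathbb{N}_{y,0}(\mathscr{L}^{\,0}_\sigma)\le\lambda$ for every $y$; moreover, since $\mathbb{N}_{y,0}(\mathscr{L}^{\,0}_\sigma=\infty)=0$, monotone convergence in \eqref{definition:ulambda} gives $u_\lambda(y)\downarrow 0$ as $\lambda\downarrow 0$ and $u_\lambda(y)\uparrow u_\infty(y):=\mathbb{N}_{y,0}(\mathscr{L}^{\,0}_\sigma>0)<\infty$ as $\lambda\uparrow\infty$, the finiteness of $u_\infty$ following from $\mathbb{N}_y(T_{E_*}<\infty)<\infty$, as in the paragraph preceding Theorem \ref{Theo_Spa_Markov_Excur}.

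For $\widetilde{\alpha}$ I would let $\lambda\downarrow 0$ in the second identity of \eqref{identity:expoenente_1}. Since $\lambda\mapsto u_\lambda(y)$ and $\psi'$ are non-decreasing, $\psi'(u_\lambda(\xi_h))\downarrow\psi'(0+)=\alpha$ as $\lambda\downarrow 0$, and the bound $u_\lambda\le\lambda$ forces $\psi'(u_\lambda(\xi_h))\le\psi'(\lambda)$, which justifies passing the limit inside the $\dd h$-integral by bounded convergence on the compact interval $[0,\sigma]$, so that $\int_0^\sigma\psi'(u_\lambda(\xi_h))\,\dd h\to\alpha\sigma$, $\mathcal{N}$--a.e. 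The outer interchange with $\mathcal{N}$ follows by monotone (dominated) convergence, the family being dominated by $1-\exp(-\int_0^\sigma\psi'(u_{\lambda_0}(\xi_h))\,\dd h)$, whose $\mathcal{N}$-integral is the finite number $\widetilde{\psi}'(\lambda_0)$. This yields $\widetilde{\alpha}=\widetilde{\psi}'(0+)=\mathcal{N}(1-\exp(-\alpha\sigma))$.

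For $\widetilde{\beta}=\lim_{\lambda\to\infty}\widetilde{\psi}(\lambda)/\lambda^2$ I would start from \eqref{definition:PhiTilde}, writing $\widetilde{\psi}(\lambda)/\lambda^2=\mathcal{N}(\int_0^\sigma\psi(u_\lambda(\xi_h))/\lambda^2\,\dd h)$, and observe that the integrand vanishes pointwise: for $h$ with $\xi_h\neq x$ we have $u_\lambda(\xi_h)\uparrow u_\infty(\xi_h)<\infty$, so $u_\lambda(\xi_h)/\lambda\to 0$ while $\psi(u_\lambda(\xi_h))/u_\lambda(\xi_h)^2\to\psi(u_\infty(\xi_h))/u_\infty(\xi_h)^2<\infty$. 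The main obstacle is the interchange of this pointwise limit with $\mathcal{N}(\int_0^\sigma\cdots\dd h)$: the crude bound $\psi(u_\lambda)/\lambda^2\le\psi(\lambda)/\lambda^2$ only controls the integrand by a constant, and $\mathcal{N}(\sigma)$ is typically infinite, so no uniform domination by $\sigma$ is available. This reflects the genuine difficulty that $u_\infty(y)=\mathbb{N}_{y,0}(\mathscr{L}^{\,0}_\sigma>0)$ blows up as $y\to x$, i.e.\ near the two endpoints of each excursion of $\xi$. I would overcome it by splitting the occupation integral according to $\{h:u_\infty(\xi_h)\le\lambda\}$ and $\{h:u_\infty(\xi_h)>\lambda\}$: on the first set $u_\lambda(\xi_h)\le u_\infty(\xi_h)$ yields a contribution controlled by $\lambda^{-2}\int_0^\sigma\psi(u_\infty(\xi_h))\mathbbm{1}_{\{u_\infty(\xi_h)\le\lambda\}}\,\dd h$, while on the second, which shrinks to the endpoints as $\lambda\to\infty$, one uses $\psi(u_\lambda)\le\psi(\lambda)$ together with the fact that the Lebesgue measure of $\{h:u_\infty(\xi_h)>\lambda\}$ tends to $0$. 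Taking $\lambda\to\infty$ and checking $\mathcal{N}$-integrability of the resulting bounds shows $\widetilde{\beta}=0$; conceptually this is the analytic counterpart of Proposition \ref{prop:branchingNotocax}, which prevents any binary branch point of $\mathcal{T}_H$ from carrying the label $x$ and hence forbids a Brownian component in $\widetilde{\psi}$.
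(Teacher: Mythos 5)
Your reduction of the corollary to the two limits $\widetilde{\alpha}=\widetilde{\psi}'(0+)$ and $\widetilde{\beta}=\lim_{\lambda\to\infty}\widetilde{\psi}(\lambda)/\lambda^{2}$ is correct, and your treatment of $\widetilde{\alpha}$ is complete: the bounds $u_{\lambda}\le\lambda$ and $u_{\lambda}\downarrow 0$, together with domination of the integrand by its value at a fixed $\lambda_{0}$, whose $\mathcal{N}$-integral $\widetilde{\psi}'(\lambda_{0})$ is finite, do justify letting $\lambda\downarrow 0$ in the second identity of \eqref{identity:expoenente_1}. This is a legitimate alternative route for that constant (the paper instead gets it from a structural decomposition, see below).

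The proof of $\widetilde{\beta}=0$, however, has a genuine gap, located exactly at the step you defer to ``checking $\mathcal{N}$-integrability of the resulting bounds''. Writing $u_{\infty}(y)=\mathbb{N}_{y,0}(\mathscr{L}^{\,0}_{\sigma}>0)$, your splitting reduces the claim to two quantitative estimates,
\begin{equation*}
\text{(a)}\quad \mathcal{N}\Big(\int_{0}^{\sigma}\dd h\,\psi\big(u_{\infty}(\xi_{h})\big)\mathbbm{1}_{\{u_{\infty}(\xi_{h})\le\lambda\}}\Big)=o(\lambda^{2}),
\qquad
\text{(b)}\quad \psi(\lambda)\cdot\mathcal{N}\Big(\int_{0}^{\sigma}\dd h\,\mathbbm{1}_{\{u_{\infty}(\xi_{h})>\lambda\}}\Big)=o(\lambda^{2}),
\end{equation*}
and neither is proved nor routine. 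Since $\mathcal{N}(\sigma)=\infty$ in general, even the finiteness of these quantities for a \emph{fixed} $\lambda$ requires an argument, and the natural dominating quantity genuinely diverges: letting $\lambda\uparrow\infty$ in the integral equation \eqref{integral_equation} for $D=E_{*}$ and $g\equiv\lambda$ (using $\Pi_{y}(\tau_{E_{*}}<\infty)=1$) gives $\Pi_{y}\big(\int_{0}^{\tau_{E_{*}}}\dd t\,\psi(u_{\infty}(\xi_{t}))\big)=\lim_{\lambda\to\infty}\big(\lambda-u_{\lambda}(y)\big)=\infty$ for every $y\neq x$, whence, by the Markov property under $\mathcal{N}$, $\mathcal{N}\big(\int_{0}^{\sigma}\dd h\,\psi(u_{\infty}(\xi_{h}))\big)=\infty$. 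So (a) and (b) are rate statements about how this divergence is produced near the endpoints of the excursion; they can be verified by hand when $\xi$ is Brownian and $\psi(u)=u^{2}$, using the explicit form $u_{\infty}(y)=c\,y^{-2}$, but no such control is available in the paper's generality, and your proposal supplies none. The paper avoids all estimates by a soft, structural argument: by \eqref{identity:expoenente_1},
\begin{equation*}
\frac{\widetilde{\psi}(\lambda)}{\lambda}
=\mathcal{N}\big(1-\exp(-\alpha\sigma)\big)
+\mathcal{N}^{*}\Big(1-\exp\Big(-\int_{0}^{\sigma}\dd h\,\Big(\frac{\psi(u_{\lambda}(\xi_{h}))}{u_{\lambda}(\xi_{h})}-\alpha\Big)\Big)\Big),
\qquad \mathcal{N}^{*}:=\mathcal{N}\big(\exp(-\alpha\sigma)\,\cdot\,\big),
\end{equation*}
and the second term is identified, through a Poisson cloud of snake excursions with intensity $J_{\sigma}(\dd h)\,\mathbb{N}_{\xi(h),0}$, as $\int\nu(\dd z)\,(1-\exp(-\lambda z))$ with $\int\nu(\dd z)(1\wedge z)<\infty$, i.e.\ the Laplace exponent of a \emph{driftless} subordinator; such a function is $o(\lambda)$ as $\lambda\to\infty$ and $o(1)$ as $\lambda\downarrow 0$, so comparing with the subordinator representation of $\widetilde{\psi}(\lambda)/\lambda$ yields both $\widetilde{\alpha}=\mathcal{N}(1-\exp(-\alpha\sigma))$ and $\widetilde{\beta}=0$ at once. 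To salvage your route you would have to prove (a) and (b); otherwise the probabilistic identification is the way to close the argument.
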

\begin{proof} 
To simplify notation, for $\lambda \geq 0$ set ${\psi}^*(\lambda) := {\psi}(\lambda)/ \lambda$, $\widetilde{\psi}^*(\lambda) := \widetilde{\psi}(\lambda)/ \lambda$. Since $\widetilde{\psi}$ satisfies (A1)--(A4), by Fubini we derive that $\widetilde{\psi}^*$ is the Laplace exponent of a subordinator with exponent:
\begin{equation} \label{equation:phitildetriplet_1}
    \widetilde{\alpha} +  \widetilde{\beta} \lambda + \int_\mathbb{R_+} \dd r \, \widetilde{\pi}([r,\infty)) \big(  1-\exp (-\lambda r ) \big).   
\end{equation}
Next, introduce the  measure $\mathcal{N}^*(\dd \xi):= \mathcal{N}(\exp(-\alpha \sigma) \dd \xi)$ and observe that by \eqref{identity:expoenente_1},  $\widetilde{\psi}^*(\lambda)$ can also be written in the form
\begin{equation} \label{equation:phitildetriplet_2}
    \mathcal{N} \Big( 1- \exp\Big(- \int_0^\sigma \dd h~  \psi^*\big(u_\lambda(\xi_h)\big)  \Big) \Big) 
    = \mathcal{N} \big( 1- \exp (- \alpha\sigma) \big) + 
    \mathcal{N}^* \Big( 1- \exp\Big(- \int_0^\sigma \dd h~  \big(\psi^*\big(u_\lambda(\xi_h)\big) - \alpha \big) \Big) \Big).  
\end{equation}
Comparing with  \eqref{equation:phitildetriplet_1}, our result will follow by showing that the second term on the right-hand side of \eqref{equation:phitildetriplet_2} is the Laplace exponent of some pure-jump  subordinator. In this direction, introduce under $E^0 \otimes \mathcal{N}^*$ and  conditionally on $(J_\infty, \xi)$, a  Poisson point measure 
\begin{equation*}
    \mathcal{M}(\dd h , \dd \rho, \dd \overline{W}) = \sum_{i \in \mathbb{N}} \delta_{(h_i, \rho^i,\overline{W}^i )}, 
\end{equation*}
 with intensity $J_\sigma(\dd h) \mathbb{N}_{\xi(h),0}\big( \dd \rho,\dd \overline{W}  \big)$. This is always possible up  to enlarging the  measure space  and for simplicity we still denote the underlying measure by $E^0\otimes\mathcal{N}^*$. Next, define the functional $\sum_{i \in \mathbb{N}}\mathscr{L}^{\,0}_\sigma( \rho^i, \overline{W}^i)$ and denote its distribution  by $\nu(\dd x)$. By definition, we have:
\begin{align*}
    E^0\otimes \mathcal{N}^*\Big( 1-\exp \Big( -\lambda \sum_{i \in \mathbb{N}}\mathscr{L}^{\,0}_\sigma( \rho^i, W^i) \Big) \Big) 
    &= E^0\otimes \mathcal{N}^*\Big( 1-\exp \Big( - 
    \int_0^\sigma J_\sigma(\dd h) u_\lambda\big(\xi(h)\big)
    \Big) \Big) \\
    &= \mathcal{N}^* \Big( 1- \exp\Big(- \int_0^\sigma \dd h~  \big(\psi^*\big(u_\lambda(\xi_h)\big) - \alpha \big) \Big) \Big), 
\end{align*}
where in the last equality we used that $J_\infty$ is the Lebesgue-Stieltjes  measure of a subordinator with exponent $\psi^*(\lambda) -\alpha$. Since the latter expression is finite, we deduce that $\nu$ is a Lévy measure satisfying $\int \nu(\dd r )\,  (1 \wedge r)< \infty$, and that the second term on the right-hand side of \eqref{equation:phitildetriplet_2} is the Laplace exponent of a driftless subordinator with Lévy measure given by $\nu$.   
\end{proof}

\subsection{Construction of the additive functional \texorpdfstring{$(A_t)$}{Lg}} \label{subsection:existenciaAditiva}
\noindent We are finally in position to introduce our additive function:

\begin{prop}\label{proposition:aditivaDefinicion} Fix $(y,r_0)\in \overline{E}$ and $(\mu,\overline{\emph{w}})\in \overline{\Theta}_x$. Under $\mathbb{N}_{y,r_0}$ and  $\mathbb{P}_{\mu,\overline{\emph{w}}}$,  the process  defined  as 
\begin{equation*}
    A_t = \int_{\mathbb{R}_+} \dd r \mathscr{L}^{\,r}_t, \quad  \quad \text{ for } t \geq 0, 
\end{equation*}
is a continuous   additive functional of the Lévy snake taking finite values. Furthermore, we have   
\begin{equation} \label{equation:aproximacionA}
  A_t = \lim_{\varepsilon \downarrow 0} \frac{1}{\varepsilon}  \int_0^t \dd u  \int_{\mathbb{R}_+} \dd r \,  \mathbbm{1}_{\{ \tau_r(\overline{W}_u ) < H_u < \tau_r(\overline{W}_u) + \varepsilon \}} ,
\end{equation}
where the  convergence  holds uniformly in compact intervals  in measure under $\mathbb{P}_{\mu, \overline{\emph{w}} }$ and $\mathbb{N}_{y,r_0}( \, \cdot \, \cap \{ \sigma > z \} )$  for every   $z >0$.
\end{prop}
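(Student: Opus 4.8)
The plan is to work first under $\mathbb{N}_{y,r_0}$ and then transfer to $\mathbb{P}_{\mu,\overline{\text{w}}}$ by excursion theory. Writing
$$L^{r,\varepsilon}_t := \frac{1}{\varepsilon}\int_0^t \dd u\, \mathbbm{1}_{\{\tau_r(\overline{W}_u) < H_u < \tau_r(\overline{W}_u)+\varepsilon\}},$$
Tonelli shows that the right-hand side of \eqref{equation:aproximacionA} equals $A^\varepsilon_t := \int_{\mathbb{R}_+}\dd r\, L^{r,\varepsilon}_t$, while $A_t = \int_{\mathbb{R}_+}\dd r\, \mathscr{L}^{\,r}_t$; here $A^\varepsilon$ is jointly measurable because of Lemma \ref{def:lem:scr:L} and the measurability of $(r,u,\bm{\omega})\mapsto\tau_r(\overline{W}_u)$ noted in its proof. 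For each fixed $r$, \eqref{L:r} together with Lemma \ref{def:lem:scr:L} gives $L^{r,\varepsilon}_t\to\mathscr{L}^{\,r}_t$ uniformly on compacts in $L^1(\mathbb{N}_{y,r_0})$. The first ingredient is a uniform first-moment bound: applying the many-to-one formula \eqref{tirage_au_hasard_N} to $L^{r,\varepsilon}_\sigma$ and the exit formula \eqref{tirage_au_hasard_ExitN} to $\mathscr{L}^{\,r}_\sigma=L^{D_r}_\sigma$ yields, for every $\varepsilon>0$,
$$\mathbb{N}_{y,r_0}(L^{r,\varepsilon}_\sigma)=\frac{1}{\varepsilon}\,\Pi_{y,r_0}\Big(\mathbbm{1}_{\{\tau_r<\infty\}}\int_{\tau_r}^{\tau_r+\varepsilon}\dd a\, e^{-\alpha a}\Big)\le g(r),\qquad \mathbb{N}_{y,r_0}(\mathscr{L}^{\,r}_\sigma)=g(r),$$
where $g(r):=\Pi_{y,r_0}(\mathbbm{1}_{\{\tau_r<\infty\}}e^{-\alpha\tau_r})\le 1$.

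The decisive observation that I would use next is that, since the label coordinate $\Lambda_s(\cdot)$ is a local time and hence non-decreasing along every snake path, one has $\sup_h\Lambda_s(h)=\widehat{\Lambda}_s$, so $\tau_r(\overline{W}_s)<\infty$ forces $r\le\widehat{\Lambda}_s$. Consequently both $L^{r,\varepsilon}_t$ and $\mathscr{L}^{\,r}_t$ vanish for $r>M:=\sup_{s\le\sigma}\widehat{\Lambda}_s$, so the $r$-integrals defining $A^\varepsilon$ and $A$ are really over $[0,M]$. Fixing $K>0$ and setting $A^{\varepsilon,K}_t:=\int_0^K\dd r\,L^{r,\varepsilon}_t$, $A^K_t:=\int_0^K\dd r\,\mathscr{L}^{\,r}_t$, the bound $\sup_{t\le T}|L^{r,\varepsilon}_t-\mathscr{L}^{\,r}_t|\le L^{r,\varepsilon}_\sigma+\mathscr{L}^{\,r}_\sigma$ with $\mathbb{N}_{y,r_0}(L^{r,\varepsilon}_\sigma+\mathscr{L}^{\,r}_\sigma)\le 2g(r)\le 2$ lets dominated convergence in $r\in[0,K]$ give
$$\mathbb{N}_{y,r_0}\Big(\sup_{t\le T}|A^{\varepsilon,K}_t-A^K_t|\Big)\le\int_0^K\dd r\,\mathbb{N}_{y,r_0}\Big(\sup_{t\le T}|L^{r,\varepsilon}_t-\mathscr{L}^{\,r}_t|\Big)\longrightarrow 0$$
as $\varepsilon\to0$, for each fixed $K$. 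On $\{M\le K\}$ the truncated and untruncated processes coincide, and since $\mathbb{N}_{y,r_0}(\sigma>z)<\infty$ and $M<\infty$ a.e., we have $\mathbb{N}_{y,r_0}(\sigma>z,\,M>K)\to0$ as $K\to\infty$; a two-step estimate (first choose $K$, then $\varepsilon$) then upgrades this to the convergence in measure, uniformly on compacts, of $A^\varepsilon$ to $A$ under $\mathbb{N}_{y,r_0}(\,\cdot\,\cap\{\sigma>z\})$ asserted in \eqref{equation:aproximacionA}.

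For the remaining properties I proceed as follows. Finiteness: for finite $t$ one has $M_t:=\sup_{s\le t}\widehat{\Lambda}_s<\infty$ and $A_t=A^K_t$ for any $K>M_t$; under $\mathbb{P}_{0,y,r_0}$ the additive functional $A^K$ decomposes over the excursions of $(\rho,\overline{W})$, whose point measure has intensity $\dd\ell\,\mathbb{N}_{y,r_0}$ by \eqref{definition:poissonExcursiones}, so with $T_\ell$ the inverse of $-I$ we get $\mathbb{E}_{0,y,r_0}[A^K_{T_\ell}]=\ell\,\mathbb{N}_{y,r_0}(A^K_\sigma)=\ell\int_0^K g\le\ell K<\infty$, whence $A^K_{T_\ell}<\infty$ and therefore $A_t<\infty$. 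Continuity of $A$ then follows from continuity of each $t\mapsto\mathscr{L}^{\,r}_t$ (Lemma \ref{def:lem:scr:L}(ii)) by dominated convergence in $r$, dominated by $\mathscr{L}^{\,r}_T$ with $\int\mathscr{L}^{\,r}_T\dd r=A_T<\infty$; additivity is inherited from that of each $\mathscr{L}^{\,r}$ by integrating in $r$; and $\dd A$ is supported on $\{t:\widehat{W}_t=x\}$ since each $\dd\mathscr{L}^{\,r}=\dd L^{D_r}$ is carried by that set (Proposition \ref{proposition:aproxLDPmu}). The passage from $\mathbb{N}_{y,r_0}$ to $\mathbb{P}_{\mu,\overline{\text{w}}}$ is then carried out exactly as in the proof of Proposition \ref{proposition:aproxLDPmu}: reduce to $\mathbb{P}_{0,y,r_0}$ via the strong Markov property at $T_0^+=\inf\{t:\langle\rho_t,1\rangle=0\}$, and treat $\mathbb{P}^\dag_{\mu,\overline{\text{w}}}$ through the Poisson decomposition \eqref{PoissonRandMeasure} with intensity $\mu(\dd h)\,\mathbb{N}_{\overline{\text{w}}(h)}$, summing the $\mathbb{N}$-estimates over the locally finite family of subtrajectories relevant up to a fixed time.

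The main obstacle is the interchange of the limit $\varepsilon\to0$ with the integral $\int_{\mathbb{R}_+}\dd r$, which requires $\varepsilon$-uniform integrability across all levels $r$. This is genuinely delicate when $\alpha=0$ (the recurrent critical case), where $g(r)=\Pi_{y,r_0}(\tau_r<\infty)=1$ for all $r\ge r_0$, so $\int_{\mathbb{R}_+}g(r)\dd r=\infty$ and a direct dominated-convergence argument over all of $\mathbb{R}_+$ fails. The resolution is precisely the truncation at level $K$ combined with the two facts that $g\le1$ always holds and that both $A^\varepsilon$ and $A$ only see labels $r\le M<\infty$; together these localize the $r$-integral to a compact set and render the dominated-convergence step uniform in $\varepsilon$.
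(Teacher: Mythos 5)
Your proposal is correct in substance and rests on the same skeleton as the paper's proof (localization of the $r$-integral through the maximum of $\widehat{\Lambda}$, pointwise-in-$r$ convergence from the exit local time approximations, dominated convergence in $r$ over a compact interval $[0,K]$), but it runs in the opposite direction, and this changes where the technical work sits. The paper argues under $\mathbb{P}_{\mu,\overline{\mathrm{w}}}$ first: its delicate point is the uniform-in-$(r,\varepsilon)$ bound on $\mathbb{E}_{0,y,0}\big[\sup_{s\le t}|\frac1\varepsilon\int_0^s\dd u\,\mathbbm{1}_{\{\tau_r(\overline{W}_u)<H_u<\tau_r(\overline{W}_u)+\varepsilon\}}-\mathscr{L}^{\,r}_s|\big]$, which is obtained via the process $\rho^{D_r}$ of \eqref{def:rho:D} and the time change \eqref{equation:exitlocalTimeChange}, reducing everything to the $r$-independent quantity $\mathbb{E}_{0,y,0}[\sup_{s\le t}|\frac1\varepsilon\int_0^s\dd u\,\mathbbm{1}_{\{0<H_u<\varepsilon\}}+I_s|]$, which tends to $0$ by \eqref{temps:local:I}; the part before $T_0^+$ is bounded by $2\langle\mu,1\rangle$ through \eqref{PoissonRandMeasure}, and the $\mathbb{N}_{y,r_0}$-statement is then read off from the first excursion of duration $>z$ under $\mathbb{P}_{0,y,r_0}$. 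You instead work under $\mathbb{N}_{y,r_0}$ first, where the many-to-one formulas \eqref{tirage_au_hasard_N} and \eqref{tirage_au_hasard_ExitN} give the uniform domination $\mathbb{N}_{y,r_0}(L^{r,\varepsilon}_\sigma+\mathscr{L}^{\,r}_\sigma)\le 2g(r)\le 2$ in one line, so no time-change device is needed; the price is that the probability-measure case must afterwards be recovered by excursion theory. Your observation that the truncation is what saves the argument when $\alpha=0$ (where $\int g=\infty$) is exactly the role played in the paper by the event $\{\widehat{\Lambda}^*_t<K\}$.

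The one genuine weak point is the transfer step, which you dismiss as being "exactly as in the proof of Proposition \ref{proposition:aproxLDPmu}". It is not: in that proof the convergence under $\mathbb{P}_{0,y,r_0}$ was already available from \eqref{definition:aproximationExitlocal}, whereas here you must deduce it from your $\mathbb{N}$-result, and the relevant family of excursions (of $(\rho,\overline{W})$ under $\mathbb{P}_{0,y,r_0}$, or of $\langle\rho,1\rangle$ above its infimum under $\mathbb{P}^{\dag}_{\mu,\overline{\mathrm{w}}}$) is countably infinite in any time window, not locally finite, so one cannot simply "sum the estimates". The correct tool is the compensation formula for the Poisson measures \eqref{definition:poissonExcursiones} and \eqref{PoissonRandMeasure}, exactly as the paper employs it in \eqref{equation:convUnifPxExitTlocNombreMuyLargo}: capping the supremum inside each excursion at the deterministic time $t$, one gets
\begin{equation*}
\mathbb{E}_{0,y,r_0}\Big[\sup_{s\le T_\ell\wedge t}\big|A^{\varepsilon,K}_s-A^{K}_s\big|\Big]\le \ell\,\mathbb{N}_{y,r_0}\Big(\sup_{u\le t}\big|A^{\varepsilon,K}_u-A^{K}_u\big|\Big),
\end{equation*}
and the analogous bound with intensity $\mu(\dd h)\,\mathbb{N}_{\overline{\mathrm{w}}(h)}$, where the domination by $2K$ and the finiteness of $\mu$ permit dominated convergence. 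The capping at $t$ matters: it lets you invoke only the compact-interval $L^1(\mathbb{N})$ convergence you actually established, whereas a global supremum over each excursion would require an additional tail estimate beyond what you proved. With this substitution and a three-step choice of $\ell$, then $K$, then $\varepsilon$, your argument closes; the remaining assertions (finiteness, continuity, additivity, support of $\dd A$) are justified as you state.
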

\begin{proof}
We start proving the proposition under $\mathbb{P}_{\mu,\overline{\w}}$, where $(\mu,\overline{\w}):=(\mu,\w,\ell)\in \overline{\Theta}_x$. Remark that by the translation invariance of the local time  we might assume that $\ell(0)=0$ without loss of generality. For simplicity, we set $y :=\w(0)$. Next, we  write $\widehat{\Lambda}^*_t := \sup_{s \leq t} \widehat{\Lambda}_s$ and we note that  it suffices to show that for any $t, K > 0$
\begin{equation*} 
    \mathbb{E}_{\mu,\overline{\w}} \Big[\sup_{s \leq t} | \int_{\mathbb{R}_+} \dd r \,  \frac{1}{\varepsilon} \int_0^s \dd u ~\mathbbm{1}_{\{ \tau_r(\overline{W}_u ) < H_u < \tau_r(\overline{W}_u) + \varepsilon \}} - \int_\mathbb{R_+} \dd r \mathscr{L}^{\,r}_s | \,  \cdot \mathbbm{1}_{\{ \widehat{\Lambda}^*_t < K \}} \Big]  \rightarrow 0, 
\end{equation*}
as $\varepsilon\downarrow 0$. In this direction, we remark that the previous expression is bounded above by 
\begin{align*} 
     & \int_{\mathbb{R}_+} \dd r \, \mathbb{E}_{\mu,\overline{\w}} \Big[   \sup_{s \leq t} | \frac{1}{\varepsilon} \int_0^s \dd u~ \mathbbm{1}_{\{ \tau_r(\overline{W}_u) < H_u < \tau_r(\overline{W}_u) + \varepsilon \}} -  \mathscr{L}^{\,r}_s | \,  \cdot \mathbbm{1}_{\{ \widehat{\Lambda}^*_t < K  \}} \Big]  \\
     & \hspace{50mm} \leq  \int_{(0,K]} \dd r\, \mathbb{E}_{\mu,\overline{\w}} \Big[   \sup_{s \leq t} | \frac{1}{\varepsilon} \int_0^s \dd u~ \mathbbm{1}_{\{ \tau_r(\overline{W}_u) < H_u < \tau_r(\overline{W}_u) + \varepsilon \}} -  \mathscr{L}^{\,r}_s | \Big],
\end{align*}
since on the event $\{ \widehat{\Lambda}^*_t < K \}$ we have  $\mathscr{L}^{r} = 0$ for every $r>K$. Now, by Lemma \ref{def:lem:scr:L},  it suffices to show that the expectation under $\mathbb{P}_{\mu,\overline{\w}}$ in the previous display is uniformly bounded on $\epsilon, r > 0$
since the desired result then follows by dominated convergence. To do so, we set $T_0^+ := \inf \big\{ t \geq 0 : \langle\rho_t,1\rangle = 0 \big\}$ and we notice that by the strong Markov property, under $\mathbb{P}_{\mu,\overline{\w}}$, the distribution of $(\rho_{T_0^++s},\overline{W}_{T_0^++s}:~s\geq 0)$ is $\mathbb{P}_{0,y,0}(\dd \rho,\dd \overline{W})$. In particular we have the upper bound:
\begin{align*}
\mathbb{E}_{\mu,\overline{\w}} \Big[   \sup_{s \leq t} | \frac{1}{\varepsilon} \int_0^s \dd u~ \mathbbm{1}_{\{ \tau_r(\overline{W}_u) < H_u < \tau_r(\overline{W}_u) + \varepsilon \}} -  \mathscr{L}^{\,r}_s | \Big]\leq &\mathbb{E}_{\mu,\overline{\w}}^\dag \Big[ \frac{1}{\varepsilon} \int_0^{\sigma} \dd u~ \mathbbm{1}_{\{ \tau_r(\overline{W}_u) < H_u < \tau_r(\overline{W}_u) + \varepsilon \}}+ \mathscr{L}^{\,r}_{\sigma} \Big]\\
&+\mathbb{E}_{0,y,0} \Big[   \sup_{s \leq t} | \frac{1}{\varepsilon} \int_0^s \dd u~ \mathbbm{1}_{\{ \tau_r(\overline{W}_u) < H_u < \tau_r(\overline{W}_u) + \varepsilon \}} -  \mathscr{L}^{\,r}_s | \Big].    
\end{align*}
So to conclude we need  to prove both:
\begin{align*}
       &\text{(i)} \quad  \sup_{\epsilon>0} \sup_{r > 0} \mathbb{E}_{0,y,0} \Big[   \sup_{s \leq t} | \frac{1}{\varepsilon} \int_0^s \dd u~ \mathbbm{1}_{\{ \tau_r(\overline{W}_u) < H_u < \tau_r(\overline{W}_u) + \varepsilon \}} -  \mathscr{L}^{\,r}_s | \Big]< \infty; \\
       &\text{(ii)}\quad  \sup_{\epsilon >0}  \sup_{r > 0}  \mathbb{E}_{\mu,\overline{\w}}^\dag \Big[ \frac{1}{\varepsilon} \int_0^{\sigma} \dd u~ \mathbbm{1}_{\{ \tau_r(\overline{W}_u) < H_u < \tau_r(\overline{W}_u) + \varepsilon \}}+ \mathscr{L}^{\,r}_{\sigma} \Big]<\infty. 
\end{align*}
Let us start showing (i).  We are going to apply similar techniques to the ones used in the proof  of Theorem \ref{Theo_Spa_Markov_Excur_P}. In this direction,  we work under $\mathbb{P}_{0,y,0}$ and  we fix $r,\epsilon>0$. Now, recall the definition of $\gamma^{D_r}$, $\sigma^{D_r}$ and $\rho^{D_r}$ introduced in Section \ref{subsection:specialMarkovProof} (keeping in mind the fact that here we work with $(\rho,\overline{W})$) and set
\begin{equation*}
    R^{D_r}_t := \int_0^t \dd s \mathbbm{1}_{\{ \gamma^{D_r} _s > 0 \}}, \quad \quad \text{ for }t \geq 0,
\end{equation*}
which is the right inverse of $\sigma^{D_r}$. Next,  for every $r > 0$, by definition we have  $\tau_r(\rho_t,\overline{W}_t)=\tau_{D_r}(\rho_t,\overline{W}_t)$ and  we derive that
 \begin{align*}
     \int_0^s \dd u~ \mathbbm{1}_{\{ \tau_r(\overline{W}_u ) < H_u < \tau_r(\overline{W}_u) + \varepsilon \}} 
     &=   \int_0^{R_s^{D_r}} \dd u~ \mathbbm{1}_{\{ 0 < H(\rho^{D_r}_u) <  \varepsilon \}} ,
 \end{align*}
 since 
 on $\{ u\geq 0 : \,  H(\rho_{\sigma^{D_r}_u}) > \tau_{r}(\overline{W}_{\sigma^{D_r}_u})  \}$, we have   $H(\rho_u^{D_r}) = H ( \rho_{\sigma_u^{D_r}}) - \tau_{r}(\overline{W}_{\sigma_u^{D_r}}).$
Recall from  \eqref{def:rho:D} that $\langle \rho^{D_r},1 \rangle$ is distributed as $\langle \rho,1 \rangle$ under $\mathbb{P}_{0,y,0}$, which is a reflected $\psi$-Lévy process,  and that we denote its local time at $0$ by $\ell^{D_r}$. In particular, the distribution of $(\langle \rho^{D_r},1\rangle, \ell^{D_r})$ is the same as $\big((X_t-I_{t}, -I_{t}):~t\geq 0\big)$. 
Recalling from \eqref{equation:exitlocalTimeChange} that $\mathscr{L}^{\,r}_t = \ell^{D_r} ( R^{D_r}_t )$ and noticing that  $R^{D_r}_s \leq s$, we derive the following inequality:
 \begin{align*}
     \mathbb{E}_{0,y,0}\Big[   \sup_{s \leq t} | \frac{1}{\varepsilon} \int_0^s \dd u~ \mathbbm{1}_{\{ \tau_r(\overline{W}_u ) < H_u < \tau_r(\overline{W}_u) + \varepsilon \}} -  \mathscr{L}^{\,r}_s | \Big]
     &=
     \mathbb{E}_{0,y,0} \Big[   \sup_{s \leq t} | \frac{1}{\varepsilon} \int_0^{R^{D_r}_s} \dd u~ \mathbbm{1}_{\{ 0 < H(\rho^{D_r}_u) <  \varepsilon \}} -  \ell^{D_r}(R^{D_r}_s)  | \Big] \\ 
     &\leq \mathbb{E}_{0,y,0}\Big[\sup_{s \leq t} | \frac{1}{\varepsilon} \int_0^{s} \dd u~ \mathbbm{1}_{\{ 0 < H(\rho^{D_r}_u) <  \varepsilon \}} -  \ell^{D_r} (s)  | \Big]\\
     &=\mathbb{E}_{0,y,0}\Big[\sup_{s \leq t} | \frac{1}{\varepsilon} \int_0^{s} \dd u~ \mathbbm{1}_{\{ 0 < H(\rho_u) <  \varepsilon \}} +  I_{s}  | \Big],
\end{align*}
where in the first line we used that for each fixed $r>0$, the processes $\mathscr{L}^{\,r}$ and $L^{D_r}$ are indistinguishable.
The latter quantity does not depend on $r$ and by \eqref{temps:local:I} it converges   to $0$ as $\varepsilon \downarrow 0$, giving (i).
\par We now turn our attention to the proof of  (ii). 
{On the one hand, by Proposition   \ref{proposition:aproxLDPmu} - (ii)  and \eqref{tirage_au_hasard_ExitN}, for every $r>0$ we have
\begin{equation*}
    \mathbb{E}^{\dag}_{\mu,\overline{\w}} \big[  \mathscr{L}^{\,r}_\sigma  \big]
    =
    \int_{(0,\tau_r(\overline{\w}))} \mu(\dd h)~\mathbb{N}_{\overline{\w}(h)}\big( \mathscr{L}^{\,r}_\sigma \big) 
    = 
     \int_{(0,\tau_r(\overline{\w}))} \mu(\dd h)~ E^0 \otimes \Pi_{\overline{\w}(h)}\big[ \mathbbm{1}_{\{ \tau_r(\xi,\mathcal{L})  < \infty \}}\exp\big(-\alpha \tau_r(\xi,\mathcal{L})\big)\big] 
     \leq \langle \mu , 1 \rangle. 
\end{equation*}
On the other hand, the remaining term
$$  \mathbb{E}_{\mu,\overline{\w}}^\dag \Big[ \frac{1}{\varepsilon} \int_0^{\sigma} \dd u~ \mathbbm{1}_{\{ \tau_r(\overline{W}_u) < H_u < \tau_r(\overline{W}_u) + \varepsilon \}}\Big]$$
can be bounded similarly as we did in \eqref{equation:aproxLDPmu_eq11}.  More precisely, consider under $\mathbb{P}_{\mu , \overline{\w}}^{\dag}$ the random measure $\sum_{i\in \mathbb{N}}\delta_{(h_i, \rho^i, \overline{W}^i)}$ defined in \eqref{PoissonRandMeasure},  set $T:= \inf\{ t > 0 : H_t = \tau_r(\overline{\w}) \}$, with the convention $T=0$ if $\tau_r(\overline{\w})=\infty$, and  remark that for every  $s\in [0,T]$ we have $\tau_r(\overline{W}_s) = \tau_r(\overline{\w})$. Recalling $\mu(\{\tau_r(\overline{\w})\}) = 0$, it follows by considering    the excursion intervals of $H$ over its running infimum  and our previous remark, that the integral  $\int_0^\sigma 
    \dd u~ \mathbbm{1}_{\{ \tau_r(\overline{W}_u) < H_u < \tau_r(\overline{W}_u) +\epsilon \}}$ can be written as 
\begin{align*}
    \sum_{h_i > \tau_r(\overline{\w})} \int_0^{\sigma(\overline{W}^i)} \dd u~ \mathbbm{1}_{\{ \tau_r(\overline{\w}) < h_i +  H(\rho^i_u) < \tau_r(\overline{\w}) + \epsilon\}} 
    +  \sum_{h_i < \tau_r(\overline{\w})} 
    \int_0^{\sigma(\overline{W}^i)} \dd u~ \mathbbm{1}_{\{ \tau_r(\overline{W}^i_u) < H(\rho^i_u) < \tau_r(\overline{W}_u^i)\}}, 
\end{align*}
where the first term is now bounded above by
$
    \sum_{h_i > \tau_r(\overline{\w})} \int_0^{\sigma(\overline{W}^i)} \dd u~ \mathbbm{1}_{\{ 0 <  H(\rho^i_u) <  \epsilon\}}. 
$
Consequently, by  \eqref{tirage_au_hasard_N}  we have  
\begin{align*}
     & \mathbb{E}^{\dag}_{\mu,\overline{\w}} \Big[     \int_0^\sigma \dd u~ \mathbbm{1}_{\{ \tau_r(\overline{W}_u) < H_u < \tau_r(\overline{W}_u) + \varepsilon \}}  \Big] \\
     &\hspace{20mm}\leq \mu((\tau_r(\overline{\w}), \infty))N(\int_{0}^{\sigma}\dd s \, \mathbbm{1}_{\{0< H(\rho_s)<\epsilon\}})+\int_{(0,\, \tau_r(\overline{\w}))}\mu(\dd h)\mathbb{N}_{\overline{\w}(h)}\big(\int_{0}^{\sigma} \dd s \,  \mathbbm{1}_{\{ \tau_{r}(\overline{W}_s) < H_s <\tau_{r}(\overline{W}_s)+  \epsilon \}}\big), 
\end{align*}
and again by the many-to-one formula \eqref{tirage_au_hasard_N}, the previous display is bounded by $\epsilon \cdot \langle \mu , 1 \rangle$.}  Putting everything together we deduce the upper  bound   
\begin{equation*}
     \mathbb{E}^{\dag}_{\mu,\overline{\w}} \Big[    \frac{1}{\varepsilon} \int_0^\sigma \dd u~ \mathbbm{1}_{\{ \tau_r(\overline{W}_u) < H_u < \tau_r(\overline{W}_u) + \varepsilon \}} +  \mathscr{L}^{\,r}_\sigma  \Big] \leq 2\cdot \, \langle \mu , 1 \rangle, 
\end{equation*}
which does not depend on the pair $r, \epsilon>0$ and concludes the proof of (ii). 
\par Finally,  we extend the result  under the excursion measure $\mathbb{N}_{y,r_0}$.  Working under $\mathbb{P}_{0,y,r_0}$ fix $z > 0$ and denote by $ (\rho^\prime , \overline{W}^\prime) = (\rho_{(g+\cdot)\wedge d}, \overline{W}_{(g+\cdot)\wedge d})$ the first excursion with length $\sigma > z$. By the previous result, the quantity
\begin{align*}
    &\sup_{s \leq t} \Big|~\epsilon^{-1} \int_{0}^{s} \dd u \int_{\mathbb{R}_+} \dd r \mathbbm{1}_{\{  \tau_r(\overline{W}^{\prime}_u)  < H(\rho_u^{\prime}) < \tau_r(\overline{W}^{\prime}_u) + \epsilon \}}    - \int_{\mathbb{R}_+} \dd r \mathscr{L}^{\,r}_s (\rho^\prime, \overline{W}^\prime) ~\Big|  \\
    &\hspace{30mm}=   \sup_{s \leq t \wedge (d-g)} \Big|~ \epsilon^{-1} \int_{g}^{g+s}\dd u \int_{\mathbb{R}_+} \dd r \mathbbm{1}_{\{  \tau_r(\overline{W}_u)  < H_u < \tau_r(\overline{W}_u) + \epsilon    \}}   
    - \int_{\mathbb{R}_+} \dd r (\mathscr{L}^{\,r}_{g+s} -\mathscr{L}^{\,r}_g) ~\Big|  
\end{align*}
converges  in probability to $0$, and it then follows that  \eqref{equation:aproximacionA} holds in measure under $\mathbb{N}_{y,r_0}( \, \cdot \, \cap \{ \sigma > z\} )$.

\end{proof}

\noindent As a straight consequence of the definition of $A$ we deduce the following many-to-one formula:
\begin{lem}\label{lemLAformula:aditiva}
For any non-negative measurable function $\Phi$ on $M_f(\mathbb{R}_+)\times M_f(\mathbb{R}_+) \times \mathcal{W}_{\overline{E}}$ and $(y,r_0) \in \overline{E}$, we have
\begin{equation} \label{LAformula:aditiva}
    \mathbb{N}_{y,r_0} \left( \int_0^\sigma \dd A_s~ \Phi\left(  \rho_s , \eta_s ,\overline{W}_s \right)  \right) 
    = 
    \int_{r_0}^\infty \dd r~ E^0 \otimes \Pi_{y,r_0} \left( \exp\big(- \alpha \tau_r\big) \cdot\Phi\big( J_{\tau_r} , \widecheck{J}_{\tau_r}  , (\xi_t, \mathcal{L}_t : t \leq \tau_r) \big)  \right) .
\end{equation}
\end{lem}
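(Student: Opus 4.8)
The plan is to derive \eqref{LAformula:aditiva} directly from the definition $A_t = \int_{\mathbb{R}_+} \dd r\, \mathscr{L}^{\,r}_t$ together with the first moment formula \eqref{tirage_au_hasard_ExitN} for the exit local times, using Fubini to interchange the integral over the label $r$ with the $\mathbb{N}_{y,r_0}$-expectation. Concretely, since for each fixed $r$ the process $\mathscr{L}^{\,r}$ is indistinguishable from the exit local time $L^{D_r}$ of $(\rho,\overline{W})$ from the domain $D_r = \overline{E}\setminus\{(x,r)\}$, and since the exit time from $D_r$ is exactly $\tau_r$, I would write
\[
 \mathbb{N}_{y,r_0}\Big( \int_0^\sigma \dd A_s\, \Phi(\rho_s,\eta_s,\overline{W}_s) \Big)
 = \mathbb{N}_{y,r_0}\Big( \int_0^\sigma \Big(\int_{\mathbb{R}_+}\dd r\, \dd \mathscr{L}^{\,r}_s\Big)\, \Phi(\rho_s,\eta_s,\overline{W}_s) \Big),
\]
and then apply Tonelli (all integrands are non-negative) to pull the $\dd r$ integral outside. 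This reduces the left-hand side to $\int_{\mathbb{R}_+}\dd r\,\mathbb{N}_{y,r_0}\big(\int_0^\sigma \dd L^{D_r}_s\, \Phi(\rho_s,\eta_s,\overline{W}_s)\big)$.

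For each fixed $r$, the inner expectation is handled by \eqref{tirage_au_hasard_ExitN} applied to the domain $D=D_r$. That formula gives
\[
 \mathbb{N}_{y,r_0}\Big( \int_0^\sigma \dd L^{D_r}_s\, \Phi(\rho_s,\eta_s,\overline{W}_s) \Big)
 = E^0 \otimes \Pi_{y,r_0}\Big( \exp(-\alpha\tau_r)\, \Phi\big(J_{\tau_r},\widecheck{J}_{\tau_r},(\xi_t,\mathcal{L}_t:t\leq\tau_r)\big) \Big),
\]
where I use that the spatial motion under $\mathbb{N}_{y,r_0}$ is $\overline\xi=(\xi,\mathcal{L})$ and its exit time from $D_r$ under $\Pi_{y,r_0}$ is precisely $\tau_r$. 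Reinserting this into the $\dd r$ integral yields exactly \eqref{LAformula:aditiva}. One slightly delicate point is that \eqref{tirage_au_hasard_ExitN} as stated presupposes the starting point lies in the domain; when $r_0 > r$ we have $\tau_r=\infty$ identically so both sides vanish, when $r_0=r$ the boundary case gives $\tau_r=0$ and $L^{D_r}\equiv 0$ by Lemma \ref{L:r:Lemma}, consistent with the $\exp(-\alpha\tau_r)$ factor, and when $r_0<r$ we are genuinely in $D_r$ and the formula applies verbatim; these cases can be absorbed into a single statement since on $\{\tau_r=\infty\}$ the $\Phi$-term carries the implicit indicator $\mathbbm{1}_{\{\tau_r<\infty\}}$ inherited from $\dd L^{D_r}$.

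The main obstacle, and the only step requiring genuine care, is justifying the Fubini interchange between $\int_{\mathbb{R}_+}\dd r$ and the Lebesgue-Stieltjes integral $\int_0^\sigma \dd\mathscr{L}^{\,r}_s$ so that $\int_0^\sigma \dd A_s$ really equals $\int_{\mathbb{R}_+}\dd r \int_0^\sigma \dd\mathscr{L}^{\,r}_s$ as measures against the integrand $\Phi$. This is where the joint measurability of $(r,t)\mapsto \mathscr{L}^{\,r}_t$ established in Lemma \ref{def:lem:scr:L} is essential: it guarantees that $r\mapsto \mathscr{L}^{\,r}_t$ is measurable and that $A_t=\int_{\mathbb{R}_+}\dd r\,\mathscr{L}^{\,r}_t$ is itself the increasing limit of measurable functions, so that the associated Stieltjes measure decomposes as the superposition $\int_{\mathbb{R}_+}\dd r\, \dd\mathscr{L}^{\,r}$. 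Since everything in sight is non-negative, no integrability hypothesis is needed to invoke Tonelli, but one must verify that the additivity of Stieltjes measures under an integral (i.e. that the Stieltjes measure of $r\mapsto\int \mathscr{L}^{\,r}\,\dd r$ is $\int \dd\mathscr{L}^{\,r}\,\dd r$) holds; this is a standard consequence of the jointly measurable, jointly continuous-in-$t$ version of $\mathscr{L}$ from Lemma \ref{def:lem:scr:L} together with monotone convergence, and finiteness of $A$ proved in Proposition \ref{proposition:aditivaDefinicion} ensures there are no $\infty-\infty$ ambiguities.
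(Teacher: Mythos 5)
Your proof is correct and follows essentially the same route as the paper: write $\dd A$ as the superposition $\int_{\mathbb{R}_+}\dd r\,\dd\mathscr{L}^{\,r}$, use the indistinguishability of $\mathscr{L}^{\,r}$ and $L^{D_r}$ under the excursion measure, interchange the integrals by Tonelli, and conclude with the first moment formula \eqref{tirage_au_hasard_ExitN} applied to each domain $D_r$. The only cosmetic difference is that the paper first reduces to $r_0=0$ by translation invariance of the local time, whereas you treat general $r_0$ through a case analysis in $r$; this is harmless, since the only delicate value $r=r_0$ (where the starting point may lie on $\partial D_r$) is Lebesgue-negligible in the $\dd r$ integral.
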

\begin{proof}
By the translation invariance of the local time it is enough to prove the Lemma for $r_0=0$. Now recall that, under $\mathbb{N}_{y,0}$, for every fixed $r\geq 0$ the processes $\mathscr{L}^{\,r}$  and $L^{D_r}$ are indistinguishable. Consequently,  the left-hand side of \eqref{LAformula:aditiva} can be written in the form: 
\begin{equation*}
 \int_0^\infty \dd r \, \mathbb{N}_{y,0} \left( \int_0^\sigma \dd {L}_s^{D_r}~ \Phi\left(  \rho_s , \eta_s ,\overline{W}_s \right)  \right),
\end{equation*}
and hence we arrive at \eqref{LAformula:aditiva} applying \eqref{tirage_au_hasard_ExitN}. 
\end{proof}
 A first  consequence of Lemma \ref{lemLAformula:aditiva} is that for any $(y,r_0) \in \overline{E}$, we have
\begin{equation} \label{indentity:soporte_1}
    \text{supp} \, \dd A \subset \{ t \in \mathbb{R}_+ : \widehat{W}_t = x \}, \quad  \mathbb{N}_{y,r_0} \text{-- a.e.}
\end{equation}
Indeed, it suffices to observe that by \eqref{LAformula:aditiva}, for any $\varepsilon>0$, it holds that
$
    \mathbb{N}_{y,r_0} \left( \int_0^\sigma \dd A_s \mathbbm{1}_{\{  \dd_E ( \widehat{W}_s ,x ) > \varepsilon  \}}  \right) 
    = 0,
$
where we recall that $d_E$ stands for the metric of $E$. 
\noindent Let us comment on a few useful identities that will be used frequently in our computations: 
\\
\\
\textbf{Remark}.
\normalfont Fix $(y,r_0) \in \overline{E}$ with $y \neq x$.
Under $\mathbb{N}_{y,r_0}$ or $\mathbb{P}_{0,y,r_0}$, let  $(g,d)$ be  an interval such that $H_s>H_g = H_d$, for every $s\in(g,d)$, and  $\widehat{\Lambda}_g = r_0$ --  remark that in particular we have $p_{H}(g)=p_{H}(d)$.   We denote  the corresponding subtrajectory, in the sense of Section \ref{secsnake}, by $(\rho^\prime , \overline{W}^\prime)$ and  its duration by $\sigma^\prime = \sigma(W^\prime)$. 
Since for any $q\geq  r$ and $s \geq 0$:
\begin{align*}
 H_{(g + s)\wedge d}= H_g + H(\rho^i_{s \wedge \sigma_i}) ~~~\text{ and }~~~
 \tau_q(\overline{W}_{(g + s)\wedge d}) = H_g + \tau_q(\overline{W}_{s \wedge \sigma_i}^{i}),
\end{align*}
we deduce by the approximation \eqref{equation:aproximacionA} that the process $(A_{(g + t)\wedge d} - A_g: t \geq 0)$ only depends on $(\rho^\prime, \overline{W}^\prime)$ and it will be denoted by $(A_{s}(\rho^\prime, \overline{W}^\prime) : s \geq 0)$. Now we make the following observations:\\
\\
\indent (i) Working under $\mathbb{N}_{y,r_0}$, we denote  the connected components of  the open set $\{( H_s - \tau_{r_0}(\overline{W}_s) )_+ > 0\}$ by $((\alpha_i, \beta_i) : i \in \mathcal{I} )$ and we set $\sigma_i:=\beta_i-\alpha_i$ its duration. We also write $( \rho^{i},  \overline{W}^{i})$ for the excursions from $D_{r_0}$ corresponding to the interval $(\alpha_i,\beta_i)$. By Proposition \ref{proposition:aditivaDefinicion}, the measure $\dd A$ does not charge the set
$\{ s\geq 0:  H_s \leq \tau_{r_0}(\overline{W}_s) \}$ and we derive that:
\begin{equation}\label{remark:descomponerA}
    A_{\sigma} = \sum_{i \in \mathcal{I}} \int_{(\alpha_i, \beta_i]}\dd A_s = \sum_{i\in \mathcal{I}} A_{\sigma_i}(\rho^{i},  \overline{W}^{i}), \quad \quad \mathbb{N}_{y,r_0}\text{--a.e.} 
\end{equation}

\indent (ii) We will now make similar remarks holding under $\mathbb{P}_{\mu, \overline{\w}}^\dag$, for $(\mu,\overline{\w})\in \overline{\Theta}_x$. Under  $\mathbb{P}^{\dag}_{\mu, \overline{\w}}$,
denote the connected components of $\{s\geq 0:~H_s>\inf_{[0,s]} H \}$ by $((a_i,b_i):i\in \mathbb{N})$ and write $(\rho^i,\overline{W}^i)$ for the subtrajectory associated with $[a_i,b_i]$. We also set $h_i=H_{a_i}$ and recall that the measure $\mathcal{M} = \sum_{i \in \mathcal{I}}\delta_{(h_i, \rho^i, \overline{W}^{i})}$ is  the Poisson point measure \eqref{PoissonRandMeasure} associated with $(\rho, \overline{W})$. Moreover, we have:
\begin{align*}
 \mathbb{E}^{\dag}_{\mu, \overline{\w}}\big[| A_\sigma -\sum_{i\in \mathbb{N}} A_\sigma(\rho^{i},  \overline{W}^{i})|\big] &\leq \int_{\mathbb{R}_+} \dd r~ \mathbb{E}^{\dag}_{\mu, \overline{\w}}\big[| \mathscr{L}_\sigma^{\,r} -\sum_{i\in \mathbb{N}} \mathscr{L}_\sigma^{\,r}(\rho^{i},  \overline{W}^{i})|\big].
\end{align*}
Consequently, by Proposition \ref{proposition:aproxLDPmu} - (ii), the previous quantity is null and it follows that we still have 
\begin{equation} \label{remark:descomponerA_v2}
    A_\sigma = \sum_{i\in \mathbb{N}} A_\sigma(\rho^{i},  \overline{W}^{i}) ,\quad \quad \mathbb{P}^{\dag}_{\mu,\overline{\w}} \text{-- a.s.}
\end{equation}

Recall now the definition \eqref{definition:PhiTilde} of $\widetilde{\psi}$  and the notation  $u_\lambda$ introduced in \eqref{definition:ulambda}.  The following proposition relates the Laplace transform of the total mass $A_\sigma$ under $\mathbb{N}_{y,r_0}$ and the Laplace exponent $\widetilde{\psi}$. This identity  will be needed to characterize the support of $\dd A$ and will also play a central role  in Section \ref{section:treeStructureLocalTime}.
\begin{prop}\label{lemma:InversaExpo}
 For every $r_0,\lambda \geq 0$ and $y \in E$, we have
 \begin{equation*} 
    \mathbb{N}_{y,r_0}\Big( 1-\exp\big(- \lambda A_\infty\big) \Big) = u_{\widetilde{\psi}^{-1}(\lambda)}(y),
\end{equation*}
where we recall the convention $u_\lambda(x)=\lambda$, for every $\lambda\geq 0$. Moreover, for  $(\mu,\overline{\mathrm{w}})\in \overline{\Theta}_{x}$, we have:
\begin{equation*}
    \mathbb{E}^{\dag}_{\mu , \overline{\mathrm{w}}} \Big[ \exp\big(-  \lambda  A_\infty \big) \Big] =
    \exp \Big(- \int \mu(\dd h)~ u_{\widetilde{\psi}^{-1}(\lambda)}(\emph{w}(h))  \Big).
\end{equation*}    
\end{prop}
\noindent The proposition has the following consequence: since $\widetilde{\psi}^{-1}(\lambda) = \widetilde{N}(1-\exp(-\lambda \sigma))$,  the total mass $A_\infty$ under $\mathbb{N}_{x,0}$ and $\sigma$ under $\widetilde{N}$ have the same distribution. This connection is the tip of the iceberg of the results  that will be established in the upcoming section, where we establish that the tree structure of the set $\{\upsilon\in \mathcal{T}_H:~\xi_\upsilon=x\}$ is encoded by a $\widetilde{\psi}$--Lévy tree.
\begin{proof} 
Under $\mathbb{N}_{y,r_0}$ with $y \neq x$ and $r_0\geq 0$, set
\begin{equation*}
    T^* := \inf \{t \geq 0 : \tau_{r_0}(\overline{W}_t) < \infty \}, 
\end{equation*}
which is just the first hitting time of $x$ by $(\widehat{W}_t)_{t \in [0,\sigma]}$. Notice that by \eqref{indentity:soporte_1},  $A_\infty$ vanishes on $\{ T^* = \infty \}$ $\mathbb{N}_{y,r_0}$-a.e.. We set  $G_\lambda := \mathbb{N}_{x,0}(1-\exp(-\lambda A_\infty ))$,  and  remark that  the identity \eqref{remark:descomponerA} and  the special Markov property  applied to the domain $D_{r_0}$ yields:
\begin{equation*} 
    \mathbb{N}_{y,r_0}\Big( 1- \exp\big(-\lambda A_\infty\big) \Big) = \mathbb{N}_{y,r_0}\Big( 1- \exp\Big( -  \mathscr{L}^{r_0}_\sigma\cdot \mathbb{N}_{x, r_0} \big( 1-\exp\big(-\lambda A_\infty\big)\big)  \Big)\Big).
\end{equation*}
Next, by the translation invariance of the local time $\mathcal{L}$, we derive that the previous display is equal to:
$$\mathbb{N}_{y,0}\Big( 1- \exp\Big( -  \mathscr{L}^{0}_\sigma\cdot \mathbb{N}_{x, 0} \big( 1-\exp\big(-\lambda A_\infty\big)\big)  \Big)\Big)=u_{G_\lambda}(y).$$
Moreover, for $(\mu , \overline{\w} ) \in \overline{\Theta}_x$ if we   denote under $\mathbb{P}^{\dag}_{\mu, \overline{\w}}$  the Poisson process introduced in \eqref{PoissonRandMeasure} by $\sum_{i \in \mathcal{I}}\delta_{(h_i, \rho^i, \overline{W}^{i})}$, we get  : 
\begin{align*}
    \mathbb{E}^{\dag}_{\mu , \overline{\w}} \Big[ \exp\big(-  \lambda  A_\infty \big) \Big]  
    &= \mathbb{E}^{\dag}_{\mu , \overline{\w}} \Big[ \exp \big(- \lambda  \sum_{i \in \mathcal{I}} A_\infty (\rho^i , \overline{W}^{i})\big) \Big]   \\
    &= \exp \Big( -\int \mu(\dd h) ~\mathbb{N}_{\overline{\w}(h)}\big( 1-\exp\big(-\lambda A_\infty\big)  \big) \Big)
    =  \exp \Big(- \int \mu(\dd h)~ u_{G_\lambda}(\w(h))  \Big),
\end{align*}
where in the first equality we applied \eqref{remark:descomponerA_v2}, and in the second  we used that $\sum_{i \in \mathcal{I}}\delta_{(h_i, \rho^i, \overline{W}^{i})}$ is a Poisson point measure with intensity $\mu(\dd h) \mathbb{N}_{\overline{\w}(h)}(\dd \rho,\dd W)$. Consequently, the statement of the proposition  will now follow if we establish that $G_\lambda=\widetilde{\psi}^{-1}(\lambda)$. In this direction, for $\lambda > 0$, notice that the Markov property implies that
\begin{align*}
    G_\lambda 
    = \lambda\cdot \mathbb{N}_{x,0} \Big( \int_0^\sigma \dd A_s ~ \exp\big(- \lambda \int_s^\sigma \dd A_u \big)  \Big) 
    =\lambda\cdot  \mathbb{N}_{x,0} \Big( \int_0^\sigma \dd A_s ~ \mathbb{E}^{\dag}_{\rho_s, \overline{W}_s} \Big[  \exp\big(- \lambda \int_0^\sigma \dd A_u\big) \Big] \Big).
\end{align*}
By the previous discussion under $\mathbb{P}^{\dag}_{\mu , \overline{\w}}$ and the many-to-one formula of $A$ given in Lemma \eqref{lemLAformula:aditiva}, we get:
\begin{align*}
    G_\lambda 
    &= \lambda \int_0^\infty \dd r ~ E^{0} \otimes \Pi_{x,0} \Big( \exp\big(- \alpha \tau_r\big) \exp\Big(- \int_0^{\tau_r} J_{\tau_r}(\dd h) ~u_{G_\lambda}\big(\xi(h)\big) \Big)  \Big)\\
&= \lambda \int_0^\infty \dd r ~ \Pi_{x,0} \Big(  \exp \Big(- \int_0^{\tau_r} \dd h~ \frac{\psi\big( u_{G_\lambda}(\xi(h))\big)}{u_{G_\lambda}\big(\xi(h)\big)} \Big)  \Big),
\end{align*}
where we recall that $\tau_r(\xi,\mathcal{L}):=\inf\{s\geq 0:~\mathcal{L}_s\geq r\}$ and in the second equality we used  that $J_\infty(\dd h)$ is the Lebesgue-Stieltjets measure of a subordinator with exponent $\psi(\lambda)/\lambda - \alpha$. Next, under $\Pi_{x,0}$, we consider $(s_i,t_i)_{i\geq 1}$ the connected components of $\{s\geq 0:~\xi_s\neq x\}$ and we remark that:
$$
\int_0^{\tau_r} \dd h~ \frac{\psi\big( u_{G_\lambda}(\xi(h))\big)}{u_{G_\lambda}\big(\xi(h)\big)}= \sum \limits_{i\geq 1, \mathcal{L}_{s_i}<r} \int_{s_i}^{t_i} \dd h~\frac{\psi\big( u_{G_\lambda}(\xi(h))\big)}{u_{G_\lambda}\big(\xi(h)\big)},
$$
since $\int_0^\infty \dd h \mathbbm{1}_{\{ \xi_h = x \}} = 0$ by assumption \ref{Asssumption_3}. Consequently, by excursion theory we get:
$$
\Pi_{x,0} \Big(  \exp \Big(- \int_0^{\tau_r} \dd h~ \frac{\psi\big( u_{G_\lambda}(\xi(h))\big)}{u_{G_\lambda}\big(\xi(h)\big)} \Big)  \Big)=   \exp \Big(- r\cdot  \mathcal{N} \Big(  1-\exp \big(- \int_0^\sigma\dd h ~\frac{ \psi\big(u_{G_\lambda}(\xi_h))}{u_{G_\lambda}(\xi_h)} \big)   \Big)\Big), 
$$
and hence
\begin{align*}
     G_\lambda  &= \lambda \cdot \mathcal{N} \Big(  1-\exp \Big(- \int_0^\sigma\dd h ~ \frac{\psi\big(u_{G_\lambda}(\xi_h))}{u_{G_\lambda}(\xi_h)} \Big)   \Big)  ^{-1}.
\end{align*}
However, by the first identity in \eqref{identity:expoenente_1}, we have 
\begin{align*}
     \mathcal{N} \Big(  1-\exp \Big(- \int_0^\sigma  \dd h \frac{\psi(u_{G_\lambda}(\xi_h))}{u_{G_\lambda}(\xi_h)}  \Big)   \Big)
     &= \frac{\widetilde{\psi}(G_\lambda)}{G_\lambda},
\end{align*}
and we derive that  $\widetilde{\psi}(G_\lambda) = \lambda$ for $\lambda > 0$ and equivalently $G_\lambda =\widetilde{\psi}^{-1}(\lambda)$. Finally,  since $G_0 = 0$  the identity also holds for $\lambda = 0$.
\end{proof}
\subsection{Characterization  of the support of \texorpdfstring{$\dd A$}{Lg}}\label{subsection:characterizationSupport}

The rest of the section is devoted to the characterisation,   under $\mathbb{N}_{y,r_0}$ and $\mathbb{P}_{\mu,\overline{\w}}$, of   the support of the measure $\dd A$. Our characterisation is given in terms of the constancy intervals of $\widehat{\Lambda}$, and  of a family of special times for the Lévy snake that will be named \textit{exit times from} $x$. Before giving a precise statement we will need several preliminary results under $\mathbb{N}_{x,0}$.  First recall that under $\mathbb{N}_{x,0}$, for every $r>0$ the processes $\mathscr{L}^{\,r }$ and $L^{D_r}$ are indistinguishables -- and in particular, by   Proposition \ref{L_eta_measurable},  $\mathscr{L}^{\,r }_\sigma$  is $\mathcal{F}^{D_r}$ measurable. Fix $r>0$,   recall the notation  $\tau_r(\rho_t,\overline{W}_t)=\tau_{D_r}(\rho_t,\overline{W}_t)$ for  $t\geq 0$, and denote  the connected components of the open set 
$\{ t \in [0,\sigma] : \tau_{r}(\overline{W}_t) < H_t \}$ by  $\{ (a^r_i, b^r_i): \,  i \in \mathcal{I}_r \}$. We write  $\{(\rho^{i,r},\overline{W}^{i,r} ): \,  i \in \mathcal{I}_r \}$  for the corresponding subtrajectories, where as usual   $\overline{W}^{i,r}=(W^{i,r},\Lambda^{i,r})$. Next, recall the notation  $\Gamma_s^{D} :=\inf\big\{t\geq 0 :  V_t^D  > s\big\}$ for $V^D$ defined by \eqref{definition:VD} and   we  set:
$$\theta_{u}^{r}:=\inf\big\{s \geq 0 \: :  \mathscr{L}^{\,r}_{\Gamma^{D_r}_{s}}> u \big\}, \quad  \text{ for all } u \in[0,\mathscr{L}^{\,r}_\sigma).$$
Remark that $\text{tr}_{D_r} \widehat{( {W} , {\Lambda})}_{\theta^r_u}=(x,r)$, for every $u\in[0,\mathscr{L}_\sigma^{\,r})$. An application of the special Markov property applied at the domain $D_r$ gives that, conditionally on $\mathcal{F}^{D_r}$, the point measure of the excursions from $D_r$
\begin{equation*}
    \mathcal{M}^{(r)} := \sum_{i \in \mathcal{I}_r} \delta_{(\mathscr{L}^{\,r}_{a_i^r} ,  \rho^{i,r} ,  \overline{W}^{i,r} )} 
\end{equation*}
is a Poisson point measure with intensity $ \mathbbm{1}_{[0, \mathscr{L}^{\,r}_\sigma ]}(u ) \dd u~  \mathbb{N}_{x,r}\left( \dd \rho , \dd \overline{W} \right)$.

\begin{lem} \label{lemma:suppordUnderN}
$\mathbb{N}_{x,0}$--a.e., we have $\{0,\sigma\}\in \mathrm{supp } ~ \dd A$. 
\end{lem}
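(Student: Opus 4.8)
The plan is to treat the endpoint $0$ first and then deduce the statement for $\sigma$ by time reversal. For the reversal step I would use the duality identity \eqref{dualidad:etaRhoW}, now applied to the spatial motion $\overline{\xi}=(\xi,\mathcal{L})$: the approximation \eqref{equation:aproximacionA} shows that $A$ is a functional of the pair $(\overline{W},H)$ alone, and since $H(\rho_t)=H(\eta_t)$, the time-reversed snake $(\eta_{(\sigma-\cdot)-},\rho_{(\sigma-\cdot)-},\overline{W}_{\sigma-\cdot})$ produces, after the change of variables $u\mapsto\sigma-u$, exactly the process $t\mapsto A_\sigma-A_{\sigma-t}$. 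As this reversed snake has the same law as $(\rho,\eta,\overline{W})$ under $\mathbb{N}_{x,0}$, the events $\{\sigma\in\operatorname{supp}\dd A\}$ and $\{0\in\operatorname{supp}\dd A\}$ carry the same $\mathbb{N}_{x,0}$-measure. Hence it suffices to prove that $\mathbb{N}_{x,0}$--a.e.\ one has $A_\epsilon>0$ for every $\epsilon>0$, i.e.\ $0\in\operatorname{supp}\dd A$.

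The key input is the regularity of $x$ for $\xi$ (assumption \ref{Asssumption_2}). Since $\Lambda_s$ is the local time of $\overline{\xi}$ at $x$ along the path $W_s$, it is continuous and non-decreasing, so $\widehat{\Lambda}_s=\max_{[0,\zeta_s]}\Lambda_s$; moreover, by \eqref{N:H(rho)}, for fixed $s$ and conditionally on $H_s>0$ the tip label $\widehat{\Lambda}_s$ equals $\mathcal{L}_{H_s}$ under $\Pi_{x}$, which is strictly positive because $x$ is regular (the zero set of $\xi$ accumulates at $0^+$, so $\mathcal{L}_h>0$ for all $h>0$). As $H_s>0$ for every $s\in(0,\sigma)$ under $\mathbb{N}_{x,0}$, testing this along a sequence $s_n\downarrow 0$ yields $\mathbb{N}_{x,0}$--a.e.\ that $S:=\inf\{s>0:\widehat{\Lambda}_s>0\}=0$, and therefore $m_\epsilon:=\sup_{s\in(0,\epsilon)}\widehat{\Lambda}_s>0$ for every $\epsilon>0$.

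Next I would fix $\epsilon>0$ and $r\in(0,m_\epsilon)$ and show that $\mathscr{L}^{\,r}_\epsilon>0$. Choosing $s_r<\epsilon$ with $\widehat{\Lambda}_{s_r}>r$, the monotonicity of $\Lambda_{s_r}$ forces $\tau_r(\overline{W}_{s_r})<H_{s_r}$, that is $\gamma^{D_r}_{s_r}>0$. Since $\gamma^{D_r}$ is continuous, the set $\{s<\epsilon:\gamma^{D_r}_s>0\}$ is open and non-empty, hence of positive Lebesgue measure. Using the time-change representation \eqref{equation:exitlocalTimeChange}, whose counterpart under $\mathbb{N}_{x,0}$ follows by excursion theory with $\langle\rho^{D_r},1\rangle$ a reflected $\psi$-Lévy excursion and $\ell^{D_r}$ its local time at $0$, one has $\mathscr{L}^{\,r}_\epsilon=\ell^{D_r}\big(\int_0^\epsilon\mathbbm{1}_{\{\gamma^{D_r}_s>0\}}\,\dd s\big)$; as $0$ is regular for the infinite-variation process $\langle\rho^{D_r},1\rangle$, its local time is strictly positive at every positive argument, so $\mathscr{L}^{\,r}_\epsilon>0$. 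Integrating over $r$ and invoking Proposition \ref{proposition:aditivaDefinicion} gives $A_\epsilon=\int_0^\infty\mathscr{L}^{\,r}_\epsilon\,\dd r\ge\int_0^{m_\epsilon}\mathscr{L}^{\,r}_\epsilon\,\dd r>0$, and since $\epsilon$ is arbitrary this shows $0\in\operatorname{supp}\dd A$.

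The step I expect to be the main obstacle is the implication ``the snake explores strictly beyond $(x,r)$ before time $\epsilon$'' $\Rightarrow$ ``$\mathscr{L}^{\,r}_\epsilon>0$'', namely transferring the time-change representation \eqref{equation:exitlocalTimeChange} and the regularity of the local time $\ell^{D_r}$ from $\mathbb{P}_{0,x}$ to the excursion measure $\mathbb{N}_{x,0}$, and verifying that the exit local time is charged immediately at the onset of the first excursion outside $D_r$. The spatial-regularity step also requires some care in upgrading the fixed-time statement $\widehat{\Lambda}_s>0$ to the simultaneous a.e.\ statement $S=0$, but this is routine once the fixed-time claim is established.
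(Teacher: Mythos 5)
Your proposal is correct in outline and shares the paper's skeleton: the reduction of $\sigma$ to $0$ via the duality identity \eqref{dualidad:etaRhoW} (the paper does exactly this, equally tersely), and the use of regularity of $x$ to show that $\widehat{\Lambda}$ becomes positive immediately, which is the paper's concluding contradiction argument. Where you genuinely diverge is in the mechanism converting ``exit local time appears before $\epsilon$'' into ``$A_\epsilon>0$''. The paper introduces the stopping times $T^r_q=\inf\{s\geq 0:\mathscr{L}^{\,r}_s>q\}$ and proves $\mathbb{N}_{x,0}(A_{T^r_q}=0,\,\mathscr{L}^{\,r}_\sigma>0)=0$ by a Laplace-transform argument: conditionally on $\mathcal{F}^{D_r}$ the excursions outside $D_r$ are Poisson, so $A_{T^r_q}$ dominates a Poisson sum of independent copies of $A_\infty$, whose Laplace transform is controlled by $G_\lambda=\widetilde{\psi}^{-1}(\lambda)$; one then lets $\lambda\uparrow\infty$, which requires the nontrivial input of Proposition \ref{lemma:divergExpo} that $\widetilde{\psi}$ satisfies (A4), so that $G_\lambda\to\infty$. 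You bypass all of this by integrating the exit local times directly, $A_\epsilon=\int_0^\infty\dd r\,\mathscr{L}^{\,r}_\epsilon\geq\int_0^{m_\epsilon}\dd r\,\mathscr{L}^{\,r}_\epsilon$, via Proposition \ref{proposition:aditivaDefinicion}, which only needs $\mathscr{L}^{\,r}_\epsilon>0$ for a.e.\ $r<m_\epsilon$. This is more economical: it needs neither the stopping times nor any property of $\widetilde{\psi}$, whereas the paper's route yields the stronger, reusable fact that $A$ is positive as soon as any single exit local time is.

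The step you flag as the main obstacle is a real gap but is fillable, and can in fact be bypassed: the relation \eqref{equation:exitlocalTimeChange} is only stated under $\mathbb{P}_{0,x}$, and under $\mathbb{N}_{x,0}$ the process $\langle\rho^{D_r},1\rangle$ is not a single Lévy excursion but (conditionally on $\mathcal{F}^{D_r}$) a reflected $\psi$-Lévy process stopped when its local time at $0$ reaches $\mathscr{L}^{\,r}_\sigma$ --- a description whose justification already requires the special Markov property. It is shorter to use Theorem \ref{Theo_Spa_Markov_Excur} directly, as the paper does: conditionally on $\mathcal{F}^{D_r}$, the excursions outside $D_r$ form a Poisson point measure whose intensity in the local-time coordinate, $\mathbbm{1}_{[0,\mathscr{L}^{\,r}_\sigma]}(u)\,\dd u$, is diffuse, so a.s.\ every excursion interval $(a^r_i,b^r_i)$ has $\mathscr{L}^{\,r}_{a^r_i}>0$; an excursion starting before $\epsilon$ then forces $\mathscr{L}^{\,r}_\epsilon\geq\mathscr{L}^{\,r}_{a^r_i}>0$ by monotonicity (this is the paper's identity $\{\mathscr{L}^{\,r}_\epsilon=0\}=\{C(r)\geq\epsilon\}$, with $C(r):=\inf\{s\geq 0:\widehat{\Lambda}_s>r\}$). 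Two smaller repairs: the positivity of $\mathscr{L}^{\,r}_\epsilon$ is an a.s.-per-fixed-$r$ statement, so a Fubini argument over $(r,\omega)$ (plus rational $\epsilon$ and monotonicity) is needed before writing $\int_0^{m_\epsilon}\dd r\,\mathscr{L}^{\,r}_\epsilon>0$; and the implication $\widehat{\Lambda}_{s_r}>r\Rightarrow\tau_r(\overline{W}_{s_r})<H_{s_r}$ uses not just monotonicity of $\Lambda_{s_r}$ but the support property of Lemma \ref{te_quedas_en_Theta} (the measure $\dd\Lambda_{s_r}$ is carried by $\{h:W_{s_r}(h)=x\}$), which guarantees that the last level-$r$ crossing of $\Lambda_{s_r}$ occurs at a point where the path actually equals $(x,r)$.
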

\begin{proof}
We are going to  show that for any $\varepsilon>0$, we have $\mathbb{N}_{x,0}(A_{\varepsilon \wedge \sigma} =0 ) =0$ -- the Lemma will follow  since the symmetric statement $\mathbb{N}_{x}(A_{\sigma - \varepsilon} = 0) = 0$ will then hold by the duality identity  \eqref{dualidad:etaRhoW}. As  previously we write $$G_\lambda  := \mathbb{N}_{x}\big(1-\exp\big(-\lambda A_\infty)\big)=\widetilde{\psi}^{-1}(\lambda),$$ where the second equality holds by  Proposition \ref{lemma:divergExpo}. For every positive rational numbers $r$ and $q$, we introduce the stopping time 
$ T_{q}^r: = \inf \big\{ s \geq 0 : \mathscr{L}^{\,r}_s > q \big\}, 
$
with the  convention $T_{q}^r = \infty$, if $\mathscr{L}^{\,r}_\sigma \leq q$. Let us prove that 
\begin{equation} \label{equation:supportA}
    \mathbb{N}_{x,0}\big(A_{T_q^{r}} = 0 , \mathscr{L}^{\,r}_\sigma > 0 \big) = 0.
\end{equation}

In this direction, set $\mathbb{N}_{x,0}^r:= \mathbb{N}^{r}_{x,0}(\dd \rho, \, \dd \overline{W} | \mathscr{L}_\sigma^{r} > 0)$ and  using the fact that $\mathcal{M}^{(r)}$ is a Poisson point measure with intensity $\mathbbm{1}_{[0, \mathscr{L}^{\,r}_\sigma ]}( u ) \dd u~  \mathbb{N}_{x,r}\left( \dd \rho , \dd \overline{W} \right)$,  remark that 
\begin{align*}
    \mathbb{N}^{ r}_{x,0}\Big( \exp\big(-\lambda A_{T_q^{r}}\big)   \Big) 
    &\leq 
    \mathbb{N}^{ r}_{x,0}\Big( \exp \big( {-\lambda \sum_{i \in \mathcal{I}_r} A_\sigma(\rho^{i,r} , \overline{W}^{i,r}) \mathbbm{1}_{\{ \mathscr{L}^{\,r}_{a_i} \leq q \}}} \big)  \Big) \\
    &= \mathbb{N}^{ r}_{x,0} \Big( \exp \Big( - (q \wedge \mathscr{L}^{\,r}_\sigma)\cdot \mathbb{N}_{x}\big( 1-\exp(-\lambda A_\infty) \big)  \Big)  \Big) 
    = \mathbb{N}^{ r}_{x,0} \Big( \exp\big(-(q \wedge \mathscr{L}^{\,r}_\sigma)\cdot G_\lambda \big)  \Big), 
\end{align*}
and hence:
\begin{align*}
  \mathbb{N}_{x,0}^{ r} ( A_{T_q^r} = 0 )  + \mathbb{N}^{ r}_{x,0}\big( \exp\big(-\lambda A_{T_q^{r}}\big) \mathbbm{1}_{\{ A_{T_q^r} >0  \}}  \big) \leq \mathbb{N}^{ r}_{x,0} \left( \exp\big(-(q \wedge \mathscr{L}^{\,r}_\sigma) \cdot  G_\lambda \big)  \right).  
\end{align*}
Now \eqref{equation:supportA} follows  taking the limit as $\lambda \uparrow \infty$,  since we are working under $\{ \mathscr{L}^{\,r}_\sigma > 0 \}$ and by Proposition \ref{lemma:divergExpo} the function  $\widetilde{\psi}$ satisfies (A4), which gives that $G_\lambda$ goes to $\infty$ when $\lambda \uparrow \infty$. We stress that  \eqref{equation:supportA}  holds for any positive rational numbers $r$ and $q$. Now fix $\varepsilon>0$, and notice  that  by the monotonicity of $A$, we have
\begin{equation*}
    \big\{ A_{\varepsilon \wedge \sigma } = 0 ~;~ T_{q}^r < \varepsilon  \big\} \subset 
    \big\{ A_{T_q^r} = 0 ~;~ T_{q}^r < \varepsilon ~;~ \mathscr{L}^{\,r}_\sigma >0 \big\}, 
\end{equation*}
where the last set has null $\mathbb{N}_{x,0}$ measure by \eqref{equation:supportA}. The identity $\mathbb{N}_{x,0} (A_{\varepsilon \wedge \sigma} = 0) = 0$  now will follow as soon as we show that, $\mathbb{N}_{x,0}$-a.e.~, there exists two positive rational numbers $r$ and $q$ satisfying that $T_{q}^r < \varepsilon$. Said otherwise, we need to establish  that the origin is an accumulation point of  $\{ T_{q}^r :~ r,q \in \mathbb{Q}_+^* \}$. Arguing by contradiction, write
\begin{equation*}
    \Omega _0 = \bigcap_{r,q \in \mathbb{Q}_+^* } \big\{ T_{q}^r \geq \varepsilon \big\} 
    = \bigcap_{r \in \mathbb{Q}_+^*} \big\{ T_q^r \geq \varepsilon ~:~ \forall q > 0  \big\} 
    = \bigcap_{r\in \mathbb{Q}_+^*} \big\{ \mathscr{L}^{\,r}_\epsilon = 0 \big\} 
\end{equation*}
where in the last equality we used \eqref{equation:supportA}, and suppose that $\mathbb{N}_{x,0}( \Omega_0) >0$. To simplify notation, set $C(r) := \inf \{ s \geq 0 : \widehat{\Lambda}_s > r \}$, and remark that  the special Markov property, as stated in Theorem \ref{Theo_Spa_Markov_Excur}, applied to the domain $D_r$  gives $\{ \mathscr{L}^{\,r}_\epsilon = 0 \} = \{ C(r) \geq \varepsilon \}$. We then derive that
\begin{equation*} 
  0 <  \mathbb{N}_{x,0}\Big( \bigcap\limits_{r\in \mathbb{Q}_+^*} \{ C(r) \geq \varepsilon \}  \Big) 
  =  \mathbb{N}_{x,0}\Big( \widehat{\Lambda}_s = 0, \,  \forall s \in [0,\varepsilon \wedge \sigma]  \Big). 
\end{equation*}
However, recalling the definition \eqref{N:H(rho)} of the excursion measure $\mathbb{N}_{x,0}$ this is in contradiction  with the fact that for every $s\in (0,\sigma)$,  $\mathbb{N}_{x,0}$ a.e., $\widehat{\Lambda}_{s} > 0$. Indeed, by definition of the Lévy snake under $\mathbb{N}_{x,0}$, for any fixed $s$ the process $(W_s(t) , \Lambda_s(t) : t \leq \zeta_s)$ has the distribution of a trajectory of the Markov process $(\xi_t, \mathcal{L}_t : t \geq 0 )$ under $\Pi_{x,0}$ killed at $\zeta_s$. We then have  $\Lambda_s(t)>0$, for every $t>0$, since  $\mathcal{L}_t > 0$,  $\Pi_{x,0}$ a.s., and $\zeta_s = H(\rho_s)$ does not vanish on $(0,\sigma)$. 
\end{proof}
\noindent Define:
$$
\mathcal{C}^* := \Big\{ t \in [0,\sigma] : ~ \sup_{(t-\varepsilon, t+\varepsilon )\cap[0,\sigma]} \widehat{\Lambda} =  \inf_{(t-\varepsilon, t+\varepsilon )\cap[0,\sigma]} \widehat{\Lambda}~, \quad  \text{ for some } \varepsilon > 0  \Big\}, $$
and remark that -- the closure of the -- connected components of $\mathcal{C}^*$ are exactly the constancy intervals of $\widehat{\Lambda}$. We will show that the support of $\dd A$ is precisely the complement of $\mathcal{C}^*$. In this direction,  our goal now is to give  an equivalent definition of $\mathcal{C}^*$ in terms of $H$ and $W$, and for this purpose we  introduce the notion of exit times.

\begin{def1} \label{definition:exitTime} \emph{(Exit times from $x$)} A non negative number $t$ is said to be an exit time from the point $x$ for the process $(\rho,W)$ if $\widehat{W}_{t}=x$ and there exists $s>0$ such that  
\begin{equation*}
    H_t < H_{t+u}, \quad  \text{ for all }  \,  u \in (0,s).
\end{equation*}
  The collection of exit times from $x$ is denoted by $\emph{Exit}(x)$.
\end{def1}

\begin{remark} \normalfont 
Note that, for every $t\in \text{Exit}(x) $, the point $p_H(t)$  corresponds by definition to an interior point of the  Lévy tree and in fact, recalling the result of Proposition \ref{prop:branchingNotocax},  $p_H(t)$ is a point of multiplicity $2$ in $\mathcal{T}_H$. In particular, for every $t\in \text{Exit}(x)$, there exists a unique $s>t$ such that $p_{H}(t)=p_{H}(s)$ and satisfying that:
  \begin{equation*}
 \widehat{W}_s = x\quad \text{ and } \quad  H_{s-u} > H_t=H_s \quad   \text{ for all } \,  u \in (0,v),
\end{equation*}
for some $v>0$ -- in this case, we can take $v:= t-s$.  By analogy, we write  $\dminiArrow \text{Exit}(x)$ for the collection of times in $[0,\sigma]$ satisfying the previous display.
Remark that the correspondence described above between $\text{Exit}(x)$ and $\dminiArrow \text{Exit}(x)$ defines a bijection. We also stress  that the inclusion  $\text{Exit}(x) \cup \dminiArrow \text{Exit}(x) \subset \{t \in \mathbb{R}_+ : \widehat{W}_t = x \} $ is a priori strict since we are excluding in our definition potential times that will be mapped  by $p_H$ into leaves  with label $x$.  
\end{remark}

\noindent Let us now prove the following technical lemma:

\begin{lem} \label{L^rcapExi}
For every fixed $r>0$, under $\mathbb{N}_{x,0}$, we have:
\begin{equation}\label{supp L^r}
 \mathrm{supp } ~\dd \mathscr{L}^{\,r}
 = \overline{\{ a_i^r, b_i^r : \, i \in \mathcal{I}_r \} } 
 = \overline{\mathrm{Exit}(x)\cap \big\{s\in [0,\sigma]:~\widehat{\Lambda}_{s}=r\big\}}, 
\end{equation}
and the same identity holds if we replace $\overline{\mathrm{Exit}(x)}$ by $\overline{\dminiArrow \mathrm{Exit}(x)}$. In particular, the measure  $\dd A$ gives no mass to the complement of  $\overline{\mathrm{Exit}(x)}$ (or  $\overline{\dminiArrow \mathrm{Exit}(x)}$).
\end{lem}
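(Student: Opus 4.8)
The plan is to prove the two claimed equalities separately: first that $\mathrm{supp}\,\dd\mathscr{L}^{\,r}$ coincides with $\overline{\{a_i^r,b_i^r\}}$, and then that this set is also the closure of $\mathrm{Exit}(x)\cap\{\widehat{\Lambda}_s=r\}$ and, by a symmetric argument, of $\underset{\leftarrow}{\mathrm{Exit}}(x)\cap\{\widehat{\Lambda}_s=r\}$. I work under $\mathbb{N}_{x,0}$ with $r>0$ fixed, so that $\mathscr{L}^{\,r}$ and $L^{D_r}$ are indistinguishable and I may freely use the approximation \eqref{L:r}, the time-change identity \eqref{equation:exitlocalTimeChange}, and the Poisson description of $\mathcal{M}^{(r)}$ recalled just before the statement.

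For the inclusion $\mathrm{supp}\,\dd\mathscr{L}^{\,r}\subseteq\overline{\{a_i^r,b_i^r\}}$ I would show that $\dd\mathscr{L}^{\,r}$ charges no connected component $I$ of the open complement of $\overline{\{a_i^r,b_i^r\}}$. Such an $I$ is either contained in some excursion interval $(a_i^r,b_i^r)$, on which $\mathscr{L}^{\,r}$ is constant by the general property of exit local times recalled in Section \ref{Special and intermediate}; or it is disjoint from every $[a_i^r,b_i^r]$, in which case $\tau_r(\overline{W}_s)\ge H_s$ for all $s\in I$, so the integrand in \eqref{L:r} vanishes identically on $\overline{I}$ for every $\varepsilon>0$ and, passing to the limit along the a.s.\ convergent subsequence of Lemma \ref{def:lem:scr:L}, $\mathscr{L}^{\,r}$ is again constant on $I$. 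Thus $\dd\mathscr{L}^{\,r}(I)=0$ in both cases.

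The reverse inclusion is the delicate point, and I expect it to be the main obstacle. I would transfer the problem to the reflected Lévy process $\langle\rho^{D_r},1\rangle$ through $\mathscr{L}^{\,r}_t=\ell^{D_r}(R^{D_r}_t)$, where $R^{D_r}_t=\int_0^t\mathbbm{1}_{\{\gamma^{D_r}_s>0\}}\dd s$ and $\ell^{D_r}$ is the local time at $0$ of $\langle\rho^{D_r},1\rangle$. Since $\langle\rho^{D_r},1\rangle$ has the law of a reflected Lévy process of infinite variation, its zero set is perfect and $\dd\ell^{D_r}$ charges every neighbourhood of every excursion endpoint; combined with the fact, from the special Markov property, that the marks $\{\mathscr{L}^{\,r}_{a_i^r}\}$ form (given $\mathcal{F}^{D_r}$) a Poisson process of infinite total intensity on $[0,\mathscr{L}^{\,r}_\sigma]$, hence an a.s.\ dense set with no repeated value, a monotonicity argument yields $\overline{\{a_i^r\}}=\overline{\{b_i^r\}}=\mathrm{supp}\,\dd\mathscr{L}^{\,r}$. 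The difficulty is precisely that $R^{D_r}$ collapses the intervals on which the snake stays inside $D_r$, so one must rule out that such an interval sits immediately on one side of an excursion endpoint; this is exactly where the perfectness of the zero set of $\langle\rho^{D_r},1\rangle$ and the distinctness of the local-time marks enter.

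For the identification with exit times I would prove $\mathrm{Exit}(x)\cap\{\widehat{\Lambda}_s=r\}=\{a_i^r:i\in\mathcal{I}_r\}$. The inclusion $\supseteq$ follows from the general description of excursions from a domain in Section \ref{Special and intermediate}: for each $i$ one has $\tau_r(\overline{W}_{a_i^r})=\zeta_{a_i^r}=H_{a_i^r}$, whence $\widehat{W}_{a_i^r}=x$ and $\widehat{\Lambda}_{a_i^r}=r$, while $H_{a_i^r}<H_s$ on $(a_i^r,b_i^r)$ gives $a_i^r\in\mathrm{Exit}(x)$. For $\subseteq$, the key observation is that $\widehat{W}_t=x$ together with $\widehat{\Lambda}_t=r$ forces $\tau_r(\overline{W}_t)=H_t$: since $x$ is regular and instantaneous for $\xi$ by \ref{Asssumption_2}, the local time $\Lambda_t$ increases strictly immediately after any visit of $W_t$ to $x$, so a hitting of $(x,r)$ at some height $h_0<H_t$ would force $\widehat{\Lambda}_t>r$, a contradiction; the snake property then shows that $t$ initiates an excursion from $D_r$, i.e.\ $t=a_i^r$. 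The symmetric argument gives $\underset{\leftarrow}{\mathrm{Exit}}(x)\cap\{\widehat{\Lambda}_s=r\}=\{b_i^r\}$, and combining with the previous paragraph identifies all three closures. The final assertion on $\dd A$ then follows by writing $A_t=\int_0^\infty\mathscr{L}^{\,r}_t\,\dd r$ and applying Fubini: on any interval disjoint from $\overline{\mathrm{Exit}(x)}$ each $\mathscr{L}^{\,r}$ is constant, hence so is $A$.
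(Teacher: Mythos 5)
Your first inclusion (constancy of $\mathscr{L}^{\,r}$ on the components of the complement of $\overline{\{a_i^r,b_i^r\}}$), your identification of $\{a_i^r\}$ with $\mathrm{Exit}(x)\cap\{\widehat{\Lambda}_s=r\}$ (which the paper treats as essentially definitional), and your final Fubini step for $\dd A$ are all sound. The genuine gap is exactly at the point you yourself flag as the main obstacle: proving $a_j^r\in\mathrm{supp}\,\dd\mathscr{L}^{\,r}$. The two tools you invoke there do not close the argument. Perfectness of the zero set of $\langle\rho^{D_r},1\rangle$ lives entirely in the time-changed scale, and the time change $R^{D_r}_t=\int_0^t \dd s\,\mathbbm{1}_{\{\gamma^{D_r}_s>0\}}$ erases precisely the information you need. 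Concretely, writing $\tilde a_j:=R^{D_r}_{a_j^r}$, the identity \eqref{equation:exitlocalTimeChange} gives $\mathscr{L}^{\,r}_{a_j^r}-\mathscr{L}^{\,r}_{a_j^r-\epsilon}=\ell^{D_r}(\tilde a_j)-\ell^{D_r}\big(R^{D_r}_{a_j^r-\epsilon}\big)$, and this vanishes whenever $R^{D_r}$ is flat on $(a_j^r-\epsilon,a_j^r)$, i.e.\ whenever the snake spends that whole interval inside $D_r$ --- no matter how perfect the zero set of the time-changed process is, and no matter that the marks $\mathscr{L}^{\,r}_{a_i^r}$ are pairwise distinct (distinctness concerns the values of the marks, not the time structure around them). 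So you must rule out that an ``inside'' interval sits immediately to the left of $a_j^r$, and neither stated tool addresses this.

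The missing idea is an independence step, which is how the paper resolves it. Set $R_t:=\sum_{\mathscr{L}^{r}_{a_i^r}\le t}\sigma(\overline{W}^{i,r})$ (the accumulated durations of excursions with mark at most $t$) and recall $\theta^r_u=\inf\{s\ge 0: \mathscr{L}^{\,r}_{\Gamma^{D_r}_s}>u\}$, which accounts for the inside time. For $a_k^r<a_j^r$ one has the deterministic bound
\begin{equation*}
a_j^{r}-a_k^r \;\leq\; \big(R_{\mathscr{L}_{a_j^r}^{r} -} - R_{\mathscr{L}_{a_k^r}^{r} -}\big) \;+\; \big(\theta^r_{\mathscr{L}_{a_j^r}^{r}} - \theta^r_{\mathscr{L}_{a_k^r}^{r} - }\big).
\end{equation*}
Here $R$ is c\`adl\`ag, and $\theta^r$ is monotone, hence has countably many discontinuities, and is $\mathcal{F}^{D_r}$-measurable by Proposition \ref{L_eta_measurable}; by the special Markov property the marks $\{\mathscr{L}^{\,r}_{a_i^r}\}$ are, conditionally on $\mathcal{F}^{D_r}$, Poissonian with Lebesgue intensity on $[0,\mathscr{L}^{\,r}_\sigma]$, so they a.s.\ avoid that countable discontinuity set: every mark is a continuity point of $\theta^r$. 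Letting $\mathscr{L}^{r}_{a_k^r}\uparrow\mathscr{L}^{r}_{a_j^r}$ (possible by density of the marks) the right-hand side tends to $0$, so left endpoints $a_k^r$ accumulate at $a_j^r$ from the left; distinctness of the marks then gives $\mathscr{L}^{r}_{a_k^r}<\mathscr{L}^{r}_{a_j^r}$, whence $a_j^r\in\mathrm{supp}\,\dd\mathscr{L}^{\,r}$, and the case of $b_j^r$ follows by the duality \eqref{dualidad:etaRhoW}. It is this conditional-independence argument --- the marks cannot see the discontinuities of the $\mathcal{F}^{D_r}$-measurable process $\theta^r$ --- that excludes your bad scenario; without it (or an equivalent substitute) the reverse inclusion in your sketch does not go through.
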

\begin{proof}
First remark that if $\mathscr{L}^{\,r}_\sigma=0$, by the special Markov property applied to the domain $D_r$, all the sets appearing in \eqref{supp L^r} are empty. Hence, it suffices to show \eqref{supp L^r} under $\mathbb{N}_x^{r} := \mathbb{N}_{x}( \cdot \, | \mathscr{L}^{\,r}_\sigma >0)$. Moreover, notice that by definition we have:
\begin{equation*} 
    \{ a_i^r : i \in \mathcal{I}_r \} 
    =
    \mathrm{Exit}(x)\cap \big\{s\in [0,\sigma]:~\widehat{\Lambda}_{s}=r\big\}, \quad \text{ and } \quad 
    \{ b_i^r : i \in \mathcal{I}_r \} 
    =
    {{\dminiArrow}{}{\mathrm{Exit}(x)}} \cap \big\{s\in [0,\sigma]:~\widehat{\Lambda}_{s}=r\big\}. 
\end{equation*}
To deduce \eqref{supp L^r}, it is then enough to show that:
$$ \mathrm{supp } ~\dd \mathscr{L}^{\,r}
 = \overline{\{ a_i^r : \, i \in \mathcal{I}_r \} } ,$$
since the same equality will hold for $\{ a_i^r : i \in \mathcal{I}_r \}$ replaced by $\{ b_i^r : i \in \mathcal{I}_r \}$, using the duality identity \eqref{dualidad:etaRhoW} under $\mathbb{N}_{x,0}$.
\par 
So let us prove the previous display, and we start showing the inclusion $ \mathrm{supp } ~\dd \mathscr{L}^{\,r}
 \subset \overline{\{ a_i^r : \, i \in \mathcal{I}_r \} }$. In this direction, 
consider $s\in \mathrm{supp } ~\dd \mathscr{L}^{\,r}$.   By the special Markov property  the set  $\{\mathscr{L}_{a_i^r}^{\,r}:~i\in \mathcal{I}_r\}$ is dense in $[0,\mathscr{L}^{\,r}_\sigma]$, which gives that for every $\epsilon$ there exists $i\in \mathcal{I}_r$ such that $\mathscr{L}^{\,r}_{(s-\epsilon)+}<\mathscr{L}^{\,r}_{a^r_i}<\mathscr{L}^{\,r}_{s+\epsilon}$. This ensures that $a^r_i\in (s-\epsilon, s+\epsilon)$ by monotonicity of $\mathscr{L}^{\,r}$. Consequently, the set $\text{supp } ~\dd \mathscr{L}^{\,r}$ is included in the closure of  $\{a^r_i:~i\in \mathcal{I}_r\}$. Let us now establish the reverse inclusion by showing that for every $j\in \mathcal{I}_r$, we have $a_j^r\in \mathrm{supp } ~\dd \mathscr{L}^{\,r}$. In order to prove it, set $R_t := \sum_{\mathscr{L}_{a^r_i}^r \leq t }\sigma(\overline{W}^{i,r})$ for $t \geq 0$ and notice that it is a càdlàg  process since $R_\infty \leq \sigma <\infty$. Now remark that by definition, for every $k\in  \mathcal{I}_r$ with $a_k^r<a_j^r$, we have:
\begin{equation*}
a_j^{r}-a_k^r \leq R_{\mathscr{L}_{a_j^r}^r -} - R_{\mathscr{L}_{a_k^r}^r -} + \theta^r_{\mathscr{L}_{a_j^r}^r} -
\theta^r_{\mathscr{L}_{a_k^r}^r - }.
\end{equation*} 
Since $\theta^{r}$ is monotone, it has a countable number of discontinuities  and it follows by  the special Markov property --  using that $\theta^r$ is $\mathcal{F}^{D_r}$-measurable -- that all the points  $\{\mathscr{L}_{a^r_i}^{\,r}:~i\in \mathcal{I}\}$ are continuity points of $\theta^{r}$. Since $R$ is càdlàg,  this implies  that for every $\epsilon>0$ there exists $k\in \mathcal{I}_r$ such that $a_j^{r}-\epsilon<a_k^{r}<a_j^{r}$. All the values $\{\mathscr{L}_{a_i^r}^{\,r}:~i\in \mathcal{I}_r\}$ being distinct we derive that $a_j^{r}\in \mathrm{supp } ~\dd \mathscr{L}^{\,r} $, as wanted. 
 As a consequence of \eqref{supp L^r}, it follows that:
\begin{align*}
     \mathbb{N}_{x,0} \left( \int_0^\sigma \dd A_s \mathbbm{1}_{s\notin\overline{\mathrm{Exit}(x)}}  \right)
    =  
    \int_0^\infty \dd r \, \mathbb{N}_{x,0} \left( \int_0^\sigma \dd \mathscr{L}^{\,r}_s \mathbbm{1}_{s\notin\overline{\mathrm{Exit}(x)}}  \right)  
    =0,
\end{align*}
and we deduce by duality that $\dd A$ gives no mass to the complement of  $\overline{\mathrm{Exit}(x)}$ -- the same result holding for  $\overline{{\dminiArrow}{}{\mathrm{Exit}(x)}}$.
\end{proof}

The next proposition  establishes the connection between  the constancy intervals of $\widehat{\Lambda}$, the exit times from $x$ and the excursion intervals from $D_r$. This is the last result needed to characterise  the support of $\dd A$.

\begin{prop}
\label{proposition:exits_constancyLambda}
$\mathbb{N}_{x,0}$--a.e., we have:
\begin{equation} \label{identity:exit,exitvolteado,constanciaL}
    \overline{\mathrm{Exit}(x)}
    =\overline{{\dminiArrow}{}{\mathrm{Exit}(x)}}
    =\overline{ \{ a_i^r, b_i^r : r \in \mathbb{Q}_+^*\emph{ and }i\in \mathcal{I}_r  \} } 
    =[0,\sigma]\setminus\mathcal{C}^*.
\end{equation}
\end{prop}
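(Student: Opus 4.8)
The plan is to show that all four sets share the common closure $K:=[0,\sigma]\setminus\mathcal{C}^*$, which is closed because $\mathcal{C}^*$ is open. I work under $\mathbb{N}_{x,0}$, so every path $\overline{W}_s=(W_s,\Lambda_s)$ starts at $(x,0)$, $\Lambda_s$ is continuous, non-decreasing with $\Lambda_s(0)=0$, and its Stieltjes measure is carried by $\overline{\{h:W_s(h)=x\}}$; in particular $\Lambda_s$ can increase only at levels where $W_s=x$. I use the identifications $\{a_i^r:i\in\mathcal{I}_r\}=\mathrm{Exit}(x)\cap\{\widehat{\Lambda}=r\}$ and $\{b_i^r:i\in\mathcal{I}_r\}=\underset{\leftarrow}{\mathrm{Exit}}(x)\cap\{\widehat{\Lambda}=r\}$ recorded in the proof of Lemma \ref{L^rcapExi}. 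Setting $S:=\overline{\{a_i^r,b_i^r:r\in\mathbb{Q}_+^*,\ i\in\mathcal{I}_r\}}$, Lemma \ref{L^rcapExi} at each rational level (intersected over $r\in\mathbb{Q}_+^*$) gives $\mathrm{supp}\,\dd\mathscr{L}^{\,r}=\overline{\{a_i^r\}}=\overline{\{b_i^r\}}$, so that $\{b_i^r\}\subseteq\overline{\{a_i^r\}}$ and symmetrically; hence \[ S=\overline{\{a_i^r:r\in\mathbb{Q}_+^*\}}=\overline{\{b_i^r:r\in\mathbb{Q}_+^*\}}. \]

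The heart of the argument is the inclusion $K\subseteq S$. Fix $t\in K$ and $\varepsilon>0$, and put $I=(t-\varepsilon,t+\varepsilon)\cap[0,\sigma]$. Since $t\notin\mathcal{C}^*$, $\widehat{\Lambda}$ is non-constant on $I$, so $m:=\inf_I\widehat{\Lambda}<\sup_I\widehat{\Lambda}=:M$; pick a rational $r\in(m,M)$ and points $s_0,s_1\in I$ with $\widehat{\Lambda}_{s_1}<r<\widehat{\Lambda}_{s_0}$. As $\Lambda_{s_0}$ is continuous, non-decreasing, starts at $0$ and ends at $\widehat{\Lambda}_{s_0}>r$, its first passage level $h$ at height $r$ satisfies $h<H_{s_0}$ and lies in $\mathrm{supp}\,\dd\Lambda_{s_0}$, whence $W_{s_0}(h)=x$ and $\overline{W}_{s_0}(h)=(x,r)$; thus $\tau_r(\overline{W}_{s_0})\le h<H_{s_0}$ and $s_0\in(a_j^r,b_j^r)$ for some $j\in\mathcal{I}_r$. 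On $[a_j^r,b_j^r]$ one has $\widehat{\Lambda}\ge r$: for $s$ in the open interval $\tau_r(\overline{W}_s)<H_s$ forces $\widehat{\Lambda}_s=\Lambda_s(H_s)\ge\Lambda_s(\tau_r)=r$, and the endpoints follow by continuity. Hence $s_1\notin[a_j^r,b_j^r]$, and since $s_0,s_1\in I$ with $s_0\in(a_j^r,b_j^r)$, either $b_j^r\in(s_0,s_1)\subseteq I$ (if $s_1>b_j^r$) or $a_j^r\in(s_1,s_0)\subseteq I$ (if $s_1<a_j^r$). In both cases $I$ meets $S$; letting $\varepsilon\downarrow0$ gives $t\in S$.

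For the converse $S\subseteq K$ it suffices that no rational-level exit point lies in $\mathcal{C}^*$. Fix a rational $r$; by the special Markov property (Corollary \ref{Corollary:specialMarkovN}) at $D_r$, conditionally on $\mathcal{F}^{D_r}$ the excursions $(\rho^{i,r},\overline{W}^{i,r})$ above $D_r$ form a Poisson measure with intensity $\mathbbm{1}_{[0,\mathscr{L}^{\,r}_\sigma]}(\ell)\,\dd\ell\,\mathbb{N}_{x,r}$, so being countably many atoms they all satisfy, by Lemma \ref{lemma:suppordUnderN} under $\mathbb{N}_{x,r}$, that $0\in\mathrm{supp}\,\dd A(\rho^{i,r},\overline{W}^{i,r})$. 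Since such an excursion has label $\ge r$ and $A_v>0$ forces the label to exceed $r$ before time $v$ (only levels $q>r$ can charge $\dd A$ near the base), this means $\widehat{\Lambda}$ is non-constant immediately to the right of $a_i^r$, i.e. $a_i^r\notin\mathcal{C}^*$. Intersecting over $r\in\mathbb{Q}_+^*$ yields $S\subseteq K$, and with the previous paragraph $S=K$.

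It remains to upgrade $S=K$ to $\overline{\mathrm{Exit}(x)}=\overline{\underset{\leftarrow}{\mathrm{Exit}}(x)}=K$. As $\{a_i^r:r\in\mathbb{Q}_+^*\}\subseteq\mathrm{Exit}(x)$ we already have $K=S\subseteq\overline{\mathrm{Exit}(x)}$, and likewise $K\subseteq\overline{\underset{\leftarrow}{\mathrm{Exit}}(x)}$; the remaining content is $\mathrm{Exit}(x)\cap\mathcal{C}^*=\emptyset$ together with its time-reversed analogue. If an exit time $t$ lay in a maximal constancy interval $(c,d)$ of value $\rho=\widehat{\Lambda}_t$, then (arguing as in the second paragraph) $t=a_i^\rho$ and the excursion $E^*$ above $D_\rho$ issuing from $t$ would have $A(E^*)\equiv0$ on an initial interval, i.e. $0\notin\mathrm{supp}\,\dd A(E^*)$; this is exactly the event excluded by Lemma \ref{lemma:suppordUnderN} for the $\mathbb{N}_{x,\rho}$-snake, and reflects the regularity that, $x$ being instantaneous, regular and of multiplicity $2$ along the branch above $p_H(t)$ (Proposition \ref{prop:branchingNotocax}), a fresh spatial motion from $(x,\rho)$ accrues local time immediately, so $\widehat{\Lambda}_{t+v}>\rho$ for all small $v>0$. \emph{The main obstacle} is that $\rho=\widehat{\Lambda}_t$ is random, possibly irrational, so the special Markov property cannot be applied directly at level $\rho$: I would render the regularity simultaneous over all exit times by combining the rational-level statement above with an approximation that brackets $\rho$ by rationals $q\uparrow\rho$ and transfers the non-constancy of the $\mathbb{N}_{x,q}$-excursions containing $t$ down to $t$ itself. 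Once $\mathrm{Exit}(x)\cap\mathcal{C}^*=\emptyset$ is established, the corresponding identity for $\underset{\leftarrow}{\mathrm{Exit}}(x)$ follows from the time-reversal invariance \eqref{dualidad:etaRhoW}, under which $\mathrm{Exit}(x)$ and $\underset{\leftarrow}{\mathrm{Exit}}(x)$ are exchanged while $\mathcal{C}^*$ is preserved, completing the identification of all four closures with $K$.
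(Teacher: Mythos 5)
Everything you do before your last paragraph is correct. Your proof of $[0,\sigma]\setminus\mathcal{C}^*\subseteq S$, where $S:=\overline{\{a_i^r,b_i^r:r\in\mathbb{Q}_+^*,\ i\in\mathcal{I}_r\}}$, is a nice alternative to the paper's corresponding step: you trap an endpoint $a_j^r$ or $b_j^r$ between two times $s_0,s_1$ with $\widehat{\Lambda}_{s_1}<r<\widehat{\Lambda}_{s_0}$, whereas the paper produces crossing times of $\widehat{\Lambda}$ directly. Your proof of $S\subseteq[0,\sigma]\setminus\mathcal{C}^*$ via Lemma \ref{lemma:suppordUnderN} is also sound (the paper gets it more simply from the fact that under $\mathbb{N}_{x,r}$ one has $\widehat{\Lambda}_t>r$ for all $t\in(0,\sigma)$), and $[0,\sigma]\setminus\mathcal{C}^*=S\subseteq\overline{\mathrm{Exit}(x)}\cap\overline{\underset{\leftarrow}{\mathrm{Exit}}(x)}$ then follows from Lemma \ref{L^rcapExi} as you say.

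The gap is exactly the step you flag yourself: $\mathrm{Exit}(x)\cap\mathcal{C}^*=\emptyset$ is never proved, and the plan you sketch cannot work as stated, for two concrete reasons. First, your claim that an exit time $t$ with $\rho=\widehat{\Lambda}_t$ satisfies $t=a_i^\rho$ presupposes $\tau_\rho(\overline{W}_t)=H_t$, i.e.\ that the tip of $\overline{W}_t$ is the \emph{first} visit of the path to $(x,\rho)$. This fails for ``return'' exit times: those $t$ for which the ancestral path already visited $(x,\rho)$ at some level $h_0<H_t$ and then performed a single spatial excursion away from $x$, returning to $x$ exactly at the tip (so $\Lambda_t$ is constant on $[h_0,H_t]$). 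Nothing you proved excludes such times; the identification $\{a_i^r\}=\mathrm{Exit}(x)\cap\{\widehat{\Lambda}=r\}$ used in Lemma \ref{L^rcapExi} can be justified for each \emph{fixed} level $r$, but not at the random level $\rho=\widehat{\Lambda}_t$. Second, for precisely these return times the bracketing $q\uparrow\rho$ fails: the excursion intervals above $D_q$ containing $t$ are nested, but their intersection contains the whole exploration interval of the subtree above height $h_0$, a nondegenerate interval whose left endpoint is strictly smaller than $t$; so the endpoints $a^q,b^q$, which is where your rational-level non-constancy lives, stay bounded away from $t$ and nothing transfers to a neighbourhood of $t$. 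Finally, your guiding heuristic ``$\widehat{\Lambda}_{t+v}>\rho$ for all small $v>0$'' is simply false: at the root of a spatial excursion component (the most basic kind of exit time) the exploration enters the excursion immediately after $t$, so $\widehat{\Lambda}$ is constant on a right-neighbourhood of $t$ and the non-constancy can only be found on the left. Only the two-sided statement is true, and this asymmetry is what makes the step delicate.

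This missing inclusion is where the paper spends most of its proof (inclusion \eqref{equation:exits_step1.1}), and its mechanism is genuinely different from level-bracketing: it works at rational \emph{times} rather than rational \emph{levels}. For $t\in\mathrm{Exit}(x)$ one takes a rational $q\in(t,t+\epsilon)$ with $H>H_t$ on $(t,q]$; the snake property gives $W_q(H_t)=x$, and the a.s.\ property --- valid simultaneously for all rational $q$, by Lemma \ref{te_quedas_en_Theta} --- that $\mathrm{supp}\,\dd\Lambda_q$ equals the closure of $\{h<H_q:W_q(h)=x\}$ forces $H_t$ to be a point of increase of $\Lambda_q$. If it is a point of right-increase, then $\widehat{\Lambda}_q>\widehat{\Lambda}_t$ and crossing times of dyadic levels inside $(t,q]$ produce points of $S$ arbitrarily close to $t$; this is exactly what disposes of the return times above, for which left-increase at $H_t$ is impossible. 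If it is only a point of left-increase, one needs Proposition \ref{prop:branchingNotocax} to rule out that $t$ is a one-sided local infimum of $H$, and then the same crossing argument runs to the left of $t$. Note that your proposal never invokes Proposition \ref{prop:branchingNotocax}; since the left-sided case genuinely requires it, this is a structural sign that the missing step cannot be recovered from rational-level information alone. The duality \eqref{dualidad:etaRhoW} you invoke at the end is fine, but only once $\mathrm{Exit}(x)\cap\mathcal{C}^*=\emptyset$ has actually been established.
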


\begin{proof}
The first step consists in showing 
\begin{equation} \label{equation:exits_step1.1}
    \overline{\mathrm{Exit}(x)}
    \subset \overline{\{ a_i^r, b_i^r : r \in \mathbb{Q}_+^*\text{ and }i\in \mathcal{I}_r  \}  }. 
\end{equation}
Remark that by Lemma \ref{L^rcapExi} the other inclusion is satisfied and still holds if we replace $\overline{\mathrm{Exit}(x)}$ by $\overline{{\dminiArrow}{}{\mathrm{Exit}(x)}}$. In this direction, recall  that by Lemma \ref{te_quedas_en_Theta} the process $(\rho,\overline{W})$ takes values in $\overline{\Theta}_{x}$. In particular, we have
\begin{equation*} 
  \mathbb{N}_{x,0}\text{--a.e.,  } \text{ for all } q \in (0,\sigma) , \, \,  \overline{\{ h < H_q : W_q(h) = x \}} = \text{supp } \Lambda_q(\dd h),\tag*{${(*)}$}
\end{equation*}
where we recall that  $\text{supp } \Lambda_q(\dd h)$ is precisely the set
\begin{equation*}
\Big\{ t \in [0,\zeta_q] : \,  \Lambda_q(t + h) > \Lambda_q(t) \text{ for any }    0 < h <   (H_q - t) \text{ or } \Lambda_q(t) > \Lambda_q(t - h) \text{ for any }   0 < h < t \Big\}.    
\end{equation*}
We let $\Omega_0 \subset \mathbb{D}(\mathbb{R}_+, \mathcal{M}_f(\mathbb{R}_+) \times \mathcal{W}_{\overline{E}})$ be a measurable subset with $\mathbb{N}_{x,0}(\Omega_0^c) = 0$ at which property $(*)$ holds for every $(\upvarrho, \omega) \in \Omega_0$ and we  argue for fixed $(\upvarrho, \omega) \in \Omega_0$.  Fix $t\in \text{Exit}(x)$; by definition, for any $\epsilon > 0$ we can find  $t<q < t+\epsilon$ such that $H_t<H_{r}$ for every $r\in(t,q]$.
By our choice of $\Omega_0$ and the snake property, it must hold either that:
\begin{itemize}
\item[\rm{(i)}] $H_t$ is a  time of right-increase for $\Lambda_q$ (and in particular  $\widehat{\Lambda}_q > {\Lambda}_q(H_t) = \widehat{\Lambda}_t$), or
\item[\rm{(ii)}] $H_t$ is \textit{not} a time of right-increase for  $\Lambda_q$, (and hence ${\Lambda}_q(H_t) > {\Lambda}_q(H_t-s), \,  \forall \,  0 < s < H_t$).
\end{itemize}
If (i) holds,  set $s_{k}:=\sup\{s\in[t,q]:~\widehat{\Lambda}_{s}\leq 2^{-k}\lfloor 2^{k}\widehat{\Lambda}_{t}\rfloor+2^{-k}\}$ and remark that we have $s_{k}\in\bigcup_{r\in \mathbb{Q}_+^*} \{ a_i^r, b_i^r :i\in \mathcal{I}_r  \} $, as soon as $\widehat{\Lambda}_{s_k}<\widehat{\Lambda}_{q}$. However, this is satisfied for $k$ large enough.  On the other hand, if (ii) holds we must have   $\inf_{[t-\epsilon , t]}H < H_t$ since $t$ can not be a local infimum for $H$ (otherwise, $p_H(t)$  would be a branching point with label $\widehat{W}_t = x$, in contradiction with Proposition \ref{prop:branchingNotocax}). Now, the argument of case (i)  holds by   working with  $s_k^\prime:=\sup\{s\in[0 , t]:~\widehat{\Lambda}_{s}\leq 2^{-k}\lfloor 2^{k}\widehat{\Lambda}_{t}\rfloor\}$. This implies   that   $t$ belongs to  the closure of  $\bigcup_{r\in \mathbb{Q}_+^*} \{ a_i^r, b_i^r :i\in \mathcal{I}_r  \} $ giving \eqref{equation:exits_step1.1}.  Moreover, by duality the  contention \eqref{equation:exits_step1.1} holds replacing $\text{Exit}(x)$ by ${\dminiArrow}{}{\mathrm{Exit}(x)}$, proving the first two equalities  in \eqref{identity:exit,exitvolteado,constanciaL}. Consequently, to conclude  it is enough to show that:
\begin{equation}\label{final:sub}
    \overline{\{ a_i^r, b_i^r : r \in \mathbb{Q}_+^*\text{ and }i\in \mathcal{I}_r  \}  }\subset  [0,\sigma] \setminus \mathcal{C}^*\subset \overline{\text{Exit}(x)\cup {\dminiArrow}{}{\mathrm{Exit}(x)}}.
\end{equation}
In this direction,  notice that for every $r\in \mathbb{Q}_+^*$, under $\mathbb{N}_{x,r}$, we have $\widehat{\Lambda}_t>r$ for every $t\in (0,\sigma)$. Now,  an application of the special Markov property applied to the domain $D_r$  gives that:
$$\{ a_i^r, b_i^r : ~i\in \mathcal{I}_r  \}\subset [0,\sigma] \setminus \mathcal{C}^*,\quad \mathbb{N}_{x,0}-\text{a.e.}, $$
for every $r\in \mathbb{Q}_+^*$, and  the first inclusion $\subset$ in \eqref{final:sub} follows. In order to obtain the remaining inclusion,  let $t\in [0,\sigma]\setminus \mathcal{C}^*$. By definition,  for every  $\epsilon > 0$   there exists $t-\epsilon<t_1< t_2 < t + \epsilon$ such that $\widehat{\Lambda}_{t_1} < \widehat{\Lambda}_{t_2}$ or $\widehat{\Lambda}_{t_1} > \widehat{\Lambda}_{t_2}$. If the first holds, then $\sup \{ s \in [t-\epsilon, t_2] : \,  \widehat{\Lambda}_{s} \leq \widehat{\Lambda}_{t_1} \}$ is an exit time and the other case follows by taking $\inf \{ s \in [t_1,t_2] : \widehat{\Lambda}_s \leq \widehat{\Lambda}_{t_2} \}$. This ensures that $t$ is in the closure of  $\text{Exit}(x)\cup {\dminiArrow}{}{\mathrm{Exit}(x)}$ concluding our proof. 
\end{proof}

\noindent Now, we are in position to state and prove the main result of the section: 

\begin{theo}\label{prop:suppA} Fix $(y,r_0)\in \overline{E}$ and $(\mu,\overline{\emph{w}})\in \overline{\Theta}_x$.  Under $\mathbb{P}_{\mu,\overline{\emph{w}}}$ and $\mathbb{N}_{y,r_0}$, we have 
\begin{equation*} 
   \emph{supp } \dd A =  
   \overline{\mathrm{Exit}(x)}= \overline{{\dminiArrow}{}{\mathrm{Exit}(x)}} = [0,\sigma] \setminus \mathcal{C}^* ,
\end{equation*}
where we recall the convention $[0,\infty]=[0,\infty)$.
\end{theo}

\begin{proof} 
First remark that by the special Markov property combined with \eqref{remark:descomponerA} and \eqref{remark:descomponerA_v2}, it is enough to prove the theorem under $\mathbb{N}_{x,0}$ and $\mathbb{P}_{0,x,0}$. We start by proving the theorem under $\mathbb{N}_{x,0}$ and remark that by Proposition \ref{proposition:exits_constancyLambda} we only have to establish the first equality. Moreover,  by  Lemma \ref{L^rcapExi} it only remains to show that under $ \mathbb{N}_{x,0}$:
\begin{equation} \label{equation:exitsUltimacont}
    \text{supp } \dd A \supset  
   \overline{\text{Exit}(x)}. 
\end{equation}
However,  by Lemma  \ref{lemma:suppordUnderN} we know that  $\mathbb{N}_{x,0}( \{0,\sigma\}\cap \text{supp } \dd A=\emptyset ) = 0$, and  then using that conditionally on $\mathcal{F}^{D_r}$ the measure $\mathcal{M}^{(r)}$ is a Poisson point measure with intensity  $ \mathbbm{1}_{[0, \mathscr{L}^{\,r}_\sigma ]}(\ell ) \dd \ell~  \mathbb{N}_{x,r}\left( \dd \rho , \dd \overline{W} \right)$, we derive that:  
\begin{equation*}
    \mathbb{N}_{x,0}-\text{a.e.,  }  \text{ for all } r \in \mathbb{Q}_+^*, \, \,  \{a_i^r,b_i^r : i \in \mathcal{I}_r \}  \subset \text{supp } \dd A.
\end{equation*}
Consequently, Proposition \ref{proposition:exits_constancyLambda} implies \eqref{equation:exitsUltimacont}. Finally, let us briefly explain how to obtain the result under $\mathbb{P}_{0,x,0}$. In this direction, under $\mathbb{P}_{0,x,0}$,  denote  the connected components of $\{ s \in \mathbb{R}_+ : X_s - I_s \neq  0 \}$ by $\big((\alpha_i, \beta_i) : i \in \mathcal{I} \big)$. Excursion theory and our results under $\mathbb{N}_{x,0}$, give that,  under  $\mathbb{P}_{0,x,0}$, we have:
 $$ \text{supp } \dd A \cap \cup_{i} (\alpha_i, \beta_i) =  
 \overline{\text{Exit}(x)} \cap \cup_{i} (\alpha_i, \beta_i) = \overline{\dminiArrow \text{Exit}(x)}~\cap \cup_{i} (\alpha_i, \beta_i)= \big([0,\sigma] \setminus \mathcal{C}^*\big) \cap \cup_{i} (\alpha_i, \beta_i), $$  
    $\alpha_i\in \text{supp } \dd A\cap \overline{\text{Exit}}(x)\cap \big([0,\infty)\setminus \mathcal{C}^*\big)$ and  $\beta_i\in \text{supp } \dd A\cap  {\dminiArrow}{}{\mathrm{Exit}(x)}\cap \big([0,\infty)\setminus \mathcal{C}^*\big)$ for every $i\in \mathcal{I}$. The desired result now follows since the set $\{\alpha_i:~i\in \mathcal{I}\}$ and $\{\beta_i:~i\in \mathcal{I}\}$ are dense in  $\{ s \in \mathbb{R}_+ : X_s - I_s =  0 \}$. 
\end{proof}

\section{The tree structure of \texorpdfstring{$\{\upsilon\in \mathcal{T}_H:~\xi_\upsilon=x\}$}{Lg} }\label{section:treeStructureLocalTime}

 In this section, we work under  the  framework introduced at the beginning of Section  \ref{section:structureOfTloc}.
 Our goal now is to study the structure of the set $\{ \upsilon \in \mathcal{T}_H : \xi_\upsilon = x \}$ and to do so, we  encode it by \textit{the subordinate tree  of $\mathcal{T}_H$ with respect to the local time $(\mathcal{L}_\upsilon: \upsilon \in \mathcal{T}_H)$}.  In this direction, we need to briefly recall  the notion of subordination of trees defined in \cite{Subor}. 
 \medskip \\
\noindent \textbf{Subordination of  trees by increasing functions. }Let $(\mathcal{T},d_{\mathcal{T}}, \upsilon_0)$ be an $\mathbb{R}$-tree and recall the standard notation $\preceq_\mathcal{T}$ and $\curlywedge_\mathcal{T}$ for the ancestor order and the first common ancestor. Next, consider  $g:\mathcal{T}\to \mathbb{R}_{+}$ a non-negative continuous function. We say that $g$ is non-decreasing if for every $u,v\in \mathcal{T}$:
\begin{equation*}
 u\preceq_{\mathcal{T}} v  \text{ implies that } g(u)\leq g(v).
\end{equation*}
 When the later holds,  we can define a pseudo-distance on $\mathcal{T}$ by setting 
\begin{equation}\label{eq:d:subor}
d_{\mathcal{T}}^{g}(u,v):=g(u)+g(v)-2 \cdot g(u\curlywedge_{\mathcal{T}} v),  \quad \quad (u,v)\in\mathcal{T}\times \mathcal{T}.
\end{equation}
The pseudo-distance $d_\mathcal{T}^g$ induces the following equivalence relation on $\mathcal{T}$: for $u, v \in \mathcal{T}$ we write
\begin{equation*}
u\sim_{\mathcal{T}}^{g}  v \iff d_{\mathcal{T}}^{g}(u,v)=0,
\end{equation*}
and it was shown in \cite{Subor} that $\mathcal{T}^{g}:=(\mathcal{T}/\sim_{\mathcal{T}}^{g},d_{\mathcal{T}}^{g},\upsilon_0)$ is a compact pointed $\mathbb{R}$-tree, where we still denoted  the equivalency class of the root of $\mathcal{T}^g$ by $\upsilon_0$. The tree $\mathcal{T}^{g}$ is called the subordinate tree of $\mathcal{T}$ with respect to $g$ and we write  $p_{g}^{\mathcal{T}}:\mathcal{T}\to\mathcal{T}^{g}$ for  the canonical projection which associates  every $u\in\mathcal{T}$ with  its $\sim_{\mathcal{T}}^{g}$--equivalency class. Observe that any two points $u,v \in \mathcal{T}$ are identified if and only if $g$  stays constant on $[u,v]_\mathcal{T}$ and consequently the subordinate tree is obtained from $\mathcal{T}$ by identifying in a single point the components of $\mathcal{T}$ where $g$ is constant. \smallskip \\ 
\par Getting back to our setting, recall that under $\mathbb{N}_{x,0}$,    $(\mathcal{L}_\upsilon: \upsilon \in \mathcal{T}_H)$ corresponds to   $(\widehat{\Lambda}_t : t  \geq 0)$  in the quotient space $\mathcal{T}_H = [0,\sigma]/ \sim_H$. This entails that the local time $(\mathcal{L}_\upsilon : \upsilon \in \mathcal{T}_H)$ is a non-decreasing function on $\mathcal{T}_H$ and we denote  the induced subordinate tree by $\mathcal{T}_H^{ \mathcal{L} }$.  Recall  that  the exponent 
\begin{equation*} 
    \widetilde{\psi}(\lambda) = \mathcal{N}\left( \int_0^\sigma \dd h \,  \psi(u_\lambda(\xi_h)) \right), \quad \text{ for } \, \lambda \geq 0,  
\end{equation*}
is the exponent of a Lévy tree by Proposition \ref{lemma:divergExpo}.  
Hence, it satisfies  (A1)---(A4)  and by Corollary \ref{corollary:phitildeParametros} it can be written in  the following form:
$$\widetilde{\psi}(\lambda):=\widetilde{\alpha}\lambda+\int_{(0,\infty)}\widetilde{\pi}(\dd x)(\exp(-\lambda x)-1+\lambda x), $$
where $\widetilde{\alpha}=\mathcal{N}\big(1-\exp(-\alpha\sigma)\big)$ and $\widetilde{\pi}$ is a sigma-finite measure on $\mathbb{R}_+\setminus\{0\}$ satisfying $\int_{(0,\infty)}\widetilde{\pi}(\dd x)(x\wedge x^{2})<\infty$. We will also use  the notation $\widetilde{H}$ and $\widetilde{N}$  introduced prior to \eqref{identity:inversePhi} for the height process and the excursion measure of a $\widetilde{\psi}$--Lévy tree.  Finally, we recall that $A$ stands for the additive functional introduced in Proposition \ref{proposition:aditivaDefinicion} and we denote its right inverse by $A_t^{-1}:=\inf\{s\geq 0:A_{s}>t\}$, with the convention $A^{-1}_{t}=\sigma$ for every $t\geq A_\infty = A_\sigma$. Remark that the constancy intervals of $A$ in $[0,\sigma]$ are the connected components of $[0,\sigma] \setminus \text{supp } \dd A$, which  by Theorem \ref{prop:suppA}  are precisely the connected components of $\mathcal{C}^*$ -- the constancy intervals of the process $(\widehat{\Lambda}_{t}:~t\in [0,\sigma])$. In particular,  $(\widehat{\Lambda}_{A^{-1}_t}: t \geq 0)$ is  a continuous non-negative process, with lifetime $A_\infty$. We can now  state the main result of this section:

\begin{theo}\label{theorem:subordinatedTreebyLocalT} The following properties hold:
\begin{itemize}
    \item [\rm{(i)}] Under $\mathbb{N}_{x,0}$, the subordinate tree  of $\mathcal{T}_H$ with respect to the local time $\mathcal{L}$, that we denote by   $\mathcal{T}_H^{ \mathcal{L}}$,  is isometric to the tree coded by the continuous function $(\widehat{\Lambda}_{A^{-1}_t}:~ t \geq 0).$

    \item [\rm{(ii)}] Moreover, we have the equality in distribution 
\begin{equation} \label{equation:leyHtilde}
  \Big( (\widetilde{H}_t: \, t \geq 0), \text{ under }\widetilde{N} \Big)\overset{(d)}{=}\Big( \big( \widehat{\Lambda}_{A^{-1}_t} :\,  t \geq 0\big),  \text{ under } \mathbb{N}_{x,0}\Big).
\end{equation}
In particular, $\mathcal{T}_H^{ \mathcal{L}}$ is a Lévy tree with exponent $\widetilde{\psi}$. 
\end{itemize}
\end{theo}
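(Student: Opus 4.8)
The plan is to prove the two statements of Theorem \ref{theorem:subordinatedTreebyLocalT} in sequence, with statement (i) being essentially a soft consequence of the support characterization from Section \ref{section:structureOfTloc}, and statement (ii) being the genuinely hard analytic core requiring the computation of the finite-dimensional marginals of the height function $\widehat{\Lambda}_{A^{-1}_\cdot}$.

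For (i), I would argue as follows. By the definition of subordination recalled above, the tree $\mathcal{T}_H^\mathcal{L}$ is obtained from $\mathcal{T}_H$ by collapsing each connected component on which the non-decreasing function $(\mathcal{L}_\upsilon)$ is constant. Pulling this back to $[0,\sigma]$ via $p_H$, the relevant pseudo-distance is governed by the constancy structure of $(\widehat{\Lambda}_t)$, i.e.\ by the set $\mathcal{C}^*$ introduced before Definition \ref{definition:exitTime}. The key input is Theorem \ref{prop:suppA}, which identifies $\text{supp }\dd A = [0,\sigma]\setminus \mathcal{C}^*$; this means precisely that $A$ is constant exactly on the constancy intervals of $\widehat{\Lambda}$, so the right-inverse $A^{-1}$ reparametrizes $[0,A_\infty]$ by "skipping over" exactly the collapsed components. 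I would verify that $t\mapsto \widehat{\Lambda}_{A^{-1}_t}$ is continuous (using continuity of $\widehat{\Lambda}$ together with the fact that $A^{-1}$ has jumps exactly across constancy intervals of $\widehat{\Lambda}$, so the composition stays continuous), and then check that the pseudo-metric $d_{\widehat{\Lambda}\circ A^{-1}}$ on $[0,A_\infty]$ coincides, after identification, with the subordination pseudo-metric $d^{\mathcal{L}}_{\mathcal{T}_H}$ pushed through $p_H$. This reduces to the elementary fact that for $s<t$, the minimum of $\widehat{\Lambda}$ over the branch $[p_H(s),p_H(t)]_{\mathcal{T}_H}$ equals $m_{\widehat{\Lambda}\circ A^{-1}}$ of the corresponding reparametrized times, which follows from the snake property and the monotonicity of $\mathcal{L}$ along ancestral lines. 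The isometry statement is then just the general encoding principle for trees coded by continuous functions from Section \ref{sec:trees:1}.

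For (ii), the heart of the matter is the identity in distribution \eqref{equation:leyHtilde}. The plan is to identify the law of $H^A_\cdot := \widehat{\Lambda}_{A^{-1}_\cdot}$ under $\mathbb{N}_{x,0}$ by computing its \emph{marginals}, in the sense of the finite-dimensional distributions of the tree it codes, and matching them with those of $\widetilde H$ under $\widetilde N$. The natural strategy is to use the characterization of Lévy trees through the joint law of the height process at finitely many points, equivalently through the branching-subordinator structure described in the many-to-one formula \eqref{tirage_au_hasard_N}. I would first record that Proposition \ref{lemma:InversaExpo} already gives the one-dimensional marginal: $A_\infty$ under $\mathbb{N}_{x,0}$ has the same law as $\sigma$ under $\widetilde N$, since $\mathbb{N}_{x,0}(1-\exp(-\lambda A_\infty)) = \widetilde\psi^{-1}(\lambda) = \widetilde N(1-\exp(-\lambda\sigma))$ by \eqref{identity:exit2} and \eqref{identity:inversePhi}. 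The upgrade to all finite-dimensional marginals would proceed by an inductive application of the special Markov property at the domains $D_r$ (Corollary \ref{Corollary:specialMarkovN}) together with the branching-process structure of $(\mathscr{L}^r_\sigma : r>0)$ established in Proposition \ref{lemma:divergExpo}, whose branching mechanism is exactly $\widetilde\psi$. Concretely, I would express the marginal of $H^A$ at levels $0<r_1<\cdots<r_k$ in terms of the exit-local-time flow $(\mathscr{L}^{r}_\sigma)$ and the exit measures $\mathcal{Z}^{D_r}$, and use Lemma \ref{lemma:cuentas} and the identities \eqref{identity:expoenente_1} to recognize the Laplace exponents of the two-dimensional subordinator $(\widetilde U^{(1)},\widetilde U^{(2)})$ associated with $\widetilde\psi$ via \eqref{identity:exponenteSubord}.

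The main obstacle, and where the bulk of the work lies, is establishing that the reparametrization by $A^{-1}$ transports the genealogical structure correctly at \emph{all} marginal levels simultaneously — not merely the total-mass one-dimensional law. The difficulty is that $H$ is not Markovian, so one cannot directly appeal to a Markov-type recursion for $H^A$; instead the argument must be routed through the exploration process $\rho$ (and its dual $\eta$, via the duality \eqref{dualidad:etaRhoW}) and the fine structure of $A$ encoded in the family $(\mathscr{L}^r)$. I expect the crux to be a careful bookkeeping argument showing that the "spine decomposition" of $H^A$ along $k$ prescribed levels matches, in law, the marginal of a $\widetilde\psi$-Lévy tree, which amounts to verifying that the Poisson intensities produced by iterating the special Markov property over the nested domains $D_{r_1}\subset\cdots\subset D_{r_k}$ reassemble into the intensity $\widetilde N(\dd\rho,\dd\eta)$ governing $\widetilde{\mathcal{T}}$. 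The translation invariance of the local time $\mathcal{L}$ (used repeatedly, e.g.\ $u^{D_{t+s}}_\lambda(x,s)=u^{D_t}_\lambda(x,0)$) and the explicit form of $\widetilde\psi$ from Corollary \ref{corollary:phitildeParametros} should make each individual level computation tractable, but the simultaneous matching across all levels is where the argument requires the most care.
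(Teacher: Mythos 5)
Your treatment of part (i) is correct and is essentially the paper's own argument: both reduce the isometry to the support characterization of Theorem \ref{prop:suppA} (so that $A$ is constant exactly on the constancy intervals of $\widehat{\Lambda}$, together with $0,\sigma\in \mathrm{supp}\,\dd A$ from Lemma \ref{lemma:suppordUnderN}) and to the comparison of the pseudo-distance $\widehat{\Lambda}_s+\widehat{\Lambda}_t-2\min_{[s\wedge t,s\vee t]}\widehat{\Lambda}$ with the subordination distance, using the monotonicity of $\mathcal{L}$ on $\mathcal{T}_H$.

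For part (ii) there is a genuine gap. Your plan is to compute ``marginals at levels $0<r_1<\cdots<r_k$'' by iterating the special Markov property over the domains $D_{r_i}$ and matching the resulting Poisson intensities with those of a $\widetilde{\psi}$-Lévy tree. (As an aside, the $D_{r_i}$ are not nested: each is the complement of the single point $(x,r_i)$; only the associated $\sigma$-fields $\mathcal{F}^{D_r}$ are monotone.) What such an iteration yields is the level-set, i.e.\ branching, structure of the subordinate tree: conditionally on what lies below level $r$, the subtrees above level $r$ form a Poisson collection governed by $\mathscr{L}^{\,r}_\sigma$ — which is exactly how Proposition \ref{lemma:divergExpo} identifies $(\mathscr{L}^{\,r}_\sigma)_{r>0}$ as a $\widetilde{\psi}$-CSBP. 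This suffices to show that the \emph{tree} $\mathcal{T}_H^{\mathcal{L}}$ is a $\widetilde{\psi}$-Lévy tree (in essence the route of \cite[Theorem 16]{Subor} and \cite[Theorem 4]{BertoinLeGallLeJean}), but it is strictly weaker than \eqref{equation:leyHtilde}: a continuous function can code a $\widetilde{\psi}$-Lévy tree without being distributed as its height process, since permuting the excursions above a given level leaves the coded tree unchanged while changing the law of the coding function. The whole content of \eqref{equation:leyHtilde} is that the time change $A^{-1}$ orders the components of $\mathcal{C}^*$ exactly as a $\widetilde{\psi}$-height process would, and this ordering is invisible in exit-measure data indexed by levels. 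Your ``careful bookkeeping'' step, where you say the Poisson intensities should ``reassemble'' into $\widetilde{N}$, is a restatement of this difficulty rather than an argument for overcoming it.

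The paper's proof supplies the missing mechanism by working on the time scale of $A$ rather than on the level scale: it throws Poisson marks of rate $\lambda$ on $[0,A_\infty]$, forms the \emph{ordered} embedded marked tree $\theta(H^A,\mathfrak{t}_1,\dots,\mathfrak{t}_M)$, and proves (Proposition \ref{proposition:subordinatedTreeMarginal}) that this discrete ordered tree has the same law as the one extracted from $\widetilde{H}$ under $\widetilde{N}^\lambda$; identity \eqref{equation:leyHtilde} then follows by a limiting argument over the marks. The proof of that proposition is a left-most-spine induction (Proposition \ref{lemma:seccion5_lemafinal}) whose key inputs have no counterpart in your outline: the law of $(\rho,\overline{W})$ at the first marked time $A^{-1}_{\mathfrak{t}}$, obtained via the time-reversal/duality identity (Lemma \ref{lemma:CalculosAdditiva}); the construction of the exploration process of the subordinate tree $\rho^*_{\mathfrak{t}}$ and the identification $\big(H^A_{\mathfrak{t}},\rho^*_{\mathfrak{t}}\big)\overset{(d)}{=}\big(\widetilde{H}_{\mathfrak{t}},\widetilde{\rho}_{\mathfrak{t}}\big)$ (Lemma \ref{lemma:explorationAtexpotime}); and the verification that the counterclockwise order of the marked excursions is recovered from the correct Poissonian structure. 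To complete your proof you would need to build this (or equivalent) machinery; the level-indexed special Markov property alone does not produce it.
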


\begin{remark}
\emph{
Let us mention that when $\psi(\lambda)=\lambda^{2}/2$ and the underlying spatial motion $\xi$ is a Brownian motion in $\mathbb{R}$, the previous theorem implies that under $\mathbb{N}_{0,0}$ the subordinate tree of $\mathcal{T}_H$ with respect to the local time $\mathcal{L}$ at $0$ is a Lévy tree and -- as a direct consequence of the scaling invariance of the Brownian motion -- its exponent is of the form $\widetilde{\psi}(\lambda)=c\lambda^{3/2}$, for some constant $c>0$. This result was already obtained by other methods in \cite[Theorem 2]{Subor}. 
}
\end{remark}

We stress that the key result in (ii) is the identity in distribution \eqref{equation:leyHtilde}:  it entails that not only the function $(\widehat{\Lambda}_{A^{-1}_t}: t \geq 0)$ encodes the subordinate tree, but it is also  the height process of a Lévy tree. The fact that $\mathcal{T}_{H}^{\mathcal{L}}$ is a $\widetilde{\psi}$-Lévy tree is then a direct consequence of (i) and \eqref{equation:leyHtilde}.  By a straightforward application of excursion theory one can deduce a version under  $\mathbb{P}_{0,x,0}$ of Theorem \ref{theorem:subordinatedTreebyLocalT}, where now $\mathcal{T}_H^{ \mathcal{L}}$ is a Lévy forest with exponent $\widetilde{\psi}$. The details are left to the reader. 
\medskip \\
\textit{The rest of the section is organised as follows: }The section is devoted to the proof of Theorem \ref{theorem:subordinatedTreebyLocalT}. In Section  \ref{subsection:5.1} we start by  showing (i)  and we present the strategy  that we  follow  to prove  (ii). The proof of (ii) relies in   all the machinery developed in previous sections combined with standard properties of Poisson point measures and is the content of Section \ref{subsection:prooflawEmbededTree}. 

\subsection{The height process of the  subordinate tree} \label{subsection:5.1}

 In this short section we establish the first claim of Theorem \ref{theorem:subordinatedTreebyLocalT} and settle the ground for the second part of the result. For every $u\in \mathcal{T}_{H}$, recall that $\mathcal{L}_{u}:=\widehat{\Lambda}_{s}$  where $s$ is any element of $p_{H}^{-1}(\{u\})$ (note that the definition is non ambiguous by the snake property) and that $\mathcal{L}$ is non-decreasing on $\mathcal{T}_H$. To simplify notation, we set:
$${H}^A_t:=\widehat{\Lambda}_{A^{-1}_t},\quad t\geq 0,$$
which is a continuous process -- as it was  already mentioned  in the discussion before Theorem \ref{theorem:subordinatedTreebyLocalT}. Let us start with the proof of  Theorem  \ref{theorem:subordinatedTreebyLocalT}-(i). 

\begin{proof}[Proof of Theorem  \ref{theorem:subordinatedTreebyLocalT}-(i)] Our goal is to show that, under $\mathbb{N}_{x,0}$,  the trees $\mathcal{T}_{H^A}$ and $\mathcal{T}_H^{ \mathcal{L}}$ are isometric. In this direction, we start by introducing the pseudo-distance:
$$\widetilde{d}(s,t):=\widehat\Lambda_{t}+\widehat\Lambda_s-2\cdot\min\limits_{s\wedge t,s\vee t} \widehat\Lambda ~,\quad s,t\in[0,\sigma],$$
and we write $s\approx t$ if and only if $\widetilde{d}(s,t)=0$. By the snake property,  we have $s\approx t$ for every $s\sim_H t$. Moreover, since $\mathcal{L}$ is increasing on $\mathcal{T}_H$, we get
\begin{align*}
  \widetilde{d}(s,t) &= \mathcal{L}_{p_H(t)}+\mathcal{L}_{p_H(s)}-2\cdot  \mathcal{L}_{p_H(s)\curlywedge_{\mathcal{T}_H} p_H(t)},
  \end{align*}
for every $s,t\in [0,\sigma]$. 
The right-hand side of the previous display is exactly the definition of the pseudo-distance  associated with the subordinate tree $\mathcal{T}_H^{ \mathcal{L}}$ between $p_{H}(s)$ and $p_{H}(t)$ given in \eqref{eq:d:subor}. We deduce that $([0,\sigma]/\approx, \widetilde{d},0)$ is isometric to $\mathcal{T}^{\mathcal{L}}_H$. It remains to show that $([0,\sigma]/\approx, \widetilde{d},0)$ is also isometric to $(\mathcal{T}_{H^A},d_{H^A},0)$. In order to prove it, we notice that:
$$\widetilde{d}(A_{r_1}^{-1}, A_{r_2}^{-1})=d_{H^A}(r_1,r_2), $$
for every $r_1,r_2\in [0,A_\sigma]$. Furthermore, for every $t\in [0,\sigma]$ there exists  $r\in[0,A_\sigma]$ such that $A_{r-}^{-1}\leq t \leq A_{r}^{-1}$ since by Lemma \ref{lemma:suppordUnderN} the points $0$ and $\sigma$ are in the support of $\dd A$. Moreover we have $\widetilde{d}(A_r^{-1},t)=0$, since by  Theorem \ref{prop:suppA} the process $\widehat{\Lambda}$ stays constant on every interval of the form $[A_{r-}^{-1},A_{r}^{-1}]$. This implies that $[0,\sigma]/\approx~=\{A_{r}^{-1}:~r\in [0,A_\infty]\}/\approx$ and we deduce by the previous display that $([0,\sigma]/\approx, \widetilde{d},0)$ and  $(\mathcal{T}_{H^A},d_{H^A},0)$ are isometric giving the desired result.
\end{proof}
The main difficulty  to establish Theorem \ref{theorem:subordinatedTreebyLocalT} (ii) comes from the fact that $\widetilde{H}$ is not a Markov process. To circumvent this, we   are going to use  the notion of marked trees embedded in a function. \medskip \\

\noindent  \textbf{Marked trees embedded in a function.}
A marked tree is a pair $\textbf{T}:=(\TT, \{ h_v:~ v \in \TT \})$, where $\TT$ is a finite rooted ordered  tree and  $h_v \geq 0$  for every $v \in \TT$ -- the number $h_v$ is called the label of the individual $v$.  For completeness let us give the formal definition of a rooted ordered tree. First, introduce  Ulam's tree:
$$\mathcal{U} := \bigcup_{n=0}^{\infty} \{1,2,...\}^n$$
where by convention $\{1,2,...\}^0 = {\varnothing}$. If $u = (u_1,...u_m)$ and $v = (v_1,...,v_n)$ belong to $\mathcal{U}$, we write $uv$ for the concatenation of $u$ and $v$, viz. $(u_1,...u_m,v_1,...,v_n)$ . In particular, we have $u\varnothing=\varnothing u=u$.
A (finite) rooted ordered tree $\TT$ is a finite subset of $\mathcal{U}$ such that:
\begin{itemize}
\item[\rm{(i)}] $\varnothing\in \TT$;
\item[\rm{(ii)}] If $v\in \TT$ and $v=uj$ for some $u\in \mathcal{U}$ and $j\in \{1,2,...\}$, then $u\in \TT$;
\item[\rm{(iii)}]  For every $u \in \TT$,  there exists a number $k_u(\TT)\geq 0$ such that $uj\in \TT$ if and only if $1 \leq  j \leq k_u(\TT )$.  
\end{itemize}
 If $u \in \TT$ can be written as  $u=v j$ for some $v \in \TT$, $1 \leq j \leq k_v(\text{T})$, we say that $v$ is the parent of $u$. More generally, if   $u=v y$ for some $v \in \TT$ and $y \in \mathcal{U}$ with $y \neq \emptyset$, we say that $v$ is an ancestor of $u$ or equivalently that $u$ is a descendant  of $v$. On the other hand, if $u \in \TT$ satisfies that  $k_u(\TT) = 0$,  $u$ is called a leaf. The element $\varnothing$ is interpreted as the root of the tree and if $v$ is a vertex of $\TT$, the branch connecting the root and $v$ is the set of prefixes of $v$ --  considered with its corresponding family of labels. \par
 Let us also introduce the concatenation of  marked trees. If $\textbf{T}_1,...,\textbf{T}_k$ are $k$ marked trees and $h$ is  a non-negative real number, we write 
$[\textbf{T}_1,...,\textbf{T}_k]_h $ for the marked tree defined as follows. The lifetime of $\varnothing$ is $h$, $k_{\varnothing}=k$, and for $1\leq j\leq k$ the point $ju$ belongs to the tree structure of $[\textbf{T}_1,...,\textbf{T}_k]_h$ if and only if $u\in \TT_j$ and its label  is the label of $u$ in $\textbf{T}_j$. For convenience, we will  identity a marked tree $\textbf{T}:=(\TT, \{ h_v:~ v \in \TT \})$ with the set $\{(v,h_v):~v\in \TT\}$.
\\
\\
We are now in position to define the embedded   marked tree associated with  a continuous function $(e(t))_{t \in[a, b]}$ and a given finite collection of times. We fix a finite sequence  of times 
$a \leq t_1 \leq \dots t_n \leq b$ and we recall the notation $m_e(s,t)=\inf_{[s\wedge t,s\vee t]}e$. The embedded tree associated with the marks $t_1, \dots, t_n$ and the function $e$, $\theta(e,t_1, \dots , t_n)$,
is defined inductively, according to the following steps: 
\begin{itemize}
    \item If $n = 1$, set $\theta(e,t_1) = ( \emptyset,\{e(t_1)\} )$. 
    \item If $n \geq 2$, suppose that we know how to construct   marked trees with less than $n$ marks. Let  $i_1, \dots , i_k$  be the distinct indices satisfying that $m_e(t_{i_q},t_{i_q +1}) = m_e(t_{1},t_{n})$, and
    define the following restrictions for $1 \leq q \leq k-1$
\begin{align*}
    e^{(0)}(t) := (e(t)  : \,  t \in [t_1,t_{i_1}]), \, \, 
    e^{(q)}(t) := (e(t)  : \,  t \in [t_{i_q+1},t_{i_{q+1}}]), \, \, 
    e^{(k)}(t) := (e(t)  : \,  t \in [t_{i_{k}+1},t_{n}]).
\end{align*}
    Next, consider the associated finite labelled trees, 
\begin{align*} 
    \theta(e^{(0)},t_1, \dots , t_{i_1}), \,   \theta(e^{(q)},t_{i_q+1}, \dots , t_{i_{q+1}}), \, 
    \theta(e^{(k)},t_{i_k+1}, \dots , t_{n}), \quad  \text{ for } 1 \leq q \leq k-1, 
\end{align*}
and finally,  concatenate them with a common ancestor with label $m_e(t_1, t_n)$, by  setting  
\begin{equation*}
    \theta(e,t_1, \dots , t_n) 
    := [ \theta(e^{(0)},t_1, \dots , t_{i_1}), \dots , 
    \theta(e^{(k)},t_{i_k+1}, \dots , t_{n})]_{m_e(t_1, t_n)}, 
\end{equation*}
and completing the recursion.
\end{itemize}

We say that the label $h_v$ is the height of $v$ in $ \theta(e,t_1, \dots , t_n)=(\TT, \{h_v:~v\in \TT\})$. Let us justify this terminology. First assume that $e(0)=0$ and consider $\mathcal{T}_e$ the compact $\mathbb{R}$--tree induced by $e$. Then if $v_{1}, \dots , v_{n}$ are the leaves of $\textbf{T}$ in lexicographic order,  we have  
$(h_{v_{1}}, \dots , h_{v_{n}}) = (e(t_1), \dots , e(t_n))$. Moreover,  if we write $v_i\curlywedge_{\TT} v_j$ for the common  ancestor of $v_i$ and $v_j$ in $\textbf{T}$, it holds that $h_{v_j \curlywedge_{\TT} v_i} = \inf_{[t_i \wedge t_j , t_i \vee t_j]} e$.\footnote{The definition of  $ \theta(e,t_1, \dots , t_n)$ is directly connected with the classical notion of marginals trees -- where the label of a point is the increment between its height and the height of its parent.}
\\
\\
\noindent \textbf{Statements and main steps for the proof of Theorem \ref{theorem:subordinatedTreebyLocalT} (ii). }Our argument relies in  identifying the distribution of the discrete embedded tree associated with  $(\widehat{\Lambda}_{{A}^{-1}_t}:{0 \leq t \leq A_\infty})$ when the collection of marks are  Poissonian. In this direction, we denote the law of a Poisson process $(\mathcal{P}_t:~t\geq 0)$ with intensity $\lambda$ by $Q^\lambda$  and 
 we   work with the pair $(H^A_t, \mathcal{P}_t  )_{t \leq A_\infty}$, under the product measure $ \mathbb{N}_{x,0} \otimes \,  Q^\lambda$. 
 For convenience, we denote the law of $(\rho, \overline{W}, \mathcal{P}_{\cdot \wedge A_\infty })$ under $\mathbb{N}_{x,0} \otimes \,  Q^\lambda$ by $\mathbb{N}_{x,0}^\lambda$  
 and we  let  $0 \leq \mathfrak{t}_1< \dots < \mathfrak{t}_M \leq A_\infty$ be the jumping times of $(\mathcal{P}_t)$ falling in the excursion interval $[0,A_\infty]$, where   $M := \mathcal{P}_{A_\infty}$. Finally,  consider the associated embedded tree
\begin{equation*}
  \textbf{T}^A :=  \theta \big(  H^A, \mathfrak{t}_1 , \dots , \mathfrak{t}_M \big), \quad \text{ under } \mathbb{N}^\lambda_{x,0}( \, \cdot \,  | M \geq 1).
\end{equation*}
Remark that the probability measure $\mathbb{N}^\lambda_{x,0}( \, \cdot \,  | M \geq 1)$ is well defined since by Proposition \ref{lemma:InversaExpo} we have
\begin{equation*} 
    \mathbb{N}^\lambda_{x,0}\left( M \geq 1 \right) 
    = \mathbb{N}_{x,0}\left( 1- \exp(-\lambda A_\infty) \right) = \widetilde{\psi}^{-1}(\lambda).
\end{equation*}
Our goal is to show that $\textbf{T}^A$ is distributed as the discrete embedded  tree  of a $\widetilde{\psi}$-Lévy tree associated with Poissonian marks with intensity $\lambda$. To state this formally, recall the notation $\widetilde{N}$ for the excursion measure of a $\widetilde{\psi}$-Lévy process, and that $\widetilde{H}$ stands for the associated height process. We write  $\widetilde{N}^\lambda$ for the law of $(\widetilde{\rho} , \mathcal{P}_{\cdot \wedge \sigma_{\widetilde{H}}})$ under $\widetilde{N} \otimes Q^\lambda$ and  remark that  $\widetilde{M} := \mathcal{P}_{\sigma_{\widetilde{H}}}$ is  the number of Poissonian marks in $[0,\sigma_{\widetilde{H}}]$. For simplicity, we denote the jumping times of $\mathcal{P}$ under $\widetilde{N}^\lambda$ by $\mathfrak{t}_1,\dots, \mathfrak{t}_{\widetilde{M}}$.
\begin{prop} \label{proposition:subordinatedTreeMarginal}
The discrete tree
$\emph{\textbf{T}}^A$ under $\mathbb{N}^{\lambda}_{x,0}( \, \cdot \, | M \geq 1)$ has the same distribution as 
\begin{equation*}
    \widetilde{\emph{\textbf{T}}} := \theta\big( \widetilde{H} , \mathfrak{t}_1, \dots , \mathfrak{t}_{\widetilde{M}}\big) \quad \quad \text{under } \widetilde{N}^\lambda( \, \cdot \,  | \widetilde{M} \geq 1).
\end{equation*}
\end{prop}
The proof of Proposition \ref{proposition:subordinatedTreeMarginal} is rather technical and will be postponed to  Section \ref{subsection:prooflawEmbededTree}.
{The reason behind considering  Poissonian marks to identify the distribution of $H^A$ is to take advantage of the memoryless of Poissonian marks;  this flexibility will allow us to make extensive use of the Markov property and excursion theory.}
Let us now  explain how to  deduce Theorem \ref{theorem:subordinatedTreebyLocalT} (ii) from Proposition  \ref{proposition:subordinatedTreeMarginal}.
\begin{proof}[Proof of Theorem  \ref{theorem:subordinatedTreebyLocalT} (ii)] 
First remark  that the fact that $\mathcal{T}_{H}^{\mathcal{L}}$ is a $\widetilde{\psi}$-Lévy tree is a direct consequence of Theorem \ref{theorem:subordinatedTreebyLocalT} (i) and \eqref{equation:leyHtilde}. To conclude it remains to prove \eqref{equation:leyHtilde}.
In this direction,  notice that the marked trees  considered  are ordered trees -- the order of the vertices being the one induced by the marks. Recall that for every $1\leq i\leq M$, the quantity $H^A_{\mathfrak{t}_i}$ is the label of  the $i$-th leaf in lexicographical order, and  the same remark holds  replacing $(H^A,M, \textbf{T}^A)$ by $(\widetilde{H},\widetilde{M}, \widetilde{\textbf{T}})$. Consequently, the identity $\textbf{T}^A \overset{(d)}{=}\widetilde{\textbf{T}}$ of Proposition \ref{proposition:subordinatedTreeMarginal} yields the following equality in distribution 
\begin{equation*}
    \Big(\big( \widetilde{M} , \widetilde{H}_{\mathfrak{t}_1}, \dots ,  \widetilde{H}_{\mathfrak{t}_{\widetilde{M}}}\big):~\widetilde{N}^\lambda( \, \cdot \,  | \widetilde{M} \geq 1)\Big) 
    \overset{(d)}{=}
    \Big(\big(M , H^A_{\mathfrak{t}_1}, \dots , H^A_{\mathfrak{t}_M}\big):~\mathbb{N}^{\lambda}_{x,0}( \, \cdot \, | M \geq 1) \Big). 
\end{equation*}
Recall from Proposition \ref{lemma:InversaExpo} and the discussion after it, that $A_\infty$ under $\mathbb{N}_{x,0}$ and $\sigma_{\widetilde{H}}$ under $\widetilde{N}$ have the same distribution. This ensures that, up to enlarging the measure space, we can define  the height  process $\widetilde{H}$ under the measure $\mathbb{N}_{x,0}^\lambda$ in such a way that its  lifetime  is precisely $A_\infty$, viz. $\sigma_{\widetilde{H}} = A_\infty$, and then we might and will consider the same  collection of Poisson marks $\mathfrak{t}_1, \dots, \mathfrak{t}_M$ to mark the processes $H^A$ and $\widetilde{H}$. In the rest of the proof, we work with this  coupling. In particular, under $\mathbb{N}^{\lambda}_{x,0}( \, \cdot \, | M \geq 1)$, our previous discussion entails 
\begin{equation*}
    \left( M, \widetilde{H}_{\mathfrak{t}_1}, \dots , \widetilde{H}_{\mathfrak{t}_M} \right ) 
    \overset{(d)}{=}
    \left(  M , H^A_{\mathfrak{t}_1}, \dots , H^A_{\mathfrak{t}_M} \right ). 
\end{equation*}
Let $( U_i:~i\geq 1  )$ be a collection of independent identically distributed  uniform random variables  in $[0,A_\infty]$ -- and independent of all the rest. Remark that,  conditionally on $A_\infty$, $(\mathcal{P}_t : t \leq A_\infty)$ is independent of $\widetilde{H}$ and $H^A$, and the random variable  $M$ is  Poisson with intensity $(\lambda A_\infty)$. By conditioning on $A_\infty$, we deduce that for any $m \geq 1$ and any  measurable function  $f:\mathbb{R}^{m}\mapsto \mathbb{R}_+$, we have
\begin{align} \label{equation:leyHtilde_}
 \mathbb{N}^\lambda_{x,0}\left( f( \widetilde{H}_{U^m_{(1)}}, \dots , \widetilde{H}_{U^m_{(m)}} ) \frac{( \lambda A_\infty )^{m}}{m!}\exp\big(-\lambda A_\infty \big) \right) =
    \mathbb{N}_{x,0}^\lambda \left( f( H^A_{U^m_{(1)}}, \dots , H^A_{U^m_{(m)}}  ) \frac{( \lambda A_\infty )^{m}}{m!}\exp\big(-\lambda A_\infty \big) \right),
\end{align}
where  $( U^m_{(1)}, \dots , U^m_{(m)} )$ stands for  the order statistics of $\{ U_1, \dots , U_m \}$. Since the previous display holds for every $\lambda> 0$ we get that
\begin{equation*}
    \left(A_\infty,  \widetilde{H}_{U^m_{(1)}}, \dots , \widetilde{H}_{U^m_{(m)}} \right ) 
    \overset{(d)}{=}
    \left(  A_\infty , H^A_{U^m_{(1)}}, \dots , H^A_{U^m_{(m)}} \right ),
\end{equation*}
for every $m\geq 1$.
Denote   the unique continuous function vanishing at $\mathbb{R}_+ \setminus  (0,A_\infty)$ and linearly interpolating between the points $\{ ( A_\infty \cdot im^{-1} , \widetilde{H}_{U_i^{(m)}}) : i \in  \{ 1, \dots , m \}\}\cup \{ (0,0) , (A_\infty , 0) \} $ by  $(\widetilde{H}^m_t:~ t \geq 0)$. Similarly, let   ${H}^{A,m}$ be the analogous function defined by replacing $\widetilde{H}$ by $H^A$. Next we remark that \eqref{equation:leyHtilde_} ensures that, for every continuous bounded function $F: \mathbb{R}^{k}  \mapsto \mathbb{R}_+$ and any fixed set of times $0 \leq t_1 \leq \dots \leq t_k$,  we have
\begin{equation*}
    \mathbb{N}_{x,0}^\lambda \left( F ( \widetilde{H}^m_{t_1}, \dots , \widetilde{H}^m_{t_k}  )\cdot\big(1-\exp(-A_\infty)\big) \right) 
    = 
    \mathbb{N}_{x,0}^\lambda \left( F ( H^{A,m}_{t_1} , \dots ,  H^{A,m}_{t_k})\cdot \big(1-\exp(-A_\infty)\big) \right).
\end{equation*}
Using the fact that $U_{ \lfloor t m \rfloor }^{m}\to A_\infty \cdot t$ a.s.  for every $t\in [0,1]$, we derive  that the pointwise convergences ${\widetilde{H}}^m \rightarrow \widetilde{H}$ and ${H}^{A,m} \rightarrow {H}^A$ as $m \uparrow \infty$. Finally since  $\mathbb{N}_{x,0}^\lambda(1-\exp(-A_\infty))<\infty$, we deduce \eqref{equation:leyHtilde} by dominated convergence.
\end{proof}
\subsection{Trees embedded in the  subordinate tree} \label{subsection:prooflawEmbededTree}

\par
This section is devoted to the proof of Proposition  \ref{proposition:subordinatedTreeMarginal}. In short, the idea is to  decompose  inductively $\widetilde{\textbf{T}}$ and $\textbf{T}^A$  starting from their respective "left-most branches" -- viz. the path connecting the root $\emptyset $ and the first leaf  with the corresponding labels -- and to show that they have the same law.  Next, if we remove the left-most branch of $\widetilde{\textbf{T}}$ and $\textbf{T}^A$, we are left with two ordered collections of independent subtrees and we shall establish that they have respectively the same law as $\widetilde{\textbf{T}}$ and $\textbf{T}^A$. This will allow us to iterate this  left-most branch decomposition in such a way that the branches discovered at step $n$ in $\widetilde{\textbf{T}}$ and $\textbf{T}^A$  have the same law.  Proposition  \ref{proposition:subordinatedTreeMarginal} will follow since this procedure leads respectively  to discover $\widetilde{\textbf{T}}$ and $\textbf{T}^A$.  In order to state this formally let us  introduce  some  notation.
\\
\\
If $\textbf{T}:=(\TT,(h_v:~v\in \TT))$ is a discrete labelled tree and $n\geq 0$,  we let $\textbf{T}(n)$ be the set of all couples $(u,h_u)\in \textbf{T}$ such that $u$ has at most $n$ entries in $\{2,3,...\}$. In particular $\textbf{T}(0)$ is the branch connecting the root and the first leaf.
Next,  we introduce the collection
\begin{equation*}
    {\mathbb{S}}(\textbf{T}):=\big( (h_v , k_v(\TT)-1):~v\text{ is a vertex of } \textbf{T}(0)\big), 
\end{equation*}  
where the elements are listed in increasing order with respect to the height and we recall that $k_v(\TT)$ stands for the number of children of $v$. For simplicity, set  $R:=\#\textbf{T}(0)-1$,  write $v_1,...,v_{R+1}$ for the vertices  of $\textbf{T}(0)$ in lexicographic order and observe that  $v_1$ is the root while $v_{R+1}$ is the first leaf -- in particular $k_{v_{R+1}}(\text{T}) =0$. Heuristically,  ${\mathbb{S}}(\textbf{T})$ -- or more precisely the measure $\sum_i (k_{v_i}(\TT)-1)\delta_{h_{v_i}}$ -- is  a discrete version of the  exploration process  when visiting the first leaf of $\TT$ and for this reason  ${\mathbb{S}}(\textbf{T})$ will be called  the left-most spine of $\textbf{T}$.  Now, for every $1 \leq j \leq R$, set 
\begin{equation*}
   K_j(\textbf{T}):= \sum_{i=1}^{j}  (k_{v_i}(\TT)-1), 
\end{equation*}
with the convention $K_0(\textbf{T})=0$ and remark that $K(\textbf{T}):=K_{R}(\textbf{T})$ stands
for the number of subtrees   attached "to the right" of  $\textbf{T}(0)$ in  $\textbf{T}$.
To define these subtrees when $K(\textbf{T}) \geq 1$, we need to introduce the following: for every $1\leq i\leq K_{R}(\textbf{T}) = K(\textbf{T})$,  let $a(i)$ be the unique index such that $K_{a(i)-1}(\textbf{T}) < i \leq  K_{a(i)} (\textbf{T})$. Then, we introduce the marked tree
\begin{equation}\label{T_iiii}
    \textbf{T}_{i}:=\big\{(u,h'_u):~\big(v_{a(i)}(K_{a(i)} + 2 -i  )u,h_{v_{a(i)}} + h'_u \big)\in \textbf{T}\big\}.
\end{equation}
Remark that the labels in each subtree $\textbf{\TT}_i$ have been shifted by their relative height in $\mathbb{S}(\textbf{\TT})$ and that the collection $(\textbf{T}_i : 1 \le i \leq K(\textbf{T}))$  is listed in counterclockwise order. 
\\
\\
We now apply  this decomposition  to $\widetilde{\textbf{T}}$ and $\textbf{T}^A$. For simplicity, we write $\widetilde{K}:= K(\widetilde{\textbf{T}})$ (resp. $K:= K({\textbf{T}}^A)$) for the number of subtrees attached  to the right of  $\widetilde{\textbf{T}}(0)$  (resp. $\textbf{T}^A(0)$). When $\widetilde{K}\geq 1$ (resp. $K\geq 1$), we let $\widetilde{\textbf{T}}_i$ (resp. $\textbf{T}_i^A$) be the marked trees defined by \eqref{T_iiii} using $\widetilde{\textbf{T}}$ (resp. $\textbf{T}^A$).
 Proposition \ref{proposition:subordinatedTreeMarginal}  can now be reduced to the following result:  

\begin{prop} \label{lemma:seccion5_lemafinal} 
\emph{(i)} We have 
\begin{equation*}
   \Big( \mathbb{S}(\widetilde{\emph{\textbf{T}}})~:~\widetilde{N}^\lambda(\cdot|\widetilde{M}\geq 1) \Big)
\overset{(d)}{=} 
     \Big(\mathbb{S}(\emph{\textbf{T}}^A)
   ~:~\mathbb{N}_{x,0}^\lambda(\cdot|M\geq 1)\Big).
   \end{equation*}
\emph{(ii)} Under   $\widetilde{N}^\lambda(  \cdot \,  |\widetilde{K},\widetilde{M} \geq 1)$  and conditionally on $\mathbb{S}( \widetilde{\emph{\textbf{T}}})$,  the  subtrees   $\widetilde{\emph{\textbf{T}}}_{1}, \dots \widetilde{\emph{\textbf{T}}}_{\widetilde{K}}$ are distributed as $\widetilde{K}$ independent copies distributed as $\widetilde{\emph{\textbf{T}}}$ under $\widetilde{N}^\lambda(  \cdot \,  |\widetilde{M} \geq 1)$.  Similarly, under $\mathbb{N}_{x,0}^\lambda(\dd \overline{W}, \dd\mathcal{P}|K,M \geq 1)$ and conditionally on $\mathbb{S}(\emph{\textbf{T}}^A)$, the subtrees  ${\emph{\textbf{T}}}^{A}_1, \dots,  \emph{\textbf{T}}^{A}_K$ are distributed as  $K$ independent copies distributed as  $\emph{\textbf{T}} ^A$ under $\mathbb{N}_{x,0}^\lambda(\dd \overline{W}, \dd\mathcal{P}|M \geq 1)$.
\end{prop}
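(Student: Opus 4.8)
The plan is to prove both parts by combining the approximation of $A$ through the exit local times $\mathscr{L}^{\,r}$ with the special Markov property, exploiting the memorylessness of the Poisson marks. For the comparison tree $\widetilde{\textbf{T}}$, the corresponding statement on the $\widetilde{\psi}$-Lévy side is classical: the left-most spine of a Lévy tree marked by a Poisson process is described by the exploration process $\widetilde{\rho}$ run up to the first mark, and the branching structure along this spine is governed by the jumps of $\widetilde{\rho}$ together with the Lévy measure $\widetilde{\pi}$ and drift $\widetilde{\alpha}$. More precisely, under $\widetilde{N}^\lambda(\,\cdot\,|\widetilde{M}\geq 1)$ the left-most spine $\mathbb{S}(\widetilde{\textbf{T}})$ is encoded by the measure $\widetilde{\rho}_{T_1}$, where $T_1$ is the first hitting time of the first Poisson mark $\mathfrak{t}_1$, and the subtrees hanging to the right are independent copies governed by the Poisson intensity $\mu(\dd h)\widetilde{N}_{\cdot}$ in the decomposition \eqref{PoissonRandMeasure}. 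The heart of the matter is therefore to show that the left-most spine of $\textbf{T}^A$ has exactly the same description with $\widetilde{\rho}$ replaced by the relevant functional of $(\rho,\overline{W})$.

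First I would fix a small $\varepsilon > 0$ and analyse the structure of $H^A$ near the first Poissonian mark $\mathfrak{t}_1$. Recalling that $A^{-1}$ reparametrises $[0,\sigma]$ by the additive functional $A$, the value $\mathfrak{t}_1$ corresponds via $A^{-1}$ to a time $s_1 := A^{-1}_{\mathfrak{t}_1}$ in $[0,\sigma]$, and by Theorem \ref{prop:suppA} this time lies in $\overline{\mathrm{Exit}(x)}$, hence the label $\widehat{\Lambda}_{s_1}$ is the height $H^A_{\mathfrak{t}_1}$ of the first leaf of $\textbf{T}^A$. The left-most spine then corresponds to the ancestral line $[0,s_1]_{\mathcal{T}_H}$ read through the local time $\mathcal{L}$, i.e.\ through $\widehat{\Lambda}$; its branching points with label strictly below $\widehat{\Lambda}_{s_1}$ are precisely the exit times from $x$ at lower levels, indexed by the rational levels $r$ via the excursion intervals from $D_r$. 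I would use Proposition \ref{lemma:divergExpo}, which identifies $(\mathscr{L}^{\,r}_\sigma : r>0)$ as a $\widetilde{\psi}$-CSBP, together with the special Markov property (Corollary \ref{Corollary:specialMarkovN}) applied successively at the domains $D_r$, to show that the point measure recording the levels $r$ at which new subtrees branch off the spine, weighted by the number of excursions, converges as the level discretisation refines towards the jump measure of the exploration process of a $\widetilde{\psi}$-tree. The computation of the intensity of this point measure is where Lemma \ref{lemma:cuentas} and the identities \eqref{identity:expoenente_1}, together with Corollary \ref{corollary:phitildeParametros} giving $\widetilde{\alpha}=\mathcal{N}(1-\exp(-\alpha\sigma))$ and $\widetilde{\beta}=0$, enter decisively: they guarantee that the drift and jump contributions match exactly those of $\widetilde{\rho}$. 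This yields part (i).

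For part (ii), once the left-most spine is removed, the subtrees attached to its right are, by the special Markov property applied at each exit level, conditionally independent given $\mathbb{S}(\textbf{T}^A)$, with each subtree being an independent copy of the whole structure started afresh from a point with label $x$. Concretely, I would condition on $\mathcal{F}^{D_r}$ at the appropriate levels and use that, by Corollary \ref{Corollary:specialMarkovN}, the excursions outside $D_r$ form a Poisson measure with intensity $\int\mathcal{Z}^{D_r}(\dd y)\mathbb{N}_y$; each such excursion, re-rooted, carries an independent copy of $(\rho,\overline{W})$ under $\mathbb{N}_{x,0}$ by the translation invariance of the local time $\mathcal{L}$. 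The independent marks of the Poisson process $\mathcal{P}$ falling in each excursion are, by the restriction property of Poisson processes and conditioning on $A_\infty$ of each piece, again Poissonian with the same intensity $\lambda$, so each subtree is distributed as $\textbf{T}^A$ under $\mathbb{N}^\lambda_{x,0}(\,\cdot\,|M\geq 1)$. The same reasoning on the $\widetilde{\psi}$-tree side, using \eqref{PoissonRandMeasure} for $\widetilde{\rho}$, gives the matching statement for $\widetilde{\textbf{T}}_1,\dots,\widetilde{\textbf{T}}_{\widetilde K}$, and a direct comparison completes the proof.

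The main obstacle I anticipate is the passage from the discrete level-approximation of the spine (indexed by rationals $r\in\mathbb{Q}_+^*$ and the associated excursion counts $\#\mathcal{I}_r$) to the continuous exploration-process description, and in particular proving that the intensity of the limiting branching point measure along the spine is exactly governed by $\widetilde{\pi}$ and $\widetilde{\alpha}$. This requires careful bookkeeping of how the exit local times $\mathscr{L}^{\,r}$ and the jumps they create interlace as $r$ varies, and a uniform control (in the spirit of the Cauchy estimates used in Proposition \ref{proposition:aproxLDPmu} and Lemma \ref{lemma:technicalLema2}) to justify taking limits inside the conditional Poisson structure. A secondary delicate point is ensuring that the ordering of the subtrees induced by the marks on the $(\rho,\overline{W})$ side agrees with the counterclockwise (lexicographic) ordering used to define $\textbf{T}_i$ in \eqref{T_iiii}; this is exactly the content of the assertion that $A^{-1}$ explores $\mathcal{T}_H^{\mathcal{L}}$ compatibly with $H$, and must be tracked through the special Markov property rather than assumed.
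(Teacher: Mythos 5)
Your proposal assembles the right ingredients (a spine decomposition at the first Poisson mark, the special Markov property, and Lemma \ref{lemma:cuentas} to match intensities), but it leaves unproven exactly the two steps on which the whole argument stands, and you yourself flag them as ``obstacles'' rather than resolve them; this is a genuine gap, not merely a different route. First, you never explain how the law of the spine itself would be computed: the height $H^A_{\mathfrak{t}_1}$ of the first leaf and the structure of the stopped path $\overline{W}_{A^{-1}_{\mathfrak{t}_1}}$. The paper obtains this by an exact computation, with no level approximation: Lemma \ref{lemma:CalculosAdditiva} determines the law of $(\rho_{A^{-1}_{\mathfrak{t}}},\overline{W}_{A^{-1}_{\mathfrak{t}}})$ using the time-reversal identity \eqref{dualidad:etaRhoW}, the many-to-one formula for $\dd A$ of Lemma \ref{lemLAformula:aditiva}, and the Laplace transform of $A_\infty$ from Proposition \ref{lemma:InversaExpo}; excursion theory for $\xi$ at $x$ then converts this into Lemma \ref{lemma:Seccion5_PoissoncondHA}: $H^A_{\mathfrak{t}}$ is exponential with parameter $\lambda/\widetilde{\psi}^{-1}(\lambda)$ and, conditionally on it, the excursions of $W^A_{\mathfrak{t}}$ away from $x$ form a Poisson point measure with the explicitly biased intensity $(E^0\otimes\mathcal{N})_*$. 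Nothing in your plan produces these laws.

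Second, your substitute for the paper's key construction --- a point measure of branch levels over a rational discretisation $r\in\mathbb{Q}_+^*$ which ``converges as the level discretisation refines'' --- is precisely the step you call the main obstacle, and it is not carried out; it is also unclear it could be, without reconstructing what it is meant to replace. The paper avoids any such limit by introducing the purely atomic measure $\rho^{*}_{\mathfrak{t}}=\sum_{j} L^j\,\delta_{r_j}$, whose atoms are the \emph{total exit local times} accumulated by the excursions grafted on the right of the spine, and by proving in Lemma \ref{lemma:explorationAtexpotime} that $(H^A_{\mathfrak{t}},\rho^*_{\mathfrak{t}})$ has the same law as the pair $(\widetilde{H}_{\mathfrak{t}},\widetilde{\rho}_{\mathfrak{t}})$ identified in Lemma \ref{lemma:Seccion5_PoissoncondHtilde}. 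This object is indispensable: the spine data $\mathbb{S}(\textbf{T}^A)$ and the attached subtrees are read off a point measure which, conditionally on $\rho^*_{\mathfrak{t}}$, is Poisson with intensity $\rho^*_{\mathfrak{t}}(\dd r)\,\mathbb{N}^\lambda_{x,0}(\dd\rho,\dd\overline{W},\dd\mathcal{P})$, so without $\rho^*_{\mathfrak{t}}$ (or an equivalent) neither part (i) nor the conditional independence in part (ii) can even be formulated, let alone proved. Finally, for (ii) you correctly observe that the counterclockwise ordering of the subtrees must be tracked rather than assumed, but you give no mechanism; the paper does this concretely, ordering the marked excursions through the running infimum of $\langle\rho,1\rangle$ after time $A^{-1}_{\mathfrak{t}}$ and through the non-decreasing exit-local-time process $V$, and then concluding by restriction properties of Poisson measures. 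In short, your plan reproduces the scaffolding of the paper's proof but omits its load-bearing computations.
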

Let us explain why Proposition \ref{proposition:subordinatedTreeMarginal} is a consequence of the previous result. 
\begin{proof}[Proof of Proposition \ref{proposition:subordinatedTreeMarginal}]
We are going to show by induction that for every $n\geq 0$:
\begin{equation}\label{induction}
\widetilde{\textbf{T}}(n)\text{ under } \widetilde{N}^\lambda(  \cdot \,  |\widetilde{M} \geq 1) ~~~~\text{ is distributed as }~~~~ \textbf{T}^A(n) \text{ under }\mathbb{N}_{x,0}^\lambda(\cdot|M\geq 1).
\end{equation}
First notice that Proposition \ref{lemma:seccion5_lemafinal} - (i) gives the previous identity in the case  $n=0$. Assume now that  \eqref{induction} holds for $n\geq 0$ and let us prove  the identity for $n+1$. First, remark that it is enough to argue with $\widetilde{\textbf{T}}(n+1)$ under $\widetilde{N}^\lambda(  \cdot \,  |\widetilde{K},\widetilde{M} \geq 1)$ and $\textbf{T}^A(n+1)$  under $\mathbb{N}_{x,0}^\lambda(\cdot|K,M\geq 1)$ -- since by Proposition \ref{lemma:seccion5_lemafinal}, the variable $\widetilde{K}$ under $\widetilde{N}^\lambda(  \cdot \,  |\widetilde{M} \geq 1)$ is distributed as $K$ under $\mathbb{N}_{x,0}^\lambda(\cdot|M\geq 1)$. Next, we see that 
 $\widetilde{\textbf{T}}(n+1)$ can be  obtained by gluing  the trees $\widetilde{\textbf{T}}_i(n)$ to $\widetilde{\textbf{T}}(0)$ at their respective positions after translating the labels by the associated heights. Moreover, these positions and heights  are precisely the entries of  $\mathbb{S}(\widetilde{\textbf{T}})$. Since the same discussion holds when replacing $\widetilde{\textbf{T}}$  by $\textbf{T}^A$, the case $n+1$ follows by 
 Proposition \ref{lemma:seccion5_lemafinal} and the case $n$. Finally, since the trees $\widetilde{\textbf{T}}$ and $\textbf{T}^A$ are finite, \eqref{induction} implies the desired result.
\end{proof}

\par Our goal now is to prove
 Proposition \ref{lemma:seccion5_lemafinal}. In this direction,  we will first encode the spines ${\mathbb{S}}(\widetilde{\textbf{T}}),  {\mathbb{S}}(\textbf{T}^A)$ as well as the corresponding subtrees  $\widetilde{\textbf{T}}_i$, $\textbf{T}^A_i$ in terms of $\widetilde{\rho}$, $(\rho , \overline{W})$ and $\mathcal{P}$. This will allow us to identify their law by making use of the machinery developed in previous sections.  
 While $\mathbb{S}(\widetilde{\textbf{T}})$ can be constructed directly in terms of $(\widetilde{\rho}_{\mathfrak{t}_1 +  t} : t \geq 0 )$ and the Poisson marks,  the construction of $\mathbb{S}(\textbf{T}^A)$ is more technical.  Roughly speaking,  the strategy  consists in defining in terms of $(\rho , \overline{W})$  the  exploration process for the subordinate tree at time $\mathfrak{t}_1$, say $\rho^*_{\mathfrak{t}_1}$,  and then  show  --  see Lemma \ref{lemma:explorationAtexpotime} below -- that  $\widetilde{\rho}_{\mathfrak{t}_1}$ and $\rho^*_{\mathfrak{t}_1}$ have the same distribution.
Needless to say that  this statement is informal, since we  have not yet shown that the subordinate tree is a Lévy tree.  We will then  deduce (i)  by  considering   $\mathbb{S}(\widetilde{\textbf{T}})$, $\mathbb{S}(\textbf{T}^A)$ and  conditioning respectively  on $\widetilde{\rho}_{\mathfrak{t}_1}$ and $\rho^*_{\mathfrak{t}_1}$, Point (ii) will then follow easily by construction. For simplicity, from now on  we write $\mathfrak{t} := \mathfrak{t}_1$.  
\\
\\
We first start working under $\widetilde{N}^\lambda(\cdot \, | M \geq 1)$ and we  introduce the following  notation: let $\big((\widetilde{\alpha}_i, \widetilde{\beta}_i):~i\in \mathbb{N}\big)$ be the connected components of the open set
$$
\big\{s\geq \mathfrak{t}:~\widetilde{H}_s>\inf\limits_{[\mathfrak{t},s]} \widetilde{H} \big\}.
$$
As usual, we write  $\widetilde{\rho}^{\,i}$ for the associated subtrajectory of the exploration process in the interval $[\widetilde{\alpha}_i , \widetilde{\beta}_i ]$. 
We also consider $\widetilde{H}^{i}:=(\widetilde{H}_{(\widetilde{\alpha}_i+s)\wedge \widetilde{\beta}_i}-\widetilde{H}_{\widetilde{\alpha}_i}:~s\geq 0)$,  $\widetilde{\mathcal{P}}^{i}:=(\widetilde{\mathcal{P}}_{(\widetilde{\alpha}_i+t)\wedge \widetilde{\beta}_i}-\widetilde{\mathcal{P}}_{\widetilde{\alpha}_i}:~t\geq 0)$ and note that in particular we have $H(\widetilde{\rho}^{\,i}) = \widetilde{H}^i$. Write  $\widetilde{h}_i:=\widetilde{H}(\widetilde{\alpha}_i)$, and consider the marked measure:
\begin{equation*}
    \widetilde{\mathcal{M}}:=\sum\limits_{i\in \mathbb{N}}\delta_{(\widetilde{h}_i, \widetilde{\rho}^{\,i},\widetilde{\mathcal{P}}^i)}.
\end{equation*}
By the Markov property and  \eqref{PoissonRandMeasure}, conditionally on $\mathcal{F}_{\mathfrak{t}}$, the measure $\widetilde{\mathcal{M}}$ is a Poisson point measure with intensity $\widetilde{\rho}_{\mathfrak{t}}(\dd h)\widetilde{N}^\lambda(\dd \rho, \,  \dd \mathcal{P}).$ 
Now we can identify $\mathbb{S}(\widetilde{\textbf{T}})$  in terms of functionals of $\widetilde{\mathcal{M}}$ and $\widetilde{H}_{\mathfrak{t}}$. First, set  $(\widetilde{h}^\circ_p:~ 1\leq p \leq \widetilde{R})$  the collection of  the different heights -- in increasing order -- among $(\widetilde{h}_i:~ i \in \mathbb{N})$ at which  $\widetilde{\mathcal{P}}^{i}_{\sigma(\widetilde{\rho}^{\,i})} \geq 1$.  In particular,  $\widetilde{R}$ gives the number of different heights $\widetilde{h}_j$ at which we can find at least one marked excursion above the running infimum of $(\widetilde{H}_{\mathfrak{t}+ t}: t \geq 0)$.  Next, we write $\widetilde{M}^\circ_p$ for  the number of atoms at level  $\widetilde{h}^\circ_p$ in $\widetilde{\mathcal{M}}$ with at least one Poissonian mark. Now, remark that by construction we have:
\begin{equation}\label{definition:spinePhiTilde}
     \mathbb{S}(\widetilde{\textbf{T}}) =  \big( (\widetilde{h}^\circ_1, \widetilde{M}^\circ_1), \dots , (\widetilde{h}^\circ_{\widetilde{R}},\widetilde{M}^\circ_{\widetilde{R}}), ( \widetilde{H}_{\mathfrak{t}}, -1)\big),  
\end{equation}
  and in particular  $\widetilde{K}  = \sum_{i=1}^{\widetilde{R}} \widetilde{M}^\circ_i$.  Finally, for later use  denote the corresponding marked excursions  arranged  in counterclockwise order by $ \widetilde{\mathscr{E}} := ( (\widetilde{\rho}^q_\circ,\widetilde{H}^q_\circ, \widetilde{\mathcal{P}}^q_\circ): 1 \leq q \leq  \widetilde{K})$.  Notice that the subtrees $(\widetilde{\textbf{T}}_i : 1 \le i \leq \widetilde{K})$ are precisely the respective embedded   marked trees associated with  $((\widetilde{H}^q_\circ, \widetilde{\mathcal{P}}^q_\circ): 1 \leq q \leq  \widetilde{K})$.
\par 
The main step remaining in our analysis under $\widetilde{N}^\lambda(\cdot \, | \widetilde{M} \geq 1)$ consists in  characterizing the law of  $(\widetilde{H}_{\mathfrak{t}},\widetilde{\rho}_{\mathfrak{t}})$, and this is  the content of the following lemma. Since  $\widetilde{\mathcal{M}}$ conditionally on $\mathcal{F}_{\mathfrak{t}}$ is a Poisson point measure with intensity $\widetilde{\rho}_{\mathfrak{t}}(\dd h)\widetilde{N}^\lambda(\dd \rho, \,  \dd \mathcal{P})$, this will suffice to identify the distribution of $\mathbb{S}(\widetilde{\textbf{T}})$.  In this direction, Corollary \ref{corollary:phitildeParametros} ensures that the measure   $\widetilde{\rho}_{\mathfrak{t}}$ is purely atomic  and consequently by \eqref{rho_atoms} it is of the form: 
\begin{equation*}
    \widetilde{\rho}_{\mathfrak{t}}:=\sum_{i \in \mathbb{N}} \widetilde{\Delta}_i \cdot  \delta_{\widetilde{h}^i} \,.
\end{equation*}
 We stress that we have $\{\widetilde{h}^i:i\in \mathbb{N}\}=\{\widetilde{h}_i:i\in \mathbb{N}\}$ -- even though the latter set has repeated elements.
\begin{lem}\label{lemma:Seccion5_PoissoncondHtilde}
Under $\widetilde{N}^\lambda(\cdot |~\widetilde{M}\geq 1)$, the random variable $\widetilde{H}_{\mathfrak{t}}$ is exponentially distributed with intensity $\lambda/\widetilde{\psi}^{-1}(\lambda)$. Moreover, conditionally on $\widetilde{H}_{\mathfrak{t}}$, the measure $\sum \delta_{(\widetilde{h}^i,\widetilde{\Delta}_i)}$   is a Poisson point measure with intensity $\mathbbm{1}_{[0,\widetilde{H}_{\mathfrak{t}}]}(\dd h) \widetilde{\nu}(\dd z)$, where $\widetilde{\nu}(\dd z)$ is the measure supported on $\mathbb{R}_+$ characterised by:
\begin{equation} \label{equation:leynutilde}
    \int \widetilde{\nu}(\dd z)\big(1-\exp(-p z)\big)=\frac{\widetilde{\psi}(p)-\lambda}{p-\psi^{-1}(\lambda)} - \frac{\lambda}{\psi^{-1}(\lambda)}, \qquad p\geq 0.
\end{equation}
\end{lem}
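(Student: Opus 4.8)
The plan is to compute the Laplace transform of the pair $(\widetilde{H}_{\mathfrak{t}}, \widetilde{\rho}_{\mathfrak{t}})$ under $\widetilde{N}^\lambda(\cdot\,|\,\widetilde M\geq 1)$ by exploiting the structure of $\widetilde{\rho}$ as a functional of the $\widetilde\psi$-Lévy process together with the independence of the Poissonian marks. First I would use the pointed-measure identity \eqref{equation:lawpointedversion}, suitably adapted to the marked setting: since $\mathfrak{t}$ is the first jump of an independent Poisson process $\mathcal{P}$ of rate $\lambda$ run along the lifetime, conditioning on $\{\widetilde M\geq 1\}$ amounts to size-biasing by $1-e^{-\lambda\sigma_{\widetilde H}}$ and, given the trajectory, $\mathfrak{t}$ is the first mark, hence (conditionally on $\sigma_{\widetilde H}$) essentially a uniform-type point weighted by $\lambda e^{-\lambda s}$. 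Concretely, for a test functional $\Phi$ of $\widetilde\rho_{\mathfrak{t}}$ one has
\begin{equation*}
\widetilde N^\lambda\big(\Phi(\widetilde\rho_{\mathfrak{t}})\,\mathbbm{1}_{\widetilde M\geq 1}\big)
=\widetilde N\Big(\int_0^{\sigma_{\widetilde H}}\lambda\,e^{-\lambda s}\,\Phi(\widetilde\rho_s)\,\dd s\Big),
\end{equation*}
because the first mark lands in $[s,s+\dd s]$ with probability $\lambda e^{-\lambda s}\dd s$ and contributes only if it occurs before $\sigma_{\widetilde H}$. The normalising constant is $\widetilde N(1-e^{-\lambda\sigma_{\widetilde H}})=\widetilde\psi^{-1}(\lambda)$ by \eqref{identity:inversePhi}.

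Next I would evaluate the right-hand side using the many-to-one formula \eqref{tirage_au_hasard_N} applied to the $\widetilde\psi$-Lévy snake/tree, i.e. with $\psi$ replaced by $\widetilde\psi$, $\alpha$ by $\widetilde\alpha$, and the subordinator pair $(U^{(1)},U^{(2)})$ by $(\widetilde U^{(1)},\widetilde U^{(2)})$ whose exponent is $\widetilde\psi(p)/p-\widetilde\alpha$. Taking $\Phi(\widetilde\rho_s)=\mathbbm{1}_{\{H(\widetilde\rho_s)\in \dd a\}}\exp(-\langle\widetilde\rho_s,f\rangle)$ and integrating, the many-to-one formula turns the spatial average into
\begin{equation*}
\int_0^\infty \dd a\,\lambda\,e^{-\widetilde\alpha a}\,E^0\big[e^{-\lambda\, a}\exp(-\langle \widetilde J_a,f\rangle)\big],
\end{equation*}
where $\widetilde J_a=\mathbbm{1}_{[0,a]}\dd\widetilde U^{(1)}_t$; the factor $e^{-\lambda a}$ comes from the requirement that the first mark falls exactly at the explored time $s$ with $H(\widetilde\rho_s)=a$, after the change of variables tying $s$ to the subordinator clock. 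Setting $f\equiv 0$ recovers the law of $\widetilde H_{\mathfrak{t}}$: the $a$-integral is $\int_0^\infty \lambda e^{-(\widetilde\alpha+\lambda)a+\kappa a}\dd a$ where $\kappa$ is the drift correction, and matching with \eqref{identity:exponenteSubord} for $\widetilde\psi$ should yield an exponential density with rate $\lambda/\widetilde\psi^{-1}(\lambda)$, using $\widetilde\psi(\widetilde\psi^{-1}(\lambda))=\lambda$. The identification of the conditional law of the atoms $\sum\delta_{(\widetilde h^i,\widetilde\Delta_i)}$ as a Poisson measure with intensity $\mathbbm{1}_{[0,\widetilde H_{\mathfrak t}]}(\dd h)\widetilde\nu(\dd z)$ then follows because, conditionally on $\widetilde H_{\mathfrak t}=a$, the measure $\widetilde J_a$ is exactly the jump measure of the subordinator $\widetilde U^{(1)}$ restricted to $[0,a]$, tilted by the extra Laplace weight $e^{-\lambda a}$; the tilting by $\exp(-\lambda\cdot(\text{total mass to the right}))$ is what deforms the Lévy measure of $\widetilde U^{(1)}$ into $\widetilde\nu$.

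To pin down $\widetilde\nu$ via \eqref{equation:leynutilde} I would compute $E[\exp(-p\,\widetilde\Delta)]$ for a single atom and read off the exponent. The key computation is that the exponent of the tilted subordinator is obtained from $\widetilde\psi(p)/p-\widetilde\alpha$ by the Esscher-type shift induced by conditioning the first mark to sit above all the mass, which produces the quotient $\tfrac{\widetilde\psi(p)-\lambda}{p-\widetilde\psi^{-1}(\lambda)}$ in place of $\tfrac{\widetilde\psi(p)}{p}$ (this is precisely formula \eqref{identity:exponenteSubord} evaluated at $\lambda_1=p$, $\lambda_2=\widetilde\psi^{-1}(\lambda)$, since $\widetilde\psi(\widetilde\psi^{-1}(\lambda))=\lambda$). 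Subtracting the value at $p=0$, namely $\lambda/\widetilde\psi^{-1}(\lambda)$, gives the stated right-hand side of \eqref{equation:leynutilde}, and a short check shows this is the Laplace exponent of a genuine Lévy measure $\widetilde\nu$ on $\mathbb{R}_+$.

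The main obstacle I anticipate is making the informal "first mark sits at an explored time, weighted by $e^{-\lambda\cdot(\text{mass above})}$" rigorous: one must justify carefully that, after disintegrating along $\mathfrak t$ via \eqref{equation:lawpointedversion}-type pointing and applying the many-to-one formula \eqref{tirage_au_hasard_N} for $\widetilde\psi$, the exponential weight $e^{-\lambda a}$ attaches to the \emph{total future mass} in exactly the way that reshuffles the subordinator exponent into the quotient in \eqref{identity:exponenteSubord}. This requires tracking which of the two subordinators $\widetilde U^{(1)},\widetilde U^{(2)}$ governs the atoms of $\widetilde\rho_{\mathfrak t}$ (it is $\widetilde U^{(1)}$, giving the $\rho$ side rather than the dual $\eta$ side) and correctly handling the drift term $\widetilde\alpha$, which by Corollary \ref{corollary:phitildeParametros} contributes no Brownian part ($\widetilde\beta=0$) so that $\widetilde\rho_{\mathfrak t}$ is purely atomic and \eqref{rho_atoms} applies with no continuous component. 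Once the bookkeeping between the pointing weight and the subordinator Laplace exponent is done correctly, the exponential law of $\widetilde H_{\mathfrak t}$ and the Poissonian description of the atoms both drop out of the same computation.
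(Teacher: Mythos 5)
Your first step (disintegrating along the first Poisson mark to write $\widetilde N^\lambda\big(\Phi(\widetilde\rho_{\mathfrak t})\mathbbm{1}_{\{\widetilde M\geq 1\}}\big)=\lambda\,\widetilde N\big(\int_0^{\sigma}\dd s\,e^{-\lambda s}\Phi(\widetilde\rho_s)\big)$, with normalisation $\widetilde N^\lambda(\widetilde M\geq 1)=\widetilde\psi^{-1}(\lambda)$) and your final identification of $\widetilde\nu$ through \eqref{identity:exponenteSubord} at $(\lambda_1,\lambda_2)=(p,\widetilde\psi^{-1}(\lambda))$ both agree with the paper. But the middle step --- the one you yourself flag as the ``main obstacle'' --- is wrong as written, and it is exactly where the paper's key idea lies. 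The weight $e^{-\lambda s}$ is a function of the exploration time $s$, not of the state $(\widetilde\rho_s,\widetilde\eta_s)$, so the many-to-one formula \eqref{tirage_au_hasard_N} cannot be applied to it; and your claim that after a change of variables it becomes the deterministic factor $e^{-\lambda a}$ with $a=H(\widetilde\rho_s)$ is false. Had the weight really been $e^{-\lambda a}$, the conclusion of the lemma would fail: the Lévy measure of $\widetilde U^{(1)}$ would not get tilted at all (so the conditional intensity of the atoms would be the untilted jump measure, not $\widetilde\nu$), and the marginal of $\widetilde H_{\mathfrak t}$ would come out exponential of rate $\widetilde\alpha+\lambda$, which is not $\lambda/\widetilde\psi^{-1}(\lambda)$ in general; the ``drift correction $\kappa$'' you invoke to fix this has no source in the computation.

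The missing idea is the combination of time reversal and the Markov property. The paper first applies the duality identity \eqref{dualidad:etaRhoW}, which turns the weight $e^{-\lambda s}$ into $e^{-\lambda(\sigma-s)}$ while exchanging the roles of $\widetilde\rho_s$ and $\widetilde\eta_s$, and then uses the Markov property together with $\widetilde\psi^{-1}(\lambda)=\widetilde N(1-e^{-\lambda\sigma})$ to replace the future-lifetime weight by a state functional:
\begin{equation*}
\widetilde E_{\widetilde\rho_s}\big[e^{-\lambda\sigma}\big]=\exp\big(-\widetilde\psi^{-1}(\lambda)\langle\widetilde\rho_s,1\rangle\big).
\end{equation*}
Only now does \eqref{tirage_au_hasard_N} apply, and it produces the \emph{random} weight $\exp(-\widetilde\psi^{-1}(\lambda)\,\widetilde U^{(2)}_a)$ carried by the dual subordinator, not $e^{-\lambda a}$. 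This weight is what does both jobs at once: it tilts the bivariate Lévy measure $\widetilde\gamma(\dd u_1,\dd u_2)$ by $e^{-\widetilde\psi^{-1}(\lambda)u_2}$ in the second coordinate --- yielding precisely \eqref{equation:leynutilde} via \eqref{identity:exponenteSubord}, with tilt parameter $\widetilde\psi^{-1}(\lambda)$ rather than the $\lambda$ appearing in your ``total mass to the right'' heuristic --- and it fixes the marginal law, since $e^{-\widetilde\alpha a}\,E^0\big[\exp(-\widetilde\psi^{-1}(\lambda)\widetilde U^{(2)}_a)\big]=\exp\big(-a\lambda/\widetilde\psi^{-1}(\lambda)\big)$, the $\widetilde\alpha$'s cancelling. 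Without the duality step your computation cannot be completed, so the proposal has a genuine gap at its core.
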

\begin{proof}
Recall that by Proposition \ref{lemma:InversaExpo}, we have  $\widetilde{\psi}^{-1}(\lambda) = \widetilde{N}(1-\exp(-\lambda \sigma))= \widetilde{N}^\lambda(\widetilde{M}\geq 1)$.  Consider two measurable  functions $g: \mathbb{R}_+ \mapsto \mathbb{R}_+$, $F: \mathcal{M}_f(\mathbb{R}_+) \mapsto \mathbb{R}_+$ and remark that
\begin{align*}
    \widetilde{N}^{\lambda}\big( g(\widetilde{H}_{\mathfrak{t}})F(\widetilde{\rho}_{\mathfrak{t}})\mathbbm{1}_{\{\widetilde{M}\geq 1 \}}\big)= \lambda\cdot  \widetilde{N}\big(\int_{0}^\sigma \dd s \exp(-\lambda s) g(\widetilde{H}_{s})F(\widetilde{\rho}_{s})\big).
\end{align*}
By duality \eqref{dualidad:etaRhoW} and the Markov property, the previous expression can be written in the form: 
\begin{align*}
\lambda \cdot  \widetilde{N} \big(\int_{0}^{\sigma} \dd s~ g(\widetilde{H}_{s})F(\widetilde{\eta}_{s}) \exp(-\lambda (\sigma-s)) \big)  &= \lambda\cdot\widetilde{N} \big(\int_{0}^{\sigma} \dd s~ g(\widetilde{H}_{s})F(\widetilde{\eta}_{s})\widetilde{E}_{\widetilde{\rho}_s}[ \exp(-\lambda \sigma)]\big)\\
  &=\lambda\cdot\widetilde{N}\big(\int_{0}^{\sigma} \dd s~ g(\widetilde{H}_{s})F(\widetilde{\eta}_{s})\exp(-\widetilde{\psi}^{-1}(\lambda)\langle \widetilde{\rho}_{s}, 1\rangle )\big),
\end{align*}
where in the last line we use the identity $\widetilde{\psi}^{-1}(\lambda)=\widetilde{N}(1-\exp(-\lambda \sigma)) $.  Consider under $P^0$ the pair of subordinators $(\widetilde{U}^{(1)}, \widetilde{U}^{(2)})$ with Laplace exponent \eqref{identity:exponenteSubord}, defined  replacing $\psi$ by $\widetilde{\psi}$, and denote its Lévy measure by $\widetilde{\gamma}(\dd u_1 , \dd u_2 )$. We stress that since $\widetilde{\psi}$ does not have Brownian part, the subordinators $(\widetilde{U}^{(1)}, \widetilde{U}^{(2)})$  does not have drift.  The  many-to-one formula \eqref{tirage_au_hasard_N} applied to $\widetilde{\psi}$ gives:
\begin{align} \label{equation:section5_leytiempoTpareja}
    \widetilde{N}^{\lambda}\big(g(\widetilde{H}_{\mathfrak{t}})F(\widetilde{\rho}_{\mathfrak{t}})\big|~\widetilde{M}\geq 1\big)=\frac{\lambda}{\widetilde{\psi}^{-1}(\lambda)}\int_{0}^{\infty}\dd a \,   \exp(-\widetilde{\alpha} a) g(a) E^{0}\big[F(\mathbbm{1}_{[0,a]} \dd \widetilde{U}^{(1)}) \exp(-\widetilde{\psi}^{-1}(\lambda) \widetilde{U}^{(2)}_a  )\big].
\end{align}
We shall now deduce from the later identity that the pair $\big(\widetilde{H}_{\mathfrak{t}} , \sum \delta_{(\widetilde{h}^i,\widetilde{\Delta}_i)}\big)$ has the desired distribution. In this direction, let $f:\mathbb{R}_+^{2}\to \mathbb{R}_+$ be a measurable function satisfying $f(h,0)=0$, for every $h\geq 0$. By  \eqref{equation:section5_leytiempoTpareja},  we derive   that 
\begin{align} \label{equation:seccion5_rhotilde}
   \widetilde{N}^{\lambda} \big(g(\widetilde{H}_{\mathfrak{t}})&\exp\big(-\sum_{i \in \mathbb{N}} f(\widetilde{h}_i,\widetilde{\Delta}_i )\big)|\widetilde{M}\geq 1\big) \nonumber \\
   &=\frac{\lambda}{\widetilde{\psi}^{-1}(\lambda)}\int_{0}^{\infty}\dd a \,  g(a)\exp(-\widetilde{\alpha} a) E^{0}\Big[ \exp\Big(-\sum_{h \leq a} \big(f(h,\Delta\widetilde{U}^{(1)}_{h} )+\widetilde{\psi}^{-1}(\lambda)\Delta\widetilde{U}^{(2)}_{h} \big)\Big)\Big].
\end{align}
Moreover, by the exponential formula it follows that the expectation under $E^0$ in the previous display equals
\begin{align*}
\exp \Big(-\int_0^a \dd h \int \widetilde{\gamma}(\dd u_1,\dd u_2) \big(1-\exp(-f(h,u_1)-\widetilde{\psi}^{-1}(\lambda) u_2)\big)\Big), 
\end{align*}
and notice that we can write:  
\begin{align*}
\int \widetilde{\gamma}(\dd u_1,\dd u_2)\big(1-\exp(-f(h,u_1)-\widetilde{\psi}^{-1}(\lambda) u_2)\big)= &\int \widetilde{\gamma}(\dd u_1,\dd u_2)\exp(-\widetilde{\psi}^{-1}(\lambda) u_2)\big(1-\exp(-f(h,u_1))\big)\\
&+ \int \widetilde{\gamma}(\dd u_1,\dd u_2)\big(1-\exp(-\widetilde{\psi}^{-1}(\lambda) u_2)\big).
\end{align*}
To simplify this expression,  introduce   the measure  $\widetilde{\gamma}^{\prime}(\dd u_1) :=\int_{u_2 \in \mathbb{R}}\widetilde{\gamma}(\dd u_1,\dd u_2)\exp(-\widetilde{\psi}^{-1}(\lambda) u_2)$ 
and observe that  \eqref{identity:exponenteSubord} entails  $\int \widetilde{\gamma}(\dd u_1,\dd u_2)(1-\exp(-\widetilde{\psi}^{-1}(\lambda) u_2))={\lambda}/{\widetilde{\psi}^{-1}(\lambda)}-\widetilde{\alpha}$.  We deduce that \eqref{equation:seccion5_rhotilde} can be written in the following form:
\begin{align*}
    \frac{\lambda}{\widetilde{\psi}^{-1}(\lambda)}\int_{0}^{\infty}\dd a \, g(a) \exp(-\frac{\lambda}{\widetilde{\psi}^{-1}(\lambda)} a) \exp \Big( - \int_0^a\dd h\int \widetilde{\gamma}'(\dd u_1) \Big(1- \exp \big( -f(h,u_1) \big) \Big) \Big), 
\end{align*}
and to conclude it suffices to remark that $\widetilde{\gamma}^\prime=\widetilde{\nu}$, since by \eqref{identity:exponenteSubord} we have
\begin{align*}
    \int \widetilde{\gamma}^\prime(\dd u_1)\big(1-\exp(-p u_1)\big)&=\int \widetilde{\gamma}(\dd u_1,\dd u_2) \big(\exp(-\widetilde{\psi}^{-1}(\lambda) u_2)-\exp(-pu_1-\widetilde{\psi}^{-1}(\lambda) u_2)\big)\\
    &=\frac{\widetilde{\psi}(p)-\lambda}{p-\widetilde{\psi}^{-1}(\lambda)} - \frac{\lambda}{\widetilde{\psi}^{-1}(\lambda)},
\end{align*}
for every $p\geq 0$. 
\end{proof}

We now turn our attention to the other side of the picture, and we now work  under  $\mathbb{N}^{\lambda}_{x,0}(\cdot |M\geq 1)$. The objective is to obtain analogue results for the spine $\mathbb{S}(\textbf{T}^A)$. In this direction,  recall the notation $G_\lambda := \widetilde{\psi}^{-1}(\lambda)$ and we start with the following technical lemma  characterizing the law of $(\rho,\overline{W})$ at  time $A^{-1}_\mathfrak{t}$.  

\begin{lem} \label{lemma:CalculosAdditiva} For any non-negative measurable function $f$ in $M_f(\mathbb{R}_+) \times \mathcal{W}_{\overline{E}}$,  we have:
\begin{equation*} 
    \mathbb{N}^\lambda_{x,0} \left( f ( \rho_{A^{-1}_{\mathfrak{t}}} , \overline{W}_{A^{-1}_{\mathfrak{t}}})  \mathbbm{1}_{ \{ M \geq 1 \}} \right) = \lambda \int_0^\infty \dd a \, E^{0} \otimes \Pi_x \Big( \exp(-\alpha \tau_a)  f ( \Jj_{\tau_a} , (\xi_{t} , \mathcal{L}_t)_{t \leq \tau_a} ) \exp \big( {-\int_0^{\tau_a} \Jc_{\tau_a}(\dd h) \,  u_{G_\lambda}(\xi_h)  } \big)\Big).  
\end{equation*}
\end{lem}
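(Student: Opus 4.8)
The plan is to compute the left-hand side by combining the Markov property at the Poissonian time $\mathfrak{t}$ with the many-to-one formula for the additive functional $A$. The starting point is the observation that, by definition of $\mathbb{N}^\lambda_{x,0}$ and the fact that $\mathcal{P}$ has intensity $\lambda$, conditioning on $\{M \geq 1\}$ is governed by the first Poissonian mark in $[0,A_\infty]$. Concretely, I would first write
\begin{equation*}
    \mathbb{N}^\lambda_{x,0}\left( f(\rho_{A^{-1}_{\mathfrak{t}}}, \overline{W}_{A^{-1}_{\mathfrak{t}}}) \mathbbm{1}_{\{M \geq 1\}} \right) = \lambda \cdot \mathbb{N}_{x,0}\left( \int_0^{A_\infty} \dd t \, \exp(-\lambda t) \, f(\rho_{A^{-1}_t}, \overline{W}_{A^{-1}_t}) \right),
\end{equation*}
using that the first jump $\mathfrak{t}$ of a rate-$\lambda$ Poisson process restricted to $[0,A_\infty]$ produces the density $\lambda \exp(-\lambda t)\dd t$ on $\{M\ge1\}$ together with the fact that only $\mathfrak{t}=\mathfrak{t}_1$ matters for the variables $\rho_{A^{-1}_{\mathfrak{t}}}$. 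The next step is to change variables from $t$ (the $A$-time) back to the natural time $s = A^{-1}_t$ of the snake. Since $\dd t = \dd A_s$ along the support of $\dd A$, the integral $\int_0^{A_\infty} \dd t \, \exp(-\lambda t) \, f(\rho_{A^{-1}_t}, \overline{W}_{A^{-1}_t})$ becomes $\int_0^\sigma \dd A_s \, \exp(-\lambda A_s) \, f(\rho_s, \overline{W}_s)$, using that $A^{-1}_{A_s} = s$ for $s$ in the support of $\dd A$ and that the integrand is constant on the constancy intervals of $A$.

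At this stage I would invoke the many-to-one formula for $A$ established in Lemma \ref{lemLAformula:aditiva}, which converts an integral against $\dd A_s$ of a functional of $(\rho_s, \eta_s, \overline{W}_s)$ into an integral over $a \in \mathbb{R}_+$ against the law of the stopped spatial motion and the subordinator pair $(J_{\tau_a}, \widecheck{J}_{\tau_a})$. The only subtlety is that the integrand $\exp(-\lambda A_s)$ depends on the \emph{whole future} additive mass $A_s$ (equivalently, $A_\sigma - \int_s^\sigma \dd A_u$), which is not of the form $\Phi(\rho_s,\eta_s,\overline{W}_s)$ allowed by Lemma \ref{lemLAformula:aditiva}. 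To handle this I would first apply the Markov property to peel off the future contribution: writing $A_\sigma = A_s + (A_\sigma - A_s)$ and using that, conditionally on $\mathcal{F}_s$, the remaining mass has the law of $A_\infty$ started from $(\rho_s, \overline{W}_s)$, the factor $\exp(-\lambda A_s)$ should really be replaced — after taking the appropriate conditional expectation — by the Laplace transform of the future mass. Here is where Proposition \ref{lemma:InversaExpo} enters decisively: identity \eqref{identity:laplaceAditiva} gives
\begin{equation*}
    \mathbb{E}^{\dag}_{\mu, \overline{\mathrm{w}}}\left[ \exp(-\lambda A_\infty) \right] = \exp\Big( -\int \mu(\dd h) \, u_{\widetilde{\psi}^{-1}(\lambda)}(\mathrm{w}(h)) \Big) = \exp\Big( -\int \mu(\dd h) \, u_{G_\lambda}(\mathrm{w}(h)) \Big),
\end{equation*}
recalling $G_\lambda = \widetilde{\psi}^{-1}(\lambda)$.

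The cleanest route, which I expect to be the main technical step, is therefore to reverse time. Using the duality identity \eqref{dualidad:etaRhoW} under $\mathbb{N}_{x,0}$, the integral $\int_0^\sigma \dd A_s \, \exp(-\lambda A_s)\, f(\rho_s,\overline{W}_s)$ is transformed so that the \emph{past} mass before $s$ becomes a future mass that can be evaluated via \eqref{identity:laplaceAditiva}, at the cost of swapping $\rho$ and $\eta$. After this swap the exponential weight factorises as $\exp(-\int_0^{\tau_a} \widecheck{J}_{\tau_a}(\dd h)\, u_{G_\lambda}(\xi_h))$ — the $\widecheck{J}$ (dual) measure appearing precisely because of the duality swap, matching the statement. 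Applying the many-to-one formula \eqref{tirage_au_hasard_N} for $A$ (Lemma \ref{lemLAformula:aditiva}) to the now-admissible functional produces the integral $\int_0^\infty \dd a\, E^0 \otimes \Pi_x(\exp(-\alpha\tau_a) f(J_{\tau_a},(\xi_t,\mathcal{L}_t)_{t\le\tau_a}) \exp(-\int_0^{\tau_a}\widecheck{J}_{\tau_a}(\dd h) u_{G_\lambda}(\xi_h)))$, yielding the claimed formula. The delicate points to verify carefully are: that the factor $\exp(-\lambda A_s)$ is correctly replaced by the conditional Laplace transform after applying the Markov/duality argument (so that the $u_{G_\lambda}$ term appears with $\widecheck{J}$ and not $J$), and that the change of variables $t \leftrightarrow s$ respects the convention $A_t = \sigma$ for $t \geq A_\infty$ together with the fact that $\{0,\sigma\} \subset \mathrm{supp}\, \dd A$ from Lemma \ref{lemma:suppordUnderN}, so no mass is lost at the endpoints.
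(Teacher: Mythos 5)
Your proposal is correct and follows essentially the same route as the paper's proof: reduction to $\lambda\,\mathbb{N}_{x,0}\big(\int_0^\sigma \dd A_s\, e^{-\lambda A_s} f(\rho_s,\overline{W}_s)\big)$ via the first Poissonian mark and a change of variables, then time reversal through \eqref{dualidad:etaRhoW} to turn the past mass $A_s$ into a future mass, the Markov property combined with \eqref{identity:laplaceAditiva} to produce the factor $\exp\big(-\int\rho_s(\dd h)\,u_{G_\lambda}(W_s(h))\big)$, and finally the many-to-one formula of Lemma \ref{lemLAformula:aditiva}. The only bookkeeping details you gloss over are that after these steps the exponential weight carries $J_{\tau_a}$ while $f$ is evaluated at $\widecheck{J}_{\tau_a}$, so one must finish with the exchangeability $(J_\infty,\widecheck{J}_\infty)\overset{(d)}{=}(\widecheck{J}_\infty,J_\infty)$ under $P^0$ (as the paper does), and that $\dd A$ does not charge the discontinuity times of $\rho$, which justifies replacing $\rho_{(\sigma-s)-}$ by $\rho_{\sigma-s}$ when invoking the duality identity.
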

\begin{proof} Since $\{ M \geq 1 \} = \{ \mathfrak{t}  \leq A_\infty\}$, we have:
\begin{align*}
    \mathbb{N}_{x,0}^\lambda \left( f ( \rho_{A^{-1}_{\mathfrak{t}}} , \overline{W}_{A^{-1}_{\mathfrak{t}}}) \mathbbm{1}_{\{ M\geq 1 \}}  \right) 
     &=  \lambda \cdot \mathbb{N}_{x,0} \left( \int_0^{A_{\infty}} \dd s \,  f(\rho_{A^{-1}_s}  , \overline{W}_{A^{-1}_s} ) \exp(-\lambda s) \right) \\
     &=\lambda \cdot \mathbb{N}_{x,0} \left( \int_0^{\sigma} \dd A_s \, f(\rho_{s}  , \overline{W}_{s} ) \exp(- \lambda  A_s ) \right) \\
     &=-\lambda \cdot\mathbb{N}_{x,0} \left( \int_0^{\sigma} \dd A_{\sigma - s} \, f(\rho_{\sigma - s}  , \overline{W}_{\sigma - s} ) \exp(- \lambda  A_{\sigma - s} ) \right).
\end{align*}
Moreover, by time reversal \eqref{dualidad:etaRhoW}, we know that:
\begin{equation*}
    (\rho_{(\sigma - s)-} , \overline{W}_{\sigma - s } , A_{\sigma - s} : 0 \leq s \leq \sigma  ) \overset{(d)}{=} (\eta_{s} , \overline{W}_{s } , A_\sigma -
A_{s} : 0 \leq s \leq \sigma  ),
\end{equation*}
and we remark that $\{ s \in [0,\sigma] : \rho_{s} \neq \rho_{s-}  \} \subset \{ s \in [0,\sigma] : \rho_{s}(\{ H_s \}) > 0  \}$ which  has null $\dd A$ measure $\mathbb{N}_{x,0}$-- a.e by the many-to-one formula of Lemma \ref{lemLAformula:aditiva}. This implies:
$$ - \mathbb{N}_{x,0} \left( \int_0^{\sigma} \dd A_{\sigma - s} \, f(\rho_{\sigma - s}  , \overline{W}_{\sigma - s} ) \exp(- \lambda  A_{\sigma - s} ) \right)= \mathbb{N}_{x,0} \left( \int_0^{\sigma} \dd A_{s} \, f(\eta_{ s}  , \overline{W}_{ s} ) \exp(- \lambda \int_{s}^{\sigma} \dd A_{s} ) \right).$$
Next,  by making use of   the strong Markov property, we derive  that
\begin{align*}
\mathbb{N}_{x,0}^\lambda \left( f ( \rho_{A^{-1}_{\mathfrak{t}}} , \overline{W}_{A^{-1}_{\mathfrak{t}}}) \mathbbm{1}_{\{ M\geq 1 \}}  \right) 
&= \lambda\cdot
 \mathbb{N}_{x,0} \left( \int_0^{\sigma} \dd A_{s} \, f(\eta_{s}  , \overline{W}_{s} ) \exp\big(-  \lambda  \int_s^\sigma \dd A_s\big)  \right) \\
&= \lambda\cdot \mathbb{N}_{x,0} \left( \int_0^{\sigma} \dd A_{s} \, f(\eta_{s}  , \overline{W}_{s} ) \mathbb{E}^{\dag}_{\rho_s , \overline{W}_s} \Big[ \exp\big(-  \lambda  \int_0^\sigma \dd A_s\big) \Big]   \right) \\
&=  \lambda \cdot  \mathbb{N}_{x,0} 
\left( \int_0^{\sigma} \dd A_{s} \, f(\eta_{s}  , \overline{W}_{s} ) \exp \Big( {-\int\rho_s(\dd h) \, u_{G_\lambda} (W_s(h)) } \Big) \right), 
\end{align*}
where in the last line we used Proposition \ref{lemma:InversaExpo}. The statement of the lemma now follows applying \eqref{LAformula:aditiva} and  recalling that $(J_\infty, \Jc_\infty) \overset{(d)}{=} ( \Jc_\infty , J_\infty)$, under $P^0$.
\end{proof}
For simplicity, in the rest of the section we write:
\begin{equation*}
    (\rho^A_\mathfrak{t} , \overline{W}_{\mathfrak{t}}^A) 
    :=
    (\rho_{A^{-1}_{\mathfrak{t}}} , \overline{W}_{A^{-1}_{\mathfrak{t}}}),
\end{equation*}
and  $ \overline{W}_{\mathfrak{t}}^A:=( W_{\mathfrak{t}}^A, \Lambda_{\mathfrak{t}}^A)$ -- remark that in particular we have $H^A_\mathfrak{t} = \widehat{\Lambda}^A_{\mathfrak{t}}$.
Let us now decompose  $\overline{W}_{\mathfrak{t}}^A$  in terms of its excursion intervals away from $x$. To be more precise, we need to introduce some  notation. For every $r>0$ and $\overline{\w}:=(\w,\ell)\in \mathcal{W}_{\overline{E}}$, we set:
 $$\tau_r^{+}(\overline{\w}):= \inf \big\{  h \geq 0 : \ell(h) >  r \big\}. $$
Remark that since $\ell$ is continuous, $r \mapsto \tau_r^{+}(\overline{\w})$ is right-continuous. Moreover, $\tau(\overline{\w})$ and $\tau^+(\overline{\w})$ are related by the relation $\tau_r(\overline{\w})=\tau_{r-}^{+}(\overline{\w})$. Similarly, under $\Pi_{y,0}$ for $y\in E$, we will write $\tau_{r}^+(\xi):=\inf\{t\geq 0: ~ \mathcal{L}_t> r\}$. The advantage  of working with $\tau^{+}(\xi)$ instead of  $\tau(\xi)$ is that, under $\Pi_{x,0}$, the process $\tau^{+}(\xi)$ is a subordinator. Moreover, by excursion theory, it is well known that   its Lévy-Itô decomposition  is given by 
\begin{equation*}
    \tau_r^+(\xi) = \sum_{s \leq r} \Delta \tau_s^+(\xi),  \quad \quad r \geq 0,
\end{equation*}
since  \ref{Asssumption_3} ensures that the process $\tau^+(\xi)$ does not have drift part -- equivalently  $\tau^{+}(\xi)$ is purely discontinuous. For simplicity, when there is no risk of confusion the dependency on $\xi$ is dropped.
\par Getting back to our discussion, under $\mathbb{N}^\lambda_{x,0}(\cdot \, | M \geq 1)$,  let $(r_{j}:~j\in \mathcal{J})$ be  an enumeration of the jumping times of the right-continuous process  $(\tau^{+}_r(\overline{W}^A_{\mathfrak{t}}): 0 \leq r < H^A_\mathfrak{t} )$ -- for technical reasons the indexing is assumed to be measurable with respect to $\overline{W}^A_{\mathfrak{t}}$. For each $j \in \mathcal{J}$, set 
\begin{equation*}
    \overline{W}^{A,j}_{\mathfrak{t}}:= \Big( 
    \big(  W^A_{\mathfrak{t}}\big(h+\tau_{r_j}(\overline{W}^A_{\mathfrak{t}})\big), \Lambda^A_{\mathfrak{t}}\big(h+\tau_{r_j}(\overline{W}^A_{\mathfrak{t}})\big)-\Lambda^A_{\mathfrak{t}}(\tau_{r_j}\big(\overline{W}^A_{\mathfrak{t}})\big) \big) :~h\in [0, \tau^{+}_{r_j}(\overline{W}^{A}_{\mathfrak{t}}) - \tau_{r_j}(\overline{W}^{A}_{\mathfrak{t}})  ] \Big),  
\end{equation*}
and
$$\langle \rho^{A,j}_{\mathfrak{t}} , f \rangle   :=\int \rho_{\mathfrak{t}}^A(\dd h)f(h -\tau_{r_j}(\overline{W}^A_{\mathfrak{t}}) )\mathbbm{1}_{\{\tau_{r_j}(\overline{W}^A_{\mathfrak{t}}) <h< \tau^{+}_{r_j}(\overline{W}^A_{\mathfrak{t}})  \}}. 
$$
The first coordinates of the family  $(\overline{W}^{A,j}_{\mathfrak{t}}: j \in \mathcal{J})$ correspond to the excursion of $W^A_\mathfrak{t}$ away from $x$ while the second coordinate is identically zero.  We also stress that since $(x,0)\in \overline{\Theta}_x$, by Lemma  \ref{te_quedas_en_Theta} the support of $\rho^A_{\mathfrak{t}}$ is included in $\bigcup_{j\in \mathcal{J}}(\tau_{r_j}(\overline{W}^A_{\mathfrak{t}}),\tau_{r_j}^+(\overline{W}^A_{\mathfrak{t}}))$. Our goal now is to identify the law of $\sum_{j\in \mathcal{J}}\delta_{(r_j,   \rho_{\mathfrak{t}}^{A,j}, {W}^{A,j}_{\mathfrak{t}}  )}$. As we shall see, the restriction to the first two coordinates  of this  measure is, roughly speaking, a biased version of the excursion  point measure of $\xi$ under $\Pi_{x,0}$. More precisely, let  $( E^{0}\otimes\mathcal{N})_*(\dd J,\dd \xi)$ be the measure in $\mathcal{M}_f(\mathbb{R}_+)\otimes \mathbb{D}(\mathbb{R}_+, E)$ defined by 
\begin{equation*}
    ( E^{0}\otimes\mathcal{N})_*\big[ F(J, \xi) \big] 
    :=   E^{0}\otimes\mathcal{N}\Big[ \exp \big (- \int \Jc_{\sigma}(\dd h) u_{G_\lambda}(\xi_h) -\alpha \sigma \big)  F( J_\sigma,\xi) \Big].
\end{equation*}

\begin{lem}\label{lemma:Seccion5_PoissoncondHA}
Under $\mathbb{N}_{x,0}^{\lambda}(\cdot  |M\geq 1)$, the random variable $H_{\mathfrak{t}}^A$ is exponentially distributed with parameter $\lambda/\psi^{-1}(\lambda)$. Moreover,  conditionally on $H_{\mathfrak{t}}^A$, the measure:
$$\sum \limits_{j\in \mathcal{J}}\delta_{(r_j , \rho_{\mathfrak{t}}^{A,j}, {W}^{A,j}_{\mathfrak{t}})}, $$
is a Poisson point measure with intensity $\mathbbm{1}_{[0,H_{\mathfrak{t}}^A]}(r)\dd r ( E^{0}\otimes\mathcal{N})_*( \dd J,\dd \xi )$.
\end{lem}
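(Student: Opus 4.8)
The strategy is to combine the computation of Lemma \ref{lemma:CalculosAdditiva}, which gives the joint law of $(\rho^A_\mathfrak{t}, \overline{W}^A_\mathfrak{t})$ under $\mathbb{N}^\lambda_{x,0}(\,\cdot\mid M\geq 1)$, with the Lévy--It\^o decomposition of the subordinator $\tau^+_\cdot(\xi)$ under $\Pi_{x,0}$. First I would rewrite Lemma \ref{lemma:CalculosAdditiva} in terms of $\tau^+$ rather than $\tau$, using the excursion decomposition of $(\xi,\mathcal{L})$ away from $x$: by assumption \ref{Asssumption_3} the local-time inverse $\tau^+(\xi)$ is a driftless subordinator whose jumps $\Delta\tau^+_s$ index the excursions of $\xi$ away from $x$, and the running of the local time $\mathcal L$ up to $\tau_a$ is carried by the Poisson point measure of these excursions with intensity $\dd s\,\mathcal{N}(\dd\xi)$. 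The key point is that the subordinators $(U^{(1)},U^{(2)})$ entering $J_{\tau_a},\widecheck J_{\tau_a}$ split, along the excursion intervals of $\xi$, into independent pieces indexed by $s\in[0,a]$, so that the functional inside the expectation in Lemma \ref{lemma:CalculosAdditiva} factorises over excursions.

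Concretely, I would write $a = H^A_\mathfrak{t}$ (the exit height) and expand $E^0\otimes\Pi_x(\,\cdot\,)$ by conditioning on $\xi$ and decomposing $\xi$ into its excursions away from $x$ occurring at local-time levels $r_j\in[0,a]$. The exponential weight $\exp(-\alpha\tau_a - \int_0^{\tau_a}\widecheck J_{\tau_a}(\dd h)\,u_{G_\lambda}(\xi_h))$ then becomes a product, over these excursions, of the local factor $\exp(-\alpha\sigma - \int \widecheck J_\sigma(\dd h)\,u_{G_\lambda}(\xi_h))$ — which is precisely the Radon--Nikodym density defining $(E^0\otimes\mathcal N)_*$. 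Integrating out the level variable $a$ against $\lambda\,\dd a$ and recognising that the weight per unit local-time level is exactly $\mathcal N(1-\exp(-\int\dd h\,\psi(u_{G_\lambda}(\xi_h))/u_{G_\lambda}(\xi_h)))=\widetilde\psi(G_\lambda)/G_\lambda$, with $G_\lambda=\widetilde\psi^{-1}(\lambda)=\psi^{-1}(\lambda)$ in the notation here, I would obtain that the exit height $H^A_\mathfrak{t}$ is exponential with the stated parameter $\lambda/\psi^{-1}(\lambda)$, and that the overshoot excursions form, conditionally on $H^A_\mathfrak{t}$, a Poisson point measure on $[0,H^A_\mathfrak{t}]$ with the claimed intensity. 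The exponential law of $H^A_\mathfrak{t}$ comes from the fact that the total rate of marked excursions accumulated along the spine is constant in the level $r$, a direct consequence of the memorylessness of the driftless subordinator $\tau^+$ together with the tilting.

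The main technical step — and the one I expect to be the main obstacle — is to justify rigorously that the restriction of $\rho^A_\mathfrak{t}$ to each interval $(\tau_{r_j}(\overline W^A_\mathfrak{t}),\tau^+_{r_j}(\overline W^A_\mathfrak{t}))$ yields, jointly with the spatial excursion $W^{A,j}_\mathfrak{t}$, an independent mark distributed according to $(E^0\otimes\mathcal N)_*$, and in particular that the first coordinate $J$-part of the mark decouples cleanly. This requires carefully matching the subordinator $U^{(1)}$ appearing in $J_{\tau_a}$ (which encodes the mass of $\rho$ below the exploration height) with the pieces of $\rho^A_\mathfrak{t}$ sitting on the excursion intervals, using that the support condition $(x,0)\in\overline\Theta_x$ forces $\rho^A_\mathfrak{t}$ to charge only the open excursion intervals, as recorded before the statement. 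The identification $\widetilde\gamma'=\widetilde\nu$ has its analogue here but is cleaner: one checks directly via \eqref{identity:exponenteSubord} (for $\psi$) that the Laplace functional of the tilted mark measure coincides with $(E^0\otimes\mathcal N)_*$.

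Finally I would verify the exponential parameter. The normalisation $\mathbb{N}^\lambda_{x,0}(M\geq 1)=\widetilde\psi^{-1}(\lambda)=\psi^{-1}(\lambda)$ follows from Proposition \ref{lemma:InversaExpo} via \eqref{indentity:inverseExpo}, and dividing the formula of Lemma \ref{lemma:CalculosAdditiva} by this quantity produces the density $\tfrac{\lambda}{\psi^{-1}(\lambda)}\exp(-\tfrac{\lambda}{\psi^{-1}(\lambda)}a)$ after integrating out the Poissonian mark structure (taking $f\equiv$ constant in the mark variable), exactly as in the final display of the proof of Lemma \ref{lemma:Seccion5_PoissoncondHtilde}. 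Taking a general test functional of the form $G(a)\exp(-\sum_j g(r_j,\rho^{A,j}_\mathfrak{t},W^{A,j}_\mathfrak{t}))$ and applying the exponential formula for Poisson measures then gives simultaneously the exponential law of $H^A_\mathfrak{t}$ and the Poissonian structure of the excursion marks, completing the proof.
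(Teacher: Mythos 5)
Your proposal is correct and follows essentially the same route as the paper's proof: starting from Lemma \ref{lemma:CalculosAdditiva}, decomposing $\xi$ into its excursions away from $x$ (equivalently the Lévy--Itô decomposition of $\tau^+$ under \ref{Asssumption_3}), factorising the subordinator weights over the excursion intervals into i.i.d. pieces, and applying the exponential formula for the Poisson excursion measure together with \eqref{identity:exponenteSubord} and \eqref{identity:expoenente_1} to identify the constant rate $\lambda/\widetilde{\psi}^{-1}(\lambda)$, which yields simultaneously the exponential law and the Poissonian mark structure. The technical step you flag (matching the pieces of $\rho^A_\mathfrak{t}$ on the intervals $(\tau_{r_j},\tau^+_{r_j})$ with independent copies of $J_\sigma$) is handled in the paper exactly as you anticipate, by the independent-increments property of $(U^{(1)},U^{(2)})$ conditionally on $\xi$; note only that the parameter $\psi^{-1}(\lambda)$ in the statement is a typo for $\widetilde{\psi}^{-1}(\lambda)$, as your computation in fact shows.
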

\begin{proof}
First, we fix two measurable  functions $g: \mathbb{R}_+ \mapsto \mathbb{R}_+$ and $f:\mathbb{R}_+ \times \mathcal{M}_f(\mathbb{R}_+)\times \mathbb{D}(\mathbb{R}_+ , E) \mapsto \mathbb{R}_+$. The statement of the lemma will follow by establishing that:
\begin{align}\label{equation:section5_PoissoncondH}
& \mathbb{N}_{x,0}^{\lambda}\big(g(H^{A}_{\mathfrak{t}})\exp(-\sum_{j \in \mathcal{J}} f(r_j,\rho_\mathfrak{t}^{A,j},{W}_{\mathfrak{t}}^{A,j}  ))\:|\:M\geq 1\big) \nonumber \\
&\hspace{10mm}=
\frac{\lambda}{\widetilde{\psi}^{-1}(\lambda)}\int_{0}^\infty\dd r \, \exp \big(
   - r \cdot \frac{\lambda}{\widetilde{\psi}^{-1}(\lambda)}
   \big) g(r)  \exp \Big( 
   -\int_0^r \dd s \,  ( E^{0}\otimes\mathcal{N})_*\big[ 1- \exp \big(-  f(s, J,\xi  )  \big) \big] \Big).
\end{align}
 To simplify notation, for every $\mu \in \mathcal{M}(\mathbb{R}_+)$ and $a,b\geq 0$, we write $\phi(\mu,a,b)$ for the measure $\nu$ defined by:
$$\int \nu(\dd h)F(h)=\int_{(a,b)}\mu(\dd h)F(h-a).$$ Next, under $\Pi_{x,0}$, denote  the excursion point measure  of $\xi$  by $\sum_{j}\delta_{(r_j, \xi^{j})}$.
Now an application of Lemma \ref{lemma:CalculosAdditiva} gives
\begin{align*}
& \mathbb{N}_{x,0}^{\lambda}\big(g(H^{A}_{\mathfrak{t}})\exp(-\sum_{j \in \mathcal{J}} f(r_j,\rho_\mathfrak{t}^{A,j},{W}_{\mathfrak{t}}^{A,j}  ))\:|\:M\geq 1\big)\\
&=\frac{\lambda}{\widetilde{\psi}^{-1}(\lambda)}\int_{0}^\infty \dd r \, g(r) E^{0}\otimes \Pi_{x,0} \Big( \exp(-\alpha \tau_r) \exp \big(- \sum\limits_{r_j\leq r} f(r_j, \phi(J_\infty ,\tau_{r_j}, \tau_{r_j}^+ ), \xi^j ) \big) \exp\big( -\int \Jc_{\tau_r}(\dd h) u_{G_\lambda}(\xi_h) \Big) \Big) \\
&= \frac{\lambda}{\widetilde{\psi}^{-1}(\lambda)}\int_{0}^\infty\dd r \, g(r) E^{0}\otimes \Pi_{x,0}\Big( \exp\Big(- \sum\limits_{r_j\leq r}\Big\{  f\big(r_j, \phi(J_\infty ,\tau_{r_j}, \tau_{r_j}^+ ),\xi^j \big) +\int_{\tau_{r_j}}^{\tau_{r_j}^+} \Jc_{\infty}(\dd h) u_{G_\lambda}(\xi_h)+\alpha \sigma(\xi^j)\Big\}\Big) \Big), 
\end{align*}
where in the last equality we used  the fact that $\tau^+$ is purely discontinuous and that thanks to \ref{Asssumption_3}, under $P^0\otimes\Pi_{x,0}$, we can write $\Jc_{\infty}(\dd h) =\sum_{r_j} \Jc_{\infty}(\dd h) \mathbbm{1}_{[\tau_{r_j}, \tau_{r_j}^+]}(h) $. We are going to conclude using standard techniques of excursion theory. First remark that if we  introduce  an i.i.d. collection of measures $(J_\infty^j, \Jc _\infty^j )_{j \in \mathbb{N}}$    distributed as $(J_\infty, \Jc_\infty)$ under $P^0$,  the previous display can  be written in the form:  
\begin{align}\label{(*):Lemma14}
    \frac{\lambda}{\widetilde{\psi}^{-1}(\lambda)}\int_{0}^\infty\dd r \, g(r) E^{0}\otimes \Pi_{x,0}\Big( \exp \Big(- \sum\limits_{r_j\leq r} \Big\{ f(r_j, J^j_{\sigma(\xi^j)},\xi^j  ) +\int \widecheck{J}^j_{\sigma(\xi^j)}(\dd h) u_{G_\lambda}(\xi_h^j) +\alpha \sigma(\xi^j) \Big\} \Big) \Big). 
\end{align}
Since by excursion theory  $\sum_{r_j \leq r}\delta_{(r_j, J^j_\infty , \Jc^j_\infty, \xi^{j} )}$ is a Poisson point measure with intensity $\mathbbm{1}_{[0,r]}(\dd s)  E^{0}\otimes\mathcal{N}(\dd J_\infty,\dd \Jc_\infty, \dd \xi)$,  we deduce that the expectation under $E^0\otimes \Pi_{x,0}$ in \eqref{(*):Lemma14} writes:  
\begin{align*}
&\exp \Big( -\int_0^r \dd s \,   E^{0}\otimes\mathcal{N}\Big[ 1- \exp \big(-  f(s , J_{\sigma}, \xi ) - \int \Jc_{\sigma}(\dd h) u_{G_\lambda}(\xi_h) - \alpha \sigma \big) \Big]  \Big).
\end{align*}
Next, we remark that the previous display equals:
\begin{align*}
\exp \Big( 
   -\int_0^r \dd s \,  ( E^{0}\otimes\mathcal{N})_*\Big[ 1- \exp \big(-  f(s , J, \xi)  \big) \Big] \Big)
   \exp \Big(
   -r\cdot  E^{0}\otimes\mathcal{N}\Big[ 1-\exp\big( -\int \Jc_\sigma (\dd h) u_{G_\lambda}(\xi_h) - \alpha \sigma \big)\Big]
   \Big).
\end{align*}
Moreover, by \eqref{identity:exponenteSubord} the measure   $\Jc_\infty$ is the Lebesgue-Stieltjes measure of a subordinator with Laplace exponent  $p\mapsto\psi(p)/p -  \alpha$, which yields 
\begin{equation*}
     E^{0}\otimes\mathcal{N}\big[ 1-\exp\big( -\int \Jc_{\sigma}(\dd h) u_{G_\lambda}(\xi_h) - \alpha \sigma \big)\big]
    =
    \mathcal{N}\Big(1- \exp\Big( -\int_0^{\sigma}\dd h \frac{\psi( u_{G_\lambda}(\xi_h))}{u_{G_\lambda}(\xi_h)} \Big) \Big)
    =  \frac{\lambda}{\widetilde{\psi}^{-1}(\lambda)}, 
\end{equation*}
where in the first equality we applied \eqref{identity:exponenteSubord} and in last one we  used \eqref{identity:expoenente_1}. Putting everything together we obtain the desired identity \eqref{equation:section5_PoissoncondH}. 
\end{proof}

To identify the law of  $\mathbb{S}(\textbf{T}^A)$, we now define the natural candidate of  the exploration process of  the subordinate tree at time $\mathfrak{t}$  -- as we already mentioned,  this statement is  purely heuristic. Let us start by introducing some notations. Still under $\mathbb{N}^{\lambda}_{x,0}( \cdot |M \geq 1)$  denote the connected components of the open set 
\begin{align*}
    \big\{  s \geq A^{-1}_\mathfrak{t}  :~ {H}_s>\inf\limits_{[A^{-1}_\mathfrak{t},s]} {H} \big\} 
\end{align*}
by  $((\alpha_i , \beta_i):i \in \mathbb{N})$, and  as usual write   $(\rho^i,\overline{W}^{i}):=(\rho^i,W^i,\Lambda^i)$ for   the subtrajectory associated with the excursion interval $[\alpha_i , \beta_i]$. Further, set  $h_i := H_{\alpha_i}$ and consider the measure: 
\begin{equation}\label{delta:5.2:fin}
\sum \limits_{i\in \mathbb{N}}\delta_{(h_i, \rho^{i},\overline{W}^{i} ) }\, . 
\end{equation}
By the strong Markov property and \eqref{PoissonRandMeasure}, conditionally on $({\rho}^A_{\mathfrak{t}} , \overline{W}^A_{\mathfrak{t}})$, the measure \eqref{delta:5.2:fin} is a Poisson point measure with intensity $
    \rho_{\mathfrak{t}}^A(\dd h)\mathbb{N}_{\overline{W}_{\mathfrak{t}}^A(h)}(\dd\rho, \dd\overline{W}).
$ 
Next, for every $j\in \mathcal{J}$ we set:
\begin{equation} \label{defnition:Lis}
    L^{j}:=\sum \limits_{\tau_{r_j}(\overline{W}^A_{\mathfrak{t}})<h_i<\tau^{+}_{r_j}(\overline{W}^A_{\mathfrak{t}})}\mathscr{L}^{\,r_j}_\sigma(\rho^{i},\overline{W}^i),
\end{equation}
which is  the total amount of exit local time from the domain $D_{r_j}$ generated by the excursions glued on the right-spine of $\overline{W}^A_{\mathfrak{t}}$  at the interval $\big(\tau_{r_j}(\overline{W}^A_\mathfrak{t}), \tau_{r_j}^{+}(\overline{W}^A_\mathfrak{t})\big)$. Finally, we introduce the measure   $\rho^{*}_{\mathfrak{t}}:=\sum\limits_{j\in \mathcal{J}} L^j\cdot  \delta_{r_j}$. 
\begin{lem} \label{lemma:explorationAtexpotime} We have the following identity in distribution:
\begin{equation*}
    \big((\widetilde{H}_{\mathfrak{t}},\widetilde{\rho}_{\mathfrak{t}}):\widetilde{N}^{\lambda}(\cdot|\widetilde{M}\geq 1)\big) \overset{(d)}{=} \big((H^{A}_{\mathfrak{t}},\rho^{*}_{\mathfrak{t}}):~\mathbb{N}^{\lambda}_{x,0}(\cdot |M\geq 1)\big).\vspace{1mm}
\end{equation*}
\end{lem}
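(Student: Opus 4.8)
The plan is to prove the claimed identity in distribution by computing the joint Laplace-type transform of both sides and checking they coincide, using the two structural lemmas just established (Lemma~\ref{lemma:Seccion5_PoissoncondHtilde} on the left and Lemma~\ref{lemma:Seccion5_PoissoncondHA} on the right). The point is that both $\widetilde\rho_{\mathfrak t}$ and $\rho^*_{\mathfrak t}$ are purely atomic measures whose atom structure has already been described as a Poisson point measure over a uniformly distributed height, so it suffices to match the exponential law of the total height together with the intensity measure governing the atoms.

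First I would recall from Lemma~\ref{lemma:Seccion5_PoissoncondHtilde} that under $\widetilde N^\lambda(\cdot\mid\widetilde M\geq 1)$ the height $\widetilde H_{\mathfrak t}$ is exponential with parameter $\lambda/\widetilde\psi^{-1}(\lambda)$ and, conditionally on $\widetilde H_{\mathfrak t}$, the atom measure $\sum\delta_{(\widetilde h^i,\widetilde\Delta_i)}$ is Poisson with intensity $\mathbbm 1_{[0,\widetilde H_{\mathfrak t}]}(h)\,\dd h\,\widetilde\nu(\dd z)$, where $\widetilde\nu$ is given by \eqref{equation:leynutilde}. On the other side, Lemma~\ref{lemma:Seccion5_PoissoncondHA} gives that $H^A_{\mathfrak t}$ is also exponential with the \emph{same} parameter $\lambda/\widetilde\psi^{-1}(\lambda)$ (note $\psi^{-1}$ there should read $\widetilde\psi^{-1}$), and that conditionally on $H^A_{\mathfrak t}$ the marked measure $\sum_{j}\delta_{(r_j,\rho^{A,j}_{\mathfrak t},W^{A,j}_{\mathfrak t})}$ is Poisson with intensity $\mathbbm 1_{[0,H^A_{\mathfrak t}]}(r)\,\dd r\,(E^0\otimes\mathcal N)_*(\dd J,\dd\xi)$. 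Since the heights already have the same exponential law, it remains only to identify the laws of the two atom measures $\widetilde\rho_{\mathfrak t}$ and $\rho^*_{\mathfrak t}$ as Poisson measures on $\mathbb R_+^2$ over a common height, i.e.\ to match their atom intensities.

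The decisive computation is to show that the pushforward of $(E^0\otimes\mathcal N)_*$ under the map $(J,\xi)\mapsto \mathscr L^{\,0}_\sigma$-type mass is exactly $\widetilde\nu$. Concretely, each atom $(r_j,\rho^{A,j}_{\mathfrak t},W^{A,j}_{\mathfrak t})$ of the measure in Lemma~\ref{lemma:Seccion5_PoissoncondHA} contributes to $\rho^*_{\mathfrak t}$ an atom at height $r_j$ with mass $L^j$, the total exit local time from $D_{r_j}$ generated by the excursions glued along the right spine, as defined in \eqref{defnition:Lis}. By the special Markov property (Corollary~\ref{Corollary:specialMarkovN}) applied conditionally on the structure already fixed, $L^j$ can be expressed through the additive functional machinery: conditionally on $W^{A,j}_{\mathfrak t}$ the excursions glued on produce exit local time whose Laplace transform is governed by $u_{G_\lambda}$ with $G_\lambda=\widetilde\psi^{-1}(\lambda)$ by Proposition~\ref{lemma:InversaExpo}. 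I would therefore compute $(E^0\otimes\mathcal N)_*\big(1-\exp(-p\,L)\big)$ for the mass functional $L$, using the definition of $(E^0\otimes\mathcal N)_*$, the identity \eqref{identity:exponenteSubord} for the Laplace exponent of $(\widetilde U^{(1)},\widetilde U^{(2)})$, and the key exponent identity \eqref{u:r:l:1} of Lemma~\ref{lemma:cuentas}; the target value is the right-hand side of \eqref{equation:leynutilde}, namely $\tfrac{\widetilde\psi(p)-\lambda}{p-\widetilde\psi^{-1}(\lambda)}-\tfrac{\lambda}{\widetilde\psi^{-1}(\lambda)}$.

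The main obstacle, and the step requiring the most care, is this last intensity computation: one must correctly track how the tilting factor $\exp\big(-\int\widecheck J_\sigma(\dd h)u_{G_\lambda}(\xi_h)-\alpha\sigma\big)$ in $(E^0\otimes\mathcal N)_*$ interacts with the exit-local-time mass $L$ and produces, after integrating against $\mathcal N$, precisely the biased exponent appearing in $\widetilde\nu$. This is exactly where the two-dimensional subordinator $(\widetilde U^{(1)},\widetilde U^{(2)})$ and its Lévy measure $\widetilde\gamma$ enter, paralleling the computation in the proof of Lemma~\ref{lemma:Seccion5_PoissoncondHtilde} where $\widetilde\gamma'(\dd u_1)=\int\widetilde\gamma(\dd u_1,\dd u_2)\exp(-\widetilde\psi^{-1}(\lambda)u_2)$ was shown to equal $\widetilde\nu$. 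Once the identity of intensities is established, the conclusion follows by the exponential formula for Poisson point measures together with the already-matched exponential law of the heights, giving $(\widetilde H_{\mathfrak t},\widetilde\rho_{\mathfrak t})\overset{(d)}{=}(H^A_{\mathfrak t},\rho^*_{\mathfrak t})$ under the respective conditioned measures, as claimed.
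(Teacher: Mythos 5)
Your plan follows essentially the same route as the paper's proof: both reduce the claim, via Lemmas \ref{lemma:Seccion5_PoissoncondHtilde} and \ref{lemma:Seccion5_PoissoncondHA}, to matching the conditional Poisson atom intensities over the (already identical) exponential heights, and both then identify the pushforward of $(E^0\otimes\mathcal{N})_*$ under the conditional law of the masses $L^j$ with $\widetilde{\nu}$ by exactly the computation you sketch, combining \eqref{identity:exponenteSubord} with Lemma \ref{lemma:cuentas} to land on \eqref{equation:leynutilde} (the paper packages the marking step through the kernel $\mathrm{m}_{\mu,\mathrm{w}}$ and the exponential formula). One precision: the conditional Laplace transform of $L^j$ at argument $p$ is $\exp\big(-\int\rho^{A,j}_{\mathfrak{t}}(\dd h)\,u_p(W^{A,j}_{\mathfrak{t}}(h))\big)$ --- governed by $u_p$ and obtained from the Poissonian decomposition \eqref{delta:5.2:fin} of the excursions above the spine given $(\rho^A_{\mathfrak{t}},\overline{W}^A_{\mathfrak{t}})$ (strong Markov property at $A^{-1}_{\mathfrak{t}}$), not from Corollary \ref{Corollary:specialMarkovN} --- while $u_{G_\lambda}$ enters only through the tilt already built into $(E^0\otimes\mathcal{N})_*$.
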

\noindent In particular, Lemma \ref{lemma:explorationAtexpotime} implies that   $H(\rho^*_{\mathfrak{t}})=H^{A}_{\mathfrak{t}}$.
\begin{proof}
We start noticing that, by Lemmas \ref{lemma:Seccion5_PoissoncondHtilde} and \ref{lemma:Seccion5_PoissoncondHA}, we already have:
\begin{equation*}
    \big(\widetilde{H}_{\mathfrak{t}}:\widetilde{N}^{\lambda}(\cdot|\widetilde{M}\geq 1)\big) \overset{(d)}{=} \big(H^{A}_{\mathfrak{t}}:~\mathbb{N}^{\lambda}_{x,0}(\cdot |M\geq 1)\big).
\end{equation*}
Consequently, again by Lemma \ref{lemma:Seccion5_PoissoncondHtilde}  the desired result will follow by  showing that, under $\mathbb{N}^{\lambda}_{x,0}(\cdot |M\geq 1)$ and conditionally on $H^A_\mathfrak{t}$, the measure 
$$\sum \limits_{j\in \mathcal{J}}\delta_{(r_j,L^j)}$$
is a Poisson point measure with intensity $\mathbbm{1}_{[0,H^A_\mathfrak{t}]}(\dd h) \widetilde{\nu}(\dd z)$, where the measure $\widetilde{\nu}$ is characterised  by \eqref{equation:leynutilde}. In this direction, we work in the rest of the proof under $\mathbb{N}^{\lambda}_{x,0}(\cdot | M\geq 1)$ and  recall that, conditionally on $(\rho^A_{{\mathfrak{t}}} , \overline{W}^A_{\mathfrak{t}})$, the measure \eqref{delta:5.2:fin} is a Poisson point measure with intensity
$
\rho^A_{{\mathfrak{t}}}(\dd h)\mathbb{N}_{\overline{W}^A_{{\mathfrak{t}}}(h)}(\dd\rho, \dd\overline{W}).
$
In particular,  \eqref{defnition:Lis} entails that conditionally on $(\rho^A_{{\mathfrak{t}}} , \overline{W}^A_{\mathfrak{t}})$, the random variables  $(L^{j}: j \in \mathcal{J})$ are independent. Moreover, 
since by definition  $u_p(y)=\mathbb{N}_{y,0}(1-\exp(-p \mathscr{L}^{\,0}_\sigma))$, the translation invariance of the local time $\mathcal{L}$ gives
\begin{equation*}
    \mathbb{N}^{\lambda}_{x,0}\big(\exp(-pL^j)\:|\:\rho^A_{{\mathfrak{t}}} , \overline{W}^A_{\mathfrak{t}})
    =\exp\Big(-\int_{\tau_{r_j}(\overline{W}^A_{\mathfrak{t}})}^{\tau_{r_j}^+(\overline{W}^A_{\mathfrak{t}})}\rho^A_{\mathfrak{t}}(\dd h)u_{p}\big(W^A_{\mathfrak{t}}(h)\big)\Big)
    =\exp\Big(-\int  \rho^{A,j}_{\mathfrak{t}}(\dd h)u_{p}\big(W^{A,j}_{\mathfrak{t}}(h)\big)\Big), 
\end{equation*}
 for every $j\in \mathcal{J}$.  It will be then convenient to  introduce, for $(\mu , \w)\in \mathcal{M}_f(\mathbb{R}_+)\times \mathcal{W}_E$, the probability measure $\rm{m}_{\mu,\w}$ in $\mathbb{R}_+$ defined through its Laplace transform: 
$$\int {\rm{m}}_{\mu,\w}(\dd z)\exp(-p z)
=\exp\Big(-\int \mu(\dd h) u_{p}\big(\w(h)\big)\Big), $$
if $H(\mu)=\zeta(\w)$ and ${\rm{m}}_{\mu,\w}=0$ otherwise. The map $(\mu,\w)\mapsto \rm{m}_{\mu,\w}$ takes values in $\mathcal{M}_f(\mathbb{R}_+)$ and it is straightforward to see that it is measurable. 
Next, remark that by our previous discussion we have: 
\begin{align*}
\mathbb{N}^{  \lambda}_{x,0}\Big( G(H^{A}_{\mathfrak{t}}) \exp(-\sum_{j \in \mathcal{I}} f(r_j,L^{j}))\:|\:M\geq 1\Big)
&=\mathbb{N}^{\lambda}_{x,0}\Big( G(H^{A}_{\mathfrak{t}}) \prod_{j \in \mathcal{I}}  \int {\rm{m}}_{\rho^{A,j}_{\mathfrak{t}},W^{A,j}_{\mathfrak{t}}}(\dd z)\exp(- f(r_j,z)) \:\Big|\:M\geq 1\Big)\\
&=\mathbb{N}^{\lambda}_{x,0}\Big( G(H^A_\mathfrak{t}) \exp(-\sum_{j \in \mathcal{J}} f^*(r_j,\rho^{A,j}_\mathfrak{t}, W_{\mathfrak{t}}^{A,j}))\:\Big|\:M\geq 1\Big), 
\end{align*}
where $f^{*}(r,\mu,\w):=-\log\big(\int {\rm{m}}_{\mu, \w}(\dd z) \exp(-f(r,z))\big).$
Now, we can apply  Lemma \ref{lemma:Seccion5_PoissoncondHA} to get:
\begin{align*}
&\mathbb{N}^{  \lambda}_{x,0}\Big( G(H^{A}_{\mathfrak{t}}) \exp(-\sum_{j \in \mathcal{I}} f(r_j,L^{j}))\:\Big|\:M\geq 1\Big) \\
&\hspace{20mm} =  \mathbb{N}^{\lambda}_{x,0}\Bigg( G(H^{A}_{\mathfrak{t}}) \exp\Big(-\int_0^{H^A_{\mathfrak{t}}} \dd r \, ( E^{0}\otimes\mathcal{N})_* \Big[ \int \mathrm{m}_{ J, \xi}(\dd z) \big(1-\exp-f(r, z) \big) \Big] \Big) \Bigg), 
\end{align*}
and it follows that conditionally on $H^A_{\mathfrak{t}}$ the measure $\sum \limits \delta_{(r_j,L^j)}$ is a Poisson point measure with intensity:
\begin{equation*}
      \mathbbm{1}_{[0,H^A_{\mathfrak{t}}]}(r) \dd r \,  ( E^{0}\otimes\mathcal{N})_*\big[  \mathrm{m}_{ J, \xi} (\dd z) ].  
\end{equation*}
To conclude, we need to show that the measure $( E^{0}\otimes\mathcal{N})_*\big[  \mathrm{m}_{ J, \xi} (\dd z) ]$ is precisely $\widetilde{\nu}( \dd z)$. 
In this direction, remark that:
\begin{align*}
& (  E^{0}\otimes \mathcal{N})_*\big[ \int \mathrm{m}_{ J , \xi}(\dd z) (1-\exp(-p z)) \big] 
=  ( E^{0}\otimes\mathcal{N})_* \big[ 1-\exp(-\int  J (\dd h) u_{p}(\xi(h) )\big]  \\
&= E^{0}\otimes\mathcal{N}\Big(1-\exp\big(-\int J_\sigma (\dd h)~ u_{p}(\xi(h)) - \int \Jc_\sigma (\dd h) u_{G_\lambda}(\xi(h)) -\alpha \sigma \big)\Big)\\
&\hspace{0.5cm} - E^{0}\otimes\mathcal{N}\Big(1-\exp\big( -\int \Jc_\sigma (\dd h) u_{G_\lambda}(\xi(h))-\alpha \sigma \big)\Big).
\end{align*}
Then, \eqref{identity:exponenteSubord} entails that the previous display is equal to
\begin{align*}
\mathcal{N}\Big(1-\exp\big(-\int_0^\sigma \dd h\frac{\psi(u_p(\xi(h)))-\psi(u_{G_\lambda}(\xi(h)))}{u_p(\xi(h))-u_{G_\lambda}(\xi(h))})\Big)-\mathcal{N}\Big(1-\exp \big( -\int_0^\sigma \dd h\frac{\psi(u_{G_\lambda}(\xi(h)))}{u_{G_\lambda}(\xi(h))}\big)\Big).
\end{align*}
  However, by Lemma \ref{lemma:cuentas} the previous display is precisely \eqref{equation:leynutilde}. 
\end{proof}

We can now identify $\mathbb{S}(\textbf{T}^A)$  in terms of our functionals.  In this direction, for every $i \in\mathbb{N}$, we introduce $(\rho^{i,k},W^{i,k},\Lambda^{i,k})_{k\in\mathcal{K}_{i}}$  the excursions from $D_0=\overline{E}~\setminus\{(x,0)\}$ of  $(\rho^i,W^{i},\Lambda^i-\Lambda^i_0)$. 
In particular,  the family $(\rho^{i,k}, \overline{W}^{i,k})_{k\in \mathcal{K}_i}$ is in one-to-one correspondence with the
connected components $[a_{i,k},b_{i,k}]$, $k\in \mathcal{K}_i$, of the open set $\{s\in[0,\sigma(\overline{W}^i)]:\tau_{\Lambda^i_0}(\overline{W}^{i}_s)<\zeta_s(\overline{W}_s^{i})\}$, in such a way that $(\rho^{i,k}, W^{i,k},\Lambda^{i,k}+\Lambda_0^i)$ is the subtrajectory of $(\rho^{i},\overline{W}^{i})$ associated with the interval  $[a_{i,k},b_{i,k}]$. 
In the time scale of $((\rho_{s},\overline{W}_{s}):s\geq 0)$, the excursion $(\rho^{i,k},W^{i,k},\Lambda^{i,k} + \Lambda^i_0 )$   corresponds to the subtrajectory associated with $[\alpha_{i,k},\beta_{i,k}]$,
where $\alpha_{i,k}:=\alpha_i+a_{i,k}$ and $\beta_{i,k}:=\alpha_i + b_{i,k}$. Next, for each $k \in \mathcal{K}_i$, we introduce the point process  $\mathcal{P}^{i,k}_{t} :=\mathcal{P}_{(A_{\alpha_{i,k}}+t)\wedge A_{\beta_{i,k}}}-\mathcal{P}_{A_{\alpha_{i,k}}}$ and we set: 
\begin{equation*}
    \mathcal{M}:=\sum \limits_{i\in \mathbb{N}}\sum\limits_{k\in \mathcal{K}_{i}}\delta_{(\Lambda_{0}^i(0), \rho^{i,k},  \overline{W}^{i,k}, \mathcal{P}^{i,k})}.
\end{equation*}
An application of the Markov property at time $A^{-1}_{\mathfrak{t}}$ and the special Markov property applied to the domain $D_0$ shows that, conditionally on $\rho^*_{\mathfrak{t}}$, the measure $\mathcal{M}$ is a Poisson point measure with intensity $\rho^*_{\mathfrak{t}}(\dd r) \mathbb{N}_{x,0}^\lambda(\dd \rho, \dd \overline{W}, \dd \mathcal{P})$.
For every $j\in \mathcal{J}$, consider  
\begin{equation*}
    M_j:=\#\Big\{ \big(\Lambda_{0}^i(0), \rho^{i,k},  \overline{W}^{i,k}, \mathcal{P}^{i,k}\big)\in \mathcal{M}:~\Lambda_0^{i} (0) =r_j\text{ and }\mathcal{P}^{i,k}_{A_\sigma(\overline{W}^{i,k})} \geq 1\Big\}, 
\end{equation*}
and denote  the elements of $\{ (r_j , M_j),  j\in \mathcal{J}:~M_{j} \geq 1\}$ arranged in increasing order with respect to  $r_j$ by 
$\big( (r^\circ_1, M^\circ_1), \dots , (r^\circ_{R},M^\circ_{R}) \big). $
We now remark that by construction we have:
\begin{equation} \label{definition:spineSubordinado}
     \mathbb{S}(\textbf{T}^A) =\big((r^\circ_1, M^\circ_1), \dots , (r^\circ_{R},M^\circ_{R}), (H^A_{\mathfrak{t}}, -1) \big),
\end{equation}
and, in particular,  $K = \sum_{p=1}^R M^\circ_{p}$ which is   the number of atoms $(\Lambda_{0}^i(0),\rho^{i,k},\overline{W}^{i,k},\mathcal{P}^{i,k}) \in \mathcal{M}$ with at least one Poissonian mark. Finally, we write $\mathscr{E} := ((\rho^{q}_\circ, \overline{W}^q_\circ, \mathcal{P}^q_\circ):~ 1 \leq q \leq K)$ for the collection of these marked  excursions enumerated  in counterclockwise order.  Remark that,  for every $1 \leq q \leq K$,  ${\textbf{T}}^A_i$ is the embedded tree associated with $\widehat{\Lambda}^q_\circ$ -- time changed by $A(\rho^q_\circ , \overline{W}_\circ^q)$ -- and marked by $\mathcal{P}^q_\circ$. We are now in position to prove Proposition  \ref{lemma:seccion5_lemafinal}. \par

\begin{proof}[Proof of Proposition \ref{lemma:seccion5_lemafinal}]
For every $h \geq 0$ with $\widetilde{\rho}_{\mathfrak{t}}(\{ h\}) > 0$, we write  $\widetilde{\mathcal{M}}^{(h)} := \widetilde{\mathcal{M}}\mathbbm{1}_{\{\widetilde{h}^i = h\}}$. Similarly,  for every $r \geq 0$ satisfying ${\rho}^*_{\mathfrak{t}}(\{ r \}) > 0$, we set $\mathcal{M}^{(r)}:=\mathcal{M}\mathbbm{1}_{\{ \Lambda_0^{i} (0) = r \}}$. Next, we introduce the following families respectively under $\widetilde{N}^\lambda(\cdot |\widetilde{M} \geq 1)$ and $\mathbb{N}^{\lambda}_{x,0}(\cdot | M \geq 1)$ :
\begin{equation}\label{def:family_Ntilde}
    \big\{  \big(h \mathbbm{1}_{\{ \widetilde{\mathcal{M}}^{(h)}(\widetilde{M} \geq 1) \geq 1  \}}, \, \widetilde{\mathcal{M}}^{(h)}(\widetilde{M} \geq 1)  \big) : h \geq 0, \, \widetilde{\rho}_{\mathfrak{t}}(\{ h \})> 0  \big\}
    {\cup \{ (\widetilde{H}_{\mathfrak{t}} ,-1 ) \} },
\end{equation}
and
\begin{equation} \label{def:family_Nx}
    \big\{  \big(r \mathbbm{1}_{\{ \mathcal{M}^{(r)}(M \geq 1) \geq 1  \}}, \, \mathcal{M}^{(r)}(M \geq 1)  \big) : r \geq 0, \, \rho^*_{\mathfrak{t}}(\{ r \})> 0  \big\} {\cup \{ ({H}^A_{\mathfrak{t}} ,-1 ) \} }, 
\end{equation}
where by Lemma \ref{lemma:explorationAtexpotime}, we have respectively that  $H(\rho^A_{\mathfrak{t}}) = H^A_\mathfrak{t}$,  $H(\widetilde{\rho}_{\mathfrak{t}}) = \widetilde{H}_\mathfrak{t}$. Recall that,  under $\widetilde{N}^\lambda(\cdot |\widetilde{M} \geq 1,\widetilde{\rho}_{\mathfrak{t}})$, the measure $\widetilde{\mathcal{M}}$ is a Poisson point measure with intensity $\widetilde{\rho}_{\mathfrak{t}}(\dd h)\widetilde{N}^\lambda(\dd \rho, \,  \dd \mathcal{P})$ and similarly,  under $\mathbb{N}^{\lambda}_{x,0}(\cdot | M \geq 1, \rho_{\mathfrak{t}}^*)$,  the measure $\mathcal{M}$ is a Poisson point measure with intensity $\rho^*_{\mathfrak{t}}(\dd r) \mathbb{N}_{x,0}^\lambda(\dd \rho, \dd \overline{W}, \dd \mathcal{P})$. Consequently, by restriction properties of Poisson measures, under $\widetilde{N}^\lambda(\cdot |\widetilde{M} \geq 1,\widetilde{\rho}_{\mathfrak{t}})$, the  variables 
$(\widetilde{\mathcal{M}}^{(h)}(\widetilde{M} \geq 1): \,  \widetilde{\rho}_{\mathfrak{t}}(\{h\}) > 0\big)$ are independent Poisson random variables  with intensity $\widetilde{\rho}_{\mathfrak{t}}(\{  h \})\widetilde{N}^{\lambda}(M \geq 1)$ and, under  $\mathbb{N}^{\lambda}_{x,0}(\cdot | M \geq 1, \rho_{\mathfrak{t}}^*)$, the variables
$(\mathcal{M}^{(r)}(M \geq 1): \,  \rho^*_{\mathfrak{t}}(\{r\}) > 0\big)$ are also  independent Poisson random variables, this time with intensity  $\rho^*_{\mathfrak{t}}(\{  r \}) \mathbb{N}_{x,0}^{\lambda}(M \geq 1)$. Now, recall from  Lemma \ref{lemma:explorationAtexpotime}  the identity 
\begin{equation*}
    (\widetilde{\rho}_{\mathfrak{t}}:\widetilde{N}^{\lambda}(\cdot|\widetilde{M}\geq 1)) \overset{(d)}{=} (\rho^{*}_{\mathfrak{t}}:~\mathbb{N}^{\lambda}_{x,0}(\cdot |M\geq 1)).
\end{equation*}
Since  $\widetilde{N}^\lambda(\widetilde{M} \geq 1) = \mathbb{N}_{x,0}^\lambda(M \geq 1)$, this ensures  that the families   \eqref{def:family_Ntilde} and \eqref{def:family_Nx}  have the same distribution. Moreover, the measures $\widetilde{\rho}_{\mathfrak{t}}$ and $\rho_{\mathfrak{t}}^*$ being atomic,  the families \eqref{definition:spinePhiTilde},  \eqref{definition:spineSubordinado}   correspond respectively to the subset of elements of   \eqref{def:family_Ntilde} and \eqref{def:family_Nx}    with non-null entries.  This gives the first statement of the proposition. 
\par
  To establish  (ii), it suffices to show that conditionally on $\mathbb{S}(\widetilde{\textbf{T}})$,  the  marked excursions $\widetilde{\mathscr{E}}$ are distributed as $\widetilde{K}$ independent copies with law   $\widetilde{N}^\lambda( \dd H, \dd \mathcal{P} |\widetilde{M} \geq 1)$ and that,  conditionally on $\mathbb{S}(\mathbf{T}^A)$, the excursions $\mathscr{E}$ are distributed as $K$ independent copies with law $\mathbb{N}_{x,0}^\lambda(\dd \overline{W}, \dd\mathcal{P}|M \geq 1)$. Remark that our previous reasoning already implies that $\widetilde{\mathscr{E}}$ and $\mathscr{E}$ satisfy  the desired property if we do not take into account the ordering. However, this is not enough  and to keep track of the ordering  we proceed as follows: 
\par 
We start studying $\widetilde{\mathscr{E}}$ under $\widetilde{{N}}^\lambda(\cdot | \widetilde{M} \geq 1)$ and we introduce $(\widetilde{\mathscr{I}}_s:s \geq \mathfrak{t})$,  the running infimum of $(\langle \widetilde{\rho}_{s} , 1 \rangle - \langle \widetilde{\rho}_{\mathfrak{t}} , 1 \rangle: s \geq \mathfrak{t})$. Next, we consider the measure
\begin{equation}\label{definition:medidaordenadatilde}
    \sum_{i \in \mathbb{N}}\delta_{(-\widetilde{\mathscr{I}}_{\widetilde{\alpha}_i},\widetilde{\rho}^{i}, \widetilde{\mathcal{P}}^i) } ,
\end{equation}
and we stress that, by the strong Markov property and the discussion below \eqref{PoissonRandMeasure_Inf},  conditionally on $\mathcal{F}_{\mathfrak{t}}$ this measure  is a Poisson point measure with intensity $\mathbbm{1}_{[0, \langle \widetilde{\rho}_{\mathfrak{t}},1  \rangle ]}(u) \dd u~ \widetilde{N}^\lambda( \dd \rho,\dd \mathcal{P})$. Moreover,  its image  by   the transformation  $s  \mapsto H(\kappa_{s} \widetilde{\rho}_{\mathfrak{t}} )$ on its first coordinate  gives precisely   $\widetilde{\mathcal{M}}$. In particular,  the collection  $\big((\widetilde{h}^\circ_1, \widetilde{M}^\circ_1), \dots , (\widetilde{h}^\circ_{\widetilde{R}},\widetilde{M}^\circ_{\widetilde{R}}), (\widetilde{H}_{\mathfrak{t}} ,-1 )\big)$ only depends on $\widetilde{\rho}_{\mathfrak{t}}$ and  $\big(\widetilde{\mathscr{I}}_{\widetilde{\alpha}_i}:~i\geq 0 \text{ with } \widetilde{\mathcal{P}}^{i}_{\sigma(\widetilde{\rho}^{\,i})}\geq 1\big)$. Remark that  the ordered marked excursions $\widetilde{\mathscr{E}}$ correspond precisely to the atoms $H(\widetilde{\rho}^{\,i})$ of \eqref{definition:medidaordenadatilde} with $\widetilde{\mathcal{P}}^{i}_{\sigma(\widetilde{\rho}^{\,i})}\geq 1$,  when considered in decreasing order with respect to  $-\widetilde{\mathscr{I}}_{\widetilde{\alpha}_i}$. Since $H(\widetilde{\rho}_{\mathfrak{t}}) =  \widetilde{H}_\mathfrak{t}$,  we deduce  by restriction properties of Poisson measures that,  conditionally 
 on $(\widetilde{\rho}_\mathfrak{t},\widetilde{K})$, the collection $\widetilde{\mathscr{E}}$ is independent of $\mathbb{S}(\widetilde{\textbf{T}})$ and  formed by  $\widetilde{K}$    i.i.d. variables  with distribution $\widetilde{N}^\lambda(\dd \rho,\dd\mathcal{P} | \widetilde{M} \geq 1)$, as wanted.
\par

Let us now turn our attention to the distribution of $\mathscr{E}$ under $\mathbb{N}_{x,0}^\lambda(\cdot |M\geq 1)$. Similarly, under $\mathbb{N}_{x,0}^\lambda(\cdot |M\geq 1)$ we consider  $(\mathscr{I}_s:~s \geq A_{\mathfrak{t}}^{-1})$, the running infimum of $(\langle \rho_{s} , 1 \rangle- \langle \rho_{A_\mathfrak{t}^{-1}} , 1 \rangle : s \geq A_\mathfrak{t}^{-1} )$ as well as  the measure
\begin{equation}\label{definition:medidaordenadatilde2}
    \sum_{i \in \mathbb{N}}\delta_{(-\mathscr{I}_{\alpha_i},\rho^{i}, W^i) }. 
\end{equation}
Once again, by the strong Markov property and  \eqref{PoissonRandMeasure_Inf}, conditionally on $\mathcal{F}_{A_\mathfrak{t}^{-1}}$, the measure  \eqref{definition:medidaordenadatilde2} is a Poisson point measure with intensity $\mathbbm{1}_{[0, \langle \rho_{\mathfrak{t}}^A,1  \rangle ]}(u) \dd u~ \mathbb{N}^\lambda_{\overline{W}^A_{\mathfrak{t}}(H( \kappa_{u}\rho^A_{\mathfrak{t}} ))}( \dd \rho,\dd \overline{W})$. We now introduce the process:
$$V_t:=\sum\limits_{i\in \mathbb{N}} \mathscr{L}^{\,\Lambda^i_0}_{t\wedge \beta_i-t\wedge \alpha_i}(\rho^{i}, \overline{W}^i),\quad t\geq 0,$$
where  $V_\infty=\langle \rho_{\mathfrak{t}}^*,1\rangle<\infty$ by Lemma \ref{lemma:explorationAtexpotime}. Recall that $(\rho^{i,k}, \overline{W}^{i,k})_{k\in \mathcal{K}_i}$ stands  for the excursions of $(\rho^{i}, W^{i},\Lambda^{i}-\Lambda^i_0)$ outside $D_0$ and  we stress  that in the time scale of $((\rho_{s},\overline{W}_{s}):s\geq 0)$, the  excursion $(\rho^{i,k}, W^{i,k},\Lambda^{i,k}+\Lambda_0^i)$ corresponds to the subtrajectory associated with $[\alpha_{i,k},\beta_{i,k}]$,
where $\alpha_{i,k}:=\alpha_i+a_{i,k}$ and $\beta_{i,k}:=\alpha_i + b_{i,k}$. To simplify notation set $\text{Tr}(\rho^i,\overline{W}^{i})$ for the truncation of $(\rho^i,\overline{W}^{i})$ to the domain $D_{\Lambda^i_0}$. An application of the strong Markov property combined  with the special Markov property  in the form given in Theorem \ref{Theo_Spa_Markov_Excur}  implies that, conditionally on $\sum \limits_i \delta_{(-\mathscr{I}_{\alpha_i},\text{Tr}(\rho^i, \overline{W}^i))}$, 
the measure:
\begin{equation}
    \label{definition:medidaordenadatilde2_2}
\sum\limits_{i\in \mathbb{N},k\in \mathcal{K}_i} \delta_{(V_{\alpha_{i,k}},\rho^{i,k},\overline{W}^{i,k},\mathcal{P}^{i,k})}
\end{equation}
is a Poisson point measure with intensity $\mathbbm{1}_{[0,\langle \rho_{\mathfrak{t}}^*,1\rangle]}(p)\dd p~\mathbb{N}_{x,0}^{\lambda}(\dd\rho,\dd\overline{W},\dd \mathcal{P})$.  The conclusion is now  similar to the previous discussion on  $\widetilde{\mathscr{E}}$. First,  remark that the collection $((r^\circ_1, M^\circ_1), \dots , (r^\circ_{R},M^\circ_{R}), ( H^A_\mathfrak{t}, -1) )$ can be recovered from 
\begin{equation*}
    \sum \limits_{i \in \mathbb{N}} \delta_{(-\mathscr{I}_{\alpha_i},\text{Tr}(\rho^i, \overline{W}^i))} \quad \text{ and } \quad 
    \Big(V_{\alpha_{i,k}} : i \in \mathbb{N}, k \in \mathcal{K}_i \text{ with }\mathcal{P}^{i,k}_{A_\sigma(\overline{W}^{i,k})}\geq 1\Big)
\end{equation*}
by making use of  the mapping $r \mapsto \sum_{(-\mathscr{I}_{\alpha_i}) \leq  r}\mathscr{L}_\sigma^{\Lambda^i_0}(\rho^i , \overline{W}^i)$ and  the fact  that $\Lambda_0^i(0)$ can be read from   $\text{Tr}(\rho^i,\overline{W}^i)$. In our last claim we used that $\mathscr{L}^{\, \Lambda_0^i}_\sigma(\rho^i, \overline{W}^i)$ is measurable with respect to $\text{Tr}(\rho^i, \overline{W}^i)$ -- by Proposition \ref{L_eta_measurable} -- as well as the equality $H^A_{\mathfrak{t}} = \sup_{i\in \mathbb{N}} \Lambda^i_0(0)$ -- which holds since $\mathcal{M}$ conditionally on $\rho^*_\mathfrak{t}$ is Poisson $\rho^*_\mathfrak{t}(\dd r) \mathbb{N}^\lambda_{x,0}$ and   $H(\rho^*_{\mathfrak{t}})=H^A_\mathfrak{t}$ by Lemma \ref{lemma:explorationAtexpotime}.   Furthermore, the ordered marked excursions $\mathscr{E}$ correspond precisely to the atoms of \eqref{definition:medidaordenadatilde2_2} with $\mathcal{P}^{i,k}_{A_\sigma(\overline{W}^{i,k})}\geq 1$ in decreasing order with respect to  the process $V$ -- since $V$ is non-decreasing and all the values $\{V_{\alpha_{i,k}}:i\in \mathbb{N}, k\in\mathcal{K}_i\}$ are distinct. Putting everything together, we deduce by restriction properties of Poisson measures that, conditionally on $ \sum \limits_i \delta_{(-\mathscr{I}_{\alpha_i},\text{Tr}(\rho^i, \overline{W}^i))}$ and  $K$,
the collection $\mathscr{E}$ is independent of $\mathbb{S}(\textbf{T}^A)$ and composed by $K$   i.i.d. variables with distribution $\mathbb{N}^\lambda_{x,0}(\dd \rho,\dd \overline{W},\dd\mathcal{P} | M \geq 1)$. This completes the proof of Proposition \ref{lemma:seccion5_lemafinal}. 
\end{proof}

\end{document}